\newtheorem{prop}{Proposition}[section]
\newtheorem{thm}{Theorem}[section]
\newtheorem{cor}{Corollary}[section]
\newtheorem{lem}[thm]{Lemma}
\theoremstyle{definition}
\newtheorem{dfn}{Definition}[section]
\theoremstyle{remark}
\newtheorem*{rmk}{Remark}
\theoremstyle{remark}
\theoremstyle{remark}
\newcommand{\G}         {\mathcal{G}}
\newcommand{\h}         {\mathcal{H}}
\newcommand{\K}         {\mathcal{K}}
\newcommand{\Ll}         {\mathcal{L}}
\newcommand{\B}         {\mathcal{B}}
\newcommand{\E}         {\mathcal{E}}
\newcommand{\F}         {\mathcal{F}}
\def\D{\mathscr D}
\def\C{\mathscr C}
\def\X{\mathscr X}
\def\Y{\mathscr Y}
\def\Z{\mathscr Z}
\def\W{\mathscr W}
\def\S{\mathscr S}
\def\g{\mathscr G}
\def\bq{\mathfrak G}
\newcommand{\Top} {\mathfrak{Top}}
\newcommand{\Et} {\mathfrak{Et}}
\newcommand{\Eft} {\mathfrak{EffEt}}
\def\Ha{\mathscr{H}}
\def\Sts{\St\left(S\right)}
\newcommand{\holim}{\operatorname{holim}}
\newcommand{\hl}        {\underset{{\longleftarrow\!\!\!-\!\!\!-\!\!\!-\!\!\!-\!\!\!-\!\!\!-} } \holim  \:}
\newcommand{\hc}        {\underset{{-\!\!\!-\!\!\!-\!\!\!-\!\!\!-\!\!\!-\!\!\!\longrightarrow}} \holim  \:}
\newcommand{\Hom}{\operatorname{Hom}}
\DeclareMathOperator{\St}{St}
\DeclareMathOperator{\Set}{Set}
\DeclareMathOperator{\Sh}{Sh}
\DeclareMathOperator{\sit}{Site}
\newcommand{\Ef}{\operatorname{Eff}}
\DeclareMathAlphabet{\scr}{OT1}{pzc}%
                                 {m}{it}
\def\gb{\scr{Gerbe}\left(\X\right)}
\def\gets{\scr{Gerbed}\left(\Eft\right)}
\newcommand{\rt} { \rotatebox[origin=C]{90}{$\perp$} }
\def\rrrarrow{\hspace{.05cm}\mbox{\,\put(0,-3){$\rightarrow$}\put(0,1){$\rightarrow$}\put(0,5){$\rightarrow$}\hspace{.45cm}}}
\def\rrarrow{  \hspace{.05cm}\mbox{\,\put(0,-2){$\rightarrow$}\put(0,2){$\rightarrow$}\hspace{.45cm}}}
\def\acts{\hspace{.1cm}{\setlength{\unitlength}{.30mm}\linethickness{.09mm}
                        \begin{picture}(8,8)(0,0)\qbezier(7,6)(4.5,8.3)(2,7)\qbezier(2,7)(-1.5,4)(2,1)\qbezier(2,1)(4.5,-.3)(7,2)
                                                 \qbezier(7,6)(6.1,7.5)(6.8,9)\qbezier(7,6)(5,6.1)(4.2,4.4)
                        \end{picture}\hspace{.1cm}}}
\def\longlongrightarrow{-\!\!\!-\!\!\!-\!\!\!-\!\!\!-\!\!\!-\!\!\!\longrightarrow}
\begin{document}

\title{Sheaf Theory for \'Etale Geometric Stacks}
\author{David Carchedi}

\begin{abstract}
We generalize the notion of a small sheaf of sets over a topological space or manifold to define the notion of a small stack of groupoids over an \'etale topological or differentiable stack. We then provide a construction analogous to the \'etal\'e space construction in this context, establishing an equivalence of $2$-categories between small stacks over an \'etale stack and local homeomorphisms over it. We go on to characterize small sheaves and gerbes. We show that ineffective data of \'etale stacks is completely described by the theory of small gerbes. Furthermore, it is shown that \'etale stacks (and in particular orbifolds) induce a small gerbe over their effective part, and all gerbes arise in this way. It follows that ineffective orbifolds, sometimes called non-reduced orbifolds, encode a canonical gerbe over their effective (or reduced) part. For nice enough classes of maps, for instance submersions, we show that \'etale stacks are equivalent to a $2$-category of gerbed effective \'etale stacks. Along the way, we also prove that the $2$-category of topoi is a full reflective sub-2-category of localic stacks.
\end{abstract}

\maketitle
\markboth{David Carchedi}{Sheaf Theory for \'Etale Geometric Stacks}


\tableofcontents

\section{Introduction}
The purpose of this article is to extend the theory of small sheaves of sets over spaces to a theory of small stacks of groupoids over \'etale topological, differentiable, and localic stacks. We provide a construction analogous to the \'etal\'e space construction in this context and establish an equivalence of $2$-categories between small stacks over an \'etale stack and local homeomorphisms over it. This theory provides an interpretation of the ineffective data of any \'etale stack as a small gerbe over its effective part. Moreover, every small gerbe over an effective \'etale stack $\Y$ arises from some \'etale stack $\Z$ whose effective part is equivalent to $\Y$. In particular, this applies to orbifolds, showing ineffective orbifolds, sometimes called non-reduced orbifolds, encode a canonical gerbe over their effective (or reduced) part.

\'Etale stacks model quotients of spaces by certain local symmetries, and their points can posses intrinsic (discrete) automorphism groups. A more or less direct consequence of the existence of points with non-trivial automorphism groups is that \'etale stacks form not only a category, but a bicategory. A widely studied class of such stacks are orbifolds, which have a wide range of uses in foliation theory, string theory, and conformal field theory. More generally, \'etale stacks are an important class of stacks as they include not only all orbifolds, but more generally, all stacky leaf spaces of foliated manifolds. The passage from spaces to \'etale stacks is a natural one as such a passage circumvents many obstructions to geometric problems. For example, it is not true that every foliation of a manifold $M$ arises from a submersion $f:M \to N$ of manifolds, however, it is true that every foliation on $M$ arises from a submersion $M \to \X$, where $\X$ is allowed to be an \'etale differentiable stack \cite{Ie}. Similarly, it is not true that every Lie algebroid over a manifold $M$ integrates to a Lie groupoid $\G \rightrightarrows M$, \cite{bracket}, however it is true when the arrow space $\G$ is allowed to be an \'etale differentiable stack \cite{stacklie}. \'Etale stacks are also a natural setting to consider small sheaves (and more generally small stacks), as the results of \cite{Dorette} imply that \'etale stacks are faithfully represented by their topos of small sheaves.

Recall that for a topological space $X$, a \emph{small sheaf} over $X$ is a sheaf over its category of open subsets, $\mathcal{O}\left(X\right)$, where the arrows are inclusions. The corresponding topos is denoted as $\Sh\left(X\right).$  For small (pre-)sheaves over $X,$ there is an \'etal\'e space construction:

Given a presheaf $F$ over $X,$ there exists a space $\underline L\left(F\right)$ and a local homeomorphism $$L(F):\underline L\left(F\right) \to X$$ such that for every open subset $U$ of $X,$ sections of the map $L\left(F\right)$ over $U$ are in bijection with elements of $aF\left(U\right),$ where $aF$ is the sheaf associated to $F$. The space $\underline L\left(F\right)$ (together with its map down to $X$) is called the \emph{\'etal\'e space} of $F$.
More precisely there is a pair of adjoint functors
$$\xymatrix{\Set^{\mathcal{O}\left(X\right)^{op}} \ar@<-0.5ex>[r]_-{L} & \mathbb{TOP}/X \ar@<-0.5ex>[l]_-{\Gamma}},$$ such that $$L\left(U\right)=U \hookrightarrow X.$$ Here, $L$ takes a presheaf to its \'etal\'e space and $\Gamma$ takes a space over $X$ to its sheaf of sections. This adjunction restricts to an equivalence

$$\xymatrix{\Sh\left(X\right) \ar@<-0.5ex>[r]_-{L} & Et\left(X\right) \ar@<-0.5ex>[l]_-{\Gamma}},$$
between the category of small sheaves over $X$ and the category of local homeomorphisms over $X$.

Similarly, a \emph{small stack} over a space $X$ is a stack (of groupoids) $\Z$ over the category of open subsets of $X$. It is not reasonable to hope to construct an \'etal\'e \emph{space} for $\Z$ whose sections over an open subset $U$ are equivalent to $\Z\left(U\right)$, unless each of the groupoids $\Z\left(U\right)$ are (equivalent to) sets, since sections of a map of spaces can only form a set. Hence, one can only find an \'etal\'e space for sheaves. If there were to be an \'etal\'e ``space'' associated to a stack, this ``space'' would need to actually be an object of a bicategory, so that sections of the map $$\underline L\left(\Z\right) \to X$$ could form a genuine non-discrete groupoid. In this paper, we show that this can be accomplished if we, instead of searching for an \'etal\'e \emph{space}, find an \'etal\'e \emph{\'etale stack}, which we less awkwardly name the \emph{\'etal\'e realization} of $\Z$. In fact, we extend this result to the setting of small stacks of groupoids over \'etale stacks.

We define the notion of a small sheaf and stack over an \'etale topological, differentiable, or localic stack in much the same way as for topological spaces, by finding an appropriate substitute for a Grothendieck site of open subsets. Sheaves over this site are what we call small sheaves over $\X,$ and similarly for stacks. For example, if $G$ is a discrete group acting on a space $X,$ the stacky quotient $X//G$ is an \'etale topological stack, and a small sheaf over $M//G$ is the same as a $G$-equivariant sheaf over $M,$ which can be described as a space $E$ equipped with an action of $G$ and a local homeomorphism $E \to M$ which is equivariant with respect to the two $G$-actions. If $\X$ happens to be an orbifold, then there is an existing notion of sheaf over $\X,$ and it agrees with the definition of a small sheaf over $\X$ in the sense of this paper.

Small sheaves and stacks need to be distinguished from their large counterparts. The $2$-topos of $\emph{large stacks}$ over an \'etale stack $\X$  is the slice $2$-topos $$\St\left(\mathbb{TOP}\right)/\X,$$ in the case of topological stacks, and similarly for the localic and smooth setting. This distinction is highlighted in \cite{Metzler}. A small sheaf or stack over a space or stack should be thought of as algebraic data attached to that space or stack, whereas a large sheaf or stack should be thought of as a geometric object sitting over it. In particular, the study of small gerbes in this setting seems to be a relatively recent endeavor. It should be noted that nearly all applications in the literature of gerbes in differential geometry are applications of \emph{large} gerbes, moreover large gerbes with band $U\!\!\left(1\right)$, so-called bundle-gerbes (see e.g. \cite{bunger1,diffg}). Not every large gerbe is a small gerbe, nor is every large gerbe a bundle gerbe. To the author's knowledge, there has been, as of yet, little application of small gerbes in differentiable geometry or topology. However, the classification of extensions of regular Lie groupoids given in \cite{reg} may be interpreted in terms of small gerbes over \'etale stacks. Nonetheless, there are plenty of examples of small gerbes right under everyone's noses, in the disguise of ineffective data, e.g. every ineffective orbifold gives rise to a small gerbe as does any almost-free action of a Lie group on a manifold. One aim of this paper is to establish the technical tools necessary to begin the study of these objects.

\subsection{Small gerbes and ineffective isotropy data}
Besides establishing a theory of small sheaves and stacks over \'etale stacks, this paper unravels the mystery behind \emph{ineffective data} of \'etale stacks. Suppose that $G$ is a finite group acting on a manifold $M.$ The stacky-quotient $M//G$ is an \'etale differentiable stack, and in particular, an orbifold. Points of this stacky-quotient are the same as points of the naive quotient, that is, orbits of the action. These are precisely images of points of $M$ under the quotient map $M \to M//G.$ For a particular point $x \in M$, if $\left[x\right]$ denotes the point in $M//G$ which is its image, then $$Aut\left(\left[x\right]\right)\cong G_x.$$
If this action is not faithful, then  there exists a non-trivial kernel $K$ of the homomorphism
\begin{equation}\label{eq:actionmap}
\rho:G \to \mathit{Diff}\left(M\right).
\end{equation}
In this case, any element $k$ of $K$ acts trivially and is tagged-along as extra data in the automorphism group $$Aut\left(\left[x\right]\right)\cong G_x$$ of each point $\left[x\right]$ of the stack $M//G$. In fact, $$\bigcap\limits_{x \in M} G_x=Ker\left(\rho\right).$$ In particular, $\rho$ restricted to $Aut\left(\left[x\right]\right)$ becomes a homomorphism
\begin{equation}\label{eq:locact}
\rho_x:Aut\left(\left[x\right]\right) \to \mathit{Diff}\left(M\right)_x
\end{equation}
to the group of diffeomorphisms of $M$ which fix $x$. This homomorphism is injective for all $x$ if and only if the kernel of $\rho$ is trivial. The kernel of each of these homomorphisms is the ``inflated'' part of each automorphism group, and is called the \emph{ineffective isotropy group} of $\left[x\right]$. Up to the identification $$Aut\left(\left[x\right]\right)\cong G_x,$$ each of these ineffective isotropy groups is $K.$ This extra information is stripped away when considering the stacky-quotient $$M//\left(G/K\right),$$ that is to say, $M//\left(G/K\right)$ is the \emph{effective part} of $M//G.$

Hence, having a kernel to the action (\ref{eq:actionmap}) artificially inflates each automorphism group. As an extreme example, suppose the action $\rho$ is trivial, and consider the stacky quotient $M//G.$ It is the same thing as $M$ except each point $x$, has the group $G,$ rather the the trivial group, as an automorphism group. These automorphisms are somehow artificial, since the action $\rho$ sees nothing of $G.$ In this case, the entire automorphism group of each point is its ineffective isotropy group, and this is an example of a purely ineffective orbifold. Since these arguments are local, the situation when $\X$ is an \'etale stack formed by gluing together stacks of the form $M_\alpha//G_\alpha$ for actions of finite groups, i.e. a general orbifold, is completely analogous. 

For a more general \'etale stack, for example a stack of the form $M//G$ where $G$ is discrete but not finite, there is no such local action of the automorphisms groups as in (\ref{eq:locact}), but the situation can be mimicked at the level of germs. There exists a manifold $V$ and a (representable) local homeomorphism $$V \to \X$$ such that for every point $$x:* \to \X,$$
\begin{itemize}
\item[i)] the point $x$ factors (up to isomorphism) as $* \stackrel{\tilde x}{\longrightarrow} V\stackrel{p}{\longrightarrow} \X,$ and
\item[ii)] there is a canonical homomorphism $\tilde\rho_x: Aut\left(x\right) \to \mathit{Diff}_{\tilde x}\left(V\right),$
\end{itemize}
where $\mathit{Diff}_{\tilde x}\left(V\right)$ is the group of germs of locally defined diffeomorphisms of $V$ that fix $\tilde x.$ The kernel of each of these maps is again the inflated part of the  automorphism group. In the case where $\X$ is of the form $M//G$ for a finite group $G$ (or more generally, when $\X$ is an orbifold) the kernel of $\tilde \rho_x$ is the same as the kernel of (\ref{eq:locact}), for each $x$. In general, each $Ker\left(\tilde \rho_x\right)$ is called an \emph{ineffective isotropy group}. Unlike in the case of a global quotient $M//G,$ these groups need not be isomorphic for different points of the stack. However, these kernels may be killed off to obtain the so-called \emph{effective part} of the \'etale stack.

There is another way of trying to artificially inflate the automorphism groups, and this is through gerbes. As a starting example, if $M$ is a manifold, a gerbe over $M$ is a stack $\g$ over $M$ such that over each point $x$ of $M,$ the stalk $\g_x$ is equivalent to a group. From such a gerbe, one can construct an \'etale stack which looks just like $M$ except each point $x,$ now instead of having a trivial automorphism group, has (a group equivalent to) $\g_x$ as its automorphism group. This construction was alluded to in \cite{pres2}. One can use this construction to show that \'etale stacks whose effective parts are manifolds are the same thing as manifolds equipped with a gerbe. In this paper, we show that this result extends to general \'etale stacks, namely that any \'etale stack $\X$ encodes a small gerbe (in the sense of Definition \ref{dfn:smgb}) over its effective part $\Ef\left(\X\right)$, and moreover, every small gerbe over an effective \'etale stack $\Y$ arises uniquely from some \'etale stack $\Z$ whose effective part is equivalent to $\Y$. The construction of an \'etale stack  $\Z$ out of an effective \'etale stack $\Y$ equipped with a small gerbe $\g,$ is precisely the \'etal\'e realization of the gerbe $\g$. In such a situation, there  is a natural bijection between the points of $\Z$ and the points of $\Y$, the only difference being that points of $\Z$ have more automorphisms. For $x$ a point $\Z$, its ineffective isotropy group, i.e. the kernel of $$Aut\left(x\right) \to \mathit{Diff}_{\tilde x}\left(V\right),$$ is equivalent to the stalk $\g_x.$

\subsection{Organization and main results}
Section \ref{sec:small} starts by briefly recalling the basic definitions of \'etale stacks. It is then explained how to associate to any stack a canonical topos of small sheaves in a functorial way. In case the stack in question is presented by a spatial groupoid $\G$, this topos is equivalent to the classifying topos $\B\G$ as defined in \cite{cont}. It is then shown how the results of \cite{Dorette} imply that \'etale stacks are faithfully represented by their topos of small sheaves. Following \cite{pres}, we associate to every (atlas for an) \'etale stack a canonical small site of definition for its topos of small sheaves. We define small stacks to be stacks over this site. We then give an abstract description of a generalized \'etal\'e space construction in this setting, which we call the \'etal\'e realization construction.

As a demonstration of the abstract machinery developed in this section, we also prove a tangential (yet highly interesting) theorem to the effect that, in some sense, topological stacks subsume Grothendieck topoi, once we replace the role of topological spaces with that of locales:

\begin{thm}
There is a $2$-adjunction
$$\xymatrix@C=1.5cm{\Top \ar@<-0.5ex>[r]_{\S} & \mathfrak{LocSt} \ar@<-0.5ex>[l]_{\Sh},}$$
exhibiting the bicategory of topoi (with only invertible $2$-cells) as a reflective subbicategory of localic stacks (stacks coming from localic groupoids).
\end{thm}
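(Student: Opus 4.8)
The proof turns on the Joyal--Tierney representation theorem, which asserts that every Grothendieck topos is the classifying topos $\B\G$ of some localic groupoid $\G=\left(G_1 \rightrightarrows G_0\right)$, together with the fact recalled above that $\Sh$ sends the localic stack presented by such a $\G$ to $\B\G$. The shape of the argument is to exhibit $\Sh\colon \mathfrak{LocSt}\to\Top$ as the reflector: I will construct a homomorphism $\S\colon \Top\to\mathfrak{LocSt}$ realizing a topos as a localic stack, establish a biadjunction $\Sh\dashv\S$, and show that its counit $\Sh\,\S(\E)\to\E$ is an equivalence. Since in a biadjunction the right adjoint is fully faithful precisely when the counit is invertible, this simultaneously identifies $\S$ as a fully faithful inclusion and exhibits $\Top$ as a reflective subbicategory. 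Working with locales rather than topological spaces is essential, as it is exactly the localic setting in which Joyal--Tierney holds.

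To define $\S$ on objects, given a topos $\E$ I would choose an open surjection $p\colon \Sh(L)\twoheadrightarrow\E$ from a locale $L$ (available for any topos, e.g. from a generating site), form the localic groupoid $L\times_{\E}L\rightrightarrows L$, and set $\S(\E)$ to be the localic stack it presents; Joyal--Tierney then supplies the equivalence $\Sh\,\S(\E)=\B\left(L\times_{\E}L\rightrightarrows L\right)\simeq\E$, which serves as the counit $\varepsilon_{\E}$. The unit $\eta_{\X}\colon \X\to\S\,\Sh(\X)$ is the canonical ``completion'' comparison: if $\X$ is presented by $\G=\left(G_1\rightrightarrows G_0\right)$, the atlas $G_0\to\X$ furnishes a localic point $\Sh(G_0)\to\B\G=\Sh(\X)$, and the induced functor $G_1\to G_0\times_{\B\G}G_0$, which is the identity on objects, gives a map of localic stacks $\X\to\S\,\Sh(\X)$.

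The functoriality of $\S$ and the biadjunction are best organized around its universal property rather than a construction on morphisms made choice by choice. Concretely, for a localic stack $\X$ and a topos $\E$ I would prove the natural equivalence of hom-groupoids
$$\Hom_{\mathfrak{LocSt}}\!\left(\X,\S(\E)\right)\;\simeq\;\Hom_{\Top}\!\left(\Sh(\X),\E\right),$$
the comparison being ``apply $\Sh$, then compose with $\varepsilon_{\E}$.'' This is a descent statement: both sides are computed by descending along the atlases $G_0\to\X$ and $L\to\S(\E)$, and the comparison is governed by the classifying topos because open surjections are effective descent morphisms for the bicategory of localic stacks, which is the core of Joyal--Tierney's Galois theory. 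Granting this equivalence, the functoriality of $\S$ on $1$- and $2$-cells, the naturality of $\eta$ and $\varepsilon$, and the triangle identities all follow from the universal property up to the canonical coherence isomorphisms; and the restriction to invertible $2$-cells on the topos side is forced, since the hom-groupoids of $\mathfrak{LocSt}$ consist only of invertible transformations.

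I expect the main obstacle to be precisely the hom-equivalence above: showing that a morphism of localic stacks into the Joyal--Tierney realization $\S(\E)$ is no more and no less than a geometric morphism of classifying topoi. This is where the subtlety that $\B\left(-\right)$ fails to be fully faithful on all localic groupoids must be confronted---it becomes faithful only after passing to the completed, Morita-closed groupoids lying in the essential image of $\S$---so the heart of the work is the descent argument identifying maps into a complete groupoid with geometric morphisms. A secondary, genuinely bicategorical nuisance is the non-canonical choice of open surjection $L\twoheadrightarrow\E$; I would sidestep this by defining $\S$ only up to coherent equivalence and checking that any two choices yield canonically equivalent localic stacks, which is all a homomorphism of bicategories requires.
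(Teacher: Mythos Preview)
Your approach is correct in outline but differs substantially from the paper's route. The paper does \emph{not} construct $\S$ by hand from Joyal--Tierney presentations; instead it already has the adjunction $\Sh\dashv\S$ for free, as the weak left Kan extension of the functor $S\to\Top$, $X\mapsto\Sh(X)$, along the Yoneda embedding (restricted then from all stacks to localic stacks). With the adjunction in hand, reflectivity reduces to showing the counit is an equivalence, and this is deduced quickly: (i) by Moerdijk's results on classifying topoi, the unit $\X\to\S\,\Sh(\X)$ is an equivalence whenever $\X$ is presented by an \emph{\'etale-complete} localic groupoid; (ii) Joyal--Tierney gives that every topos is $\B\G$ for some localic $\G$; and (iii) every localic groupoid has an \'etale completion with the same classifying topos. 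Thus $\Sh$ restricted to \'etale-complete localic stacks is fully faithful and essentially surjective onto $\Top$, and the triangle identities force the counit to be an equivalence everywhere.

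The hard hom-equivalence you correctly identify as the main obstacle---$\Hom_{\mathfrak{LocSt}}(\X,\S(\E))\simeq\Hom_{\Top}(\Sh(\X),\E)$---is exactly what the paper absorbs into the citation of \'etale-completeness: Moerdijk's theorem that geometric morphisms into $\B\G$ for $\G$ \'etale-complete are computed by principal bundles (hence by maps of stacks) \emph{is} that descent statement. Your approach would reprove this from scratch, and would also have to manage the coherence of choosing open surjections, whereas the Kan-extension framework sidesteps both issues. What your route buys is self-containment and an explicit description of $\S(\E)$; what the paper's route buys is a two-line proof once the machinery of the section is in place.
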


Section \ref{sec:conc} aims at giving a concrete description of the abstract construction given in section \ref{sec:small}. For this, we choose to represent small stacks by groupoid objects in the topos of small sheaves. We then show how a generalized action-groupoid construction gives us a concrete model for the \'etal\'e realization of small stacks. As a consequence, we prove:

\begin{thm}
For any \'etale topological, differentiable, or localic stack $\X$, there is an adjoint-equivalence of $2$-categories
$$\xymatrix{\St\left(\X\right)  \ar@<-0.5ex>[r]_-{L}  & Et\left(\X\right)\ar@<-0.5ex>[l]_-{\Gamma}},$$
between small stacks over $\X$ and the $2$-category of \'etale stacks over $\X$ via a local homeomorphism.
\end{thm}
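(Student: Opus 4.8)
The plan is to establish the equivalence $\St(\X) \simeq Et(\X)$ by exhibiting $L$ and $\Gamma$ as an adjoint pair and then verifying that the unit and counit are equivalences. The strategy rests on the concrete models promised in the preamble to Section \ref{sec:conc}: I represent a small stack over $\X$ by a groupoid object $\mathbb{G}_\bullet$ in the topos $\Sh(\X)$ of small sheaves, and I build its \'etal\'e realization by a generalized action-groupoid construction. The first step is to recall, from the abstract development of Section \ref{sec:small}, that small sheaves over $\X$ correspond to local homeomorphisms (\'etale maps) into $\X$ via the classical \'etal\'e space construction, upgraded to the stacky setting. This gives the degreewise picture: a simplicial object in $\Sh(\X)$ is sent, level by level, to a simplicial space over $\X$ all of whose structure maps and face maps are local homeomorphisms, so that the realization is presented by an \'etale groupoid equipped with an \'etale map to $\X$.

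Next I would make $L$ and $\Gamma$ precise as functors and construct the adjunction. The functor $\Gamma$ assigns to an \'etale stack $\Y \to \X$ over $\X$ its \emph{stack of sections}: to an object $U$ of the small site of $\X$ one assigns the groupoid of sections of $\Y \to \X$ over $U$. One checks this is a small stack, i.e. satisfies descent for the topology on the small site, which is immediate because sections glue. The functor $L$ is the \'etal\'e realization just described. The adjunction $L \dashv \Gamma$ should be obtained from the universal property of the realization: a map out of $L(\Z)$ over $\X$ is the same as a compatible family of sections, i.e. a morphism of stacks $\Z \to \Gamma(\Y)$. I would package this as a natural equivalence of groupoids
\begin{equation*}
\Hom_{Et(\X)}\bigl(L(\Z),\Y\bigr) \;\simeq\; \Hom_{\St(\X)}\bigl(\Z,\Gamma(\Y)\bigr),
\end{equation*}
pseudonatural in both variables, which is exactly a $2$-adjunction.

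With the adjunction in hand, the remaining work is to show the unit $\eta_\Z : \Z \to \Gamma L(\Z)$ and the counit $\varepsilon_\Y : L\Gamma(\Y) \to \Y$ are equivalences. Both statements are local over $\X$, so I would reduce to the case where $\X$ is (presented by) a space, or more precisely argue stalkwise / over each chart of an atlas, where the generalized construction collapses to the ordinary \'etal\'e space story recalled in the introduction. For the unit, the key is that $\Z$, being a small \emph{stack}, already satisfies descent, so reconstructing it from the sections of its realization introduces no loss — this is the stacky analogue of a sheaf being recovered from its \'etal\'e space. For the counit, the point is that an arbitrary \'etale stack $\Y \to \X$ over $\X$ is built by gluing \'etale pieces, each of which is the realization of its own sheaf of sections; here one genuinely uses that $\Y \to \X$ is a \emph{local homeomorphism} (not merely representable), since this is what forces $\Y$ to be locally a disjoint union of opens of $\X$ and hence recoverable from section data.

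The main obstacle, I expect, is the counit: verifying that \emph{every} \'etale stack over $\X$ arises as a realization, which amounts to a faithful-flatness / effective-descent argument for the action-groupoid model. Concretely, one must show the comparison map $L\Gamma(\Y) \to \Y$ is an equivalence of \'etale stacks rather than merely a fully faithful or essentially surjective map — and because these are objects of a bicategory, one has to control the $2$-morphisms (the automorphisms of sections) as well as the objects, checking that the groupoid of sections captures the full isotropy of $\Y$. Managing this coherently, while keeping the groupoid presentations under control so that the construction descends from an atlas to $\X$ itself, is where the essential difficulty lies; the adjunction and the unit should by comparison be formal consequences of descent and the sheaf-level \'etal\'e equivalence.
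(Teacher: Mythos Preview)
Your overall architecture---set up the adjunction $L \dashv \Gamma$, then verify the unit and counit are equivalences---matches the paper's, and your treatment of the unit is essentially right: the paper proves directly that $\Gamma \circ L$ is naturally equivalent to the stackification functor, so on the full subcategory $\St(\X)$ of stacks the unit is an equivalence.

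The substantive divergence is in the counit. The paper does \emph{not} argue by reducing to the case where $\X$ is a space or by a stalkwise check. Instead it works directly at the level of groupoid presentations: given $\varphi:\G \to \h$ with $\varphi_0$ \'etale, it builds a groupoid object $P(\varphi)$ in $\B\h$ (Section~\ref{subsec:action}) and exhibits an explicit natural equivalence $\varepsilon : \h \ltimes P \Rightarrow \mathrm{id}$ in $(S\text{-}Gpd)/\h$ (Lemma~\ref{lem:eps}). Combined with the identification $\bar L \circ [\,\cdot\,]_{\B\h} \simeq Y \circ (\h\ltimes)$ (Theorem~\ref{thm:finn}) and the later identification $\bar\Gamma([\varphi]) \simeq [P(\varphi)]_{\B\h}$ (Theorem~\ref{thm:secs}), this \emph{is} the counit, built by hand. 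In particular, the paper never invokes a ``local over $\X$'' principle; it shows essential surjectivity of $L$ onto $Et(\X)$ directly.

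Your proposed localization route has a real gap. First, ``reduce to the case where $\X$ is a space'' does not land you in the classical \'etal\'e-space story: even over a genuine space $X$, the objects of $Et(X)$ are \'etale \emph{stacks} $\Y \to X$, not spaces, so the classical equivalence $\Sh(X)\simeq Et(X)$ (for sheaves and \emph{representable} local homeomorphisms) does not apply; you still owe the stacky case. Second, to make the reduction you need that both $L$ and $\Gamma$ commute with pullback along the atlas $a:\h_0 \to \X$. The paper does prove $f^*\bar L \simeq \bar L f^*$ (Theorem~\ref{thm:inv}), but the analogous statement for $\Gamma$ is not available a priori---$\Gamma$ is a right adjoint while $f^*$ is a left adjoint---and using the equivalence $\bar L \simeq \bar\Gamma^{-1}$ to deduce it would be circular. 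Third, a purely stalkwise argument is unavailable in the localic case (locales need not have enough points), yet the theorem is asserted there as well. So the counit step as you have sketched it would not close; the paper's explicit construction of $\varepsilon$ via $P$ and $\h\ltimes$ is where the genuine content lies.
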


Here $L$ is the \'etal\'e realization functor, and $\Gamma$ is the ``stack of sections'' functor. We also determine which local homeomorphisms over $\X$ correspond to sheaves:

\begin{thm}
A local homeomorphism $f:\Z \to \X$ over an \'etale stack $\X$ is equivalent to the \'etal\'e realization of a small sheaf $F$ over $\X$ if and only if it is a representable map.
\end{thm}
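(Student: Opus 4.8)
The plan is to exploit the adjoint-equivalence $L \dashv \Gamma$ between $\St(\X)$ and $Et(\X)$ established in the previous theorem. Since $\Gamma$ is a quasi-inverse to the \'etal\'e realization $L$, we have $f \simeq L(\Gamma(f))$ for every local homeomorphism $f:\Z \to \X$, so $f$ is equivalent to $L(F)$ for some small sheaf $F$ if and only if $\Gamma(f)$ is (equivalent to) a sheaf, i.e. a $0$-truncated small stack. The theorem therefore reduces to the intrinsic assertion that, for a small stack $\S$ over $\X$, the realization $L(\S)$ is a representable local homeomorphism if and only if $\S$ is a sheaf; both implications of the theorem are then recovered by taking $\S = \Gamma(f)$.

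Both conditions at issue are local on $\X$, so I would first reduce to the case of a space. Fix an atlas $p:X \to \X$, presenting $\X$ by an \'etale groupoid $\G \rightrightarrows X$. Under the identification of small stacks over $\X$ with $\G$-equivariant stacks over $X$, restriction along $p$ sends $\S$ to a small stack $p^*\S$ over the space $X$, and $\S$ is a sheaf precisely when $p^*\S$ is, since being $0$-truncated is a local property and $p$ is a cover. On the geometric side, representability is local on the target, so $f = L(\S)$ is representable if and only if the pullback $L(\S) \times_\X X$ is a space. The compatibility I need here is that \'etal\'e realization commutes with this base change, namely $L(\S) \times_\X X \simeq L_X(p^*\S)$, where $L_X$ denotes the realization over the space $X$. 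Granting this, $f$ is representable if and only if $L_X(p^*\S)$ is a space.

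It then remains to prove the statement over a space: for a small stack $\E$ over a space $X$, the realization $L_X(\E) \to X$ is (equivalent to) a space if and only if $\E$ is a sheaf. The direction ``sheaf $\Rightarrow$ space'' is exactly the classical \'etal\'e-space construction recalled in the introduction, which produces a genuine local homeomorphism of spaces. For the converse I would argue at the level of stalks: the fiber of $L_X(\E)$ over a point $x \in X$ recovers the stalk $\E_x$ as a groupoid, and $L_X(\E)$ is a space exactly when every such fiber is discrete, i.e. when every $\E_x$ is equivalent to a set; since a small stack is a sheaf if and only if all its stalks are $0$-truncated, this yields ``space $\Rightarrow$ sheaf.'' Chaining these equivalences through the reduction of the previous paragraph proves the theorem.

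The main obstacle I anticipate is the base-change compatibility $L(\S) \times_\X X \simeq L_X(p^*\S)$. The realization over the stack $\X$ is built from the small site attached to the atlas via the action-groupoid model of Section \ref{sec:conc}, whereas $L_X$ is the ordinary construction over the space $X$; one must check that forming the action-groupoid presentation and then pulling back along $p$ agrees, up to canonical equivalence, with first restricting the equivariant data and then realizing. I would settle this by unwinding that concrete model and exhibiting the two presentations as Morita-equivalent \'etale groupoids over $X$. The stalk identification in the final step is comparatively formal, but one should note that the realization computes its sections as an associated sheaf, so the identification of the fiber with $\E_x$ uses that stalks are insensitive to sheafification.
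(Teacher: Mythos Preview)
Your approach is essentially the paper's: reduce along the atlas and use the base-change compatibility of \'etal\'e realization with pullback. The compatibility you flag as the ``main obstacle'' is exactly Theorem~\ref{thm:inv}, already proved in the paper, so nothing remains to unwind there. One minor difference in the converse over a space: the paper argues directly that if the pullback is a space $E \to \h_0$ then $a^*\W \simeq \bar\Gamma(E \to \h_0)$ is its sheaf of sections, hence set-valued, and since by definition $a^*\W(U) = \W(m_U)$ for every $U$ in $\sit(\h)$, this forces $\W$ itself to be a sheaf; your stalk argument reaches the same conclusion but tacitly assumes enough points, which is fine for topological spaces and manifolds but would need care if $S$ is locales.
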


Section \ref{sec:section} provides a concrete model for the ``stack of sections'' functor $\Gamma$ in terms of groupoid objects in the topos of small sheaves.

In section \ref{sec:effective}, we introduce the concept of an effective \'etale stack and show how to associate to every \'etale stack $\X$ an effective \'etale stack $\Ef\left(\X\right)$, which we call its effective part. Although this construction is not functorial with respect to all maps, we show that it is functorial with respect to any category of open maps which is \'etale invariant (see Definition \ref{dfn:local}). Examples of open \'etale invariant classes of maps include open maps, local homeomorphisms, and submersions.

The subject of section \ref{sec:gerbe} is the classification of small gerbes. For $\X$ an effective \'etale stack, the answer is quite nice:

\begin{thm}
For an effective \'etale stack $\X$, a local homeomorphism $f:\g \to \X$ is equivalent to the \'etal\'e realization of a small gerbe over $\X$ if and only if $$\Ef\left(f\right):\Ef\left(\g\right) \to \Ef\left(\X\right) \simeq \X$$ is an equivalence.
\end{thm}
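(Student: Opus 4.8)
The plan is to reduce everything to small stacks via the equivalence $\St\left(\X\right) \simeq Et\left(\X\right)$ of the second displayed theorem, and then to detect the gerbe condition through the fibres of $f$ and their interaction with the effective part. First I would invoke that equivalence to write $f \simeq L\left(\S\right)$ for the small stack $\S := \Gamma\left(f\right)$, so that the claim becomes: $\S$ is a small gerbe if and only if $\Ef\left(f\right)$ is an equivalence. Recall that a small gerbe (Definition \ref{dfn:smgb}) is a small stack all of whose stalks $\S_x$ are nonempty and connected groupoids, and that, exactly as for the classical \'etal\'e space, the construction $L$ is built so that sections over an open recover $\S$ and hence the fibre of $L\left(\S\right) \to \X$ over a point $x:* \to \X$ is equivalent to the stalk $\S_x$. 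Thus the gerbe condition reads: every fibre of $f$ is a nonempty connected groupoid.

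The technical heart of the argument is the following local computation, which I expect to be the main obstacle since it is where effectivity of $\X$ is genuinely used. Fix a point $y:* \to \g$ with $x := f\left(y\right)$. Because $f$ is a local homeomorphism I can choose compatible atlases $W \to \g$ and $V \to \X$ with $W \to V$ a local homeomorphism of spaces, and a lift $\tilde y \in W$ over $\tilde x \in V$. Any $\phi \in Aut\left(y\right)$ determines a germ $\tilde\rho_y\left(\phi\right) \in \mathit{Diff}_{\tilde y}\left(W\right)$, and since $W \to V$ is a local isomorphism this germ is trivial precisely when $\tilde\rho_x\left(f\left(\phi\right)\right) \in \mathit{Diff}_{\tilde x}\left(V\right)$ is trivial. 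As $\X$ is effective, $\tilde\rho_x$ is injective, so $\tilde\rho_x\left(f\left(\phi\right)\right)$ is trivial if and only if $f\left(\phi\right)=\mathrm{id}$. Hence the ineffective isotropy group of $y$ satisfies
$$Ker\left(\tilde\rho_y\right) = Ker\left(Aut\left(y\right) \to Aut\left(x\right)\right) = Aut_{\S_x}\left(y\right),$$
the last group being the fibre automorphisms. The care here lies in checking that germs of automorphisms are computed identically upstairs and downstairs, which is exactly what the \'etale hypothesis on $f$ guarantees.

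With this in hand I would compute $\Ef\left(f\right)$ objectwise. The effective part replaces the atlas groupoid by its associated germ (effect) groupoid without altering the object space, so $\Ef\left(\g\right)$ has the same points as $\g$ and the same partition into isomorphism classes (an iso in the effect groupoid is a germ of an actual arrow, so it already joins the two points in $\g$), and $Aut_{\Ef\left(\g\right)}\left(y\right) = Aut\left(y\right)/Ker\left(\tilde\rho_y\right) = \mathrm{Im}\left(Aut\left(y\right) \to Aut\left(x\right)\right)$ by the lemma. Since $\X$ is effective, $\Ef\left(f\right)$ identifies $Aut_{\Ef\left(\g\right)}\left(y\right)$ with this image inside $Aut\left(x\right)=Aut_{\Ef\left(\X\right)}\left(x\right)$, and is therefore automatically faithful. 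Viewing $\Ef\left(f\right)$ as a functor of groupoids over $\X$, it is an equivalence if and only if it is a bijection on isomorphism classes and an isomorphism on all automorphism groups; this unwinds to the three conditions: (i) every fibre is nonempty (essential surjectivity on $\pi_0$), (ii) every fibre is connected (injectivity on $\pi_0$), and (iii) each $Aut\left(y\right) \to Aut\left(x\right)$ is surjective (fullness on automorphisms).

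Finally I would observe that (iii) is forced by (ii): in the local action-groupoid model of $\g$ over $\X$, the image of $Aut\left(y\right) \to Aut\left(x\right)$ is the stabilizer of $\left[y\right]$ under the natural $Aut\left(x\right)$-action on $\pi_0\left(\S_x\right)$, and when $\S_x$ is connected this set is a point, so the stabilizer is all of $Aut\left(x\right)$. Consequently conditions (i)--(iii) together are equivalent to "$\S_x$ is nonempty and connected for every $x$", that is, to $\S$ being a small gerbe; this settles both implications at once. The localic and differentiable cases go through verbatim, as germs of local homeomorphisms and the effect groupoid are available in all three settings.
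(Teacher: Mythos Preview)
Your strategy is genuinely different from the paper's.  The paper never argues through stalks or point-automorphisms: it works entirely with groupoid presentations.  For the forward direction it represents the gerbe by a bouquet $\bq$ over an effective $\h$ and proves, via an explicit germ computation (Lemma~\ref{lem:situation}), that $\Ef\left(\h\ltimes\bq\right)\cong\h_{\mu_0}$; since $\h_{\mu_0}\to\h$ is a Morita equivalence this gives $\Ef(f)\simeq\mathrm{id}$.  For the converse it shows directly that for any \'etale $\G$ the groupoid $P(\iota_\G)$ is a bouquet over $\Ef(\G)$, so $\iota_\G$ already presents a gerbe.  Your approach instead leverages the stalkwise characterisation of gerbes and reduces the statement to a comparison of isotropy data; the key observation that for \'etale $f$ over an \emph{effective} base the ineffective isotropy at $y$ coincides with the vertical automorphisms $\ker\!\left(Aut(y)\to Aut(x)\right)$ is exactly the conceptual content behind the paper's Lemma~\ref{lem:situation}, stated more invariantly.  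Each approach has its merits: the paper's is self-contained and yields the explicit isomorphism $\Ef(\h\ltimes\bq)\cong\h_{\mu_0}$ used later, while yours explains transparently \emph{why} effectivity of $\X$ is needed and why fullness must be added in the non-effective case.

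There is one genuine slip in your unwinding step.  You write that ``$\Ef(f)$ is an equivalence $\Leftrightarrow$ bijection on $\pi_0$ and isomorphism on automorphisms'' and then identify injectivity on $\pi_0$ with ``every fibre of $f$ is connected''.  These are not the same: $\pi_0$-injectivity of $\Ef(f)$ only says that $f(y)\cong f(y')\Rightarrow y\cong y'$ in $\g$, whereas connectedness of the stalk $\S_x$ demands that \emph{every} arrow $f(y)\to f(y')$ lift to $\g$, which is strictly stronger.  Concretely, your orbit--stabiliser argument for ``(iii) follows from (ii)'' uses that $\pi_0(\S_x)$ is a point, i.e.\ fibre-connectedness, not merely $\pi_0$-injectivity; so as written the implication ``$\Ef(f)$ equivalence $\Rightarrow$ gerbe'' has a gap.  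The fix is short: from $\pi_0$-injectivity pick some $\alpha:y\to y'$, and then use the surjectivity of $Aut(y)\to Aut(x)$ (your (iii), which \emph{does} follow from the isomorphism-on-automorphisms hypothesis) to correct $\alpha$ so that $f(\alpha)=\beta'\beta^{-1}$.  In other words, ``$\pi_0$-injective $+$ (iii) $\Leftrightarrow$ fibres connected'', and this is the missing lemma that closes your argument.  You should also make explicit that checking $\Ef(f)$ is an equivalence on geometric points suffices: this is because $\Ef(f)$ is itself a local homeomorphism over $\X$, hence corresponds under $\bar L\dashv\bar\Gamma$ to a small stack, and small stacks over an \'etendue are detected by their stalks.
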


For a general \'etale stack, the theorem is as follows:

\begin{thm}
For an \'etale stack $\X$, a local homeomorphism $f:\g \to \X$ is equivalent to the \'etal\'e realization of a small gerbe over $\X$ if and only if
\begin{itemize}
\item[i)] $\Ef\left(f\right):\Ef\left(\g\right) \to \Ef\left(\X\right) \simeq \X$ is an equivalence, and
\item[ii)] for every space $T$, the induced functor $\g\left(T\right) \to \X\left(T\right)$ is full.
\end{itemize}
\end{thm}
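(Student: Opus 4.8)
The plan is to route everything through the equivalence $\St\left(\X\right) \simeq Et\left(\X\right)$ of the preceding theorem: writing $F = \Gamma\left(f\right)$, the map $f$ is the \'etal\'e realization of a small gerbe precisely when $F$ is a small gerbe, that is (by the definition of a small gerbe) precisely when every stalk $F_x$ is a nonempty connected groupoid. Since the stalk of $\Gamma\left(f\right)$ at a point $x$ of $\X$ is computed as the fibre groupoid $\g_x := \g \times_{\X} *$ of $f$ over $x$, the whole statement reduces to proving
$$\left(\text{every } \g_x \text{ is nonempty and connected}\right)\quad\Longleftrightarrow\quad \text{(i) and (ii)}.$$
I will prove the two implications using this fibre description, and then extract the conceptual content by reducing to the already established effective case.

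First the direction that uses the fibres directly. For necessity of (i): if $f$ is a gerbe then each $\g_x$ is nonempty, so $f$ is essentially surjective on points, whence $\Ef\left(f\right)$ is essentially surjective; and since $\Ef\left(f\right)$ is then a local homeomorphism between \emph{effective} \'etale stacks which is moreover bijective on points (connectedness of the fibres makes it injective on isomorphism classes of points), it is an equivalence --- here effectiveness of both sides is essential, as $B K \to *$ shows that a point-bijective local homeomorphism need not be an equivalence without it. Conversely, assuming (i) and (ii): condition (i) makes $f$ bijective on points, so each $\g_x$ is nonempty; and given two objects of $\g_x$, i.e. points $y,y'$ of $\g$ with chosen isomorphisms $f\left(y\right) \cong x \cong f\left(y'\right)$, the composite isomorphism $f\left(y\right) \to f\left(y'\right)$ in $\X\left(*\right)$ lifts by the fullness hypothesis (ii) (at $T = *$) to an isomorphism $y \to y'$ in $\g\left(*\right)$ compatible with the data, so $\g_x$ is connected.

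To see \emph{why} (ii) is the right supplementary condition I would, granting (i), reduce to the effective case. The canonical maps $\g \to \Ef\left(\g\right)$ and $\X \to \Ef\left(\X\right)$ are themselves local homeomorphisms realizing the ineffective-isotropy gerbes of $\g$ and of $\X$ (this is exactly the content established for the effective part in this section). Setting $Y := \Ef\left(\X\right) \simeq \Ef\left(\g\right)$, naturality of $\Ef$ produces a commuting square exhibiting $f$ as a morphism over $Y$ between the local homeomorphisms $\g \to Y$ and $\X \to Y$; under the equivalence over $Y$ these correspond to gerbes $\mathcal{A},\mathcal{B}$ over $Y$ and a morphism of gerbes $\theta : \mathcal{A} \to \mathcal{B}$. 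Because $\X \to Y$ is bijective on points, the fibre $\g_x$ of $f$ over the point $x$ above $y \in Y$ is the homotopy fibre of the induced map of stalks $B\mathcal{A}_y \to B\mathcal{B}_y$, and such a homotopy fibre is nonempty and connected exactly when $\theta_y : \mathcal{A}_y \to \mathcal{B}_y$ is surjective. Hence $f$ is the realization of a gerbe if and only if $\theta$ is surjective on every stalk. As a sanity check this recovers the effective-case theorem: when $\X$ is effective, $\mathcal{B}$ is trivial, every $\theta_y$ is automatically surjective, and (ii) becomes vacuous.

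It remains to match ``$\theta_y$ surjective for all $y$'' with the fullness condition (ii), and this is the step I expect to be the main obstacle. Surjectivity of $\theta_y$ is surjectivity of $f$ on the automorphism group of each point: since $f$ is a local homeomorphism it identifies the local models, hence the germ-of-diffeomorphism groups into which the effective automorphisms inject, so granting (i) the map $\mathrm{Aut}_\g\left(y\right) \to \mathrm{Aut}_\X\left(f\left(y\right)\right)$ is surjective iff it is surjective on the ineffective parts $\mathcal{A}_y \to \mathcal{B}_y$. The genuine work is to promote this pointwise statement to fullness of $\g\left(T\right) \to \X\left(T\right)$ for \emph{every} test object $T$: given $y,y' \in \g\left(T\right)$ and $\phi : f\left(y\right) \to f\left(y'\right)$, stalkwise surjectivity yields lifts locally on $T$, and one must glue these to a global lift. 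The gluing is a descent argument using that $\g$ and $\X$ are stacks and that $f$ is a local homeomorphism, so that the ambiguity in local lifts is controlled by the kernel gerbe $\ker\theta$; carrying this out in both directions --- so that fullness for all $T$ is seen to be equivalent to, and not merely implied by, stalkwise surjectivity --- is the technical heart of the proof.
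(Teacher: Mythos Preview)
Your stalkwise strategy is sound in outline, and your converse direction (assuming (i) and (ii), the fibres are nonempty and connected) is correct. But there are two genuine gaps.

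\textbf{The claim for (i) is not justified as stated.} You assert that a local homeomorphism between effective \'etale stacks which is bijective on isomorphism classes of points is automatically an equivalence. This is false without further input: take $\G \hookrightarrow \Ha\left(\mathbb{R}\right)$ the subgroupoid of orientation-preserving germs. Both groupoids are effective, the inclusion is a local homeomorphism, both stacks have a single isomorphism class of points, yet the map is not an equivalence. In the long exact sequence language, bijectivity on $\pi_0$ is not enough; you also need surjectivity and injectivity on each $\pi_1$, i.e., you need to know $\Ef\left(f\right)$ is full and faithful on automorphism groups. These can in fact be established (fullness descends from the connected fibres of $f$; faithfulness follows because germs in the source are detected via the local homeomorphism $\varphi_0$), but you have not done so, and doing so is essentially the content of the effective-case theorem you are trying to invoke.

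\textbf{The fullness direction is harder than you need it to be.} You correctly identify the descent problem of promoting pointwise surjectivity on automorphisms to fullness on every $T$, but the paper avoids this entirely. It represents the gerbe by a bouquet $\bq$ and observes that $\theta_{\bq}:\h \ltimes \bq \to \h$ is visibly full, since on arrows it is the projection $\h_1 \times_{\h_0} \bq_1 \to \h_1$; fullness passes to the stackification. This one-line argument replaces your entire third paragraph.

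More broadly, the paper's architecture is different from yours: it first proves the key decomposition theorem that $\rho:\g\to\X$ is a gerbe if and only if $\iota_\X \circ \rho:\g\to\Ef\left(\X\right)$ is a gerbe \emph{and} $\rho$ is full (this is done by explicit surjectivity checks at the groupoid level), and then the present statement drops out in a few lines by combining that with the already-established effective case. Your reduction to the effective case via the square of $\iota$-maps gestures at the same structure, but you would do better to isolate and prove that decomposition directly rather than attempting to route the whole argument through stalks.
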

We also prove in this section that the \'etal\'e realization of any small gerbe over an \'etale differentiable stack is, in particular, a differentiable gerbe in the sense of \cite{diffg}.

In section \ref{sec:groth}, we introduce the $2$-category of gerbed effective \'etale stacks. The objects of this $2$-category are effective \'etale stacks equipped with a small gerbe. We then show that when restricting to open \'etale invariant maps, this $2$-category is equivalent to \'etale stacks. In particular, we prove:

\begin{cor}
There is an equivalence of $2$-categories between gerbed effective \'etale differentiable stacks and submersions, $\gets_{subm}$, and the $2$-category of \'etale differentiable stacks and submersions, $\Eft_{subm}.$
\end{cor}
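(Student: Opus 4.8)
The plan is to obtain this corollary as the differentiable, submersion-specific instance of the general equivalence between gerbed effective \'etale stacks and \'etale stacks proved earlier in the section, so I would first exhibit the two quasi-inverse $2$-functors and then verify that they restrict correctly to the class of submersions. The key structural inputs are the adjoint-equivalence $\St\left(\X\right)\simeq Et\left(\X\right)$ between small stacks and local homeomorphisms, the characterization of which local homeomorphisms realize small gerbes, and the fact that submersions form an open \'etale invariant class so that the effective-part construction is functorial along them.

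First I would define $L\colon\gets_{subm}\to\Eft_{subm}$ on objects by \'etal\'e realization: a gerbed effective \'etale differentiable stack $\left(\Y,\g\right)$ is sent to the \'etale stack $\underline L\left(\g\right)$ equipped with the local homeomorphism $L\left(\g\right)\colon\underline L\left(\g\right)\to\Y$. The gerbe characterization guarantees both that $\underline L\left(\g\right)$ is a genuine \'etale differentiable stack and that $\Ef\left(L\left(\g\right)\right)\colon\Ef\left(\underline L\left(\g\right)\right)\to\Y$ is an equivalence, so the assignment lands in $\Eft_{subm}$. In the opposite direction I would define a functor sending an \'etale stack $\X$ to $\left(\Ef\left(\X\right),\Gamma\left(q_\X\right)\right)$, where $q_\X\colon\X\to\Ef\left(\X\right)$ is the effective quotient (a local homeomorphism) and $\Gamma$ is the stack-of-sections functor; the gerbe characterization again certifies that $\Gamma\left(q_\X\right)$ is a small gerbe over $\Ef\left(\X\right)$.

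The role of submersions enters in making these assignments $2$-functorial on $1$- and $2$-cells. Since submersions form an open \'etale invariant class, the effective-part construction is functorial on them, so a submersion $f\colon\X\to\X'$ yields a submersion $\Ef\left(f\right)\colon\Ef\left(\X\right)\to\Ef\left(\X'\right)$ together with an induced comparison of the associated section-gerbes; symmetrically, I would check that \'etal\'e realization carries a morphism of gerbed stacks lying over a submersion to a submersion of \'etale stacks, using that local homeomorphisms and submersions are both \'etale-local and that realization commutes with the relevant base change, i.e. $\underline L\left(\Ef\left(f\right)^{*}\g'\right)\simeq\Ef\left(\X\right)\times_{\Ef\left(\X'\right)}\underline L\left(\g'\right)$. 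I would then assemble the quasi-inverse equivalences: on one side $L\left(\Ef\left(\X\right),\Gamma\left(q_\X\right)\right)=\underline L\left(\Gamma\left(q_\X\right)\right)\simeq\X$ is the counit of the adjoint-equivalence $\St\left(\Ef\left(\X\right)\right)\simeq Et\left(\Ef\left(\X\right)\right)$ applied to $q_\X$, while on the other $\left(\Ef\left(\underline L\left(\g\right)\right),\,\Gamma\left(\underline L\left(\g\right)\to\Ef\left(\underline L\left(\g\right)\right)\right)\right)\simeq\left(\Y,\g\right)$ combines the equivalence $\Ef\left(\underline L\left(\g\right)\right)\simeq\Y$ with the unit of the same adjoint-equivalence, after identifying the map $L\left(\g\right)$ with the effective quotient.

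I expect the main obstacle to be the last piece of coherence bookkeeping at the level of $2$-cells: confirming that $2$-morphisms between submersions correspond bijectively and coherently to $2$-morphisms of gerbed maps, so that one genuinely obtains an equivalence of $2$-categories rather than a mere bijection on objects. This, together with the verification that the gerbe-comparison attached to a submersion realizes back to a submersion, is where the argument is most delicate; both, however, are dictated by the $2$-adjunction data underlying $\St\left(\X\right)\simeq Et\left(\X\right)$ and by the stability of submersions under the base changes above.
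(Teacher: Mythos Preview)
Your proposal is correct, but note that in the paper this corollary carries no separate proof: it is an immediate specialization of the preceding general theorem (the equivalence $\gets_P\simeq\Et_P$ for any open \'etale invariant class $P$) to $P=$ submersions, which the paper has already listed as an example of such a class. What you have written is essentially a reconstruction of that theorem's proof in this special case; your two functors are exactly the paper's $\Xi$ and $\Theta$, modulo the identification of a small gerbe with its \'etal\'e realization as an effective local equivalence.
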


\vspace{0.2in}\noindent{\bf Acknowledgment:} I would like to thank Andr\'e Henriques for his help and encouragement during the beginning of this project, as well as his patience and useful comments concerning the proof-reading of the first draft of this article. I would also like to thank my Ph.D. advisor, Ieke Moerdijk, both for his guidance and for useful mathematical discussion. I am also grateful to Urs Schreiber for taking the time to brainstorm with me about how to prove a technical lemma. Furthermore, I extend thanks to Dorette Pronk for her fruitful e-mail correspondence. Finally, I would like to thank Philip Hackney for his useful comments about the introduction of this paper. The first draft of this article (under the title \emph{Shall Sheaves Stacks and Gerbes over \'Etale Topological and Differentiable Stacks}) was written during my studies as a Ph.D. student at Utrecht University. I am currently (as of the date of this article) a postdoc at the Max Planck Institute for Mathematics, in Bonn, Germany.

 \newpage

\section{Small Sheaves and Stacks over \'Etale Stacks}\label{sec:small}

\subsection{Conventions and notations concerning stacks}
Throughout this article, $S$ shall denote a fixed category whose objects we shall call ``spaces''. The category $S$ shall always be assumed to be either (sober) topological spaces, smooth manifolds, or locales unless otherwise noted. The machinery developed should apply to a larger class of spaces, but a more systematic treatment will be given in a subsequent paper. We will employ a minimalist definition of smooth manifold in that manifolds will neither be assumed paracompact nor Hausdorff. This is done in order to consider the \'etal\'e space (espace \'etal\'e) of a sheaf over a manifold as a manifold itself. We will also sometimes argue point-set theoretically about objects of $S,$ however, in all such cases, the arguments can be extended to locales in the usual way. In this article, the term (local) homeomorphism will mean (local) \emph{diffeo}morphism if $S$ is the category of manifolds. Similarly, for terms such as \emph{continuous}.

\begin{dfn}
A \textbf{topological groupoid} is a groupoid object in $\mathbb{TOP}$, the category of topological spaces. Explicitly, it is a diagram

$$\xymatrix{ {\G_1 \times _{\G_0}\G_1} \ar[r]^(0.6){m} & \G_1 \ar@<+.7ex>^s[r]\ar@<-.7ex>_t[r] \ar@(ur,ul)[]_{\hat i}  & \G_0 \ar@/^1.65pc/^{\mathbb{1}} [l] }$$
of topological spaces and continuous maps satisfying the usual axioms. Forgetting the topological structure (i.e. applying the forgetful functor from $\mathbb{TOP}$ to $\Set$), one obtains an ordinary (small) groupoid. Throughout this article, we shall denote the source and target maps of a groupoid by $s$ and $t$ respectively. Similarly, a $\textbf{localic groupoid}$ is a groupoid object in locales.

A \textbf{Lie groupoid} is a groupoid object in smooth manifolds such that the source and target maps are submersions.

Topological groupoids, localic groupoids, and Lie groupoids form $2$-categories with continuous functors as $1$-morphisms and continuous natural transformations as $2$-morphisms, respectively. (Recall that when $S$ is smooth manifolds, by continuous, we mean smooth.) We will denote any of these $2$-category by $S-Gpd$, and call an object thereof an $S$-groupoid.

\end{dfn}

\begin{rmk}
Traditionally, Lie groupoids are required to have a Hausdorff object space, however, as every manifold is locally Hausdorff, any Lie groupoid in the sense we defined is Morita equivalent to one that meets this requirement. (See Definition \ref{dfn:Morita}.)
\end{rmk}

Consider the $2$-category $Gpd^{S^{op}}$\ of weak presheaves in groupoids over $S$, that is contravariant (possibly weak) $2$-functors from the category $S$ into the 2-category of (essentially small) groupoids $Gpd$\footnote{Technically speaking, when $S$ is topological spaces or locales, we should restrict ourselves to a Grothendieck universe of such spaces. If $S$ is smooth manifolds, we may avoid this by replacing $\Sts$ with stacks on Cartesian manifolds, i.e., manifolds of the form $\mathbb{R}^n$, which forms a small site.}.

We recall the $2$-Yoneda Lemma:

\begin{lem}\cite{FGA}
If $C$ is an object of a category $\C$ and $\X$ a weak presheaf in $Gpd^{\C^{op}}$, then there is a natural equivalence of groupoids
$$\Hom_{Gpd^{\C^{op}}}\left(C,\X\right) \simeq \X\left(C\right).$$
\end{lem}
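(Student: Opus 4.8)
The plan is to run the usual Yoneda argument, upgraded so as to keep track of the coherence data carried by the weak presheaf $\X$. Unwinding the definitions, an object of $\Hom_{Gpd^{\C^{op}}}\left(C,\X\right)$ is a pseudonatural transformation $\eta$ from the representable presheaf $h=\Hom_{\C}\left(-,C\right)$ (valued in discrete groupoids) to $\X$, and a morphism is a modification. Explicitly, $\eta$ consists of functors $\eta_D:\Hom_{\C}\left(D,C\right)\to\X\left(D\right)$ for each object $D$, together with, for each arrow $g:D'\to D$, a natural isomorphism $\sigma_g^{\eta}:\X\left(g\right)\circ\eta_D\Rightarrow\eta_{D'}\circ g^{*}$, subject to coherence with respect to composition and identities in $\C$.

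First I would define the evaluation functor $\mathrm{ev}:\Hom_{Gpd^{\C^{op}}}\left(C,\X\right)\to\X\left(C\right)$ by $\eta\mapsto\eta_C\left(\mathrm{id}_C\right)$ on objects, and by sending a modification $m$ to the component $m_C$ evaluated at $\mathrm{id}_C$. Conversely I would define $\Phi:\X\left(C\right)\to\Hom_{Gpd^{\C^{op}}}\left(C,\X\right)$ by sending $\xi\in\X\left(C\right)$ to the pseudonatural transformation $\eta^{\xi}$ whose component at $D$ is the functor $f\mapsto\X\left(f\right)\left(\xi\right)$ for $f:D\to C$. The structure isomorphisms $\sigma_g^{\eta^{\xi}}$ are supplied directly by the compositors of $\X$: for $g:D'\to D$ the required comparison $\X\left(g\right)\left(\X\left(f\right)\left(\xi\right)\right)\cong\X\left(fg\right)\left(\xi\right)$ is exactly the coherence isomorphism witnessing $\X\left(g\right)\circ\X\left(f\right)\cong\X\left(fg\right)$.

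Then I would check that $\mathrm{ev}$ and $\Phi$ are mutually quasi-inverse. In one direction, $\mathrm{ev}\left(\Phi\left(\xi\right)\right)=\X\left(\mathrm{id}_C\right)\left(\xi\right)$, which is canonically isomorphic to $\xi$ via the unitor of $\X$, naturally in $\xi$. In the other direction, for a pseudonatural transformation $\eta$ with $\xi=\eta_C\left(\mathrm{id}_C\right)$, the structure cell $\sigma_f^{\eta}$ at the object $f:D\to C$ (using $f^{*}\mathrm{id}_C=f$) furnishes an isomorphism $\X\left(f\right)\left(\eta_C\left(\mathrm{id}_C\right)\right)\cong\eta_D\left(f\right)$; assembling these over all $D$ yields a modification $\Phi\left(\mathrm{ev}\left(\eta\right)\right)\cong\eta$. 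Finally, naturality of the equivalence in $C$ (and in $\X$) follows by checking that $\mathrm{ev}$ commutes, up to the evident coherent isomorphism, with precomposition by arrows $C'\to C$ and with the action of $\X$.

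The main obstacle is purely the coherence bookkeeping forced by the fact that $\X$ is only weak rather than strict: one must confirm that the cells $\sigma_g^{\eta^{\xi}}$ built from the compositors genuinely satisfy the pseudonaturality axioms (which reduces to the associativity coherence of the compositors of $\X$), and that the comparison isomorphisms constructed above are bona fide modifications rather than merely objectwise isomorphisms. Were $\X$ a strict presheaf these verifications would be immediate; in the weak setting they amount to diagram chases with the unit and associativity coherence isomorphisms of the pseudofunctor $\X$.
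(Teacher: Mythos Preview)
Your argument is the standard proof of the $2$-categorical Yoneda lemma and is correct: evaluation at $\mathrm{id}_C$ and its quasi-inverse built from the restriction maps of $\X$, with the compositors supplying the pseudonaturality cells, is exactly how one proceeds. The coherence verifications you flag as the ``main obstacle'' are routine and reduce, as you say, to the associativity and unit axioms for the pseudofunctor $\X$.

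There is nothing to compare against in the paper itself: the lemma is stated with a citation to \cite{FGA} and is not proved in the text. So your proposal does not conflict with the paper's approach---there simply is none---and what you have written is a faithful sketch of the standard argument one finds in the cited source.
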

If $G$ is a topological group or a Lie group, then a standard example of a weak presheaf is the functor that assigns to each space the category of principal $G$-bundles over that space (this category is a groupoid). More generally, let $\G$ be an $S$-groupoid. Then $\G$ determines a weak presheaf on $S$ by the rule

\begin{equation*}
X \mapsto \Hom_{S-Gpd}\left(\left(X\right)^{(id)},\G\right),
\end{equation*}
where $\left(X\right)^{(id)}$ is the $S$-groupoid whose object space is $X$ and has only identity morphisms. This defines an extended Yoneda $2$-functor $\tilde y:S-Gpd \to Gpd^{S^{op}}$ and we have the obvious commutative diagram

$$\xymatrix{S  \ar[d]_{\left(\mspace{2mu} \cdot \mspace{2mu}\right)^{(id)}} \ar[r]^{y} & \Set^{S^{op}} \ar^{\left(\mspace{2mu} \cdot \mspace{2mu}\right)^{(id)}}[d]\\
S-Gpd \ar_{\tilde y}[r] & Gpd^{S^{op}}.}$$
We denote by $\left[\G\right]$ the associated stack on $S$, $a \circ \tilde y\left(\G\right)$, where $a$ is the stackification $2$-functor (with respect to the Grothendieck topology generated by open covers). $\left[\G\right]$ is called the \textbf{stack completion} of the groupoid $\G$.

\begin{rmk}
There is a notion of principal bundle for topological groupoids and Lie groupoids, and $\left[\G\right]$ is in fact the functor that assigns to each space the category of principal $\G$-bundles over that space.
\end{rmk}

\begin{dfn}
A stack $\X$ on $\mathbb{TOP}$ is a \textbf{topological stack} if it is equivalent to $\left[\G\right]$ for some topological groupoid $\G$. A stack $\X$ on $\mathit{Mfd}$, the category of smooth manifolds, is a \textbf{differentiable stack} if it is equivalent to $\left[\G\right]$ for some Lie groupoid $\G$. Similarly, one can define a \textbf{localic stack.}
\end{dfn}

\begin{dfn}
An $S$-groupoid $\G$ is \textbf{\'etale} if its source-map $s$ (and therefore also its target map $t$) is a local homeomorphism.
\end{dfn}

\begin{dfn}
A topological, differentiable, or localic stack $\X$ is \textbf{\'etale} if it is equivalent to $\left[\G\right]$ for some \'etale $S$-groupoid $\G$.
\end{dfn}

\begin{dfn}
A morphism $f:\Y \to \X$ of stacks is said to be \textbf{representable} if for any map from a space $T \to \X$, the weak $2$-pullback $T \times_{\X} \Y$ is (equivalent to) a space.
\end{dfn}

\begin{rmk}
A morphism $\varphi:\X \to \Y$ between stacks is an \textbf{epimorphism} if it is locally essentially surjective in the following sense:

For every space $X$ and every morphism $f:X \to \Y,$ there exists an open cover  $\mathcal{U}=\left(U_i \hookrightarrow X\right)_i$ of $X$ such that for each $i$ there exists a map $\tilde f_i:U_i \to \Y,$ such that the following diagram $2$-commutes:

$$\xymatrix@M=5pt{U_i \ar@{^(->}[d] \ar[r]^-{\tilde f_i} & \Y \ar[d]^-{\varphi}\\
X \ar[r]^-{f} & \X.}$$
In words, this just means any map $X \to \Y$ from a space $X$ locally factors through $\varphi$ up to isomorphism.
\end{rmk}

\begin{dfn}
An \textbf{atlas} for a stack $\X$ is a representable epimorphism $X \to \X$ from a space $X$.
\end{dfn}

\begin{rmk}
A stack $\X$ comes from an $S$-groupoid if and only if it has an atlas. If $X \to \X$ is an atlas, then $\X$ is equivalent to the stack-completion of the groupoid $X \times_\X X \rightrightarrows X$. Conversely, for any $S$-groupoid $\G$, the canonical morphism $\G_0 \to \left[\G\right]$ is an atlas.
\end{rmk}

\begin{dfn}\label{dfn:local}
Let $P$ be a property of a map of spaces. It is said to be \textbf{invariant under change of base} if for all $$f: Y \to X$$ with property $P$, if $$g:Z \to X$$ is any representable map, the induced map $$Z \times_X Y \to Z$$ also has property $P$. The property $P$ is said to be \textbf{invariant under restriction}, in the topological setting if this holds whenever $g$ is an embedding, and in the differentiable setting if and only if this holds whenever $g$ is an \emph{open} embedding. Note that in either case, being invariant under change of base implies being invariant under restriction. A property $P$ which is invariant under restriction is said to be \textbf{local on the target} if any $$f: Y \to X$$ for which there exists an open cover $\left(U_\alpha \to X\right)$  such that the induced map $$\coprod\limits_\alpha  {U_\alpha  }  \times_{X} Y \to \coprod\limits_\alpha  {U_\alpha  } $$ has property $P$, must also have property $P$.
\end{dfn}

Examples of such properties are being an open map, \'etale map, proper map, closed map etc.

\begin{prop}
A stack $\X$ over $S$ is \'etale if and only if it admits an \'etale atlas $p:X \to \X$, that is a representable epimorphism which is also \'etale.
\end{prop}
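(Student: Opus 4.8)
The plan is to prove both implications using the dictionary between atlases and groupoid presentations recorded in the remark following the definition of an atlas, together with the facts (guaranteed by Definition \ref{dfn:local} and the examples after it) that being a local homeomorphism is invariant under change of base and local on the target.

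For the ``if'' direction, I would start with an \'etale atlas $p : X \to \X$. By the cited remark, $\X$ is equivalent to the stack completion of the groupoid $\G = \left(X \times_\X X \rightrightarrows X\right)$, whose object space $X \times_\X X$ is a genuine space because $p$ is representable, and whose source and target maps are the two canonical projections $X \times_\X X \to X$. Each of these projections is precisely the pullback of the representable \'etale map $p$ along $p$ itself. Since ``\'etale'' for a representable map means exactly that every pullback along a map from a space is a local homeomorphism, taking the base to be $X \to \X$ via $p$ shows that both projections are local homeomorphisms. Hence $\G$ is an \'etale $S$-groupoid and $\X \simeq \left[\G\right]$ is \'etale.

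For the ``only if'' direction, suppose $\X \simeq \left[\G\right]$ for an \'etale $S$-groupoid $\G$. The same remark tells us that the canonical map $p : \G_0 \to \left[\G\right]$ is an atlas, i.e.\ a representable epimorphism, so the only thing to check is that $p$ is \'etale: for an arbitrary map $f : T \to \X$ from a space $T$, the pullback $T \times_\X \G_0 \to T$ must be a local homeomorphism. I would exploit that $p$ is an epimorphism to choose an open cover $\left(U_i \hookrightarrow T\right)$ on which $f$ factors, up to isomorphism, as $p \circ \tilde f_i$ for maps $\tilde f_i : U_i \to \G_0$. Over each $U_i$ the pullback can be rewritten, using the fundamental identification $\G_0 \times_\X \G_0 \simeq \G_1$ (whose two projections correspond to $s$ and $t$), as $U_i \times_{\G_0} \G_1 \to U_i$, namely the pullback of the source map $s$ along $\tilde f_i$. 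Because $s$ is a local homeomorphism and this property is invariant under base change, each restriction $U_i \times_\X \G_0 \to U_i$ is a local homeomorphism; since being a local homeomorphism is local on the target, it follows that $T \times_\X \G_0 \to T$ is a local homeomorphism. As $T$ and $f$ were arbitrary, $p$ is \'etale, hence an \'etale atlas.

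The main obstacle I anticipate is the ``only if'' direction, specifically the base-change computation identifying $U_i \times_\X \G_0$ with $U_i \times_{\G_0} \G_1$ over the cover. This rests on the canonical equivalence $\G_0 \times_{\left[\G\right]} \G_0 \simeq \G_1$ matching the fibered product of the atlas with itself to the arrow space and its projections to $s$ and $t$; one must keep careful track of which projection is which so that the pullback of $s$, rather than $t$, appears, and must then invoke the locality of ``local homeomorphism'' on the target in order to descend from the cover $\left(U_i\right)$ back to all of $T$. By contrast, the ``if'' direction is essentially immediate once the definition of a representable \'etale map is unwound.
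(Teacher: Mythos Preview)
Your proof is correct and rests on the same key fact the paper invokes: the square with $\G_1$, $\G_0$, $\G_0$, and $\left[\G\right]$ is $2$-Cartesian, so that the projections $\G_0 \times_{\left[\G\right]} \G_0 \to \G_0$ are identified with $s$ and $t$. The paper's proof simply states this square and stops, leaving both implications implicit; you have spelled them out in detail, including the descent step in the ``only if'' direction where you pass to a cover of $T$ to exploit that local homeomorphisms are local on the target---but this is exactly the argument the terse paper proof is pointing at.
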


\begin{proof}
This follows from the fact that if $\G$ is any $S$-groupoid, the following diagram is $2$-Cartesian:
$$\xymatrix{\G_1 \ar[d]_{s} \ar[r]^{t} & \G_0 \ar[d]\\
\G_0 \ar[r] & \left[\G\right],}$$
where the map $\G_0 \to \left[\G\right]$ is induced from the canonical map $\G_0 \to \G$.
\end{proof}

\begin{rmk} Traditionally speaking, a \textbf{differentiable stack} is a stack $\X$ equivalent to $\left[\G\right]$ where $\G$ is a Lie groupoid. This is equivalent to it having an atlas which is a representable submersion.
\end{rmk}

\begin{dfn}\label{dfn:Morita}
An internal functor $\varphi:\h \to \G$ of $S$-groupoids is a \textbf{Morita equivalence} if the following two properties hold:

\begin{itemize}
\item[i)] (Essentially Surjective)\\
The map $t \circ pr_1:\G_1 \times_{\G_0} \h_0 \to \G_0$ admits local sections, where $\G_1 \times_{G_0} \h_0$ is the fibred product

$$\xymatrix{\G_1 \times_{G_0} \h_0 \ar[r]^-{pr_2} \ar[d]_-{pr_1} & \h_0 \ar[d]^-{\varphi} \\
\G_1 \ar[r]^-{s} & \G_0.}$$
\item[i)] (Fully Faithful)
The following is a fibered product:

$$\xymatrix{\h_1 \ar[r]^-{\varphi} \ar[d]_-{\left(s,t\right)} &\G_1 \ar[d]^-{\left(s,t\right)}  \\
\h_0 \times \h_0 \ar[r]^-{\varphi \times \varphi} & \G_0 \times \G_0.}$$
\end{itemize}
Two $S$-groupoids $\Ll$ and $\K$ are \textbf{Morita equivalent} if there is a chain of Morita equivalences $\Ll \leftarrow \h \rightarrow \K$. Moreover, $\Ll$ and $\K$ are Morita equivalent if and only if $\left[\Ll\right] \simeq \left[\K\right]$
\end{dfn}

Every internal functor $\h \to \G$ induces a map $\left[\h\right] \to \left[\G\right]$ and the induced functor $$\Hom\left(\h,\G\right) \to \Hom\left(\left[\h\right],\left[\G\right]\right)$$ is full and faithful, but not essentially surjective. However, any morphism $$\left[\h\right] \to \left[\G\right]$$ arises from a chain $$\h \leftarrow \K \rightarrow \G,$$ with $\K \to \h$ a Morita equivalence. In fact, the class of Morita equivalences admits a calculus of fractions and topological (differentiable) stacks are equivalent to the bicategory of fractions of $S$-groupoids with inverted Morita equivalences. For details see \cite{Dorette}.

\begin{dfn}
By an \textbf{\'etale cover} of a space $X$, we mean a surjective local homeomorphism $U \to X$. In particular, for any open cover $\left(U_\alpha\right)$ of $X$, the canonical projection $$\coprod\limits_{\alpha} U_\alpha \to X$$ is an \'etale cover.
\end{dfn}

\begin{dfn}\label{dfn:cech}
Let $\h$ be an $S$-groupoid. If $\mathcal{U}=U \to \h_0$ is an \'etale cover of $\h_0$, then one can define the \textbf{\v{C}ech-groupoid} $\h_{\mathcal{U}}$. Its objects are $U$ and the arrows fit in the pullback diagram

$$\xymatrix{\left(\h_{\mathcal{U}}\right)_1 \ar[r] \ar[d]_-{\left(s,t\right)} & \h_1 \ar[d]^-{\left(s,t\right)} \\
U \times U \ar[r] & \h_0 \times \h_0,}$$
and the groupoid structure is induced from $\h$. There is a canonical map $\h_{\mathcal{U}} \to \h$ which is a Morita equivalence. Moreover,
\end{dfn}
\begin{equation}\label{eq:pizza}
\Hom\left(\left[\h\right],\left[\G\right]\right) \simeq \underset{\mathcal{U} \in Cov\left(\h_0\right)} \hc \Hom_{S-Gpd} \left(\h_{\mathcal{U}},\G\right),
\end{equation}
where the weak 2-colimit above is taken over a suitable 2-category of \'etale covers. For details see \cite{Andre}.

\begin{rmk}
We could restrict to open covers, and a similar statement would be true. However, it will become convenient to work with \'etale covers later.
\end{rmk}

Applying equation (\ref{eq:pizza}) to the case where $\left[\h\right]$ is a space $X$, by the Yoneda Lemma we have

\begin{equation*}
\left[\G\right] \left(X\right) \simeq \underset{\mathcal{U} \in Cov\left(X\right)} \hc \Hom_{S-Gpd} \left(X_{\mathcal{U}},\G\right).
\end{equation*}

\begin{dfn}
Let $\C$ be a $2$-category, and $C$ an object. The \textbf{slice $2$-category} $\C/C$ has as \textbf{objects} morphisms $\varphi:D \to C$ in $\C$. The \textbf{morphisms} are $2$-commutative triangles of the form
$$\xymatrix@R=0.6cm@C=0.6cm{D \ar[rrdd]_-{\varphi}
 \ar[rrrr]^{f} \ar@{} @<-5pt>  [rrrr]| (0.6) {}="a"
 							& &  & & E \ar[lldd]^-{\psi}\\
						&\ar@{} @<5pt>  [rd]| (0.4) {}="b"
\ar @{=>}^{\alpha}  "b";"a"&	& & \\
						& & C, & &}$$
with $\alpha$ invertible. A \textbf{$2$-morphism} between a pair of morphisms  $\left(f,\alpha\right)$ and $\left(g,\beta\right)$ going between $\varphi$ and $\psi$ is a $2$-morphism in $\C$ $$\omega:f \Rightarrow g$$ such that the following diagram commutes:
$$\xymatrix@R=0.6cm@C=0.6cm{\psi f \ar@{=>}[rr]^-{\psi\omega}& & \psi g\\ & \varphi \ar@{=>}[lu]^-{\alpha} \ar@{=>}[ru]_-{\beta}.}$$
\end{dfn}

We end by a standard fact which we will find useful later:

\begin{prop}\label{prop:stand}
For any stack $\X$ on $S$, there is a canonical equivalence of $2$-categories $\St\left(S/\X\right)\simeq \St\left(S\right)/\X$.
\end{prop}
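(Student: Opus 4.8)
The plan is to upgrade the classical equivalence $\Sh(C)/X \simeq \Sh(C/X)$ (sheaves on a slice site versus the slice of the sheaf topos) to the present $2$-categorical, groupoid-valued setting, by writing down explicit pseudofunctors in both directions and showing they are mutually inverse. First I would make the site $S/\X$ precise: its objects are morphisms $x\colon T \to \X$ with $T$ a space (equivalently, by the $2$-Yoneda Lemma, objects of the groupoid $\X(T)$), a morphism $(T',x')\to(T,x)$ is a map $f\colon T'\to T$ together with an invertible $2$-cell $x\circ f\cong x'$, and a family is declared covering precisely when its image under the projection $\pi\colon S/\X\to S$ is an open cover in $S$.

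The two functors are then as follows. Let $\Phi\colon \St(S)/\X \to \St(S/\X)$ send an object $p\colon \Z\to\X$ to the stack whose value on $(T,x)$ is the homotopy fibre of $p_T\colon\Z(T)\to\X(T)$ over $x$, i.e. the groupoid of pairs $(z,\theta)$ with $z\in\Z(T)$ and $\theta\colon p(z)\xrightarrow{\sim} x$; equivalently, this is the groupoid of sections of $T\times_\X\Z\to T$ along $x$. Let $\Psi\colon\St(S/\X)\to\St(S)/\X$ be the fibrewise Grothendieck construction: $\Psi(\W)(T)$ has as objects the pairs $(x,w)$ with $x\in\X(T)$ and $w\in\W(T,x)$, and a morphism $(x,w)\to(x',w')$ is a morphism $\phi\colon x\to x'$ in $\X(T)$ together with a morphism $w\to\phi^{*}w'$ in $\W(T,x)$ (using the functoriality of $\W$ along the morphism of $S/\X$ induced by $\phi$); the projection $(x,w)\mapsto x$ equips $\Psi(\W)$ with its structure map to $\X$.

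I would then verify both assignments are well defined. That $\Phi(\Z)$ satisfies descent over $S/\X$ is inherited from descent for $\Z$ and $\X$, since $2$-pullbacks of stacks are stacks. The key point is that $\Psi(\W)$ is again a stack on $S$: given an open cover $\{U_i\to T\}$ and descent data $(x_i,w_i)$ for $\Psi(\W)$, the $x_i$ glue to some $x\in\X(T)$ because $\X$ is a stack, and then the $w_i$ together with their gluing isomorphisms constitute descent data for $\W$ relative to the cover $\{(U_i,x|_{U_i})\to(T,x)\}$ of $S/\X$, hence glue to a unique $w\in\W(T,x)$ because $\W$ is a stack; this produces the required object $(x,w)$. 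Both $\Phi$ and $\Psi$ are pseudofunctorial, the coherence cells coming from those of the fibre products, of $\X$, and of $\W$.

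Finally I would produce the natural equivalences $\Psi\Phi\simeq\mathrm{id}$ and $\Phi\Psi\simeq\mathrm{id}$. Their engine is the fibrewise statement, valid for each space $T$: the groupoidal Grothendieck construction is an equivalence between the slice $2$-category $Gpd/\X(T)$ and the $2$-category of pseudofunctors $\X(T)\to Gpd$, under which a functor of groupoids $\Z(T)\to\X(T)$ corresponds to the pseudofunctor $x\mapsto(\text{fibre over }x)$ and is recovered, up to canonical equivalence, as the projection out of the Grothendieck construction of its fibres. Applying this for every $T$ yields the unit and counit. I expect the main obstacle to be the $2$-categorical bookkeeping: because fibre products and Grothendieck constructions are only defined up to coherent isomorphism, the genuinely delicate part is assembling honest pseudofunctors and pseudonatural equivalences with all associators accounted for, together with the verification that the stack (descent) condition is transported across $\Psi$ as sketched above. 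I would isolate the coherence into the single fibrewise straightening equivalence and argue that the restriction maps and gluing morphisms intertwine it, so that the global equivalence follows formally.
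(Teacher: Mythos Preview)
Your proposal is correct and follows essentially the same route as the paper. The paper's $\tilde\Y(T\to\X)=\Hom_{\St(S)/\X}(T\to\X,\Y\to\X)$ is exactly your homotopy-fibre functor $\Phi$, and the paper's inverse---composing the fibered category $\int\W\to S/\X\simeq\int\X$ with $\int\X\to S$---is precisely your fibrewise Grothendieck construction $\Psi$ (the fibre of $\int\W$ over $T$ consists of pairs $(x,w)$ with $x\in\X(T)$, $w\in\W(T,x)$). The paper then writes ``We leave the rest to the reader,'' so your explicit descent verification for $\Psi(\W)$ and the appeal to the fibrewise straightening equivalence supply exactly the details the paper omits.
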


The construction is as follows:

Given $\Y \to \X$ in $\St\left(S\right)/\X$, consider the stack $$\tilde \Y(T \to \X):=\Hom_{\St\left(S\right)/\X}\left(T \to \X,\Y \to \X\right).$$ Given a stack $\W$ in $\St\left(S/\X\right)$, consider it as a fibered category $\int{\W} \to S/\X$. Then since $S/\X \simeq \int{\X}$ (as categories), the composition $\int{\W} \to \int{\X} \to S$ is a category fibered in groupoids presenting a stack $\tilde W$ over $S$, and since the diagram

$$\xymatrix{\int{\W} \ar[rd] \ar[d] & \\
\int{\X} \ar[r] & S}$$
commutes,  $\int{\W} \to \int{\X}$ corresponds to a map of stacks $\tilde \W \to \X$.

We leave the rest to the reader.

\subsection{Grothendieck topoi}\label{sec:topoi}
A concise definition of a Grothendieck topos is as follows:

\begin{dfn}
A category $\E$ is a Grothendieck topos if it is a reflective subcategory of a presheaf category $Set^{\C^{op}}$ for some small category $\C$,

\begin{equation}\label{eq:topos}
\xymatrix@C=1.5cm{\E \ar@<-0.5ex>[r]_{j_*} & Set^{\C^{op}} \ar@<-0.5ex>[l]_{j^*}},
\end{equation}
with $j^* \rt \mspace{2mu} j_*$, such that the left-adjoint $j^*$ preserves finite limits. From here on in, topos will mean Grothendieck topos.
\end{dfn}

\begin{rmk}
It is standard that this definition is equivalent to saying that $\E$ is equivalent to $\Sh_J\left(\C\right)$ for some Grothendieck topology $J$ on $\C$, see for example \cite{sheaves}.
\end{rmk}

\begin{dfn}
A \textbf{geometric morphism} from a topos $\E$ to a topos $\F$ is a an adjoint-pair

$$\xymatrix@C=1.5cm{\E \ar@<-0.5ex>[r]_{f_*} & \F \ar@<-0.5ex>[l]_{f^*}},$$
with $f^* \rt \mspace{1mu} f_*$, such that $f^*$ preserve finite limits. The functor $f_*$ is called the \textbf{direct image} functor, whereas the functor $f^*$ is called the \textbf{inverse image} functor.
\end{dfn}

In particular, this implies, somewhat circularly, that equation (\ref{eq:topos}) is an example of a geometric morphism.

Topoi form a $2$-category. Their arrows are geometric morphisms. If $f$ and $g$ are geometric morphisms from $\E$ to $\F$, a $2$-cell $$\alpha:f \Rightarrow g$$ is given by a natural transformation $$\alpha:f^* \Rightarrow g^*.$$ In this paper, we will simply ignore all non-invertible $2$-cells to arrive at a $(2,1)$-category of topoi, $\Top$.

\subsection{Locales and frames}
Our conventions on locales and frames closely follow \cite{loc1}. Recall that the category of \textbf{frames} has as objects complete Heyting algebras, which are complete lattices of a certain kind, and morphism are given by functions which preserve finite meets and arbitrary joins. The category of \textbf{locales} is dual to that of frames. Locales are generalized spaces and find their home in the domain of so-called pointless topology. See for example \cite{pointless}.

\begin{dfn}\label{dfn:open}
Given a topological space $X$, we denote its poset of open subsets by $\mathcal{O}\left(X\right)$. The poset $\mathcal{O}\left(X\right)$ together with intersection and union, forms a complete Heyting algebra, hence a locale.
\end{dfn}

Notice that a continuous map $f:X \to Y$ induces a map

\begin{eqnarray*}
\mathcal{O}\left(Y\right) &\to& \mathcal{O}\left(X\right)\\
U &\mapsto& f^{-1}\left(U\right).
\end{eqnarray*}
It is easy to see that this is a map of frames, hence, is a map $$\mathcal{O}\left(f\right):\mathcal{O}\left(X\right) \to \mathcal{O}\left(Y\right)$$ in the category of locales. This makes $\mathcal{O}$ into a functor $$\mathcal{O}:\mathbb{TOP} \to Locales.$$ In fact, this functor has a right-adjoint $$pt:Locales \to Top.$$ The adjoint-pair $\mathcal{O} \rt \mspace{2mu} pt$ restricts to an equivalence between sober topological spaces, and locales with enough points (both ``sober'' and ``with enough points'' have a precise mathematical meaning). This result is known as Stone duality. The class of sober spaces is quite large in practice. It includes many highly non-Hausdorff topological spaces such as the prime spectrum with the Zariski topology, $Spec\left(A\right)$, for a commutative ring $A$.

Note that the open-cover Grothendieck topology on topological spaces naturally extends to locales. We make the following definition:


\subsection{Small sheaves as a Kan-extension}

Let $\Top$ denote the bicategory of Grothendieck topoi, geometric morphisms, and \emph{invertible} natural transformations, as in \ref{sec:topoi}. There is a canonical functor $$S \to \Top,$$ which assigns each space $X$ its topos of sheaves $\Sh(X)$. By (weak) left-Kan extension, we obtain a 2-adjoint pair $\Sh \rt \S$
$$\xymatrix@C=1.5cm{Gpd^{S^{op}} \ar@<-0.5ex>[r]_{\Sh} & \Top \ar@<-0.5ex>[l]_{\S},}$$
where $Gpd^{S^{op}}$ denotes the bicategory of weak presheaves in groupoids. In fact, the essential image of $\S$ lies entirely within the bicategory of stacks over $S$, $\Sts$, where $S$ is equipped with the standard ``open cover'' Grothendieck topology \cite{bunge}. So, by restriction, we obtain an adjoint pair

\begin{equation}\label{eq:sheaves}
\xymatrix@C=1.5cm{\St(S) \ar@<-0.5ex>[r]_{\Sh} & \Top \ar@<-0.5ex>[l]_{\S}.}
\end{equation}

\begin{dfn}

For $\X$ a stack over $S$, we define the topos of \textbf{small sheaves} over $\X$ to be the topos $\Sh(\X)$.

\end{dfn}

\begin{rmk}
Suppose that $\X \simeq \left[\G\right]$ for some $S$-groupoid $\G$. Then we may consider the nerve $N\left(\G\right)$ as a simplicial $S$-object

$$\xymatrix{ \G_0& \G_1  \ar@<+.7ex>[l] \ar@<-.7ex>[l]& {\G_2 \cdots}  \ar@<0.9ex>[l] \ar@<0.0ex>[l] \ar@<-0.9ex>[l]}.$$

By composition with the Yoneda embedding, we obtain a simplicial stack $$y \circ N\left(\G\right):\Delta^{op} \to \St(S).$$ The weak colimit of this diagram is the stack $\left[\G\right]$. Since $\Sh$ is a left-adjoint, it follows that $\Sh\left(\left[\G\right]\right)$ is the weak colimit of the simplicial-topos

$$\xymatrix{ \Sh\left(\G_0\right)& \Sh\left(\G_1\right)  \ar@<+.7ex>[l] \ar@<-.7ex>[l]& {\Sh\left(\G_2\right) \cdots}  \ar@<0.9ex>[l] \ar@<0.0ex>[l] \ar@<-0.9ex>[l]}.$$

From \cite{cont}, it follows that $\Sh\left(\left[\G\right]\right) \simeq \B\G$, the classifying topos of $\G$. We will return to a more concrete description of the classifying topos later.
\end{rmk}

For the rest of this subsection, we will assume that $S$ is sober topological spaces, or locales, unless otherwise noted.

The adjoint pair $\Sh \rt \S$ restricts to an equivalence between, on one hand, the subbicategory of $\St(S)$ on which the unit is an equivalence, and, on the other hand, the subbicategory of $\Top$ on which the co-unit is an equivalence.

\begin{prop}\label{prop:unit}
If $\X$ is an \'etale stack, then the unit is an equivalence.
\end{prop}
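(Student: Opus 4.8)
The plan is to unwind the unit using the adjunction $\Sh \rt \S$ together with the $2$-Yoneda Lemma: for any topos $\E$ and any space $T$ we have
$$\S(\E)(T) \simeq \Hom_{\St(S)}\left(T, \S(\E)\right) \simeq \Hom_{\Top}\left(\Sh(T), \E\right),$$
so that the component $\eta_\X : \X \to \S(\Sh(\X))$ is, on $T$-points, simply the functor
$$\Sh : \Hom_{\St(S)}(T,\X) \longrightarrow \Hom_{\Top}\left(\Sh(T), \Sh(\X)\right)$$
induced by $\Sh$ on hom-groupoids. Hence $\eta_\X$ is an equivalence precisely when $\Sh$ restricts to an equivalence of these hom-groupoids for every space $T$. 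Since every \'etale stack is of the form $\X \simeq \left[\G\right]$ for an \'etale $S$-groupoid $\G$, and $\Sh\left(\left[\G\right]\right) \simeq \B\G$ by the remark above (following \cite{cont}), I would reduce to comparing $\left[\G\right](T)$ with the groupoid of geometric morphisms $\Sh(T) \to \B\G$, proving the comparison first for $\X$ a space and then descending along an \'etale atlas.

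\emph{Base case (spaces).} For $T$ and $X$ both spaces (sober, or localic), I would invoke the full faithfulness of $\Sh$ on localic topoi — which is Stone duality combined with the fact that $\Sh : \mathbf{Loc} \to \Top$ is full and faithful onto its image — to identify $\Hom_{\Top}\left(\Sh(T), \Sh(X)\right) \simeq \Hom_{\mathbf{Loc}}(T,X)$. Moreover, a localic topos admits no nontrivial invertible $2$-cells between parallel geometric morphisms into it, so this hom-groupoid is discrete, and therefore equivalent to the discrete groupoid $\Hom_{\St(S)}(T,X) = X(T)$. This shows the unit is an equivalence on spaces.

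\emph{Descent along the atlas.} Let $p : \G_0 \to \X$ be the \'etale atlas, so that $\X$ is the stack completion of $\G_1 \rightrightarrows \G_0$ with $\G_1 \simeq \G_0 \times_\X \G_0$, exhibited as the weak colimit of $y \circ N(\G)$. As $\Sh$ is a left adjoint it preserves this colimit, giving $\Sh(\X) \simeq \B\G$ as the weak colimit of the simplicial topos $\Sh(N(\G))$. The crucial input is that, $\G$ being \'etale, $\B\G$ is an \'etendue and the induced geometric morphism $\Sh(\G_0) \to \B\G$ is a \emph{localic} \'etale surjection with $\Sh(\G_0) \times_{\B\G} \Sh(\G_0) \simeq \Sh(\G_1)$. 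I would then show that applying $\S$ sends $\Sh(\G_0) \to \B\G$ to a representable \'etale epimorphism $\S(\Sh(\G_0)) \to \S(\Sh(\X))$, and that $\S$ preserves the relevant $2$-pullbacks, so that
$$\S(\Sh(\G_0)) \times_{\S(\Sh(\X))} \S(\Sh(\G_0)) \simeq \S(\Sh(\G_1)).$$
By the base case, $\eta_{\G_0}$ and $\eta_{\G_1}$ are equivalences, so the \'etale groupoid $\S(\Sh(\G_1)) \rightrightarrows \S(\Sh(\G_0))$ presenting $\S(\Sh(\X))$ is equivalent to $\G_1 \rightrightarrows \G_0$, compatibly with $\eta$; passing to stack completions shows $\eta_\X$ is an equivalence.

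\emph{Main obstacle.} The delicate step is controlling the right adjoint $\S$ on the colimit presentation — specifically, proving that $\Sh(\G_0) \to \B\G$ maps to an atlas and that $\S$ preserves the fiber products $\Sh(\G_0) \times_{\B\G} \Sh(\G_0)$. This is exactly where \'etale-ness is indispensable: it guarantees $\B\G$ is an \'etendue and that the cover $\Sh(\G_0) \to \B\G$ is localic, so these pullbacks are again localic topoi, on which $\S$ is computed by the space-level reconstruction of the base case. For a non-\'etale groupoid this cover need not be localic and $\S$ fails to reconstruct the groupoid, which is why the statement is special to \'etale stacks. I would draw the precise \'etendue and localic-cover facts, as well as the identification of geometric morphisms into $\B\G$ with objects of $\left[\G\right](T)$, from \cite{cont} and \cite{Dorette}.
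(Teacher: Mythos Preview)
Your unwinding of the unit is exactly what the paper does: $\S(\Sh(\X))(T)\simeq\Hom_{\Top}(\Sh(T),\B\G)$, and the task is to identify this with $\X(T)\simeq[\G](T)$. But from that point on the paper is a one-liner: it simply cites Moerdijk \cite{cont} for the equivalence $\Hom_{\Top}(\Sh(T),\B\G)\simeq[\G](T)$ (geometric morphisms into the classifying topos of an \'etale groupoid are principal $\G$-bundles), and the proof is over.

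Your descent argument is correct in outline and would work, but it is substantially more elaborate than necessary---and somewhat self-defeating, since in your final paragraph you say you would draw ``the identification of geometric morphisms into $\B\G$ with objects of $[\G](T)$'' from \cite{cont}. That identification \emph{is} the proposition, once the unit has been unwound; citing it makes the base-case/atlas/descent machinery redundant. If you really wanted to avoid citing Moerdijk's theorem and reprove it by descent, the argument can be made to go through (the key point being that $\Sh(\G_0)\to\B\G$ is a localic \'etale surjection, so its pullback along any $\Sh(T)\to\B\G$ is again $\Sh(T')$ for an \'etale cover $T'\to T$, giving the required local factorization; and $\S$, being a right adjoint, preserves the pullback $\Sh(\G_1)\simeq\Sh(\G_0)\times_{\B\G}\Sh(\G_0)$). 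But this is effectively a reproof of a special case of Moerdijk's theorem, and the paper rightly just invokes it.
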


\begin{proof}
Let $T$ be a space, then $$\S(\Sh(\X))(T)=\Hom(\Sh(T),\Sh(\X)),$$ and the latter is the groupoid of geometric morphisms from $\Sh(T)$ to $\B\G$, where $\G$ is some groupoid representing $\X$. From, \cite{cont} this in turn is equivalent to $\X(T)$.
\end{proof}

Let $\Et$ denote the full subbicategory of $\St(S)$ consisting of \'etale stacks. Then, since the unit restricted to $\Et$ is an equivalence, $Sh$ restricted to $\Et$ is 2-categorically fully faithful. We now identify its essential image.

\begin{dfn}
A topos $\E$ is an \textbf{\'etendue} if there exists a well-supported object $E \in \E$ (i.e. $E \to 1$ is an epimorphism) such that the slice topos $\E/E$ is equivalent to $\Sh(X)$ for some space $X$.
\end{dfn}

\begin{thm}
A topos $\E$ is an \'etendue if and only if $\E \simeq \B\G$ for some \'etale groupoid $\G$ \cite{sga4}.
\end{thm}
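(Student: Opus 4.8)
The plan is to prove both implications separately, handling the substantive converse direction by descent along an étale surjection of topoi.

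For the easy direction, suppose $\E \simeq \B\G$ with $\G$ an \'etale $S$-groupoid. Since $\B\G \simeq \Sh\left(\left[\G\right]\right)$ and the canonical atlas $a:\G_0 \to \left[\G\right]$ is a representable \'etale epimorphism, I would take $E$ to be the object of $\B\G$ corresponding to $a$ — in the description of $\B\G$ as $\G$-equivariant sheaves on $\G_0$, this is the sheaf $\G_1 \xrightarrow{s} \G_0$ with its left $\G$-action. The two points to verify are that $\B\G/E \simeq \Sh\left(\G_0\right)$, which is a direct computation in the equivariant-sheaf picture, and that $E \to 1$ is an epimorphism, which holds precisely because $a$ is an epimorphism of stacks. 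As $\G_0$ is a space, this exhibits $\E$ as an \'etendue, witnessed by $X = \G_0$.

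Conversely, let $\E$ be an \'etendue with well-supported $E$ and $\E/E \simeq \Sh\left(X\right)$, and write $\pi:\Sh\left(X\right) \to \E$ for the slice projection. This $\pi$ is an \'etale geometric morphism (being a slice) and is surjective (since $E \to 1$ is epi). I would form its \v{C}ech nerve: the $\left(n+1\right)$-fold fibre product $\Sh\left(X\right) \times_\E \cdots \times_\E \Sh\left(X\right)$ equals $\E/E^{n+1} \simeq \left(\E/E\right)/\pi^*\!\left(E^n\right) \simeq \Sh\left(X\right)/\left(\pi^* E\right)^n$. The crucial observation is that every such slice is again spatial: an object of $\Sh\left(X\right)$ is the sheaf of sections of a local homeomorphism $X_n \to X$, and $\Sh\left(X\right)/F \simeq \Sh\left(X_n\right)$ for the \'etal\'e space $X_n$ of $F$, which is an honest space. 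Thus the \v{C}ech nerve is a simplicial space $X_\bullet$ with $X_0 = X$, whose face and degeneracy maps are \'etale geometric morphisms between spatial topoi and hence, by Stone duality, come from local homeomorphisms. Reading the source and target off the two face maps $X_1 \rightrightarrows X_0$ and the composition off $X_2$, I obtain an \'etale $S$-groupoid $\G$.

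It then remains to identify $\E$ with $\B\G$. Since $\pi$ is an \'etale surjection it is in particular an open surjection, hence an effective descent morphism, so $\E$ is the weak $2$-colimit of the simplicial topos $\Sh\left(X_\bullet\right)$; by \cite{cont} this colimit is exactly the classifying topos $\B\G$, completing the argument. I expect the main obstacle to be the two inputs underpinning this last paragraph: first, the spatiality of the iterated fibre products, which rests entirely on the lemma that slicing a localic topos over an object returns the topos of the corresponding \'etal\'e space; and second, the effective-descent property of the open surjection $\pi$, which is what guarantees that $\E$ can be reconstructed from its cover by the space $X$ as $\G$-equivariant sheaves.
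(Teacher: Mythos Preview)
The paper does not supply a proof of this theorem; it is simply recorded with a citation to \cite{sga4}. There is therefore nothing to compare against directly.

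Your argument is correct and is essentially the standard modern proof via descent. The forward direction is routine: in the paper's conventions (Definition~\ref{dfn:mu}) your well-supported object is $m_{\G_0}=\G_1$ with moment map $t$ rather than $s$, but either choice works, and the identification $\B\G/m_{\G_0}\simeq\Sh(\G_0)$ is standard. For the converse, your computation of the iterated fibre products as slices $\Sh(X)/(\pi^*E)^n$ is correct, and the spatiality of these slices via the \'etal\'e-space construction is exactly what is needed. The two substantive external inputs you identify --- that the \'etale (hence open) surjection $\pi$ is an effective descent morphism, and that the resulting $2$-colimit of $\Sh(\G_\bullet)$ is the classifying topos $\B\G$ --- are precisely the Joyal--Tierney and Moerdijk theorems, the latter being \cite{cont} in the paper's bibliography.

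One small quibble: the step you label ``by Stone duality'' is more accurately the \'etal\'e-space equivalence. What you need is that an \'etale geometric morphism between spatial topoi is induced by a local homeomorphism of spaces, which follows because any \'etale morphism into $\Sh(X)$ is equivalent to a slice projection $\Sh(X)/F\to\Sh(X)$, and $\Sh(X)/F\simeq\Sh(L(F))$ for the \'etal\'e space $L(F)\to X$. Stone duality per se only identifies continuous maps, not local homeomorphisms. This does not affect the validity of the argument.
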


\begin{cor}\label{cor:etendue}
$\Sh$ induces an equivalence between the bicategory of \'etale stacks and the bicategory of \'etendues.
\end{cor}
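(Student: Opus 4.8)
The plan is to exhibit $\Sh|_{\Et}$ as a biequivalence onto the bicategory of \'etendues by verifying the two conditions defining an equivalence of bicategories: full faithfulness on hom-groupoids, and essential surjectivity onto \'etendues. Full faithfulness is already in hand: for the adjoint pair $\Sh \rt \S$, the left adjoint $\Sh$ is $2$-categorically fully faithful exactly on the subbicategory where the unit is an equivalence, and Proposition \ref{prop:unit} shows the unit is an equivalence on every \'etale stack. So the remaining task is purely to identify the essential image of $\Sh|_{\Et}$ with the \'etendues.

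First I would check that $\Sh$ sends \'etale stacks to \'etendues. Any object of $\Et$ is equivalent to $\left[\G\right]$ for an \'etale $S$-groupoid $\G$, and the Remark computing $\Sh\left(\left[\G\right]\right)$ as the weak colimit of the simplicial topos $\Sh\left(N\left(\G\right)\right)$ identifies it with the classifying topos $\B\G$. Since $\G$ is \'etale, the preceding theorem characterizing \'etendues gives that $\B\G \simeq \Sh\left(\left[\G\right]\right)$ is an \'etendue. Conversely, I would show every \'etendue is attained: given an \'etendue $\E$, the same theorem furnishes an \'etale groupoid $\G$ with $\E \simeq \B\G$, and invoking $\B\G \simeq \Sh\left(\left[\G\right]\right)$ once more---together with the fact that $\left[\G\right]$ is an \'etale stack because $\G$ is \'etale---exhibits $\E$ as $\Sh\left(\left[\G\right]\right)$ with $\left[\G\right] \in \Et$. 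These two inclusions show the essential image is precisely the \'etendues.

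Combining full faithfulness with this identification of the essential image yields that $\Sh$ restricts to an equivalence between $\Et$ and the bicategory of \'etendues, proving the corollary. I do not expect a genuine obstacle here, since the real content is packaged into Proposition \ref{prop:unit} (whose proof rests on the comparison $\Sh\left(\left[\G\right]\right) \simeq \B\G$ and \cite{cont}) and the cited \'etendue theorem; the only care needed is $2$-categorical bookkeeping, namely confirming that ``fully faithful plus essentially surjective'' delivers a biequivalence in this weak $(2,1)$-categorical setting, and that the equivalences $\B\G \simeq \Sh\left(\left[\G\right]\right)$ are natural enough in $\G$ to assemble compatibly along Morita equivalences.
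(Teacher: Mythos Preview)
Your proposal is correct and matches the paper's own reasoning: the corollary is stated without a separate proof because it follows immediately from the preceding theorem (\'etendues are exactly classifying topoi of \'etale groupoids) together with the observation, made just before, that $\Sh|_{\Et}$ is $2$-categorically fully faithful by Proposition~\ref{prop:unit}. Your argument spells out precisely this combination of full faithfulness and essential surjectivity.
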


\begin{rmk}
This result was original proven in \cite{Dorette}.
\end{rmk}

This corollary should be interpreted as evidence that for \'etale groupoids $\G$, $\Sh\left(\left[\G\right]\right) = \B\G$ is the correct notion for the topos of sheaves over $\left[\G\right]$ since just as for spaces, morphisms between \'etale stacks are the same as geometric morphisms between their topoi of sheaves.

\begin{cor}
Let $\X \simeq \left[\G\right]$ and $\Y \simeq \left[\h\right]$ be two stacks with $\Y$ \'etale. Then $$\Hom\left(\X,\Y\right) \simeq \Hom\left(\B\G,\B\h\right).$$
\end{cor}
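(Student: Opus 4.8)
The plan is to deduce this directly from the $2$-adjunction $\Sh \rt \S$ of (\ref{eq:sheaves}), together with Proposition \ref{prop:unit}, which guarantees that the unit is an equivalence on \'etale stacks. The essential observation is that \'etaleness will only be needed at the \emph{target} $\Y$; the adjunction absorbs the source $\X$ with no hypothesis at all.

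First I would record the defining equivalence of the $2$-adjunction: for any stack $\X$ over $S$ and any topos $\E$, there is an equivalence of mapping groupoids, natural in both variables,
$$\Hom_{\Top}\left(\Sh\left(\X\right),\E\right) \simeq \Hom_{\St\left(S\right)}\left(\X,\S\left(\E\right)\right),$$
where the right-hand side makes sense because, as noted above, the essential image of $\S$ lies in $\St\left(S\right)$. Specializing to $\E=\Sh\left(\Y\right)$ gives
$$\Hom_{\Top}\left(\Sh\left(\X\right),\Sh\left(\Y\right)\right) \simeq \Hom_{\St\left(S\right)}\left(\X,\S\left(\Sh\left(\Y\right)\right)\right).$$

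Next, since $\Y$ is \'etale, Proposition \ref{prop:unit} shows that the unit $\eta_\Y:\Y \to \S\left(\Sh\left(\Y\right)\right)$ is an equivalence of stacks, so post-composition with $\eta_\Y$ induces an equivalence
$$\Hom_{\St\left(S\right)}\left(\X,\Y\right) \simeq \Hom_{\St\left(S\right)}\left(\X,\S\left(\Sh\left(\Y\right)\right)\right).$$
Chaining the last two displays yields $\Hom_{\St\left(S\right)}\left(\X,\Y\right) \simeq \Hom_{\Top}\left(\Sh\left(\X\right),\Sh\left(\Y\right)\right)$, and it remains only to identify the right-hand side. Invoking the identification $\Sh\left(\left[\G\right]\right) \simeq \B\G$ recorded earlier (from \cite{cont}) for both $\X \simeq \left[\G\right]$ and $\Y \simeq \left[\h\right]$, I obtain $\Hom\left(\X,\Y\right) \simeq \Hom\left(\B\G,\B\h\right)$, as claimed.

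This deduction is short, so rather than a genuine obstacle the points requiring care are bookkeeping ones. I would make sure the equivalence furnished by the $2$-adjunction is genuinely $2$-natural in the source variable, so that composing with $\eta_\Y$ is compatible and the final chain is an equivalence of groupoids rather than a mere bijection on isomorphism classes. I would also confirm that the $\Hom$ on the topos side is interpreted as the groupoid of geometric morphisms with only invertible $2$-cells, matching the definition of $\Top$ from Section \ref{sec:topoi}, so that the equivalence is stated between the correct objects on both sides.
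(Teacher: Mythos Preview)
Your proof is correct and is exactly the argument the paper intends: the corollary is stated without proof immediately after Proposition \ref{prop:unit} and Corollary \ref{cor:etendue}, and follows from the $2$-adjunction $\Sh \rt \S$ together with the fact that the unit is an equivalence on the \'etale target $\Y$, plus the identification $\Sh\left(\left[\G\right]\right)\simeq\B\G$ valid for any $S$-groupoid $\G$. Your observation that \'etaleness is needed only for $\Y$ is the key point and matches the paper's hypotheses precisely.
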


The adjoint pair $\Sh \rt \S$ allows us also to prove another interesting result, which we shall now do, for completeness.

\begin{dfn}
An $S$-groupoid $\G$ is \textbf{\'etale-complete} if the diagram

$$\xymatrix@=16pt@M=6pt{ \Sh\left(\G_1\right) \ar[r]^{t}  \ar@{} @<-4pt> [r]| (0.4){}="a"
               \ar[d]_{s} \ar@{} @<4pt> [d]| (0.4){}="b"
                                   & \Sh\left(\G_0\right) \ar[d]^{p}     \ar @{=>}^{\mu}  "a";"b" \\
                                        \Sh\left(\G_0\right) \ar[r]_{p}        & \B\G           }$$
is a (weak) pullback-diagram of topoi, where $\B\G$ is the classifying topos of $\G$, $p$ is induced from the inclusion $\G_0 \to \G$, and $\mu$ is induced by the obvious action of $\G$ on sheaves over $\G_0$. For details, see \cite{cont}.

A stack $\X$ over $S$ is \textbf{\'etale-complete} if it is equivalent to $\left[\G\right]$ for some \'etale-complete $\G$.
\end{dfn}

\begin{rmk}
Every \'etale-groupoid is \'etale-complete \cite{cont}.
\end{rmk}

\begin{rmk}
Proposition \ref{prop:unit} and its proof remains valid if \'etale is replaced with \'etale-complete.
\end{rmk}

Let $\mathfrak{EtC}$ denote the full subbicategory consisting of \'etale-complete stacks. $\Sh$ restricted to $\mathfrak{EtC}$ is also 2-categorically fully faithful. For $S$ sober-topological spaces, to the author's knowledge, there is no nice description for the essential image. However, for $S$ locales, the answer is quite nice indeed:

\begin{thm}
For $S$ locales, $\Sh$ induces an equivalence between the bicategory of \'etale-complete stacks and the bicategory $\Top$ of topoi. In particular, $$\S:\Top \to St\left(S\right)$$ exhibits Grothendieck topoi as a reflective full subbicategory of stacks on locales.
\end{thm}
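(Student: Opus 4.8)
The plan is to deduce the theorem from what has already been set up, by showing that the restriction of $\Sh$ to the full subbicategory $\mathfrak{EtC}$ of \'etale-complete stacks is essentially surjective onto $\Top$. Combined with the fact that this restriction is already $2$-categorically fully faithful (the unit of the adjunction $\Sh \dashv \S$ being an equivalence on $\mathfrak{EtC}$, by Proposition \ref{prop:unit} together with the remark following it that allows \'etale to be replaced by \'etale-complete), this yields an equivalence $\mathfrak{EtC} \simeq \Top$, from which the reflectivity statement follows formally.

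For essential surjectivity the key external input is the representation theorem of Joyal and Tierney: every Grothendieck topos $\E$ admits an open surjection $p : \Sh(X) \to \E$ from the sheaf topos of a locale $X$. I would form the localic groupoid $\G$ with object locale $\G_0 = X$ and arrow locale $\G_1 = X \times_\E X$, the $2$-kernel pair of $p$; the openness of $p$ guarantees (again by Joyal--Tierney) that this pullback of topoi is itself localic, so that $\G_1$ is a genuine locale and $\G$ a genuine localic groupoid. Because open surjections are of effective descent for topoi, $\E$ is recovered as the associated descent topos, so $\E \simeq \B\G$, and by \cite{cont} we have $\B\G \simeq \Sh\left(\left[\G\right]\right)$. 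Moreover $\G$ is \'etale-complete, since the defining square for $\X := \left[\G\right]$ is precisely the equivalence
$$\Sh\left(\G_1\right) = \Sh\left(X \times_\E X\right) \simeq \Sh\left(X\right) \times_{\E} \Sh\left(X\right) = \Sh\left(\G_0\right) \times_{\B\G} \Sh\left(\G_0\right),$$
which merely expresses that the $2$-kernel pair of $p$ is computed as a localic pullback. Hence $\X$ is an \'etale-complete localic stack with $\Sh\left(\X\right) \simeq \E$, establishing essential surjectivity.

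For the reflective statement I would argue formally. By the above, every topos $\E$ is equivalent to $\Sh\left(\X\right)$ for some $\X \in \mathfrak{EtC}$; applying $\S$ and using that the unit $\X \to \S\Sh\left(\X\right)$ is an equivalence on $\mathfrak{EtC}$ shows both that $\S\left(\E\right) \simeq \X$ lies in $\mathfrak{EtC}$ and that the counit $\Sh\S\left(\E\right) \to \E$ is an equivalence. Thus $\S$ is $2$-categorically fully faithful with left adjoint $\Sh$, exhibiting $\Top$ as a reflective full subbicategory of $\St\left(S\right)$ with reflector $\Sh$, whose essential image is exactly $\mathfrak{EtC}$.

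The main obstacle is the input from Joyal--Tierney theory: both the existence of an open surjection onto $\E$ from a localic topos and the effective-descent property identifying $\E$ with $\B\G$, together with the verification that the kernel-pair locale $X \times_\E X$ is genuinely localic. Everything downstream of this is formal manipulation of the adjunction $\Sh \dashv \S$ and of the already-established equivalence between the subbicategory of stacks on which the unit is an equivalence and the subbicategory of topoi on which the counit is an equivalence.
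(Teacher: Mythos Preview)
Your proof is correct and follows essentially the same strategy as the paper: reduce to essential surjectivity of $\Sh|_{\mathfrak{EtC}}$ using the Joyal--Tierney representation theorem. The one point of difference is in how you produce an \'etale-complete groupoid presenting a given topos $\E$. The paper first invokes Joyal--Tierney to write $\E \simeq \B\G$ for \emph{some} localic groupoid $\G$, and then separately cites Moerdijk \cite{cont} for the existence of an \'etale-completion $\hat\G$ with $\B\hat\G \simeq \B\G$. You instead unpack the Joyal--Tierney construction itself: taking $\G_1$ to be the kernel pair $X \times_{\E} X$ of the open surjection $\Sh(X) \to \E$, you observe that the resulting groupoid is \'etale-complete \emph{by construction}, since the defining pullback square is tautologically the one witnessing \'etale-completeness once $\E \simeq \B\G$. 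This is a mild improvement in economy---you avoid the second citation---and is in fact exactly how the \'etale-completion in \cite{cont} is built, so the two arguments are very close. Your formal deduction of reflectivity from the counit being an equivalence is fine and matches the paper's intent.
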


\begin{proof}
It suffices to show that $\Sh$ is essentially surjective. Every topos is equivalent to $\B\G$ for some localic groupoid $\G$ \cite{loc1}, and hence to $\Sh\left(\X\right)$ for some localic stack $\X$ over locales. The result now follows from the fact that every localic groupoid $\G$ has an \'etale-completion $\hat \G$ such that $\B\G \simeq \B \hat \G$ \cite{cont}.
\end{proof}

\begin{cor}
The adjunction $\left(\ref{eq:sheaves}\right)$ restricts to a adjunction
$$\xymatrix@C=1.5cm{\Top \ar@<-0.5ex>[r]_{\S} & \mathfrak{LocSt} \ar@<-0.5ex>[l]_{\Sh},}$$
exhibiting the $2$-category of topoi as a reflective subbicategory of localic stacks.
\end{cor}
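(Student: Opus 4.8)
The plan is to deduce this corollary from the immediately preceding theorem by the routine device of restricting an adjunction to a full subbicategory that contains the essential image of the right adjoint. For $S$ locales the theorem already gives us everything of substance: the pair $\Sh \dashv \S$ exhibits $\Top$ as a reflective full subbicategory of $\St\left(S\right)$, meaning that $\S$ is $2$-categorically fully faithful, its essential image is exactly the \'etale-complete stacks, and the co-unit $\Sh \circ \S \Rightarrow \mathrm{id}_{\Top}$ is an equivalence (this last being a restatement of the fact that $\Sh$ restricted to \'etale-complete stacks is an equivalence onto $\Top$). The only thing left to check is that this reflective structure descends from $\St\left(S\right)$ to the smaller full subbicategory $\mathfrak{LocSt}$.

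First I would observe that the essential image of $\S$ is already contained in $\mathfrak{LocSt}$. By the theorem, each $\S\left(\E\right)$ is an \'etale-complete stack, hence of the form $\left[\G\right]$ for an \'etale-complete $S$-groupoid $\G$; but when $S$ is locales such a $\G$ is by definition a localic groupoid, so $\left[\G\right]$ is a localic stack. Thus $\S$ factors, up to equivalence, through the full inclusion $\mathfrak{LocSt} \hookrightarrow \St\left(S\right)$, and we may regard $\S$ as a functor $\Top \to \mathfrak{LocSt}$.

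Next I would restrict the left adjoint by precomposing with the inclusion, obtaining $\Sh\colon \mathfrak{LocSt} \to \Top$; note the codomain is unproblematic since $\Sh$ always lands in $\Top$. Because $\mathfrak{LocSt}$ is a \emph{full} subbicategory and, for $\X \in \mathfrak{LocSt}$ and $\E \in \Top$, both $\X$ and $\S\left(\E\right)$ lie in it, the adjunction equivalence
$$\Hom_{\Top}\left(\Sh\left(\X\right), \E\right) \simeq \Hom_{\St\left(S\right)}\left(\X, \S\left(\E\right)\right) = \Hom_{\mathfrak{LocSt}}\left(\X, \S\left(\E\right)\right)$$
persists, naturally in both variables, the final identification being exactly fullness of the inclusion. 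This exhibits $\Sh|_{\mathfrak{LocSt}} \dashv \S$ as the desired restricted adjunction.

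Finally, fully faithfulness of $\S$ and the equivalence $\Sh \circ \S \simeq \mathrm{id}_{\Top}$ are properties of $\S$ and of the composite that are unaffected by corestricting the codomain to a full subbicategory, so they carry over verbatim, and $\S$ exhibits $\Top$ as a reflective full subbicategory of $\mathfrak{LocSt}$. I do not expect any genuine obstacle here beyond the image-containment observation of the second step; the only mild care required is bicategorical bookkeeping — interpreting the restriction of an adjunction up to coherent equivalence rather than on the nose — but this is subsumed by the same fullness argument, since the inclusion preserves the relevant hom-groupoids.
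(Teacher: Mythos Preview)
Your proposal is correct and is precisely the routine restriction argument the paper has in mind; the paper presents this statement as an immediate corollary of the preceding theorem without a separate proof, and what you have written is exactly the standard way to spell out why a reflective adjunction restricts to any full subbicategory containing the essential image of the right adjoint. The only substantive observation needed is that $\S$ lands in $\mathfrak{LocSt}$, which you justify correctly via the theorem's identification of the essential image with \'etale-complete stacks.
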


\begin{rmk}
In light of the fact that every topos $\E$ with enough points is equivalent to $\B\G$ for some topological groupoid $\G$ \cite{IekeButz}, one may be tempted to claim that \'etale-complete topological stacks are equivalent to topoi with enough points. However, the proof just given does not work for the topological case as a topological groupoid's \'etale-completion may not be a topological groupoid, but only a groupoid object in locales.
\end{rmk}

\begin{rmk}
Most of what has been done in this subsection caries over for smooth manifolds if we use ringed-topoi rather than just topoi. In particular, the result of Pronk that \'etale differentiable stacks and smooth-\'etendue are equivalent can be proven along these lines.
\end{rmk}

\subsection{The classifying topos of a groupoid}

\begin{dfn}
Given an $S$-groupoid $\h$, a (left) \textbf{$\h$-space} is a space $E$ equipped with a \textbf{moment map} $\mu:E \to \h_0$ and an \textbf{action map} $$\rho:\h_1 \times_{\h_0} E \to E,$$
where
$$\xymatrix{\h_1 \times_{\h_0} E \ar[r]  \ar[d] & E \ar^-{\mu}[d] \\
\h_1 \ar^-{s}[r] & \h_0\\}$$
is the fibred product, such that the following conditions hold:

\begin{itemize}
\item[i)] $\left(gh\right) \cdot e = g \cdot \left(h \cdot e\right)$ whenever $e$ is an element of $E$ and $g$ and $h$ elements of $\h_1$ with domains such that the composition makes sense,
\item[ii)] $\mathbb{1}_{\mu\left(e\right)} \cdot e =e$ for all $e \in E,$ and
\item[iii)] $\mu\left(g \cdot e\right) = t \left(g\right)$ for all $g \in \h_1$ and $e \in E.$
\end{itemize}
A map of $\h$-spaces is simply an equivariant map, i.e., a map $$(E,\mu,\rho) \to (E',\mu',\rho')$$ is map $f:(E,\mu,) \to (E',\mu')$ in $S/\h_0$ such that

$$f(he)=hf(e)$$
whenever this equation makes sense.
\end{dfn}

\begin{rmk}
This definition extends for localic groupoids in the obvious (diagrammatic) way.
\end{rmk}

\begin{dfn}
An $\h$-space $E$ is an $\h$-\textbf{equivariant sheaf} if the moment map $\mu$ is a local homeomorphism. The category of $\h$-equivariant sheaves and equivariant maps forms the \textbf{classifying topos} $\B\h$ of $\h$.
\end{dfn}

\subsection{The small-site of an \'etale stack}

\begin{dfn}\label{dfn:site}
Let $\h$ be an \'etale $S$-groupoid. Let $\sit\left(\h\right)$ be the following category: The objects are the open subsets of $\h_0$. An arrow $U \to V$ is a section $\sigma$ of the source-map $s:\h_1 \to \h_0$ over $U$ such that $t \circ \sigma: U \to V$ as a map in $S$. Composition is by the formula $\tau \circ \sigma(x):=\tau\left(t\left(\sigma(x\right)\right).$

There is a canonical functor $i:\mathcal{O}\left(\h_0\right) \hookrightarrow \sit\left(\h\right)$ which sends an inclusion $U \hookrightarrow V$ in $\mathcal{O}\left(\h_0\right)$ to $\mathbb{1}|_U$, where $\mathbb{1}$ is the unit map of the groupoid, and $\mathcal{O}$ is as in Definition \ref{dfn:open}.

This functor induces a Grothendieck pre-topology on $\sit\left(\h\right)$ by declaring covering families to be images under $i$ of covering families of $\mathcal{O}\left(\h_0\right)$. The Grothendieck site $\sit\left(\h\right)$ equipped with the induced topology is called the \textbf{small site} of the groupoid $\h$.
\end{dfn}

\begin{rmk}
Given an \'etale stack $\X$ with an \'etale atlas $X \to \X$, we can describe $\sit\left(X \times_\X X \rightrightarrows X\right)$ in terms of this stack and atlas. Denote the $S$-groupoid $$X \times_\X X \rightrightarrows X$$ by $\h$. Let $\sit\left(\X,X\right)$ denote the following category. The objects of are open subsets of $X=\h_0$ and the arrows are pairs $\left(f,\alpha\right)$, such that

$$\xymatrix@R=0.4cm@C=0.4cm{U \ar@{^{(}->}[rd]
 \ar@<+0.5ex>[rrrr]^{f} \ar@{} @<-5pt>  [rrrr]| (0.6) {}="a"
 							& &  & & V \ar@{^{(}->}[ld]\\
						&X \ar[rd] \ar@{} @<5pt>  [rd]| (0.4) {}="b"
\ar @{=>}^{\alpha}  "b";"a"&	& X \ar[ld]& \\
						& & \X & &}.$$
In other words, it is the full subcategory of $\St\left(S\right)/\X \simeq \St\left(S/\X\right)$ (Proposition \ref{prop:stand}) spanned by objects of the form $U \hookrightarrow X \to \X$, with $U  \subseteq X$ open. We claim that this category is canonically equivalent to $\sit\left(\h\right).$
To see this, suppose $\sigma$ is a section of $s$ over $U$ whose image lies in $t^{-1}\left(V\right)$. We can associate to it the map
\begin{eqnarray*}
\alpha\left(\sigma\right):U &\to& \h_1\\
x &\mapsto& \sigma\left(x\right).\\
\end{eqnarray*}
Then, letting $$f:=t\circ \sigma:U \to V,$$ $\alpha\left(\sigma\right):U \to \h_1$ is a continuous natural transformation
$$\xymatrix@R=0.4cm@C=0.4cm{U \ar@{^{(}->}[rd]
 \ar@<+0.5ex>[rrrr]^{f} \ar@{} @<-5pt>  [rrrr]| (0.6) {}="a"
 							& &  & & V \ar@{^{(}->}[ld]\\
						&\h_0 \ar[rd] \ar@{} @<5pt>  [rd]| (0.4) {}="b"
\ar @{=>}^{\alpha\left(\sigma\right)}  "b";"a"&	& \h_0 \ar[ld]& \\
						& & \h. & &}$$
Applying stack-completion, one arrives at an arrow in $\sit\left(\X,X\right).$ Conversely, if one has a diagram of the form, $$\xymatrix@R=0.4cm@C=0.4cm{U \ar@{^{(}->}[rd]
 \ar@<+0.5ex>[rrrr]^{f} \ar@{} @<-5pt>  [rrrr]| (0.6) {}="a"
 							& &  & & V \ar@{^{(}->}[ld]\\
						&X \ar[rd] \ar@{} @<5pt>  [rd]| (0.4) {}="b"
\ar @{=>}^{\alpha}  "b";"a"&	& X \ar[ld]& \\
						& & \X & &},$$
then since the canonical map $$\tilde y\left(\h\right) \to \left[\h\right]$$ is object-wise full and faithful, this must correspond to continuous natural transformation as in the previous diagram. But such a natural transformation, by definition, is a continuous map $$\alpha:U \to \h_1$$ such that $$s\circ\alpha=id_U$$ and $$t\circ\alpha=f.$$ Spelling this out, one arrives at an equivalence of categories.
\end{rmk}

\begin{dfn}\label{dfn:mu}
Given an object $U \subset \h_0$ of $\sit\left(\h\right)$, the space $s^{-1}\left(U\right)$ comes equipped with a canonical left $\h$-action along the target map $t$. Since the target map is a local homeomorphism, this $\h$-space is in fact an equivariant sheaf. We denote it by $m_U$.
\end{dfn}

Extend this to a functor as follows:

Given $\sigma:U \to V$ in $\sit\left(\h\right)$, define a map $$f:s^{-1}\left(U\right) \to s^{-1}\left(V\right)$$ by sending $$x\stackrel{h}{\longrightarrow}y$$ to $$t\left(\sigma(x\right))\stackrel{\sigma(x)^{-1}}{-\!\!\!-\!\!\!-\!\!\!-\!\!\!-\!\!\!\longrightarrow} x\stackrel{h}{\longrightarrow}y,$$ which is clearly $\h$-equivariant. Conversely, given and $\h$-equivariant map $$f:s^{-1}\left(U\right) \to s^{-1}\left(V\right),$$ let $\sigma:=\hat i \circ F \circ \mathbb{1}|_U,$ where $\hat i$ denotes the morphism $$\h_1 \to \h_1$$ which sends an arrow to its inverse. The map $\sigma$ is an object of $\sit\left(\h\right)$ and it is easy to check that this defines a natural bijection

$$\Hom_{\sit\left(\h\right)}\left(U,V\right) \cong \Hom_{\B\h}\left(m_U,m_V\right).$$
Hence we get a full and faithful functor $m:\sit\left(\h\right) \to \B\h$.

\begin{prop}
The left Kan extension of $m$ along the Yoneda embedding $$y:\sit\left(\h\right) \to \Sh(\left(\sit\left(\h\right)\right)$$ is an equivalence between the topos of sheaves for the Grothendieck site $\sit\left(\h\right),$ and the classifying topos $\B\h$ \cite{pres}.
\end{prop}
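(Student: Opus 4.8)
The plan is to recognize $\B\h$ as the topos of sheaves on the site $\sit(\h)$ by the standard principle that a Grothendieck topos is reconstructed from any small, full, generating sub-site carrying the induced topology (the Comparison Lemma; see \cite{sga4}, \cite{sheaves}). Write $y\colon \sit(\h)\to\Sh(\sit(\h))$ for the sheafified Yoneda embedding and $\tilde m:=\operatorname{Lan}_y m$. Because $\B\h$ is cocomplete, $\tilde m$ is a left adjoint, with right adjoint $R$ given by $R(E)(U)=\Hom_{\B\h}(m_U,E)$. The first thing I would verify is that $m$ carries a covering family $\{U_\alpha\hookrightarrow U\}$ to the effective epimorphic family $\{s^{-1}(U_\alpha)\hookrightarrow s^{-1}(U)\}$ in $\B\h$ (open covers pull back along $s$, and in a topos such a surjective family is an effective epimorphism, so $m_U$ is the colimit of its Čech nerve). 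This guarantees that $\tilde m$ is well defined and cocontinuous and, as we check next, that $R$ lands in sheaves; it then remains to prove that $\tilde m\dashv R$ is an adjoint equivalence $\Sh(\sit(\h))\simeq\B\h$.

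The key computation is to identify $R(E)$ explicitly. By Definition \ref{dfn:mu} the object $m_U=s^{-1}(U)$ carries its $\h$-action along the target map $t$, and the units $\mathbb{1}|_U\colon U\hookrightarrow s^{-1}(U)$ form a distinguished section over $U$. I would show that restriction along $\mathbb{1}|_U$ gives a natural bijection between $\h$-equivariant maps $s^{-1}(U)\to E$ and sections of the moment map $\mu_E\colon E\to\h_0$ over $U$: equivariance forces any such map to satisfy $\phi(g)=g\cdot\phi(\mathbb{1}_{s(g)})$, and conversely any section of $\mu_E$ over $U$ extends uniquely to an equivariant map by this formula, continuity being guaranteed precisely because $s$ is a local homeomorphism. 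Hence $R(E)$ is exactly the sheaf of local sections of the local homeomorphism $\mu_E$, restricted to $\sit(\h)$; since sections of a local homeomorphism glue along open covers and the covers of $\sit(\h)$ are images of open covers, $R(E)$ is manifestly a sheaf, so $R$ factors through $\Sh(\sit(\h))$ and the adjunction $\tilde m\dashv R$ restricts to one between $\Sh(\sit(\h))$ and $\B\h$.

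To conclude that this restricted adjunction is an equivalence I would invoke the recognition theorem after checking its two hypotheses. First, $m$ is fully faithful — this was established just before the statement. Second, the objects $m_U$ generate $\B\h$: for an arbitrary equivariant sheaf $E$, each point lies in an open set mapped homeomorphically by $\mu_E$ onto an open $U\subseteq\h_0$, and the corresponding section $U\to E$ yields a map $m_U\to E$; these maps are jointly epimorphic, so $\{m_U\}$ is a generating family and $R$ is conservative. It then follows that $R$ is fully faithful with essential image the sheaves for the topology \emph{induced} by $\B\h$, so the final task is to match this induced topology with the given open-cover topology: open-cover images are epimorphic by the pullback observation above, while for the converse one computes that the image of the map $m_V\to m_U$ attached to a site morphism $\sigma\colon V\to U$ is $s^{-1}\bigl(t\sigma(V)\bigr)$, an open of the form $s^{-1}(W)$ since $t$ and the sections of $s$ are open maps, whence an epimorphic sieve is refined by an open cover. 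With both topologies identified, the Comparison Lemma shows $\tilde m\dashv R$ is an adjoint equivalence, i.e. $\operatorname{Lan}_y m\colon\Sh(\sit(\h))\xrightarrow{\ \simeq\ }\B\h$. I expect the main obstacle to be exactly this last pair of interlocking steps — generation of $\B\h$ by the $m_U$ together with the identification of the induced topology — since this is where the étale hypothesis on $\h$ is indispensable: it is the local-homeomorphism property of $s$ and $t$ that lets one reduce an arbitrary equivariant sheaf to sectionwise data indexed by $\sit(\h)$.
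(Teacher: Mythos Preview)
The paper does not prove this proposition at all; it simply cites it from \cite{pres} (Moerdijk), so there is no ``paper's own proof'' to compare against. Your argument via the Comparison Lemma---checking that $m$ is full and faithful, that the $m_U$ generate $\B\h$, and that the induced topology coincides with the open-cover topology---is exactly the standard route to this result and is essentially how the cited reference establishes it. One small point to tighten: in your final step, after computing that the image of $m(\sigma)\colon m_V\to m_U$ is $s^{-1}(t\sigma(V))$, you conclude that an epimorphic sieve is refined by an open cover; to make this airtight you should note that although $t\sigma_\alpha\colon V_\alpha\to W_\alpha$ need not be injective, it is an \'etale surjection and hence admits local sections, so after a further refinement the inclusions $W_{\alpha,i}\hookrightarrow U$ do factor through the $\sigma_\alpha$ and lie in the sieve.
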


\begin{dfn}
By a \textbf{small stack} over an \'etale stack $\X\simeq\left[\h\right]$, we mean a stack $\Z$ over $\sit\left(\h\right)$. We denote the 2-category of small stacks over $\X$ by $\St\left(\X\right)$.
\end{dfn}

\begin{rmk}
This definition does not depend on the choice of presenting groupoid since, if $\G$ is another groupoid such that $\left[\G\right]\simeq \X$, then $$\Sh\left(\sit\left(\G\right)\right) \simeq \B\G \simeq \B\h \simeq \Sh\left(\sit\left(\h\right)\right)$$ and hence $\St\left(\sit\left(\G\right)\right) \simeq \St\left(\sit\left(\h\right)\right)$ by the Comparison Lemma for stacks \cite{sga4}. A more intrinsic equivalent definition is that a small stack over $\X$ is a stack over the topos $\Sh\left(\X\right)$ in the sense of Giraud in \cite{Giraud}, that is a stack over $\Sh\left(\X\right)$ with respect to the canonical Grothendieck topology, which in this case is generated by jointly epimorphic families. Even better, since we are dealing with \'etale stacks, in light of Corollary \ref{cor:etendue}, we may instead work with the bicategory of \'etendues. Then, a small stack over an etendue $\E$ is precisely a stack over $\E,$ with its canonical site.
\end{rmk}

\subsection{The \'etal\'e realization of a small stack}

Recall that for a sheaf $F$ over a space $X$, the \'etal\'e space (espace \'etal\'e) is a space $E \to X$ over $X$ via a local homeomorphism (\'etale map), such that the sheaf of sections of $E \to X$ is isomorphic to $F$. In fact, the \'etal\'e space can be constructed for any presheaf, and the corresponding sheaf of sections is isomorphic to its sheafification. As a set, $E$ is the disjoint union of the stalks of $F$ and the topology is induced by local sections.

Abstractly, this construction may be carried out as follows:

Consider the category of open subsets of $X$, $\mathcal{O}\left(X\right)$, where the arrows are inclusions, as in Definition \ref{dfn:open}. This category, equipped with its natural Grothendieck topology, is of course the site over which ``sheaves over $X$'' are sheaves. There is a canonical functor $j:\mathcal{O}\left(X\right) \to S/X$ which sends an open $U \subseteq X$ to $U \hookrightarrow X$. Hence, there is an induced adjunction

$$\xymatrix{\Set^{\mathcal{O}\left(X\right)^{op}} \ar@<-0.5ex>[r]_-{L} & S/X \ar@<-0.5ex>[l]_-{\Gamma}}.$$
Here, $L$ takes a presheaf to its \'etal\'e space and $\Gamma$ takes a space $T \to X$ over $X$ to its sheaf of sections. The composite $\Gamma \circ L$ is isomorphic to the sheafification functor $a:\Set^{\mathcal{O}\left(X\right)} \to \Sh\left(X\right)$, and the image of $L$ lies completely in the subcategory $Et\left(X\right)$ of $S/X$ spanned by spaces over $X$ via a local homeomorphism. When restricted to $\Sh\left(X\right)$ and $Et\left(X\right)$, the adjoint pair $L \rt\ \Gamma$ is an equivalence of categories

$$\xymatrix{\Sh\left(X\right) \ar@<-0.5ex>[r]_-{L} & Et\left(X\right) \ar@<-0.5ex>[l]_-{\Gamma}}.$$
This construction can be done even more topos-theoretically as follows:

The canonical functor $j:\mathcal{O}\left(X\right) \to S/X$ produces three adjoint functors $j_! \rt j^* \rt j_*$

$$\xymatrix{\Sh\left(X\right) \ar@<-0.8ex>[r] \ar@<0.8ex>[r]  & \Sh\left(S/X\right) \ar[l]},$$
where the Grothendieck topology on $S/X$ is induced from the open cover topology on $S$. For a sheaf $F$ over $X$, $j_!\left(X\right)=y\left(L\left(F\right)\right)$, where $y$ denotes the Yoneda embedding $y:S/X \hookrightarrow \Sh\left(S/X\right)$.

Hence, $$y \circ L:\Set^{\mathcal{O}\left(X\right)^{op}} \to \Sh\left(S/X\right)$$ can be identified with the left Kan extension of $$\mathcal{O}\left(X\right) \stackrel{j}{\longrightarrow} S/X \stackrel{y}{\hookrightarrow} \Sh\left(S/X\right)$$ along Yoneda.

We now turn our attention to generalizing this construction to work when both $X$ and $F$ are stacks. Let $\h$ be an \'etale groupoid and let $\X \simeq  \left[\h\right].$ In light of the remark after Definition \ref{dfn:site}, there is a canonical fully faithful functor $$j_\h:\sit\left(\h\right) \to S/\X$$ which sends $U \subseteq \h_0$ to $U \hookrightarrow \h_0 \to \X$. This produces three adjoint functors $j_! \rt j^* \rt j_*$

$$\xymatrix{Gpd^{\sit\left(\h\right)^{op}} \ar@<-0.8ex>[r] \ar@<0.8ex>[r]  & \St\left(S/\X\right) \ar[l]}.$$
We denote $j_!$ by $L$ and $j^*$ by $\Gamma$.

More explicitly, $j_!$ is the weak left Kan extension of $j_\h$ along Yoneda, and $$\Gamma(\Y)(U)=\Hom_{ \St\left(S/\X\right)}\left(y\left(U \hookrightarrow \h_0 \to \X\right),\Y\right).$$

\begin{rmk}

Under the equivalence given in Proposition \ref{prop:stand}, $\Y$ may be viewed as stack $\bar\Y$ in $\St\left(S\right)$ together with a map $$f:\bar \Y \to \X.$$ From this point of view, $\Gamma(f:\bar \Y \to \X)$ assigns an open subset $U$ of $\h_0$ the groupoid of ``sections of $f$ over $U$,'' which can be described explicitly as the groupoid whose objects are pairs $\left(\sigma,\alpha\right)$ which fit into a $2$-commutative diagram
$$\xymatrix@R=0.6cm{& & \bar \Y \ar[dd]^-{f} & &\\
& & \ar@{} @<3pt>  [r]| (-0.1) {}="a"  &\\
U \ar@{^{(}->}[r] \ar@<+.7ex>[rruu]^-{\sigma} & \h_0\ar[r] \ar@{} @<+12pt>  [r]  | (-.1) {}="b" \ar @{=>}^{\alpha}  "b";"a"& \X,&}$$
and whose morphisms $\left(\sigma,\alpha\right) \to \left(\sigma',\alpha'\right)$ are $2$-cells
$$ \xygraph{!{0;(2,0):(0,.5)::}
{U}="a" [r] {\bar \Y}="b"
"a":@/^{1.5pc}/"b"^-{\sigma}|(.4){}="l"
"a":@/_{1.5pc}/"b"_-{\sigma'}
"l" [d(.3)]  [r(0.1)] :@{=>}^{\omega} [d(.5)]} $$
such that the following diagram commutes:
$$\xymatrix{j_{\h}\left(U\right) \ar@{=>}[r]^-{\alpha} \ar@{=>}[rd]_-{\alpha'} & f \circ \sigma \ar@{=>}[d]^-{f\omega}\\
& f\circ \sigma'.}$$
\end{rmk}

\begin{dfn}
Let $\Z$ be a weak presheaf in groupoids over $\sit\left(\h\right)$. Then $L\left(\Z\right)$ is the \textbf{\'etal\'e realization} of $\Z$.
\end{dfn}

\begin{prop}
Let $\Y$ be any stack in $\St\left(S/\X\right)$. Then $\Gamma\left(\X\right)$ is a stack.
\end{prop}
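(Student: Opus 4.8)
The plan is to verify directly that the groupoid-valued presheaf $\Gamma(\Y)$ satisfies descent for the covering families of $\sit(\h)$, by transporting the descent already enjoyed by $\Y$ on the larger site $S/\X$. Recall that $\Gamma = j^*$ is restriction along the fully faithful functor $j_\h:\sit(\h)\to S/\X$, so that $\Gamma(\Y)(U)\simeq\Y(j_\h(U))$ for every open $U\subseteq\h_0$; thus the task reduces to comparing the two Grothendieck topologies. A covering family of $U$ in $\sit(\h)$ is, by definition, the image under $i$ of an open cover $\{V_i\hookrightarrow U\}$ of $U\subseteq\h_0$, and $j_\h$ sends it to the family $\{j_\h(V_i)\to j_\h(U)\}$, which is exactly the cover of $j_\h(U)=(U\hookrightarrow\h_0\to\X)$ induced by the open cover $\{V_i\}$ in the open-cover topology on $S/\X$.

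The heart of the matter is to match the overlaps. First I would compute that the pullback in $\sit(\h)$ of two inclusion-type arrows $i(V_i\hookrightarrow U)$ and $i(V_j\hookrightarrow U)$ is $i(V_i\cap V_j\hookrightarrow U)$: since $i$ sends inclusions to restrictions of the unit, post-composition with $i(V_i\hookrightarrow U)$ acts as the identity under the composition law of $\sit(\h)$, so a compatible pair of arrows into $V_i$ and $V_j$ is the same datum as a single section whose $t$-image lands in $V_i\cap V_j$. Applying $j_\h$, these double overlaps---and likewise the triple overlaps $i(V_i\cap V_j\cap V_k)$---become precisely the fibre products $j_\h(V_i)\times_{j_\h(U)}j_\h(V_j)\simeq j_\h(V_i\cap V_j)$ computed in $S/\X$. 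Consequently the descent groupoid of $\Gamma(\Y)$ associated to the cover $\{V_i\}$ of $U$ is isomorphic, term by term and compatibly, to the descent groupoid of $\Y$ associated to the cover $\{j_\h(V_i)\to j_\h(U)\}$. Because $\Y$ is a stack over $S/\X$, the canonical map from $\Y(j_\h(U))$ to the latter homotopy limit is an equivalence; transporting back along $\Gamma(\Y)(U)\simeq\Y(j_\h(U))$ shows that $\Gamma(\Y)(U)$ is equivalent to the descent groupoid over $\{V_i\}$. Since every covering family of $\sit(\h)$ arises in this way, $\Gamma(\Y)$ satisfies descent and is a stack. In the language of sites, what is being used is that $j_\h$ is a continuous and cocontinuous morphism of sites, so that restriction along it preserves the stack condition.

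As a cross-check, one may instead argue concretely: by Proposition \ref{prop:stand} present $\Y$ as a map $f:\bar\Y\to\X$ with $\bar\Y\in\St(S)$, so that, as in the remark preceding the statement, $\Gamma(\Y)(U)$ is the groupoid of sections $(\sigma,\alpha)$ of $f$ over $U$; descent for such sections along an open cover of $U$ is then immediate from the fact that $\bar\Y$ is itself a stack over $S$, which guarantees that the maps $\sigma$ and the structural $2$-cells $\alpha$ glue uniquely up to coherent isomorphism. I expect the only real obstacle to be the bookkeeping in the overlap computation: one must check carefully that the pullbacks of the covering arrows in $\sit(\h)$ are the naive intersections, even though a general morphism of $\sit(\h)$ is a section of $s$ rather than an honest inclusion, so that the two descent diagrams genuinely coincide.
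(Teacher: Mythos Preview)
Your proposal is correct and follows the same idea as the paper's proof, which consists of the single sentence ``This is immediate from the fact that $\Y$ satisfies descent.'' You have simply spelled out in detail what the paper leaves as an exercise: that $j_\h$ carries the covering families of $\sit(\h)$ (and their overlaps, which you correctly identify as the intersections $V_i\cap V_j$) to the corresponding covers and overlaps in $S/\X$, so that descent for $\Y$ transports to descent for $\Gamma(\Y)$.
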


\begin{proof}
This is immediate from the fact that $\Y$ satisfies descent.
\end{proof}

In fact, we can say more:

\begin{thm}
The 2-functor $\Gamma \circ L$ is equivalent to the stackification 2-functor $a:Gpd^{\sit\left(\h\right)^{op}} \to \St\left(\sit\left(\h\right)\right)\simeq \St(\X).$
\end{thm}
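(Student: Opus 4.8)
The plan is to prove the statement by the universal property of stackification, after reducing the composite $\Gamma\circ L$ to a \emph{restriction of a stackification} and then invoking the Comparison Lemma. Recall that $L=j_!$ is the weak left Kan extension of $j_\h$ (composed with the Yoneda embedding into $\St(S/\X)$) along Yoneda, landing in the cocomplete $2$-category $\St(S/\X)$. Since stackification $a_{S/\X}$ is a left $2$-adjoint, hence preserves weak colimits, and since the presheaf-level left Kan extension of $y\circ j_\h$ along Yoneda is just $(j_\h)_!$, I would first identify
$$L\simeq a_{S/\X}\circ (j_\h)_!,$$
with $(j_\h)_!:Gpd^{\sit(\h)^{op}}\to Gpd^{(S/\X)^{op}}$ the presheaf-level Kan extension. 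On the other side $\Gamma=j^*$ is restriction along $j_\h$, so $\Gamma\simeq (j_\h)^*\circ\iota$ with $\iota$ the inclusion of stacks into weak presheaves; that this lands in stacks is exactly the earlier proposition that $\Gamma(\Y)$ is always a stack. Thus $\Gamma\circ L\simeq (j_\h)^*\circ\iota\circ a_{S/\X}\circ(j_\h)_!$.

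The first real step is to replace $(j_\h)^*\circ\iota\circ a_{S/\X}$ by $a_{\sit(\h)}\circ(j_\h)^*$, i.e. to commute restriction along $j_\h$ past stackification. This is the content of the Comparison Lemma for stacks (\cite{sga4}), whose hypotheses I must verify: $j_\h:\sit(\h)\to S/\X$ is a dense, continuous and cocontinuous fully faithful morphism of sites. Full faithfulness is stated in the excerpt; continuity and cocontinuity are immediate because the topology on $\sit(\h)$ is by definition generated by the $j_\h$-images of open covers of $\h_0$, so restriction along $j_\h$ preserves both stacks and local equivalences. Density is the one genuinely geometric input: every object $T\to\X$ of $S/\X$ must be covered by objects in the image of $j_\h$. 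This follows from the \'etale atlas $\h_0\to\X$: pulling it back along $T\to\X$ gives an \'etale cover $T\times_\X\h_0\to T$, and restricting to opens $W\subseteq T\times_\X\h_0$ on which the projection to $\h_0$ is an open embedding onto some $U$ yields $(W\to\X)\cong j_\h(U)$, with these $W$ covering $T$.

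Granting the Comparison Lemma we get $\Gamma\circ L\simeq a_{\sit(\h)}\circ(j_\h)^*\circ(j_\h)_!$, so it remains to show $(j_\h)^*\circ(j_\h)_!\simeq\mathrm{id}$. This is the formal consequence of full faithfulness of $j_\h$: for $V\in\sit(\h)$ the pointwise description of $(j_\h)_!$ evaluates $(j_\h)^*(j_\h)_!\Z(V)$ as the weak $2$-colimit of the groupoids $\Z(U)$ over the comma $2$-category $(j_\h\downarrow j_\h V)$, and full faithfulness identifies this indexing $2$-category with $\sit(\h)/V$, which has the terminal object $\mathrm{id}_V$; hence the weak colimit collapses to $\Z(V)$, naturally in $\Z$. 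Combining the two steps gives $\Gamma\circ L\simeq a_{\sit(\h)}$, which under $\St(\sit(\h))\simeq\St(\X)$ is precisely the asserted stackification $2$-functor.

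The main obstacle is $2$-categorical bookkeeping rather than any single hard idea: one must check that the Comparison Lemma holds for \emph{stacks} of groupoids and not merely sheaves of sets, that $(j_\h)_!$ admits the pointwise (bi)colimit description used above, and that the terminal-object collapse survives the coherence data of weak $2$-colimits. As an independent check, in the topological case where $\B\h\simeq\Sh(\X)$ has enough points, I would test the unit $\Z\to\Gamma L\,\Z$ stalkwise: at each point both sides compute the stalk of $\Z$, confirming that the unit is a local equivalence into a stack and hence the stackification.
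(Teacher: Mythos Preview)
Your overall strategy---factor $L\simeq a_{S/\X}\circ(j_\h)_!$, commute restriction with stackification, then use full faithfulness of $j_\h$ to collapse $(j_\h)^*(j_\h)_!$ to the identity---is sound and is in fact exactly what the paper does, only the paper carries it out by an explicit plus-construction computation rather than by naming a lemma. The paper writes $G(\Z)=(j_\h)_!\Z$, checks $G(\Z)(j_\h W)\simeq\Z(W)$ (your full-faithfulness step), and then shows $G(\Z)^+(j_\h V)\simeq\Z^+(V)$ by observing that the covering families of $j_\h(V)$ in $S/\X$ are precisely the open covers of $V$, hence coincide with the covers of $V$ in $\sit(\h)$.

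There is, however, a genuine error in your justification of the commutation step. You invoke the Comparison Lemma and claim density of $j_\h$: that every object $T\to\X$ of $S/\X$ is covered by objects in the image of $j_\h$. This is false. Your argument pulls back the atlas to get $T\times_\X\h_0\to T$ and then looks for opens $W$ on which the projection to $\h_0$ is an open embedding; but the projection $T\times_\X\h_0\to\h_0$ is the pullback of $T\to\X$, which is an \emph{arbitrary} map, so there is no reason for it to be a local homeomorphism. Concretely, take $\X=\h_0=\mathbb{R}$ and $T=\mathbb{R}^2\to\mathbb{R}$ a projection: then $T\times_\X\h_0\cong\mathbb{R}^2$ and the map to $\h_0$ is the same projection, nowhere an embedding. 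More tellingly, if density held, the Comparison Lemma would give $\St(S/\X)\simeq\St(\sit(\h))$, i.e.\ large stacks over $\X$ equivalent to small stacks over $\X$, which the paper is at pains to distinguish.

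Fortunately density is not what you need. The commutation $(j_\h)^*\iota\,a_{S/\X}\simeq a_{\sit(\h)}(j_\h)^*$ follows already from the facts you correctly verified: $j_\h$ is fully faithful, continuous, and cocontinuous, and indeed the covering sieves of $j_\h(V)$ in $S/\X$ are \emph{exactly} the $j_\h$-images of the covering sieves of $V$ in $\sit(\h)$ (both being generated by open covers of $V$). This bijection of covers is precisely what makes the plus-constructions agree after restriction, and it is the argument the paper gives. Drop the density claim and the appeal to the Comparison Lemma, replace it with this cover-correspondence (or equivalently continuity plus cocontinuity), and your proof goes through and matches the paper's.
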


\begin{proof}

Suppose $\Z$ is a weak presheaf in groupoids over $\sit\left(\h\right)$. Then $$\Gamma\left(\Z\right)\left(V\right) \simeq L\left(\Z\right)\left(V \hookrightarrow \h_0 \to \X\right).$$ Let $G\left(\Z\right)$ be the weak presheaf in groupoids over $S/\X$ given by $$G\left(\Z\right) \simeq  \underset{U \to \Z} \hc y\left(U \hookrightarrow \h_0 \to \X\right).$$ Then $\Gamma L\left(\Z\right)\left(V\right) \simeq a\left(G\left(\Z\right)\right)\left(V\hookrightarrow \h_0 \to \X\right)$, where $a$ is stackification.\\

Note:

\begin{eqnarray*}
\resizebox{1.5in}{!}{$G\left(\Z\right)\left(W \hookrightarrow \h_0 \to \X\right)$} &\simeq& \resizebox{3.5in}{!}{$\underset{U \to \Z} \hc \Hom_{\St\left(S/\X\right)}\left(y\left(W \hookrightarrow \h_0 \to \X\right),y\left(U \hookrightarrow \h_0 \to \X\right)\right)$}\\
&\simeq& \underset{U \to \Z} \hc \Hom_{S/\X}\left(W \hookrightarrow \h_0 \to \X,U \hookrightarrow \h_0 \to \X\right)\\
&\simeq& \underset{U \to \Z} \hc \Hom_{\sit\left(\h\right)}\left(W,U\right)\\
&\simeq& \left(\underset{U \to \Z} \hc y\left(U\right)\right) \left(W\right)\\
&\simeq& \Z\left(W\right).\\
\end{eqnarray*}
Given any weak presheaf in groupoids $\W$ over a Grothendieck site $\left(\C,J\right)$, we define $\W^+$ by
$$\W^{+}\left(C\right) =  \underset{\left(C_i \to C\right)_i} \hc \hl \left[ {\prod \limits_i{\W\left(C_i\right)}} \rrarrow {\prod \limits_{i,j}{\W\left(C_{ij}\right)}} \rrrarrow {\prod\limits_{i,j,k} {\W\left(C_{ijk}\right)}}\right].$$
Then $a\left(\W\right)=\W^{+++}$ (See for instance \cite{htt}, section 6.5.3). Now,

\begin{eqnarray*}
\resizebox{1in}{!}{$G\left(\Z\right)^{+}\left(j_{\h}\left(V\right)\right)$}&=& \resizebox{3.88in}{!}{$\underset{\left(V_i \hookrightarrow V\right)_i} \hc \hl \left[{\prod \limits_i{G\left(j_{\h}\left(V_i\right)\right)}} \rrarrow {\prod \limits_{i,j}{G\left(j_{\h}\left(V_{ij}\right)\right)}} \rrrarrow {\prod\limits_{i,j,k} {G\left(j_{\h}\left(V_{ijk}\right)\right)}}\right]$}\\
&\simeq& \underset{\left(V_i \hookrightarrow V\right)_i}\hc \hl \left[ {\prod \limits_i{\Z\left(V_i\right)}} \rrarrow {\prod \limits_{i,j}{\Z\left(V_{ij}\right)}} \rrrarrow {\prod\limits_{i,j,k} {\Z\left(V_{ijk}\right)}}\right]\\
&\simeq&\Z^{+}\left(V\right).\\
\end{eqnarray*}
Hence

\begin{eqnarray*}
\Gamma L\left(\Z\right)\left(V\right) &\simeq& a\left(G\left(\Z\right)\right)\left(V\hookrightarrow \h_0 \to \X\right)\\
&\simeq& \left(G\left(\Z\right)\right)^{+++}\left(V\hookrightarrow \h_0 \to \X\right)\\
&\simeq& \Z^{+++}\left(V\right)\\
&\simeq& a\left(\Z\right)\left(V\right).\\
\end{eqnarray*}

\end{proof}

\begin{cor}\label{cor:rest}
The adjunction $L$  $\rt$  $\Gamma$ restricts to an adjunction
$$\xymatrix{\St\left(\X\right)  \ar@<-0.5ex>[r]_-{\tilde L}  & \St\left(S/\X\right)\ar@<-0.5ex>[l]_-{\tilde \Gamma}},$$
where $\tilde L$ and $\tilde \Gamma$ denote the restrictions. This furthermore restricts to an adjoint-equivalence
$$\xymatrix{\St\left(\X\right)  \ar@<-0.5ex>[r]_-{\bar L}  & \mathscr{E}ss\left(L\right)\ar@<-0.5ex>[l]_-{\bar \Gamma}},$$
equivalence between $\St\left(\X\right)$ and its essential image under $L$.
\end{cor}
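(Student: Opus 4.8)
The plan is to treat this as a formal consequence of the preceding Theorem ($\Gamma \circ L \simeq a$, stackification) together with the earlier Proposition that $\Gamma$ carries stacks to stacks; all the real content sits in those two results, and the Corollary is pure adjoint-functor bookkeeping. First I would pin down the two restrictions. Since $L=j_!$ lands in $\St\left(S/\X\right)$ by construction, its restriction to the full sub-$2$-category $\St\left(\X\right)\hookrightarrow Gpd^{\sit\left(\h\right)^{op}}$ of stacks gives $\tilde L:\St\left(\X\right)\to\St\left(S/\X\right)$. Dually, the Proposition preceding the Theorem says $\Gamma\left(\Y\right)$ is a stack whenever $\Y\in\St\left(S/\X\right)$, so $\Gamma$ restricted to $\St\left(S/\X\right)$ factors through the inclusion $\St\left(\X\right)\hookrightarrow Gpd^{\sit\left(\h\right)^{op}}$, yielding $\tilde\Gamma:\St\left(S/\X\right)\to\St\left(\X\right)$.

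To see $\tilde L\dashv\tilde\Gamma$, I would transport the adjunction equivalence along the fullness of the stack inclusion: for $\Z\in\St\left(\X\right)$ and $\Y\in\St\left(S/\X\right)$,
\[
\Hom_{\St\left(S/\X\right)}\left(\tilde L\Z,\Y\right)\simeq\Hom_{Gpd^{\sit\left(\h\right)^{op}}}\left(\Z,\Gamma\Y\right)\simeq\Hom_{\St\left(\X\right)}\left(\Z,\tilde\Gamma\Y\right),
\]
the last step because $\St\left(\X\right)$ is a full sub-$2$-category of $Gpd^{\sit\left(\h\right)^{op}}$ and $\Gamma\Y=\tilde\Gamma\Y$ already lies there. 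This establishes the first displayed adjunction.

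For the adjoint-equivalence I would show that $\tilde L$ is fully faithful by checking that the unit $\eta:\mathrm{id}_{\St\left(\X\right)}\Rightarrow\tilde\Gamma\tilde L$ is an equivalence. By the Theorem, $\tilde\Gamma\tilde L\left(\Z\right)=\Gamma L\left(\Z\right)\simeq a\left(\Z\right)$, and for a stack $\Z$ the stackification map $\Z\to a\left(\Z\right)$ is an equivalence, since stacks are precisely the local objects of the idempotent stackification; under the comparison of the Theorem, $\eta_\Z$ is identified with this map, hence is an equivalence. The standard characterization of fully faithful left adjoints then applies. Moreover, the same adjunction computation shows $L\left(\W\right)\simeq L\left(a\W\right)$ for every presheaf $\W$ — both corepresent the $2$-functor $\Y\mapsto\Hom_{Gpd^{\sit\left(\h\right)^{op}}}\left(\W,\Gamma\Y\right)$ on $\St\left(S/\X\right)$, using that $\Gamma\Y$ is a stack — so the essential image of $L$ coincides with that of $\tilde L$, namely $\mathscr{E}ss\left(L\right)$. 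Restricting the codomain of $\tilde L$ to $\mathscr{E}ss\left(L\right)$ makes $\bar L$ essentially surjective by definition and fully faithful by the above, hence an equivalence; taking $\bar\Gamma$ to be the corestriction of $\tilde\Gamma$, the triangle identities force the counit of a fully faithful left adjoint to be an equivalence exactly on its essential image, so $\bar L\dashv\bar\Gamma$ is an adjoint-equivalence.

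The only genuinely delicate point, and the one I would spend care on, is the compatibility used above: that under the equivalence $\Gamma L\simeq a$ produced in the Theorem, the unit $\eta_\Z$ really corresponds to the canonical stackification map $\Z\to a\Z$, rather than merely having an equivalent target. This follows because the comparison in the Theorem is natural and both $\eta_\Z$ and the stackification unit are the universal maps classifying the identity of $L\Z$, respectively of $a\Z$; making this naturality explicit is the essential, though routine, verification. Everything else is formal two-categorical adjunction calculus.
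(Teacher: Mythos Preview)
Your proof is correct and follows essentially the same route as the paper: deduce the restricted adjunction from $\Gamma$ landing in stacks, identify the unit with the stackification map via the Theorem $\Gamma L\simeq a$, and show the essential images of $L$ and $\tilde L$ coincide via $L\left(\W\right)\simeq L\left(a\W\right)$. The only notable difference is in that last step: you argue $L\left(\W\right)\simeq L\left(a\W\right)$ by corepresentability (both corepresent $\Y\mapsto\Hom\left(\W,\Gamma\Y\right)$ on $\St\left(S/\X\right)$, using that $\Gamma\Y$ is already a stack), whereas the paper proves it as a separate Proposition by observing that $\tilde L\circ a$ and $L$ are both weak-colimit-preserving and agree on representables. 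Your argument is arguably more self-contained here, while the paper's is slightly more reusable downstream; both are valid and the overall structure is the same.
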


The first part of this Corollary is clear. In general, a $2$-adjunction restricts to an equivalence between, on one hand, those objects for which the component of the unit is an equivalence, and on the other hand, those objects for which the component of the co-unit is an equivalence. Hence, it suffices to prove that the essential image of $L$ is the same as the essential image of $\bar L$. In fact, we will prove more, namely:

\begin{prop}\label{relp}
Suppose $\Z$ is a weak presheaf of groupoids over $\sit\left(\h\right)$. Then $L\left(\Z\right)\simeq L\left(a\left(\Z\right)\right)).$
\end{prop}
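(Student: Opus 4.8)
The plan is to deduce this from the adjunction $L \dashv \Gamma$ together with the universal property of stackification. The mechanism is the standard one: because the right adjoint $\Gamma$ takes values in stacks (the local objects), the left adjoint $L$ is forced to send the stackification map $\Z \to a\left(\Z\right)$ to an equivalence. The key input, already available, is the Proposition immediately preceding the theorem above, which guarantees that $\Gamma\left(\Y\right)$ is a genuine stack over $\sit\left(\h\right)$ for every stack $\Y$ over $S/\X$. This is precisely what allows the universal property of stackification to be invoked.

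Concretely, I would fix an arbitrary stack $\Y \in \St\left(S/\X\right)$ and write the adjunction equivalence
$$\Hom_{\St\left(S/\X\right)}\left(L\left(\Z\right),\Y\right) \simeq \Hom_{Gpd^{\sit\left(\h\right)^{op}}}\left(\Z,\Gamma\left(\Y\right)\right),$$
together with its analogue for $a\left(\Z\right)$ in place of $\Z$. Writing $\eta:\Z \to a\left(\Z\right)$ for the stackification map, the fact that $\Gamma\left(\Y\right)$ is a stack means, by the universal property of stackification, that precomposition with $\eta$ gives an equivalence of groupoids
$$\Hom\left(a\left(\Z\right),\Gamma\left(\Y\right)\right) \xrightarrow{\;\sim\;} \Hom\left(\Z,\Gamma\left(\Y\right)\right).$$
Chaining the two adjunction equivalences around this one yields
$$\Hom\left(L\left(a\left(\Z\right)\right),\Y\right) \simeq \Hom\left(a\left(\Z\right),\Gamma\left(\Y\right)\right) \simeq \Hom\left(\Z,\Gamma\left(\Y\right)\right) \simeq \Hom\left(L\left(\Z\right),\Y\right).$$

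Next I would invoke naturality of the adjunction isomorphism in its first variable: the square relating the two adjunction equivalences along $\eta$ on the presheaf side and along the induced morphism $L\left(\eta\right):L\left(\Z\right) \to L\left(a\left(\Z\right)\right)$ on the stack side commutes up to coherent isomorphism. Since the stackification edge of that square is an equivalence, the opposite edge, namely precomposition with $L\left(\eta\right)$, is an equivalence of groupoids for every $\Y$. As $L\left(\Z\right)$ and $L\left(a\left(\Z\right)\right)$ are both objects of the $2$-category $\St\left(S/\X\right)$, the $2$-categorical Yoneda lemma then forces $L\left(\eta\right)$ itself to be an equivalence, giving $L\left(\Z\right) \simeq L\left(a\left(\Z\right)\right)$.

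The main point requiring care is the $2$-categorical bookkeeping: one must ensure that the three displayed equivalences assemble into a single pseudonatural transformation of representable $2$-functors $\St\left(S/\X\right) \to Gpd$ induced by the $1$-morphism $L\left(\eta\right)$, so that the representable form of the $2$-Yoneda lemma applies. The genuinely substantive ingredient, that $\Gamma\left(\Y\right)$ lands in stacks, is already established, so apart from this coherence check the argument is purely formal. A more hands-on alternative would be to unwind $L\left(\Z\right) \simeq \underset{U \to \Z} \hc y\left(U \hookrightarrow \h_0 \to \X\right)$ and $L\left(a\left(\Z\right)\right)$ as weak colimits and show the comparison is a local equivalence of stacks directly, essentially re-running the $\left(-\right)^{+}$ computation of the previous theorem; the adjunction argument is cleaner and avoids this.
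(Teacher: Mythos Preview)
Your proof is correct. It is the standard argument that a left adjoint inverts the unit of a reflective localization whenever its right adjoint already lands in the local objects: since $\Gamma$ takes values in stacks, the adjunction forces $L$ to invert the stackification map $\eta:\Z\to a\left(\Z\right)$, and $2$-Yoneda finishes the job. The coherence check you flag is routine.

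The paper's proof is a different one-liner: it observes that $L$ and $\tilde L\circ a$ are both weak-colimit-preserving $2$-functors out of $Gpd^{\sit\left(\h\right)^{op}}$ which agree on representables, and hence are equivalent. This uses the fact that a presheaf $2$-category is freely generated under weak colimits by its representables, so a weak-colimit-preserving functor is determined by its restriction to those. Your approach trades this ``density'' principle for the universal property of stackification combined with the already-established fact that $\Gamma$ lands in stacks; the paper's approach trades the explicit use of that proposition for the colimit-preservation of $\tilde L$ (which follows from the restricted adjunction of Corollary~\ref{cor:rest}). Both are entirely formal once the earlier results are in hand; the paper's version is marginally shorter, while yours makes the dependence on the proposition about $\Gamma$ more transparent.
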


\begin{proof}
$\tilde L \circ a$ and $L$ are both weak colimit preserving and agree on representables.
\end{proof}

\begin{rmk}
If $\X$ is equivalent to a space $X$, then this construction generalizes the \'etal\'e space construction from sheaves over $X$ to stacks over $X$ (in the ordinary sense). In the particular case when the stack over $X$ is a sheaf of sets, then its \'etal\'e realization is its (Yoneda-embedded) \'etal\'e space.
\end{rmk}

\section{A Concrete Description of \'Etal\'e Realization}\label{sec:conc}
The construction given for the \'etal\'e realization of a small stack over an \'etale stack, as of now, is rather abstract, since it is given as a weak left Kan extension. In order to work with this construction, we wish to give a more concrete description of it. To accomplish this, it is useful first to have a more concrete hold on how to represent these small stacks themselves.

For a general Grothendieck site $\left(\C,J\right)$, one way of representing stacks is by groupoid objects in sheaves. Given a groupoid object $\mathbb{G}$ in $\Sh\left(\C\right)$, it defines a strict presheaf of groupoids by assigning an object $C$ of $\C$ the groupoid $$\Hom_{Gpd\left(\Sh\left(\C\right)\right)}\left(y\left(C\right)^{id},\mathbb{G}\right),$$ where $y\left(C\right)^{id}$ is the groupoid object in sheaves with objects $y\left(C\right)$ and with only identity arrows, where $y$ denotes the Yoneda embedding. This strict presheaf is a sheaf of groupoids. In fact, there is an equivalence of $2$-categories between groupoid objects in sheaves, and sheaves of groupoids. Moreover, every stack on $\left(\C,J\right)$ is equivalent to the stackification of such a strict presheaf arising from a groupoid object in sheaves. For details see Appendix \ref{App:sheavess}.

In our case, we have a nice description of sheaves on $\sit\left(\h\right)$, namely, it is the classifying topos $\B\h$ of equivariant sheaves. Hence, we can model small stacks over $\left[\h\right]$ by groupoid objects in $\h$-equivariant sheaves. In the following subsection, we will describe a way to construct from a given groupoid object $\K$ in equivariant sheaves, an \'etale stack over $\left[\h\right]$ which will turn out to be equivalent to the \'etale realization of the stack over $\sit\left(\h\right)$ associated to $\K$.

\subsection{Generalized action groupoids}\label{subsec:action}
\begin{dfn}
Let $\h$ be any $S$-groupoid and let $\K$ be a groupoid object in $\h$-spaces. In particular we have two $\h$-spaces $\left(\K_0,\mu_0,\rho_0\right)$ and $\left(\K_1,\mu_1,\rho_1\right)$ which are the underlying objects and arrows of $\K$. Note that the source map $$s:\left(\K_1,\mu_1,\rho_1\right) \to \left(\K_0,\mu_0,\rho_0\right)$$ and target map $$t:\left(\K_1,\mu_1,\rho_1\right) \to \left(\K_0,\mu_0,\rho_0\right)$$ are maps $s,t:\left(\K_1,\mu_1,\right) \to \left(\K_0,\mu_0,\right)$ in $S/\h_0$, hence $\mu_0 \circ s=\mu_0 \circ t=\mu_1$. Similarly for other structure maps.

We define an $S$-groupoid $\h \ltimes \K$ as follows:

The space of \textbf{objects} of $\h \ltimes \K$ is $\K_0$. An \textbf{arrow} from $x$ to $y$ is a pair $\left(h,k\right)$ with $h \in \h_1$ and $k \in \K_1$ such that $k:hx \to y$ (which implicitly means that $s(h)=\mu_0(x)$). We denote such an arrow pictorially as

$$x \stackrel{h}{\dashrightarrow} hx \stackrel{k}{\rightarrow}y.$$
In other words, $\left(\h \ltimes \K\right)_1$ is the fibered product $\h_1 \times_{\h_0} \K_1$:

$$\xymatrix{\h_1 \times_{\h_0} \K_1 \ar[d]_-{pr_1} \ar[r]^-{pr_2} & \K_1 \ar[d]^{\mu_1} \\
\h_1 \ar[r]^{t} & \h_0,}\\$$
and the \textbf{source} and \textbf{target} maps are given by $$s\left(h,k\right)=h^{-1}s\left(k\right)$$ and $$t\left(h,k\right)=t\left(k\right).$$
We need to define \textbf{composition}. Suppose we have two composable arrows:

$$x \stackrel{h}{\dashrightarrow} hx \stackrel{k}{\rightarrow}t(k)\stackrel{h'}{\dashrightarrow}h't(k) \stackrel{k'}{\rightarrow} t(k').$$
Notice that $\mu_1\left(k\right)=\mu_0\left(t(k)\right)$ so that $h'$ can act on $k$. So we get an arrow $$h' \cdot k:\left(h'h\right)x \to h't(k).$$ We define the composition to be

$$x \stackrel{h'h}{\dashrightarrow} h'hx \stackrel{k'\left(h'\cdot k\right)}{-\!\!\!-\!\!\!-\!\!\!-\!\!\!-\!\!\!-\!\!\!\longrightarrow}y.$$
In other words $$\left(h',k'\right) \circ \left(h,k\right):=\left(h'h,k'\circ \left(h'\cdot k\right)\right).$$
The \textbf{unit} map $\K_0 \to \left(\h \ltimes \K\right)_1$ is given by $$x \mapsto \left(\mathbb{1}_{\mu_0\left(x\right)},\mathbb{1}_x\right).$$ And the \textbf{inverse} map is given by $$\left(h,k\right)^{-1}:=\left(h^{-1},h^{-1}\cdot k^{-1}\right).$$
Notice that if $\K$ is actually an $\h$-space $E$ considered as a groupoid object with only identity morphisms, then $\h \ltimes \K$ is the usual action groupoid $\h \ltimes E$. Hence, we call $\h \ltimes \K$ the \textbf{generalized action groupoid} of $\K$, or simply the action groupoid.
\end{dfn}

\begin{rmk}
This construction is known. It appears, for example, in \cite{Fol} under the name \emph{semi-direct product}.
\end{rmk}

Notice that each action groupoid $\h \ltimes \K$ comes equipped with a canonical morphism $\theta_\K:\h \ltimes \K \to \h$ given by $$\left(\theta_{\K}\right)_0=\mu_0:\K_0 \to \h_0$$ and $$\left(\theta_{\K}\right)_{1}=pr_1:\left(\h \ltimes \K\right)_1=\h_1 \times_{\h_0} \K_1 \to \h_1.$$
The following proposition is immediate:

\begin{prop}\label{prop:shver}
If $\h$ is \'etale and $\K$ is in fact a groupoid object in $\h$-equivariant sheaves, then $\h \ltimes \K$ is \'etale and the components of $\theta_\K$ are local homeomorphisms.
\end{prop}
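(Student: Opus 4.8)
The plan is to reduce the statement to three standard stability properties of local homeomorphisms: closure under composition, stability under change of base (being a local homeomorphism is one of the properties $P$ of Definition \ref{dfn:local}), and the elementary fact that a map $f\colon A \to B$ lying over a base $C$, with $p_B \circ f = p_A$, is a local homeomorphism as soon as both structure maps $p_A, p_B$ are. I would record these at the outset and then verify each claim by exhibiting the relevant map as a composite of local homeomorphisms.

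The two components of $\theta_\K$ are quickly dispatched. The object component $(\theta_\K)_0 = \mu_0$ is a local homeomorphism by the definition of an $\h$-equivariant sheaf. The arrow component $(\theta_\K)_1 = pr_1$ is, by the defining pullback square for $(\h \ltimes \K)_1 = \h_1 \times_{\h_0} \K_1$, precisely the base change of $\mu_1 \colon \K_1 \to \h_0$ along $t$; since $\mu_1$ is a local homeomorphism and this property is stable under change of base, so is $pr_1$.

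The substance of the proposition is that $\h \ltimes \K$ is \'etale, i.e. that its source map $s \colon \h_1 \times_{\h_0} \K_1 \to \K_0$, $(h,k) \mapsto h^{-1} \cdot s_\K(k)$ (writing $s_\K$ for the source of $\K$), is a local homeomorphism; the target map will then automatically be one, being $s$ composed with the inversion of $\h \ltimes \K$, which is a homeomorphism. I would factor $s$ as a composite of two maps. The first, $(h,k) \mapsto (h, s_\K(k))$, is the base change of $s_\K \colon \K_1 \to \K_0$; as $s_\K$ is a morphism of equivariant sheaves it is a map over $\h_0$ between the local homeomorphisms $\mu_1$ and $\mu_0$, hence itself a local homeomorphism, and so is its base change. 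The second map, $(h,x) \mapsto h^{-1} \cdot x$, is the action map $\rho_0$ of $\K_0$ precomposed with the homeomorphism $(h,x) \mapsto (h^{-1}, x)$ induced by inversion in $\h$.

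The one step needing a genuine argument --- and the crux --- is that the action map $\rho_0$ of an equivariant sheaf is a local homeomorphism. For this I would use the shear isomorphism $\Phi \colon (g,x) \mapsto (g, g \cdot x)$, which identifies the domain of $\rho_0$ (fibered over the source map of $\h$) with the fibered product of $t$ and $\mu_0$, with inverse $(g,y) \mapsto (g, g^{-1} \cdot y)$; under $\Phi$ the action map $\rho_0$ becomes the projection $pr_2$, which is the base change of $t$ along $\mu_0$ and hence a local homeomorphism. Composing the two local homeomorphisms from the previous paragraph then exhibits $s$ as a local homeomorphism, and closure under composition completes the proof.
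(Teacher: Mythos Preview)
Your argument is correct in every detail: the components of $\theta_\K$ are handled directly from the definitions and the pullback square, and for the source map of $\h \ltimes \K$ your two-step factorization via $(h,k)\mapsto (h,s_\K(k))$ followed by the action of $h^{-1}$, together with the shear-map trick to see that the action map of an equivariant sheaf is a local homeomorphism, is exactly the right verification. The paper itself offers no proof at all---it declares the proposition ``immediate''---so your write-up supplies the honest check the paper omits rather than taking a different route.
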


\begin{rmk}
Each groupoid object $\K$ in $\h$-spaces has an underlying $S$-groupoid $\underline \K$ and there is a canonical map $\tau_\K:\underline \K \to \h \ltimes \K$ given by the identity morphism on $\K_0$ and on arrows by $$k \mapsto \left(\mathbb{1}_{\mu_1(k)},k\right).$$
\end{rmk}

Let $\left(S-Gpd\right)/\h$ denote the slice $2$-category of $S$-groupoids over $\h$. We will show that the action groupoid construction $$\K \mapsto \left(\left(\h \ltimes \K\right) \stackrel{\theta_\K}{\searrow} \h\right)$$ extends to a $2$-functor $$\h \ltimes:Gpd\left(\h-spaces\right) \to \left(S-Gpd\right)/\h.$$
Suppose $\varphi:\K \to \Ll$ is a homomorphism of groupoid objects in $\h$-spaces. Then we can define $\h \ltimes\left(\varphi\right):\h \ltimes \K \to \h \ltimes \Ll$ \textbf{on objects} as $\varphi_0$ and \textbf{on arrows} by $$\left(h,k\right) \mapsto \left(h,\varphi\left(k\right)\right),$$ which strictly commutes over $\h$. Finally, for $2$-\textbf{cells}, given an internal natural transformation $$\alpha:\varphi \Rightarrow \psi$$ between two homomorphisms $$\K \to \Ll,$$ $\alpha$ is in particular a map of $\h$-spaces $\alpha:\K_0 \to \Ll_1$. It is easily checked that $\left(\tau_\Ll\right)_1 \circ \alpha:\K_0 \to \left(\h \ltimes \Ll\right)_1$ encodes a $2$-cell $$\h \ltimes\left(\alpha\right):\h \ltimes\left(\varphi\right) \Rightarrow \h \ltimes\left(\psi\right),$$ where $\tau$ is as in the remark directly proceeding Proposition \ref{prop:shver}
We leave it to the reader to check that this is a strict $2$-functor.

\begin{rmk}
This restricts to a $2$-functor

$$\h \ltimes:Gpd\left(\B\h\right) \to \left(S^{et}-Gpd\right)/\h,$$
where $S^{et}$ denotes the category whose objects are spaces and arrows are all local homeomorphisms.
\end{rmk}

Let us now define a strict $2$-functor in the other direction, $$P:\left(S-Gpd\right)/\h \to Gpd\left(\h-spaces\right).$$
\textbf{On objects}:

Let $\varphi:\G \to \h$ be a map of $S$-groupoids. Consider the associated principal $\h$-bundle over $\G$. Its total space is $\h_1 \times_{\h_0} \G_0$, where

$$\xymatrix{\h_1 \times_{\h_0} \G_0 \ar[d]_-{pr_1} \ar[r]^-{pr_2} & \G_0 \ar[d]^-{\varphi_0} \\
\h_1 \ar[r]^-{s} & \h_0\\}$$
is a pullback diagram. Together with its projection $pr_2:\h_1 \times_{\h_0} \G_0 \to \G_0,$ it is a right $\G$-space with action given by $$\left(h,x\right)g:=\left(h\varphi\left(g\right),s(g)\right).$$ We define $$\underline P\left(\varphi\right):=\left(\h_1 \times_{\h_0} \G_0\right) \rtimes \G,$$
that is, the right action groupoid of the underlying $\G$-space of the associated principal bundle of $\varphi$. Since the left $\h$-action and right $\G$-action on $\h_1 \times_{\h_0} \G_0$ commute, this becomes a groupoid object in $\h$-spaces. Explicitly, the objects of $P\left(\varphi\right)$ are $\h_1 \times_{\h_0} \G_0$ equipped with the obvious left $\h$-action along $s\circ pr_1$ given by $$h'\left(h,x\right)=\left(h'h,x\right).$$ The arrows are the fibered product

$$\xymatrix{\h_1 \times_{\h_0} \G_1 \ar[d]_-{pr_1} \ar[r]^-{pr_2} & \G_1 \ar[d]^-{\varphi_0\circ t} \\
\h_1 \ar[r]^-{s} & \h_0,\\}$$
equipped with an analogously defined left $\h$-action along $s \circ pr_1$. The source and target maps are defined by $$s\left(h,g\right)=\left(h\varphi(g),s(g)\right),$$ and $$t\left(h,g\right)=\left(h,t(g)\right).$$ Composition and units are defined in the obvious way.

The following proposition is immediate:
\begin{prop}\label{prop:shver2}
If $\h$ is \'etale and $\varphi_0$ is a local homeomprhism (which implies that so is $\varphi_1$), then $P\left(\varphi\right)$ is a groupoid object in $\B\h$.
\end{prop}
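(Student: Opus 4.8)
The plan is to verify directly that the two underlying $\h$-spaces of $P(\varphi)$, namely the object space $\h_1\times_{\h_0}\G_0$ and the arrow space $\h_1\times_{\h_0}\G_1$, are $\h$-equivariant sheaves (i.e.\ that their moment maps are local homeomorphisms), and then to observe that the rest of the groupoid data comes for free. Recall that $\B\h$ is the \emph{full} subcategory of $\h$-spaces whose objects have a local-homeomorphism moment map. Since $P(\varphi)$ is already a groupoid object in $\h$-spaces, every one of its structure maps (source, target, unit, inverse, and multiplication) is $\h$-equivariant by construction, so once the object and arrow spaces are shown to lie in $\B\h$, these maps are automatically morphisms of $\B\h$. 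Thus the entire content of the proposition reduces to the \'etale-ness of the two moment maps, together with the routine fact that the fibered products appearing in the groupoid axioms, computed in $\B\h$, agree with those computed in $\h$-spaces.

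For the object space I would first note that the projection $pr_1:\h_1\times_{\h_0}\G_0\to\h_1$ is, by definition of the defining pullback, the base change of $\varphi_0$ along $s:\h_1\to\h_0$; since local homeomorphisms are stable under change of base and $\varphi_0$ is a local homeomorphism, $pr_1$ is a local homeomorphism. Composing with $s$, which is a local homeomorphism because $\h$ is \'etale, shows that the moment map $s\circ pr_1$ is a local homeomorphism, so $\h_1\times_{\h_0}\G_0$ is an equivariant sheaf. For the arrow space the same two-line argument applies as soon as the leg $\varphi_0\circ t:\G_1\to\h_0$ of its defining pullback is known to be a local homeomorphism: then $pr_1:\h_1\times_{\h_0}\G_1\to\h_1$ is again a base change of a local homeomorphism, and $s\circ pr_1$ is \'etale.

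The key point, and the step I expect to be the main obstacle, is therefore the parenthetical assertion that $\varphi_1$ is a local homeomorphism (equivalently, that $\varphi_0\circ t_{\G}=t_{\h}\circ\varphi_1$ is one). I would extract this from the cancellation property of local homeomorphisms: if $g$ and $g\circ f$ are local homeomorphisms then so is $f$. Source-compatibility of the internal functor gives $s_{\h}\circ\varphi_1=\varphi_0\circ s_{\G}$; the right-hand side is a composite of local homeomorphisms once $\G$ is \'etale, and $s_{\h}$ is a local homeomorphism because $\h$ is, so cancellation yields that $\varphi_1$ is a local homeomorphism. The leg $\varphi_0\circ t_{\G}=t_{\h}\circ\varphi_1$ is then a local homeomorphism as a composite with the \'etale map $t_{\h}$. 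It is precisely here that one uses more than the bare hypotheses on $\h$ and $\varphi_0$: one needs $\G$ to be \'etale, which is exactly the standing assumption in the relevant slice $(S^{et}\text{-}Gpd)/\h$, where $\varphi$ is required to have local-homeomorphism components (and conversely, $\h$ \'etale together with $\varphi_0,\varphi_1$ local homeomorphisms forces $\G$ \'etale by the same cancellation). This interplay between the functoriality squares for $s$ and $t$, the \'etale hypothesis on $\h$, and cancellation of local homeomorphisms is the only nontrivial ingredient.

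Finally, to confirm that $P(\varphi)$ is genuinely a groupoid object \emph{in} $\B\h$, and not merely one whose object and arrow spaces happen to lie in $\B\h$, I would record that any morphism of equivariant sheaves is automatically a local homeomorphism: its composite with the target's moment map equals the source's moment map, both \'etale, so cancellation applies again. Consequently a fibered product of equivariant sheaves, formed over an equivariant sheaf along equivariant maps, is again an equivariant sheaf, so $\B\h$ is closed under all the pullbacks occurring in the groupoid axioms; equivalently, $\B\h$ is a topos and its inclusion into $\h$-spaces preserves these finite limits. The groupoid structure of $P(\varphi)$ therefore transfers verbatim from $\h$-spaces to $\B\h$, which completes the argument.
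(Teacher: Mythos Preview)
Your proposal is correct. The paper does not give a proof at all: it merely prefaces the proposition with ``The following proposition is immediate'', so there is no argument to compare against. Your verification is exactly the kind of routine check the author is leaving to the reader.

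One remark worth making: you are right to flag the parenthetical. As stated, ``$\h$ \'etale and $\varphi_0$ a local homeomorphism'' does \emph{not} by itself force $\varphi_1$ to be one (take $\h$ a point and $\G$ any non-\'etale group); the implication only goes through once $\G$ is \'etale, via the cancellation argument you give. In the paper this is harmless because every application of the proposition (Corollary~\ref{cor:im1}, Theorem~\ref{thm:secs}) takes place with $\G$ \'etale, and the ambient slice $(S^{et}\text{-}Gpd)/\h$ makes this a standing hypothesis. So your reading of the context is the correct one, and your proof is in fact slightly more careful than the paper's statement.
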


We will now define $P$ \textbf{on arrows}:

Suppose we are given an arrow $$\left(f,\alpha\right):\left(\G\stackrel{\varphi}{\searrow}\h\right) \to \left(\Ll\stackrel{\psi}{\searrow}\h\right).$$

We wish now to define an internal functor $P\left(\left(f,\alpha\right)\right)$. On objects define it by:

$$P\left(\left(f,\alpha\right)\right)\left(h,x\right)=\left(h\alpha(x)^{-1},f(x)\right).$$
On arrows define it by
$$P\left(\left(f,\alpha\right)\right)\left(h,g\right)=\left(h\varphi(g)\alpha\left(s(g)\right)^{-1}\psi\left(f(g)\right)^{-1},f(g)\right).$$  It is routine to verify that this defines an internal functor.

We now define $P$ \textbf{on $2$-cells}:

Suppose we are given a $2$-cell $\omega:\left(f,\alpha\right) \Rightarrow \left(f',\alpha'\right)$ between two maps $$\left(\G\stackrel{\varphi}{\searrow}\h\right) \to \left(\Ll\stackrel{\psi}{\searrow}\h\right).$$
Define an internal natural transformation $$P\left(\omega\right):P\left(\left(f,\alpha\right)\right) \Rightarrow P\left(\left(f',\alpha'\right)\right)$$ by $$P\left(\omega\right)\left(h,x\right)=\left(h\alpha(x),\omega(x)\right).$$ We leave it to the reader to check that $P$ is indeed a strict $2$-functor.

\begin{lem}\label{lem:eps}
There exists a natural transformation $\varepsilon:\h \ltimes P\Rightarrow id_{\left(S-Gpd\right)/\h}$ whose components are equivalences.
\end{lem}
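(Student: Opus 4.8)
The plan is to construct the natural transformation $\varepsilon$ componentwise and then verify its components are equivalences of $S$-groupoids over $\h$. For a fixed object $\varphi:\G\to\h$ of $\left(S-Gpd\right)/\h$, I must produce an internal functor
$$\varepsilon_{\varphi}:\h \ltimes P\left(\varphi\right) \to \G$$
over $\h$, i.e. compatible with $\theta_{P\left(\varphi\right)}$ and $\varphi$ up to the structural $2$-cell. Recall that the objects of $\h\ltimes P\left(\varphi\right)$ are $\h_1 \times_{\h_0} \G_0$ and an arrow is a pair $\left(h',\left(h,g\right)\right)$ with $h'\in\h_1$ and $\left(h,g\right)\in\h_1\times_{\h_0}\G_1$ satisfying the appropriate source/target incidences coming from the generalized action groupoid. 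The natural guess, which I would write down first, is to define $\varepsilon_{\varphi}$ on objects by the projection $\left(h,x\right)\mapsto x$ (i.e. $pr_2$ to $\G_0$), and on arrows by a formula that forgets the $\h$-bookkeeping and returns the underlying arrow $g\in\G_1$, suitably twisted by the acting element so that source and target land correctly. Concretely I expect $\varepsilon_{\varphi}\left(h',\left(h,g\right)\right)=g$, and I would check this respects source, target, units, and composition by unwinding the two definitions side by side.

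The first key step is therefore purely computational: verify that these assignments constitute a well-defined internal functor. This reduces to chasing the source and target formulas of both the action groupoid ($s\left(h,k\right)=h^{-1}s\left(k\right)$, $t\left(h,k\right)=t\left(k\right)$) and of $P\left(\varphi\right)$ (whose arrow-source is $\left(h\varphi(g),s(g)\right)$ and target is $\left(h,t(g)\right)$), so that the composite incidences in $\h\ltimes P\left(\varphi\right)$ simplify correctly under $\varepsilon_{\varphi}$. I would then exhibit the $2$-cell filling the triangle over $\h$, coming from the tautological identification $\theta_{P\left(\varphi\right)}=\varphi\circ\varepsilon_{\varphi}$ up to the canonical principal-bundle data. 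The second key step is naturality: given an arrow $\left(f,\alpha\right):\varphi\to\psi$, I must check that the square relating $\varepsilon_{\varphi}$, $\varepsilon_{\psi}$, $\h\ltimes P\left(\left(f,\alpha\right)\right)$ and $\left(f,\alpha\right)$ commutes (as morphisms in the slice $2$-category, hence up to a specified invertible $2$-cell). This again is a matter of substituting the explicit formula for $P\left(\left(f,\alpha\right)\right)$ on objects and arrows and confirming the two composites agree after applying $\varepsilon$.

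The essential mathematical content, and the step I expect to be the main obstacle, is proving that each component $\varepsilon_{\varphi}$ is an \emph{equivalence} of $S$-groupoids. The cleanest route is to show $\varepsilon_{\varphi}$ is a Morita equivalence in the sense of Definition \ref{dfn:Morita}, i.e. essentially surjective and fully faithful as an internal functor; by the last clause of that definition this already gives $\left[\h\ltimes P\left(\varphi\right)\right]\simeq\left[\G\right]$ over $\h$. Essential surjectivity should follow because every object $x\in\G_0$ is hit by $\left(\mathbb{1}_{\varphi_0(x)},x\right)$, so in fact $\varepsilon_{\varphi}$ is surjective on objects, not merely essentially so; I would verify the map $t\circ pr_1$ in the essential-surjectivity diagram admits local sections (indeed a global section) here. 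Full faithfulness is the real work: I must show that the square
$$\xymatrix{\left(\h \ltimes P\left(\varphi\right)\right)_1 \ar[r]^-{\varepsilon_{\varphi}} \ar[d]_-{\left(s,t\right)} & \G_1 \ar[d]^-{\left(s,t\right)} \\
\left(\h \ltimes P\left(\varphi\right)\right)_0 \times \left(\h \ltimes P\left(\varphi\right)\right)_0 \ar[r]^-{\varepsilon_{\varphi}\times\varepsilon_{\varphi}} & \G_0 \times \G_0}$$
is a pullback. This amounts to checking that, given $g\in\G_1$ together with a choice of lifts of its source and target into $\h_1\times_{\h_0}\G_0$, there is a unique arrow of $\h\ltimes P\left(\varphi\right)$ over $g$ with those endpoints; the uniqueness and existence here come down to the torsor property of the associated principal $\h$-bundle $\h_1\times_{\h_0}\G_0$, i.e. the fact that the $\h$-component $h'$ is uniquely determined once the two object-lifts and $g$ are fixed. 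I would make this precise by solving the relevant incidence equations for $h'$ explicitly, using invertibility in $\h$, and checking the solution is unique, which is exactly where the groupoid (as opposed to merely category) structure of $\h$ is used. Finally, since essential surjectivity plus full faithfulness of an internal functor between $S$-groupoids yields a Morita equivalence, and Morita equivalences induce equivalences after stack-completion, each $\varepsilon_{\varphi}$ is an equivalence, completing the proof.
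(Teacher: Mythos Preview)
Your construction of $\varepsilon_\varphi$ (projection to $\G_0$ on objects, projection to $g$ on arrows, with the $2$-cell over $\h$ coming from $pr_1$) is exactly the paper's map, and your observation that $x\mapsto(\mathbb{1}_{\varphi_0(x)},x)$ furnishes a continuous section on objects is the same section the paper uses. The difference is in how equivalence is established, and there your final step has a genuine gap.

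The lemma asserts that each $\varepsilon_\varphi$ is an equivalence \emph{in the $2$-category $\left(S\text{-}Gpd\right)/\h$}, i.e.\ admits a pseudo-inverse which is itself an internal functor over $\h$. Your argument only shows $\varepsilon_\varphi$ is a Morita equivalence, and then appeals to stack-completion. But a Morita equivalence need not be an equivalence in $\left(S\text{-}Gpd\right)/\h$: Morita equivalences are precisely the maps that become invertible \emph{after} localising, and in general have no internal-functor inverse before one passes to generalized morphisms or stacks. So ``Morita equivalence $+$ stack-completion $\Rightarrow$ equivalence'' does not prove what the lemma states.

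The fix is already implicit in what you wrote. You have a continuous global section $\chi_\varphi:\G_0\to\left(\h\ltimes P(\varphi)\right)_0$, and you propose to verify full faithfulness as a pullback square. These two ingredients together let you \emph{construct} an internal functor $\chi_\varphi:\G\to\h\ltimes P(\varphi)$ (extend the section to arrows using the pullback property) with $\varepsilon_\varphi\circ\chi_\varphi=id_\G$ and a continuous natural isomorphism $\chi_\varphi\circ\varepsilon_\varphi\Rightarrow id$; checking this $\chi_\varphi$ lives over $\h$ is routine. This is precisely what the paper does: it writes down $\chi_\varphi$ explicitly (on arrows, $g\mapsto\left((\mathbb{1}_{\varphi(s(g))},s(g)),(\varphi(g),g)\right)$) and the $2$-cell $\lambda_\varphi(h,x)=\left((\mathbb{1}_{\varphi(x)},x),(h,\mathbb{1}_x)\right)$ witnessing $id\Rightarrow\chi_\varphi\circ\varepsilon_\varphi$. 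Replace your last sentence with this construction and the proof goes through.
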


\begin{proof}
Given $\varphi:\G \to \h$, consider the left-action of $\h \times \G$ on $$\h_1 \times_{\h_0} \G_0=P\left(\varphi\right)_0$$ along $$\left(h,x\right) \mapsto \left(t\left(h\right),x\right)$$ defined by

$$\left(l,g\right)\cdot\left(h,x\right):=\left(lh\varphi(g)^{-1},t(g)\right).$$
Consider $$\theta_{P\left(\varphi\right)}:\left(\h \times \G\right) \ltimes \left(\h_1 \times_{\h_0} \G_0\right) \to \left(\h \times \G\right)$$ where $\theta_{P\left(\varphi\right)}$ is the canonical morphism.

By direct inspection, we see that $\h \ltimes P\left(\varphi\right)$ is canonically isomorphic to $$\tilde\theta_{P\left(\varphi\right)}:=pr_1 \circ \theta_{P\left(\varphi\right)}.$$ Consider the map $$\tilde \epsilon_{\varphi}:=pr_2 \circ \theta_{P\left(\varphi\right)}:\left(\h \times \G\right) \ltimes \left(\h_1 \times_{\h_0} \G_0\right) \to \G.$$
Let $\xi_{\varphi}:\h_1 \times_{\h_0} \G_0 \to \h_1$ be the obvious projection map. Then $\xi_{\varphi}$ is a natural isomorphism from $\varphi \circ \tilde \epsilon_{\varphi}$ to $\tilde\theta_{P\left(\varphi\right)}.$ Hence $\left(\tilde \epsilon_{\varphi},\xi^{-1}_{\varphi}\right)$ is a morphism in $\left(S-Gpd\right)/\h$ from $\tilde\theta_{P\left(\varphi\right)}$ to $\varphi$. It is easy to check that $$\epsilon:\h \ltimes P \circ \Rightarrow id_{\left(S-Gpd\right)/\h}$$ defined by $$\varepsilon\left(\varphi\right)=\left(\tilde \varepsilon_{\varphi}, \xi^{-1}_{\varphi}\right),$$ is a strict natural transformations of $2$-functors. It remains to see that its components consist of equivalences.

Define $\chi_{\varphi}:\G \to \left(\h \times \G\right) \ltimes \left(\h_1 \times_{\h_0} \G_0\right)$ on objects by $$\chi_{\varphi}\left(x\right)=\left(\mathbb{1}_{\varphi(x)},x\right),$$ and on arrows by $$\chi_{\varphi}\left(g\right)=\left(\left(\mathbb{1}_{\varphi(s(g))},s(g)\right),\left(\varphi(g),g\right)\right).$$ Then $$\tilde \varepsilon_{\varphi} \circ \chi_{\varphi}=id_\G.$$
Note that $\tilde\theta_{P\left(\varphi\right)} \circ \chi_{\varphi}=\varphi$ so that $\chi_{\varphi}$ is a morphism in $\left(S-Gpd\right)/\h$.

Define $$\lambda_{\varphi}:\h_1 \times_{\h_0} \G \to \left(\h \times \G\right) \ltimes \left(\h_1 \times_{\h_0} \G_0\right)_1$$ by

$$\lambda_{\varphi}\left(h,x\right)=\left(\left(\mathbb{1}_{\varphi(x)},x\right),\left(h,\mathbb{1}_x\right)\right).$$
Then $\lambda_{\varphi}$ encodes a $2$-cell $id_{\h \ltimes P\left(\varphi\right)} \Rightarrow \chi_{\varphi} \circ \varepsilon_{\varphi}$.
\end{proof}

\begin{cor}\label{cor:im1}
The $2$-functors $$\h \ltimes:Gpd\left(\h-spaces\right) \to \left(S-Gpd\right)/\h$$ and $$\h \ltimes:Gpd\left(\B\h\right) \to \left(S^{et}-Gpd\right)/\h$$ are bicategorically essentially surjective.
\end{cor}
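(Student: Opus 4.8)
The plan is to extract both essential-surjectivity statements directly from Lemma \ref{lem:eps}, which provides a natural transformation $\varepsilon:\h \ltimes P \Rightarrow id_{\left(S-Gpd\right)/\h}$ whose components are equivalences. Recall that a $2$-functor $F$ is bicategorically essentially surjective precisely when every object of its target is equivalent to one of the form $F\left(c\right)$; a transformation out of $F\circ P$ into the identity with equivalence components supplies exactly such witnesses, with $P$ serving as a section on objects. Note that for this we need $P$ only on objects, not its full $2$-functoriality.

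For the unrestricted functor $\h \ltimes:Gpd\left(\h-spaces\right) \to \left(S-Gpd\right)/\h$ the argument is immediate: given any $\varphi:\G \to \h$ in the slice, the object $P\left(\varphi\right)$ lies in $Gpd\left(\h-spaces\right)$, and Lemma \ref{lem:eps} gives an equivalence $\varepsilon_\varphi:\h \ltimes P\left(\varphi\right) \to \varphi$ in $\left(S-Gpd\right)/\h$, so $\varphi$ is in the essential image.

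For the restricted functor $\h \ltimes:Gpd\left(\B\h\right) \to \left(S^{et}-Gpd\right)/\h$, with $\h$ étale, I would rerun the same argument inside the étale sub-$2$-category. Starting from $\varphi:\G \to \h$ whose components are local homeomorphisms, Proposition \ref{prop:shver2} shows $P\left(\varphi\right)$ is a groupoid object in $\B\h$, and Proposition \ref{prop:shver} shows $\h \ltimes P\left(\varphi\right)$ indeed lands in $\left(S^{et}-Gpd\right)/\h$. The one genuinely new point is to check that the equivalence $\varepsilon_\varphi$ of Lemma \ref{lem:eps} is an equivalence in the étale slice, i.e. that its structure $1$-cells are themselves componentwise local homeomorphisms.

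This last check is where the work lies. The forward map $\tilde\varepsilon_\varphi$ is on objects the projection $pr_2:\h_1 \times_{\h_0} \G_0 \to \G_0$, a base change of the étale source map $s:\h_1 \to \h_0$ and hence a local homeomorphism; its quasi-inverse $\chi_\varphi$ is on objects the section $x \mapsto \left(\mathbb{1}_{\varphi\left(x\right)},x\right)$ of that étale projection, hence an open embedding and again a local homeomorphism, with the arrow components handled identically. Consequently both $\tilde\varepsilon_\varphi$ and $\chi_\varphi$ are morphisms of $S^{et}$-groupoids over $\h$, and the $2$-cells $\lambda_\varphi$ of Lemma \ref{lem:eps} exhibit the equivalence within $\left(S^{et}-Gpd\right)/\h$. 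Thus every object of the étale slice is equivalent to one in the image, and the second functor is essentially surjective as well. I expect the main obstacle to be precisely this verification that the étale (local-homeomorphism) conditions survive passage through $\varepsilon_\varphi$; the remainder is a formal consequence of Lemma \ref{lem:eps}.
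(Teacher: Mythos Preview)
Your proposal is correct and follows the paper's intended approach: the corollary is stated immediately after Lemma~\ref{lem:eps} with no proof, so the author regards it as an immediate consequence of that lemma, which is precisely what you do. Your additional verification that $\tilde\varepsilon_\varphi$ and $\chi_\varphi$ have local-homeomorphism components (so that the equivalence lives in the \'etale slice) is a detail the paper leaves implicit, and your argument for it is sound.
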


\subsection{Action groupoids are \'etal\'e realizations}

Let $\h$ be an \'etale groupoid and $\X$ its associated \'etale stack, $\left[\h\right].$ Let $$Y:\left(S^{et}-Gpd\right)/\h \to \St\left(S\right)/\X$$ be the $2$-functor which sends a groupoid $\varphi:\G \to \h$ over $\h$ to $$\left[\varphi\right]:\left[\G\right] \to \left[\h\right]=\X.$$ Consider furthermore the canonical $2$-functor
$$\left[ \mspace{3mu} \cdot \mspace{3mu}\right]_{\B\h}:Gpd\left(\B\h\right) \to St\left(\sit\left(\h\right)\right)$$
which associates a groupoid object $\K$ in in $\B\h$ with its stack completion.

\begin{thm}\label{thm:finn}
The $2$-functor $\left[ \mspace{3mu} \cdot \mspace{3mu}\right]_{\B\h}$ is essentially surjective and faithful (but not in general full), and the $2$-functors $\bar L \circ \left[ \mspace{3mu} \cdot \mspace{3mu}\right]_{\B\h}$ and $Y \circ \h \ltimes$ are equivalent.
\end{thm}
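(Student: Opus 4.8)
The plan is to prove the two assertions in turn: first the abstract properties of $\left[\,\cdot\,\right]_{\B\h}$, and then the identification $\bar L\circ\left[\,\cdot\,\right]_{\B\h}\simeq Y\circ\left(\h\ltimes\right)$ by a weak-colimit argument in the spirit of the proof of Proposition \ref{relp}. Throughout I write $\underline\K$ for the prestack over $\sit\left(\h\right)$ represented by a groupoid object $\K$ in $\B\h$, so that $\left[\K\right]_{\B\h}=a\underline\K$.

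For the properties of $\left[\,\cdot\,\right]_{\B\h}$: \emph{Essential surjectivity} is the representation theorem of Appendix \ref{App:sheavess} transported along $\Sh\left(\sit\left(\h\right)\right)\simeq\B\h$: every stack over $\sit\left(\h\right)$ is the stackification of a prestack arising from a groupoid object in sheaves, hence is equivalent to some $\left[\K\right]_{\B\h}$. For \emph{faithfulness} the key point is that $\underline\K$ is a \emph{separated} prestack: for $a,b\in\K_0\left(U\right)$ its hom-presheaf is the subobject of $\K_1$ cut out by the equalizer conditions $s=a$, $t=b$ in the sheaf $\K_0\times\K_0$, and is therefore itself a sheaf. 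Consequently the unit $\underline\Ll\to a\underline\Ll$ is fully faithful, so the induced functor $$\Hom_{Gpd\left(\B\h\right)}\left(\K,\Ll\right)\longrightarrow\Hom\left(\left[\K\right]_{\B\h},\left[\Ll\right]_{\B\h}\right)\simeq\Hom\left(\underline\K,a\underline\Ll\right)$$ is fully faithful, and in particular $\left[\,\cdot\,\right]_{\B\h}$ is faithful on both $1$- and $2$-cells. It fails to be full on $1$-morphisms because a morphism of stacks $\left[\K\right]_{\B\h}\to\left[\Ll\right]_{\B\h}$ is in general only a weak morphism, defined after passing to a cover, and need not strictify to an internal functor $\K\to\Ll$; the simplest witness is a nontrivial $G$-torsor, which is a morphism into a classifying stack not induced by any internal functor.

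For the comparison I would first reduce the left-hand side. By Proposition \ref{relp}, $\bar L\left(a\Z\right)\simeq L\left(\Z\right)$, so on a groupoid object $\K$ we get $\bar L\left(\left[\K\right]_{\B\h}\right)=\bar L\left(a\underline\K\right)\simeq L\left(\underline\K\right)$. Using Proposition \ref{prop:stand} to identify $\St\left(S/\X\right)\simeq\St\left(S\right)/\X$, it thus suffices to produce a natural equivalence $L\left(\underline\K\right)\simeq\left[\h\ltimes\K\right]$ over $\X$. I would prove this exactly as in Proposition \ref{relp}, by exhibiting both $L\left(\underline{\left(-\right)}\right)$ and $Y\circ\left(\h\ltimes\right)$ as weak-colimit-preserving $2$-functors that agree on representables. \emph{Agreement on representables:} for $U\in\sit\left(\h\right)$ the representable object of $\B\h$ is $m_U=s^{-1}\left(U\right)$ with its canonical $\h$-action, and since $m\colon\sit\left(\h\right)\to\B\h$ is fully faithful we have $\underline{m_U}=y\left(U\right)$; hence $L\left(\underline{m_U}\right)=L\left(y\left(U\right)\right)=j_\h\left(U\right)=\left(U\hookrightarrow\h_0\to\X\right)$ because $L=j_!$ is the left Kan extension of $j_\h$ along Yoneda. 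On the other side, the action of $\h$ on $s^{-1}\left(U\right)$ by left multiplication is free with orbit space $U$, so $\h\ltimes m_U$ presents the same representable substack, giving $Y\left(\h\ltimes m_U\right)=\left[\h\ltimes m_U\right]\simeq\left(U\hookrightarrow\h_0\to\X\right)=j_\h\left(U\right)$.

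Finally I would invoke colimit-preservation: $L=j_!$ is a left adjoint, hence weak-colimit preserving; $Y$ is weak-colimit preserving since a stack is the weak colimit of the nerve of any presenting groupoid and stack completion is left adjoint to the inclusion. Every $\K$ is the weak colimit of its nerve, each level $\K_n$ being an equivariant sheaf, and each equivariant sheaf is the weak colimit $\hc_{m_U\to\K_n} m_U$ of representables; feeding this canonical presentation into the two functors, which agree on the $m_U$, yields the desired natural equivalence. The hard part will be the colimit-preservation of $Y\circ\left(\h\ltimes\right)$: one must check that forming the generalized action groupoid commutes, up to coherent equivalence, with these weak $2$-colimits. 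This is where the étale hypothesis on $\h$ is essential, since the arrow space $\left(\h\ltimes\K\right)_1=\h_1\times_{\h_0}\K_1$ is a pullback along the local homeomorphism $s$, and pullback along an étale map is a cocontinuous functor on $\B\h$ (as such maps induce essential geometric morphisms). The remaining labour is assembling the levelwise equivalences into a genuine equivalence of $2$-functors while tracking the pseudo-naturality and coherence data of both constructions, which I expect to be the main technical obstacle.
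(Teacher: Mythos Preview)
Your overall architecture matches the paper's: essential surjectivity and faithfulness are handled exactly as in Appendix~\ref{App:sheavess}, the agreement on representables $m_U$ is the content of Lemma~\ref{lem:mk}, and the comparison $\bar L\circ[\,\cdot\,]_{\B\h}\simeq Y\circ(\h\ltimes)$ is in both cases reduced to identifying $[\h\ltimes\K]$ with the weak colimit in $\St(S/\X)$ of the truncated simplicial diagram $[\h\ltimes\K_2]\rrrarrow[\h\ltimes\K_1]\rrarrow[\h\ltimes\K_0]$.

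The gap is precisely in the step you flag as hard, and your proposed resolution does not close it. Saying that pullback along the \'etale source map is cocontinuous on $\B\h$ tells you only that $\K_1\mapsto(\h\ltimes\K)_1=\h_1\times_{\h_0}\K_1$ is a colimit-preserving functor of equivariant sheaves; this says nothing about why the \emph{stack} $[\h\ltimes\K]$ is the colimit of the $[\h\ltimes\K_n]$. Relatedly, your phrase ``$\K$ is the weak colimit of its nerve'' is not well-posed in $Gpd(\B\h)$: it is the associated \emph{stack} $[\K]_{\B\h}$ in $\St_{can}(\B\h)$ that is such a colimit, and $Y\circ(\h\ltimes)$ is not defined on that $2$-category, so a bare ``colimit-preserving and agree on representables'' argument in the style of Proposition~\ref{relp} does not literally apply.

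The paper's mechanism is more targeted. It proves (Lemma~\ref{lem:C4}) that $Y\circ(\h\ltimes)$ preserves \emph{epimorphisms} and \emph{weak pullbacks}. Applying this to the $2$-Cartesian cube in $Gpd(\B\h)$ whose back faces exhibit $\K_1,\K_2$ as iterated fibre products of $\K_0$ over $\K$, one obtains a $2$-Cartesian cube in $\St(S/\X)$ all of whose edges are epimorphisms. In a $2$-topos this says exactly that $[\h\ltimes\K_\bullet]$ is the \v{C}ech nerve of the effective epimorphism $[\h\ltimes\K_0]\to[\h\ltimes\K]$, hence $[\h\ltimes\K]$ is the desired weak colimit. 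On the other side, the paper first upgrades $\bar L$ to a genuinely weak-colimit-preserving functor $L_{\B\h}$ out of $\St_{can}(\B\h)$ (Corollary~\ref{cor:mk}, Corollary~\ref{cor:Lbh}), so that $\bar L[\K]_{\B\h}$ is automatically the same colimit. The missing ingredient in your sketch, then, is not a general cocontinuity statement but the specific preservation of epis and weak pullbacks by $Y\circ(\h\ltimes)$; the pullback half of this (checking that $\h\ltimes$ commutes with weak fibre products of groupoid objects up to equivalence over $\h$) is where the real work lies.
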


The proof of this theorem is quite involved, so it is delayed to Appendix \ref{sec:lem}.

\begin{rmk}
In particular, this implies that if $\Z$ is a small stack over $\X$ represented by a groupoid object $\K$ in $\B\h$, then $L\left(\Z\right) \simeq Y\left(\h \ltimes \K\right)$.
\end{rmk}

\begin{dfn}
A morphism $\Y \to \X$ of \'etale stacks is said to be a \textbf{local homeomorphism} if it can be represented by a map $\varphi:\G \to \h$ of $S$-groupoids such that $\varphi_0$ (and  hence $\varphi_1$) is a local homeomorphisms of spaces. Denote the full sub-2-category of $St\left(S\right)/\X$ spanned by local homeomorphisms over $\X$ by $Et\left(\X\right)$.
\end{dfn}

In light of Theorem \ref{thm:finn} and Proposition \ref{relp}, the essential image of $L$ is precisely the local homeomorphisms over $\X$. Moreover, with Corollary \ref{cor:rest}, this implies:

\begin{cor}\label{cor:real}
$$\xymatrix{\St\left(\X\right)  \ar@<-0.5ex>[r]_-{\bar L}  & Et\left(\X\right)\ar@<-0.5ex>[l]_-{\bar \Gamma}},$$
is an adjoint-equivalence between $\St\left(\X\right)$ and local homeomorphisms over $\X.$
\end{cor}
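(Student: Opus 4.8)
The plan is to deduce this from the adjoint-equivalence already established in Corollary \ref{cor:rest} by identifying the essential image $\mathscr{E}ss\left(L\right)$ of $L$ inside $\St\left(S\right)/\X$ with the full sub-$2$-category $Et\left(\X\right)$ of local homeomorphisms. Corollary \ref{cor:rest} already provides an adjoint-equivalence $\bar L \rt \bar\Gamma$ between $\St\left(\X\right)$ and $\mathscr{E}ss\left(L\right)$, so everything reduces to the equality $\mathscr{E}ss\left(L\right) = Et\left(\X\right)$. By Proposition \ref{relp}, $L\left(\Z\right) \simeq L\left(a\left(\Z\right)\right)$, so the essential image of $L$ on all weak presheaves coincides with its essential image on honest stacks, i.e. with that of $\bar L$; hence it suffices to work with small stacks throughout.

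First I would establish the inclusion $\mathscr{E}ss\left(L\right) \subseteq Et\left(\X\right)$. Since $\left[\,\cdot\,\right]_{\B\h}\colon Gpd\left(\B\h\right) \to \St\left(\X\right)$ is essentially surjective (Theorem \ref{thm:finn}), any small stack $\Z$ over $\X$ is equivalent to $\left[\K\right]_{\B\h}$ for some groupoid object $\K$ in $\B\h$. By the equivalence $\bar L \circ \left[\,\cdot\,\right]_{\B\h} \simeq Y \circ \left(\h \ltimes\right)$ of Theorem \ref{thm:finn}, its \'etal\'e realization is
$$\bar L\left(\Z\right) \simeq Y\left(\h \ltimes \K\right) = \left(\left[\h \ltimes \K\right] \stackrel{\left[\theta_\K\right]}{\longrightarrow} \X\right).$$
Proposition \ref{prop:shver} guarantees that $\h \ltimes \K$ is \'etale and that the components of $\theta_\K$ are local homeomorphisms, so $\left[\theta_\K\right]$ is by definition a local homeomorphism over $\X$, placing $\bar L\left(\Z\right)$ in $Et\left(\X\right)$.

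For the reverse inclusion $Et\left(\X\right) \subseteq \mathscr{E}ss\left(L\right)$, I would take an arbitrary local homeomorphism $\Y \to \X$ and represent it by a morphism $\varphi\colon \G \to \h$ of $S$-groupoids with $\varphi_0$ (hence $\varphi_1$) a local homeomorphism, i.e. an object of $\left(S^{et}-Gpd\right)/\h$. Corollary \ref{cor:im1} asserts that $\h \ltimes\colon Gpd\left(\B\h\right) \to \left(S^{et}-Gpd\right)/\h$ is bicategorically essentially surjective, so $\varphi$ is equivalent over $\h$ to $\theta_\K = \h \ltimes \K$ for some $\K \in Gpd\left(\B\h\right)$. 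Applying $Y$ and invoking Theorem \ref{thm:finn} once more gives $\Y \simeq Y\left(\varphi\right) \simeq Y\left(\h \ltimes \K\right) \simeq \bar L\left(\left[\K\right]_{\B\h}\right)$, so $\Y \to \X$ lies in $\mathscr{E}ss\left(L\right)$. Combining the two inclusions yields $\mathscr{E}ss\left(L\right) = Et\left(\X\right)$, and Corollary \ref{cor:rest} then transports the adjoint-equivalence onto $Et\left(\X\right)$.

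The genuine obstacle has already been isolated and deferred: it is the content of Theorem \ref{thm:finn}, whose proof is relegated to the appendix, together with the essential-surjectivity statement of Corollary \ref{cor:im1}. Granting those, the present argument is purely a matter of assembling the two directions of the essential-image computation; the only point requiring care is checking that $Y$ carries the essentially-surjective image of $\h \ltimes$ exactly onto the class $Et\left(\X\right)$, which is immediate from the definition of a local homeomorphism of \'etale stacks.
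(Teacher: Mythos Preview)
Your argument is correct and follows exactly the route the paper takes: the paper states just before the corollary that ``In light of Theorem \ref{thm:finn} and Proposition \ref{relp}, the essential image of $L$ is precisely the local homeomorphisms over $\X$,'' and then invokes Corollary \ref{cor:rest}. You have simply unpacked that sentence, making explicit the use of Proposition \ref{prop:shver} for the inclusion $\mathscr{E}ss(L)\subseteq Et(\X)$ and of Corollary \ref{cor:im1} for the reverse inclusion, both of which are the natural (and intended) ingredients hidden behind the paper's appeal to Theorem \ref{thm:finn}.
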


\begin{rmk}
Note that there is a small error on the top of page 44 of \cite{Metzler}; the construction $P_1$, which assigns a stack $\Z$ over a space $X$ an \'etale groupoid over $X$ via a local homeomorphism, is not functorial with respect to all maps of stacks. It is only functorial with respect to \emph{strict} natural transformations of stacks, but in general, one must consider also pseudo-natural transformations. The above corollary may be seen as a corrected version of this construction, in the case that $\X$ is a space $X$. Note that this error also makes Theorem 94 of \cite{Metzler} incorrect. The corrected version of Theorem 94 is explained in section \ref{sec:groth} of this paper.
\end{rmk}

\subsection{The inverse image functor}\label{sec:inverseimage}

Suppose $f:\Y \to \X$ is a morphism of \'etale stacks. This induces a geometric morphism of $2$-topoi $\St\left(\Y\right) \to \St\left(\X\right)$, where by this we mean a pair of adjoint $2$-functors $f^* \rt \mspace{2mu} f_*$, such that $f^*$ preserves finite (weak) limits \cite{htt}. To see this, note that there is a canonical trifunctor $$\Top \to 2-\Top,$$ from topoi to $2$-topoi, which sends a topos $\E$ to the $2$-topos of stacks over $\E$ with the canonical topology. Since, $$\Sh:\St\left(S\right) \to \Top$$ is a $2$-functor, so we get an induced geometric morphism $$\Sh\left(f\right):\Sh\left(\Y\right) \to \Sh\left(\X\right),$$ which in turn gives rise to a geometric morphism $$\St\left(f\right):\St\left(\Y\right) \to \St\left(\X\right),$$ after applying the trifunctor $\Top \to 2-\Top$. We denote the direct and inverse image $2$-functors by $f_*$ and $f^{*}$.

We also get an induced geometric morphism between the $2$-topoi of \emph{large} stacks, $$\St\left(f\right):\St\left(S/\Y\right) \to \St\left(S/\X\right).$$ This arises as the adjoint pair of slice $2$-categories
$$\xymatrix{\St\left(S\right)/\Y  \ar@<-0.5ex>[r]_-{f_*} & \St\left(S\right)/\X \ar@<-0.5ex>[l]_-{f^*}},$$ induced by $f$. The inverse image $2$-functor $f^*$ is given by pullbacks:\\
If $\Z \to \X$ is in $\St\left(S\right)/\X$, then $f^*\left(\Z \to \X\right)$ is given by  $\Y \times_{\X} \Z \to \Y$.

\begin{thm}\label{thm:inv}

The following diagram $2$-commutes:

$$\xymatrix{\St\left(\X\right) \ar[r]^-{\bar L} \ar[d]_{f^*} & \St\left(S\right)/\X \ar[d]^-{f^*}\\
 \St\left(\Y\right) \ar[r]^-{\bar L} &  \St\left(S\right)/\Y,}$$
where $\bar L$ is as in Corollary \ref{cor:rest}.

\end{thm}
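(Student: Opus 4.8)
The plan is to reduce the statement to representable small stacks and then invoke the classical compatibility between the topos-theoretic inverse image of sheaves and the pullback of their \'etal\'e realizations. Both composites $\bar L \circ f^*$ and $f^* \circ \bar L$, regarded as $2$-functors $\St\left(\X\right) \to \St\left(S\right)/\Y$, preserve weak colimits: the two copies of $\bar L$ are left $2$-adjoints by Corollary \ref{cor:rest}, the inverse image $f^*:\St\left(\X\right)\to\St\left(\Y\right)$ on small stacks is the inverse-image part of a geometric morphism of $2$-topoi (hence a left $2$-adjoint), and the inverse image $f^*:\St\left(S\right)/\X \to \St\left(S\right)/\Y$ on large stacks is given by pullback along $f$, which is again the left-adjoint part of the geometric morphism of slice $2$-topoi from Section \ref{sec:inverseimage}. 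Since every small stack over $\X$ is a weak colimit of (stackifications of) representables $a\left(y\left(U\right)\right)$, $U \in \sit\left(\h\right)$, it suffices---exactly as in the proof of Proposition \ref{relp}---to produce a natural equivalence of the two composites on the generating family of representables $a\left(y\left(U\right)\right)$ and to check that it is compatible with the weak-colimit presentation.

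Next I would evaluate both composites on a representable $a\left(y\left(U\right)\right)$. On the one hand, Proposition \ref{relp} together with the description of $\bar L$ as the left Kan extension of $j_\h$ along Yoneda gives $\bar L\left(a\left(y\left(U\right)\right)\right) \simeq j_\h\left(U\right) = \left(U \hookrightarrow \h_0 \xrightarrow{p} \X\right)$; applying $f^*$ (pullback along $f$) then yields the local homeomorphism $\Y \times_\X U \to \Y$. On the other hand, under the equivalence $\Sh\left(\X\right)\simeq\B\h$ the representable sheaf $a\left(y\left(U\right)\right)$ corresponds to the equivariant sheaf $m_U = s^{-1}\left(U\right)$ of Definition \ref{dfn:mu}; since inverse-image functors preserve $0$-truncated objects, $f^*\left(a\left(y\left(U\right)\right)\right)$ is again a small \emph{sheaf} over $\Y$, namely $f^* m_U$. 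By the characterization of those local homeomorphisms arising as \'etal\'e realizations of small sheaves, $\bar L\left(f^* m_U\right)$ (now the realization functor over $\Y$) is then a representable local homeomorphism over $\Y$. The crux is therefore to identify $\bar L\left(f^* m_U\right) \simeq \Y \times_\X U$.

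To establish this identification I would again use the sheaf-characterization theorem. The map $\Y \times_\X U \to \Y$ is a pullback of the representable local homeomorphism $U \to \X$, hence is itself a representable local homeomorphism; consequently it is the \'etal\'e realization of its own stack of sections $\bar\Gamma\left(\Y\times_\X U\right)$, which is an honest small sheaf. It thus remains to prove the sheaf-level identity $\bar\Gamma\left(\Y \times_\X U\right) \simeq f^* m_U$, i.e. that the sheaf of sections of the pullback $\Y\times_\X U$ coincides with the inverse image of the sheaf of sections $m_U$ of $U\to\X$. This is precisely the Beck--Chevalley-type compatibility of \'etal\'e realization with inverse image; for spaces it is the familiar statement that $\mathrm{Et}\left(g^*F\right)\cong Z\times_W \mathrm{Et}\left(F\right)$ for $g:Z\to W$ and a sheaf $F$ on $W$, and it extends to \'etale stacks through the groupoid / action-groupoid model of Section \ref{subsec:action} (equivalently, through the site-level description of the geometric morphism $\B\g\to\B\h$ attached to a presentation $\Y\simeq\left[\g\right]$).

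The main obstacle is this last identification and its coherence. Verifying $\bar\Gamma\left(\Y\times_\X U\right)\simeq f^* m_U$ requires unwinding the topos-theoretic inverse image between the classifying topoi $\B\g \to \B\h$ and matching it---naturally in $U$---with the concrete pullback of equivariant sheaves, keeping careful track of the $\h$- and $\g$-actions and of the connecting $2$-cells. Once the pointwise equivalences are in hand, they must be promoted to a \emph{coherent} natural equivalence compatible with the weak-colimit presentation of an arbitrary small stack, which is the genuinely $2$-categorical part of the argument; by weak-colimit preservation this then forces the two composites to be equivalent on all of $\St\left(\X\right)$. An alternative, more formal route bypasses the explicit computation by exhibiting an induced functor of sites relating $\sit\left(\g\right)$ and $\sit\left(\h\right)$ and checking that the square of left adjoints commutes at the level of the defining functors, i.e. $j_\g\circ\left(\text{induced site functor}\right)\simeq f^*\circ j_\h$; but verifying this site-level compatibility is the same essential difficulty in a different guise.
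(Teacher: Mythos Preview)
Your overall strategy coincides with the paper's: both composites are weak-colimit preserving left adjoints, so it suffices to compare them on representables. The difference lies in how the representable case is handled, and here your proposal stops short of an actual argument.

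The paper does \emph{not} pass through $\bar\Gamma$ or invoke a Beck--Chevalley principle. Instead it fixes an internal functor $\varphi:\G\to\h$ presenting $f$, uses the explicit description of $\varphi^*$ on equivariant sheaves (namely $\varphi^*E=\G_0\times_{\h_0}E$ with the evident $\G$-action) to write $\bar L\left(f^*m_U\right)$ as the action groupoid $\G\ltimes\left(\G_0\times_{\h_0}s^{-1}\left(U\right)\right)$, and writes $f^*\bar L\left(m_U\right)$ as the strict pullback of $S$-groupoids $\G\times_{\h}\left(\h\ltimes s^{-1}\left(U\right)\right)$. It then exhibits explicit internal functors $\zeta$, $\psi$ and an internal natural isomorphism $\omega$ giving an equivalence between these two groupoids over $\G$. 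This is a direct, hands-on verification.

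Your proposed route---show $\bar\Gamma\left(\Y\times_\X U\right)\simeq f^*m_U$ and then apply $\bar L$---is in principle legitimate, but you have only \emph{named} the key identification, not proved it. Carrying it out would require a groupoid presentation of $\Y\times_\X U$ over $\G$ (which is exactly the pullback groupoid the paper writes down), then computing $P$ of that map and comparing with $\varphi^*m_U$; this is the same computation in dual guise, and no easier. Moreover, be careful about logical order: the sheaf characterization theorem you invoke (``representable local homeomorphisms correspond to sheaves'') is proved in the paper \emph{using} Theorem \ref{thm:inv}, so you cannot cite it here. You only need the adjoint equivalence $\bar L\bar\Gamma\simeq\mathrm{id}$ on $Et\left(\Y\right)$, which is available, but you still owe the concrete identification of $\bar\Gamma\left(\Y\times_\X U\right)$. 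In short: the scaffolding is right, but the load-bearing step is asserted rather than done; the paper's explicit groupoid comparison is precisely what fills that gap.
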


\begin{proof}
As both composites $f^* \circ \bar L$ and $\bar L \circ f^*$ are weak colimit preserving, it suffices to show that they agree on representables. We fix an \'etale $S$-groupoid $\h$ such that $\left[\h\right] \simeq \X$ and \emph{choose} a particular $\G$ such that $$\left[\G\right] \simeq \Y$$ and $f=\left[\varphi\right]$ with $\varphi:\G \to \h$ an internal functor. Choose a representable sheaf $m_U \in \B\h$. From \cite{cont}, for any equivariant sheaf $$\h \acts E \stackrel{\mu}{\longrightarrow} \h_0,$$ $\varphi^*\left(E\right)$ as a sheaf over $\G_0$ is given by $$\G_0\times_{\h_0} E \to \G_0$$ and has the $\G$-action
$$g \cdot \left(x,e\right)=\left(t\left(g\right),\varphi\left(g\right) \cdot e\right).$$
Hence $\bar L\left(f^*m_u\right)$ is given by $Y \left(\G \ltimes \left(\G_0\times_{\h_0} s^{-1}\left(U\right)\right)\right).$
Explicitly, the arrows may be described by pairs $\left(g,h\right) \in \G_1 \times s^{-1}\left(U\right)$ such that $$s\varphi\left(g\right)=t\left(h\right).$$

The other composite, $$f^*\bar L\left(m_u\right)$$ is given by $$\left[\G\right] \times_{\left[\h\right]} \left[\h \ltimes s^{-1}\left(U\right)\right] \to \left[\G\right].$$ Since the extended Yoneda $2$-functor preserves all weak limits, and stackification preserves finite ones, this pullback may be computed in $S$-groupoids. Its objects are triples $$\left(z,h,\alpha\right) \in \G_0 \times s^{-1}\left(U\right) \times \h_1$$ such that $$\varphi_0\left(z\right) \stackrel{\alpha}{\longrightarrow} t\left(h\right).$$
Its arrows are quadruples $$\left(g,h,h',\alpha\right) \in \G_1 \times \h_1 \times s^{-1}\left(U\right) \times \h_1$$ such that
$$s\left(\varphi\left(g\right)\right)=s\left(\alpha\right)$$ and
$$t\left(\alpha\right)=s\left(h'\right)=t\left(h\right).$$
Such a quadruple is an arrow from $\left(s\left(g\right),h,\alpha\right)$ to $\left(t\left(g\right),h'h,h'\alpha\varphi\left(g\right)^{-1}\right).$
The projections are defined  by

\begin{eqnarray*}
pr_1:\G \times_{\h}\left(\h \ltimes s^{-1}\left(U\right)\right) &\to& \G\\
\left(z,h,\alpha\right) &\mapsto& z\\
\left(g,h',h,\alpha\right) &\mapsto& g\\
\end{eqnarray*}
and
\begin{eqnarray*}
pr_2:\G \times_{\h}\left(\h \ltimes s^{-1}\left(U\right)\right) &\to& \h \ltimes s^{-1}\left(U\right)\\
\left(z,h,\alpha\right) &\mapsto& h\\
\left(g,h',h,\alpha\right) &\mapsto& \left(h',h\right).\\
\end{eqnarray*}
We now define an internal functor $$\zeta:\G \times_{\h}\left(\h \ltimes s^{-1}\left(U\right)\right) \to \G \ltimes \left(\G_0 \times_{\h_0}  s^{-1}\left(U\right)\right)$$ on objects by $$\left(z,h,\alpha\right) \mapsto \left(z,\alpha^{-1}h\right)$$ and on arrows by $$\left(g,h',h,\alpha\right) \mapsto \left(g,\alpha^{-1}h\right).$$
We define another internal functor $$\psi:\G \ltimes \left(\G_0 \times_{\h_0}  s^{-1}\left(U\right)\right) \to \G \times_{\h}\left(\h \ltimes s^{-1}\left(U\right)\right)$$ on objects as
$$\left(z,h\right) \mapsto \left(z,\mathbb{1}_{s(h)},h^{-1}\right)$$ and on arrows as $$\left(g,h\right)=\left(g,\mathbb{1}_{s(h)},\mathbb{1}_{s(h)},h^{-1}\right).$$
Note that $\psi$ is a left inverse for $\zeta$. We define an internal natural isomorphism $$\omega:\psi \circ \zeta \Rightarrow id_{\G \times_{\h}\left(\h \ltimes s^{-1}\left(U\right)\right)}$$ by
$$\omega\left(z,h,\alpha\right)=\left(\mathbb{1}_z,h^{-1},h,\alpha\right):\left(z,h,\alpha\right) \to \left(z,\mathbb{1}_{s(h)},h^{-1}\alpha\right)=\psi\zeta\left(z,h,\alpha\right).$$
As both $\zeta$ and $\psi$ commute strictly over $\G$, this establishes our claim.
\end{proof}

\begin{dfn}
For $\Z$ a small stack over an \'etale stack $\X$, and $$x:* \to \X$$ a point of $\X,$ the \textbf{stalk} of $\Z$ at $x$ is the groupoid $x^*\left(\Z\right),$ where we have made the identification $\St\left(*\right)\simeq Gpd.$ We denote this stalk by $\Z_x$.
\end{dfn}

As we have just seen, this stalk may be computed as the fiber of $$\bar L\left(\Z\right) \to \X$$ over $x,$ i.e. the weak pullback $* \times_{\X} \bar L\left(\Z\right),$ which is a constant stack with value $x^*\left(\Z\right)$. This stalk can also be computed analogously to stalks of sheaves:

\begin{lem}
Let $x \in X$ be a point of a space, and let $\Z$ be a small stack over $X$. Then the stalk at $x$ of $\Z$ can be computed by $$\Z_x\simeq  \underset{x \in U} \hc\Z\left(U\right),$$ where the weak colimit is taken over the open neighborhoods of $x$ regarded as a full subcategory of $\mathcal{O}\left(X\right).$
\end{lem}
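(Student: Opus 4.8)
The plan is to identify the claimed weak colimit with the inverse image $x^*$ of the point $x:* \to \X$, using the machinery of Section \ref{sec:inverseimage}. Since $\X = X$ is a space I take $\h = X^{(id)}$ to be the trivial groupoid on $X$, so that $\sit\left(\h\right) = \mathcal{O}\left(X\right)$ and $\St\left(X\right) \simeq \St\left(\mathcal{O}\left(X\right)\right)$; a small stack over $X$ is then literally a stack on the site of open subsets. The point $x$ induces a geometric morphism of $2$-topoi $\St\left(*\right) \to \St\left(X\right)$ whose inverse image $x^*:\St\left(X\right) \to \St\left(*\right) \simeq Gpd$ is, by its construction via the trifunctor $\Top \to 2\text{-}\Top$, a left adjoint preserving all weak colimits and finite weak limits, and by definition $\Z_x = x^*\left(\Z\right)$. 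I would define the candidate functor $S_x:\St\left(X\right) \to Gpd$ by the right-hand side, $S_x\left(\Z\right) = \underset{x \in U}\hc \Z\left(U\right)$, the weak colimit over the full subcategory $\mathcal{O}_x\left(X\right) \subseteq \mathcal{O}\left(X\right)$ of open neighborhoods of $x$ ordered by inclusion; since finite intersections of neighborhoods are neighborhoods, $\mathcal{O}_x\left(X\right)$ is cofiltered, so this is a filtered weak colimit.

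First I would analyze $S_x$ at the level of weak presheaves $Gpd^{\mathcal{O}\left(X\right)^{op}}$, where evaluation at each $U$ and the filtered weak colimit both preserve weak colimits and finite weak limits; the latter rests on the key fact that $2$-filtered weak colimits commute with finite weak limits in $Gpd$, applied to the filtered poset $\mathcal{O}_x\left(X\right)^{op}$. Thus on presheaves $S_x$ is cocontinuous and left exact. Next I would show that $S_x$ descends along stackification, i.e. $S_x\left(\W\right) \simeq S_x\left(a\W\right)$ for every weak presheaf $\W$; this is precisely the assertion that the neighborhood colimit inverts the local equivalences, and it follows as in the classical sheaf case (\cite{sheaves}) because $x$ is a genuine point of $\Sh\left(X\right)$: passing to arbitrarily small neighborhoods cannot distinguish a presheaf from its associated stack. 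Consequently $S_x$ factors through a finite-weak-limit-preserving and weak-colimit-preserving functor on $\St\left(X\right)$, which is therefore itself the inverse image of a point of the $2$-topos $\St\left(X\right)$.

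Finally I would identify $S_x$ with $x^*$. Both functors preserve weak colimits and every small stack is a weak colimit of representables $y\left(V\right)$ with $V \in \mathcal{O}\left(X\right)$, so it suffices to compare them on representables. By Corollary \ref{cor:real} and the fiber description of the stalk, $\bar L\left(y\left(V\right)\right)$ is the open inclusion $V \hookrightarrow X$, whose fiber over $x$ is $*$ when $x \in V$ and $\emptyset$ otherwise, computing $x^*\left(y\left(V\right)\right)$; on the other hand $y\left(V\right)\left(U\right) = \Hom_{\mathcal{O}\left(X\right)}\left(U,V\right)$ is terminal exactly when $U \subseteq V$, so the filtered colimit $S_x\left(y\left(V\right)\right)$ is $*$ precisely when $x \in V$ and $\emptyset$ otherwise. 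As these agree naturally in $V$, the two weak-colimit-preserving functors coincide, yielding $\Z_x \simeq \underset{x \in U}\hc \Z\left(U\right)$.

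I expect the main obstacle to be the left-exactness in the second step together with its compatibility with stackification in the third: one must verify that the filtered weak colimit over the neighborhood poset commutes with finite weak limits of groupoids and inverts stackification-local equivalences. This is the $2$-categorical upgrade of the classical interchange of filtered colimits with finite limits and of the fact that the stalk functor factors through sheafification; the verification is conceptually routine but is where the genuine content lies, everything else being formal consequences of $x^*$ and $S_x$ being weak-colimit-preserving and agreeing on representables.
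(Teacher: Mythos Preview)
Your proposal is correct and follows the same strategy as the paper: establish that the neighborhood-colimit functor $S_x$ is weak-colimit-preserving on $\St\left(X\right)$, then compare it with $x^*$ on representable opens $V$, where both yield $*$ or $\emptyset$ according to whether $x \in V$. The paper's proof simply asserts in one line that $S_x$ is ``clearly weak colimit preserving,'' whereas you unpack this by first working on presheaves (where cocontinuity is immediate) and then arguing descent along stackification via left-exactness; your extra care addresses precisely the step the paper glosses over, so the additional work on filtered-colimit/finite-limit commutation is not superfluous but rather fills in the paper's ``clearly.''
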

\begin{proof}
The $2$-functor
\begin{eqnarray*}
\St\left(X\right) &\to& Gpd\\
\Z &\mapsto& \underset{x \in U}\hc  \Z\left(U\right),\\
\end{eqnarray*}
is clearly weak colimit preserving. If $\Z=V \subseteq X$ is a representable sheaf, i.e., an open subset of $X$, then $$\Z_x\simeq \underset{x \in U} \hc \Hom\left(U,V\right) \simeq \underset{x \in U} \varinjlim \Hom\left(U,V\right),$$ and the latter expression is equivalent to the singleton set if $x \in V$ and the empty set otherwise. This set is the same as the fiber of $V$ over $x,$ i.e. the stalk $V_x \cong x^*\left(V\right).$ So $$\Z \mapsto \underset{x \in U} \hc \Z\left(U\right)$$ is weak colimit preserving and agrees with $x^*$ on representables, hence is equivalent to $x^*$.
\end{proof}

\begin{cor}\label{cor:stalk}
Let $x:* \to \X$ be a point of an \'etale stack $\X\simeq \left[\h\right],$ with $\h$ an \'etale groupoid. Pick a point $\tilde x \in \h_0$ such that $x\cong p \circ \tilde x$ where $$p:\h_0 \to \X$$ is the atlas associated to $\h$. Let $\Z$ be a small stack over $X$. Then the stalk at $x$ of $\Z$ can be computed by $$\Z_x\simeq  \underset{\tilde x \in U} \hc\Z\left(U\right),$$ where the weak colimit is taken over the open neighborhoods of $\tilde x$ in $\h_0$ regarded as a full subcategory of $\mathcal{O}\left(\h_0\right).$
\end{cor}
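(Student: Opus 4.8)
The plan is to bootstrap from the preceding Lemma, which already settles the case of a space, by pulling $\Z$ back along the atlas. Writing $x \simeq p \circ \tilde x$ and using that inverse-image $2$-functors compose contravariantly (Section \ref{sec:inverseimage}), we have $x^* \simeq \tilde x^* \circ p^*$, so that
$$\Z_x = x^*\left(\Z\right) \simeq \tilde x^*\left(p^*\Z\right) = \left(p^*\Z\right)_{\tilde x}.$$
Now $p^*\Z$ is a small stack over the \emph{space} $\h_0$ (inverse-image $2$-functors send stacks to stacks), so the Lemma applies to it at the point $\tilde x \in \h_0$ and gives
$$\left(p^*\Z\right)_{\tilde x} \simeq \underset{\tilde x \in U} \hc \left(p^*\Z\right)\left(U\right),$$
the weak colimit over open neighborhoods of $\tilde x$ in $\h_0$. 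Thus everything reduces to a single identification: $\left(p^*\Z\right)\left(U\right) \simeq \Z\left(U\right)$ for every open $U \subseteq \h_0$.

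To establish this it suffices to recognize $p^* : \St\left(\X\right) \to \St\left(\h_0\right)$ as restriction along the canonical site inclusion $i : \mathcal{O}\left(\h_0\right) \hookrightarrow \sit\left(\h\right)$ of Definition \ref{dfn:site}. Since $i$ is the identity on objects, restriction along it (as the direct image of a geometric morphism, hence landing in stacks) sends a stack $\Z$ on $\sit\left(\h\right)$ to the stack $U \mapsto \Z\left(i\left(U\right)\right) = \Z\left(U\right)$ on $\mathcal{O}\left(\h_0\right)$, which is exactly the right-hand side above. Conceptually this is the expected statement: $i$ discards precisely the non-inclusion arrows of $\sit\left(\h\right)$, i.e. the sections of $s$ encoding the $\h$-action, while $p^*$ is the functor forgetting $\h$-equivariance from $\B\h$ to $\Sh\left(\h_0\right)$.

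The heart of the matter, and the step I expect to be the main obstacle, is therefore the identification $p^* \simeq i^*$ (restriction), which I would argue by uniqueness of adjoints. By the general theory of morphisms of sites, restriction $i^*$ along $i$ is the \emph{direct} image $g_*$ of a geometric morphism $g : \Sh\left(\X\right) \to \Sh\left(\h_0\right)$, whose inverse image $g^*$ is the (stackified) left Kan extension along $i$. On the other hand, because the atlas $p$ is \'etale, $\Sh\left(p\right)$ is essential, so $p^*$ has a left adjoint $p_!$ fitting into $p_! \rt p^* \rt p_*$. Both $g^*$ and $p_!$ are weak-colimit-preserving functors $\Sh\left(\h_0\right) \to \Sh\left(\X\right)$, and a direct check on representables shows they agree: $i$ carries the representable at $U$ to the representable at $U$ in $\sit\left(\h\right)$, i.e. to $m_U$ (Definition \ref{dfn:mu}), and likewise $p_!$ sends the open $U \hookrightarrow \h_0$ to the \'etale object $U \hookrightarrow \h_0 \to \X$, which corresponds again to $m_U$. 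Since representables are colimit-dense and both functors preserve weak colimits, $g^* \simeq p_!$, whence by uniqueness of right adjoints $p^* \simeq g_* = i^*$. The argument applies verbatim one categorical level up, for the inverse-image $2$-functor on small stacks, as it too preserves weak colimits and agrees with restriction along $i$ on the $m_U$. Combining this with the two displayed equivalences yields $\Z_x \simeq \underset{\tilde x \in U} \hc \Z\left(U\right)$, as claimed.
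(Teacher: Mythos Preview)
Your proof is correct and follows essentially the same approach as the paper: factor $x^* \simeq \tilde x^* \circ p^*$, apply the preceding Lemma to $p^*\Z$ over the space $\h_0$, and reduce to the identification $(p^*\Z)(U) \simeq \Z(U)$. The only difference is that the paper disposes of this last identification in one line (``by definition''), whereas you supply a full adjoint-uniqueness argument showing $p^*$ agrees with restriction along $i$; your elaboration is sound but more than the paper deems necessary, since the inverse image $p^*$ on $\B\h$ is the forgetful functor to $\Sh(\h_0)$, which on the site side is visibly restriction along $i$.
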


\begin{proof}
Since $x\cong p \circ \tilde x,$ it follows that $$\Sh\left(x\right) \simeq \Sh\left(p\right) \circ \Sh\left(\tilde x\right):\Sh\left(*\right) \to \Sh\left(\X\right),$$ and hence $$x^* \simeq \tilde x^* \circ p^*.$$ By definition, for $U$ an open subset of $\h_0,$ $$p^*\left(\Z\right)\left(U\right)\simeq\Z\left(U\right).$$ Hence,

\begin{eqnarray*}
\Z_x&=& x^*\Z\\
 &\simeq& \tilde x^*\left(p^*\Z\right)\\
 &\simeq& \left(p^*\Z\right)_{\tilde x}\\
 &\simeq& \underset{\tilde x \in U} \hc \left(p^*\Z\right)\left(U\right)\\
 &\simeq& \underset{\tilde x \in U} \hc \Z\left(U\right).\\
\end{eqnarray*}
\end{proof}

\subsection{A classification of sheaves}

From Corollary \ref{cor:real}, we know that the for an \'etale stack $\X$, the $2$-category of local homeomorphisms over $\X$ is equivalent to the $2$-category of small stacks over $\X$. A natural question is which objects in $Et\left(\X\right)$ are actually \emph{sheaves} over $\X$, as opposed to stacks, i.e., what are the $0$-truncated objects?

\begin{thm}
A local homeomorphism $f:\Z \to \X$ over an \'etale stack $\X$ is a equivalent to $\bar L\left(F\right)$ for a small sheaf $F$ over $\X$ if and only if it is a representable map.
\end{thm}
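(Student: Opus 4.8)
The plan is to transport the statement across the adjoint-equivalence of Corollary \ref{cor:real} and then read off representability from a concrete groupoid presentation. By Corollary \ref{cor:real}, the local homeomorphism $f\colon\Z\to\X$ is equivalent to $\bar L\left(W\right)$ for the small stack $W:=\bar\Gamma\left(\Z\right)$ over $\X$, and $W$ is determined up to equivalence. Hence $f\simeq\bar L\left(F\right)$ for a small \emph{sheaf} $F$ precisely when $W$ is $0$-truncated, i.e.\ equivalent to a sheaf. So the theorem reduces to showing: $f$ is representable if and only if $W$ is $0$-truncated.

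Next I would fix an \'etale groupoid $\h$ with $\left[\h\right]\simeq\X$ and, using Appendix \ref{App:sheavess}, present $W$ by a groupoid object $\K$ in $\B\h$. By the remark following Theorem \ref{thm:finn}, $\bar L\left(W\right)\simeq Y\left(\h\ltimes\K\right)$, so $f$ is equivalent to $\left[\theta_\K\right]\colon\left[\h\ltimes\K\right]\to\left[\h\right]=\X$. On the algebraic side I would invoke the standard fact that a groupoid object $\K$ in a topos presents a $0$-truncated stack exactly when $\left(s,t\right)\colon\K_1\to\K_0\times\K_0$ is a monomorphism (equivalently, $\K$ is an internal equivalence relation); this is the condition I must match with representability of $\left[\theta_\K\right]$.

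To detect representability I would first reduce to the \'etale atlas $\h_0\to\X$: a morphism $g\colon\Y\to\X$ of \'etale stacks is representable if and only if the single weak $2$-pullback $\h_0\times_\X\Y$ is (equivalent to) a space. The forward implication is immediate, and the converse follows by \'etale descent for spaces: for any $T\to\X$ the projection $T\times_\X\h_0\to T$ is an \'etale cover, over which $T\times_\X\Y$ becomes $\left(T\times_\X\h_0\right)\times_{\h_0}\left(\h_0\times_\X\Y\right)$, a pullback of the space $\h_0\times_\X\Y$ along a map of spaces, and being a space is local for the \'etale topology. Applying this with $\Y=\left[\h\ltimes\K\right]$, I would compute the weak pullback groupoid $\h_0\times_\h\left(\h\ltimes\K\right)$ and observe that its automorphism group at an object $z\in\K_0$ is $\ker\bigl(\operatorname{Aut}_{\h\ltimes\K}\left(z\right)\xrightarrow{\theta_\K}\operatorname{Aut}_\h\left(\mu_0 z\right)\bigr)$. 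A direct inspection of the generalized action groupoid identifies this kernel with $\operatorname{Aut}_\K\left(z\right)=\{k\in\K_1:s\left(k\right)=t\left(k\right)=z\}$: an automorphism $\left(h,k\right)$ of $z$ lies in the kernel iff $h=\mathbb{1}_{\mu_0\left(z\right)}$, which forces $k$ to be an automorphism of $z$ inside $\K$.

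Finally I would assemble the chain of equivalences. Since $\h$ is \'etale and $\theta_\K$ has local-homeomorphism components (Proposition \ref{prop:shver}), the pullback groupoid is an \'etale internal equivalence relation—hence presents a genuine space—exactly when all these kernels vanish, and the kernels all vanish iff $\left(s,t\right)\colon\K_1\to\K_0\times\K_0$ is a monomorphism. Combining the three previous paragraphs yields $f$ representable $\iff$ $\h_0\times_\X\Z$ a space $\iff$ $\operatorname{Aut}_\K\left(z\right)$ trivial for all $z$ $\iff$ $\left(s,t\right)$ mono $\iff$ $W$ is $0$-truncated $\iff$ $f\simeq\bar L\left(F\right)$ for a small sheaf $F$. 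The main obstacle I anticipate is the bookkeeping in the pullback-groupoid computation and, more delicately, justifying that an \'etale internal equivalence relation admits a genuine quotient space (manifold, or locale) so that ``the pullback is a space iff $\left(s,t\right)$ is a monomorphism'' holds uniformly in all three settings; the \'etale hypothesis on $\h$ together with Proposition \ref{prop:shver} is exactly what makes this step go through.
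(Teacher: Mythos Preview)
Your argument is correct in outline but takes a more computational route than the paper, and one step deserves sharper justification than you give it.

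The paper's proof bypasses groupoid presentations entirely by invoking Theorem~\ref{thm:inv}: the pullback of $\bar L(W)$ along the atlas $a\colon\h_0\to\X$ is equivalent to $\bar L(a^*W)$, and since $\h_0$ is a space, the classical \'etal\'e-space correspondence says this is a space exactly when $a^*W$ is a sheaf; but $a^*W(U)=W(m_U)$, so $a^*W$ is a sheaf iff $W$ is. Both directions fall out in two lines.

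Your route---presenting $W$ by a groupoid $\K$ in $\B\h$ and analysing the pullback groupoid $P$ directly---also works, and your identification of the isotropy of $P$ with $\operatorname{Aut}_\K$ is correct. The delicate point is the one you flag: ``an \'etale internal equivalence relation presents a genuine space.'' As stated for arbitrary \'etale $S$-groupoids this is \emph{false}: the irrational-rotation groupoid $\mathbb{Z}\ltimes_\alpha S^1$ has $(s,t)$ monic yet its stack is not representable (its classifying topos is not spatial). What rescues your argument is not merely that $P$ is an \'etale $S$-groupoid, but that $P$ is a groupoid object in $\Sh(\h_0)$, i.e.\ in \'etale spaces over $\h_0$: the projection $P\to\h_0^{id}$ from the pullback square makes $P_0,P_1\to\h_0$ \'etale, and the source and target maps of $P$ live over $\h_0$. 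In a topos every equivalence relation is effective, so when $(s,t)$ is monic the quotient exists as a sheaf over $\h_0$, hence as a genuine space. Proposition~\ref{prop:shver} contributes to this, as you note, but the crucial extra ingredient is that $s$ and $t$ of $P$ commute with the projection to $\h_0$; once that is said explicitly your proof is complete. Observe that this step is essentially a groupoid-level unpacking of Theorem~\ref{thm:inv} applied to the atlas, so the two approaches converge here.
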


\begin{proof}
Suppose $F$ is a small sheaf over $\X\simeq\left[\h\right]$ with $\h$ an \'etale $S$-groupoid. Denote by  $$\underline{\bar L \left(F\right)} \to \X$$ the map $\bar L\left(F\right)$. We wish to show that $$\underline{\bar L \left(F\right)} \to \X$$ is representable. It suffices to show that the $2$-pullback

$$\xymatrix{\h_0 \times_{\X} \underline{\bar L \left(F\right)} \ar[d] \ar[r] & \underline{\bar L \left(F\right)} \ar[d]\\
\h_0 \ar[r]^-{a} & \X},$$
is (equivalent to) a space, where $a:\h_0 \to \X$ is the atlas associated to $\h$. By Theorem \ref{thm:inv}, this pullback is equivalent to the total space of the \'etal\'e space of the sheaf $a^*\left(F\right)$ over $\h_0$.
Conversely, suppose $\Z \to \X$ is a representable local homeomorphism equivalent to $\bar L\left(\W\right)$ for some small stack $\W$. Then the pullback  $$\h_0 \times_{\X} \underline{\bar L \left(F\right)}$$ is equivalent to a space. This implies that $a^*\left(W\right)$ is a sheaf of sets over $\h_0$. By definition $a^*\left(\W\right)$ assigns to each open subset $U$ of $\h_0$ the groupoid $\W\left(m_U\right)$. It follows that $\W$ must be a sheaf.
\end{proof}

\begin{cor}
For an \'etale stack $\X$, the category of small sheaves over $\X$ is equivalent to the $2$-category of representable local homeomorphisms over $\X$.
\end{cor}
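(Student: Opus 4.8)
The plan is to derive this corollary as a formal consequence of the adjoint-equivalence $\bar L \dashv \bar\Gamma$ of Corollary \ref{cor:real} together with the characterization of representable maps established in the preceding Theorem. First I would recall that $\bar L : \St\left(\X\right) \to Et\left(\X\right)$ is an equivalence of $2$-categories; in particular it is $2$-categorically fully faithful, so for any full sub-$2$-category $\mathcal{C} \subseteq \St\left(\X\right)$ it restricts to an equivalence between $\mathcal{C}$ and the full sub-$2$-category of $Et\left(\X\right)$ spanned by the essential image $\bar L\left(\mathcal{C}\right)$.

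Next I would take $\mathcal{C}$ to be the full sub-$2$-category of $\St\left(\X\right)$ consisting of the small sheaves, i.e. the $0$-truncated small stacks (those taking values in sets rather than genuine groupoids). The content of the preceding Theorem is precisely that the essential image of this $\mathcal{C}$ under $\bar L$ is the collection of representable local homeomorphisms over $\X$: a local homeomorphism $f : \Z \to \X$ lies in $\bar L\left(\mathcal{C}\right)$ if and only if it is representable. Hence $\bar L$ restricts to an equivalence between small sheaves and the full sub-$2$-category of $Et\left(\X\right)$ spanned by representable local homeomorphisms.

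Finally I would verify that this is genuinely an equivalence with a $1$-category, as claimed. For two small sheaves $F$ and $G$, the hom-groupoid $\Hom_{\St\left(\X\right)}\left(F,G\right)$ is discrete: a morphism of sheaves of sets admits no nontrivial automorphisms, so the only $2$-cells between maps of sheaves are identities. Since $\bar L$ is locally an equivalence of groupoids, the hom-groupoids between representable local homeomorphisms are likewise discrete, so the $2$-category of representable local homeomorphisms is (equivalent to) an ordinary category. Transporting along $\bar L$ then identifies it with the category of small sheaves.

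The only real subtlety — and the step I would be most careful with — is the identification of the essential image, but that is exactly what the preceding Theorem supplies; the remainder is the standard fact that an equivalence of $2$-categories restricts to an equivalence between a full sub-$2$-category and its essential image, combined with the observation that local discreteness of the hom-groupoids is preserved under such an equivalence.
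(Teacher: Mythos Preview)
Your proposal is correct and follows exactly the approach the paper intends: the corollary is stated without proof as an immediate consequence of the preceding theorem together with the equivalence $\bar L \simeq \bar\Gamma^{-1}$ of Corollary~\ref{cor:real}, and your argument spells out precisely those details. Your final paragraph, showing that the hom-groupoids on the representable side are discrete, is exactly the content of the remark the paper places immediately after this corollary.
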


\begin{rmk}
This implies that the $2$-category of representable local homeomorphisms over $\X$ is equivalent to its $1$-truncation.
\end{rmk}

\begin{rmk}
This gives a purely intrinsic definition of the topos of sheaves $\Sh\left(\X\right)$. In particular, a posteriori, we could define a small stack over $\X$ to be a stack over this topos. We note for completeness that a site of definition of this topos is the category of local homeomorphisms $T \to \X$ from $T$ a space, with the induced open cover topology. This is equivalent to the category of principal $\h$-bundles whose moment map is a local homeomorphism.
\end{rmk}

\section{A Groupoid Description of the Stack of Sections}\label{sec:section}
\sectionmark{The Stack of Sections}
Now that we have a concrete description of $\bar L$ in terms of groupoids, it is natural to desire a similar description for $\bar \Gamma$ (where $\bar L$ and $\bar \Gamma$ are as in Corollary \ref{cor:rest}).

\begin{lem}\label{lem:sec1}
Suppose that $\varphi:T \to \h$ is a local homeomorphism from a space $T$, with $\h$ an \'etale groupoid. Then $\bar \Gamma\left(\left[\varphi\right]\right)$ is the equivariant sheaf $P\left(\varphi\right)\in \B\h$, where $P$ is as in Section \ref{subsec:action}.
\end{lem}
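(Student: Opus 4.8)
The plan is to deduce the lemma formally from the adjoint equivalence of Corollary~\ref{cor:real}, the concrete description of the étalé realization functor in Theorem~\ref{thm:finn}, and the counit of Lemma~\ref{lem:eps}, rather than computing the groupoid of sections $\bar\Gamma\left(\left[\varphi\right]\right)\left(U\right)$ by hand. Since $\bar L:\St\left(\X\right) \to Et\left(\X\right)$ is an equivalence with quasi-inverse $\bar\Gamma$, to identify $\bar\Gamma\left(\left[\varphi\right]\right)$ it suffices to produce a small stack $\Z$ over $\X$ together with an equivalence $\bar L\left(\Z\right) \simeq \left[\varphi\right]$, and then invoke $\bar\Gamma \circ \bar L \simeq id$ to conclude $\bar\Gamma\left(\left[\varphi\right]\right) \simeq \Z$. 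The candidate is of course $\Z = P\left(\varphi\right)$.

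First I would record the two preliminary observations that make the statement well-posed in the topos $\B\h$. Because $\varphi:T \to \h$ is a local homeomorphism from a \emph{space}, the map $\varphi_{0}:T \to \h_{0}$ is a local homeomorphism, so by Proposition~\ref{prop:shver2} the groupoid object $P\left(\varphi\right)$ genuinely lies in $Gpd\left(\B\h\right)$. Moreover, since the underlying groupoid of $T$ has only identity arrows, its arrow space $\h_{1}\times_{\h_{0}}\G_{1}$ collapses and $P\left(\varphi\right)$ is a groupoid object in $\B\h$ with only identity arrows, i.e. an $\h$-equivariant sheaf. Hence its stack completion $\left[\,\cdot\,\right]_{\B\h}$ yields the small \emph{sheaf} represented by $P\left(\varphi\right)$, and this is precisely the object of $\St\left(\X\right)$ that the statement means to identify with $\bar\Gamma\left(\left[\varphi\right]\right)$.

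The main computation is then short. By the Remark following Theorem~\ref{thm:finn}, applied to the groupoid object $\K=P\left(\varphi\right)$ in $\B\h$, one has $\bar L\left(\left[P\left(\varphi\right)\right]_{\B\h}\right) \simeq Y\left(\h \ltimes P\left(\varphi\right)\right)$. Now Lemma~\ref{lem:eps} furnishes the component $\varepsilon\left(\varphi\right)$ of the counit $\varepsilon:\h\ltimes P \Rightarrow id_{\left(S-Gpd\right)/\h}$, which is an equivalence in $\left(S-Gpd\right)/\h$; thus $\h \ltimes P\left(\varphi\right) \simeq \varphi$ over $\h$. Since $Y$ is a $2$-functor it preserves this equivalence, giving $Y\left(\h \ltimes P\left(\varphi\right)\right) \simeq Y\left(\varphi\right) = \left[\varphi\right]$ in $\St\left(S\right)/\X$. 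Combining the two displays yields $\bar L\left(\left[P\left(\varphi\right)\right]_{\B\h}\right) \simeq \left[\varphi\right]$, and therefore $\bar\Gamma\left(\left[\varphi\right]\right) \simeq \bar\Gamma\,\bar L\left(\left[P\left(\varphi\right)\right]_{\B\h}\right) \simeq \left[P\left(\varphi\right)\right]_{\B\h}$, i.e. $\bar\Gamma\left(\left[\varphi\right]\right)$ is the equivariant sheaf $P\left(\varphi\right)$.

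I do not expect a serious obstacle, since essentially all of the content has already been isolated in Theorem~\ref{thm:finn} and Lemma~\ref{lem:eps}. The only points requiring care are bookkeeping: checking that the equivalence produced by Lemma~\ref{lem:eps} is genuinely an equivalence \emph{over} $\h$ (so that it survives application of $Y$ as an equivalence over $\X$), and verifying that the discreteness of $P\left(\varphi\right)$ for $T$ a space really does place the final identification in the topos $\B\h$ rather than merely among small stacks. One could instead give a direct proof by computing the groupoid $\bar\Gamma\left(\left[\varphi\right]\right)\left(U\right)$ of sections over each $U \in \sit\left(\h\right)$ and matching it with $\Hom_{\B\h}\left(m_{U},P\left(\varphi\right)\right)$, but this is strictly more laborious and the abstract route above extracts the result cleanly from the machinery already in place.
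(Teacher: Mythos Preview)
Your approach is correct and takes a genuinely different route from the paper. The paper computes $\bar\Gamma\left(\left[\varphi\right]\right)\left(U\right)$ directly: it unwinds $\Hom_{Gpd/\h}\left(\h \ltimes s^{-1}\left(U\right),\varphi\right)$ (a set, since $T$ has no non-identity arrows) and writes down an explicit bijection with $\Hom_{\B\h}\left(m_U,P\left(\varphi\right)\right)$, then invokes Yoneda. You instead deduce the result formally from Theorem~\ref{thm:finn}, Lemma~\ref{lem:eps}, and the adjoint equivalence of Corollary~\ref{cor:real}, which is cleaner and involves no element-chasing; there is no circularity, since none of those results depend on the present lemma. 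The trade-off is that the paper's explicit bijection is not incidental: it is reused verbatim in the proof of Theorem~\ref{thm:secs} (see equation~(\ref{eq:choochootrain}) and the surrounding computation), where one needs to know \emph{how} an element of $\bar\Gamma\left(\left[\varphi\right]\circ a\right)\left(U\right)$ corresponds to a map $m_U \to P\left(\varphi\circ\tilde a\right)$ in order to match source and target maps with those of $P\left(\varphi\right)$. Your abstract argument certifies the isomorphism of sheaves but does not record the formula $\gamma \mapsto \left(\alpha\left(\gamma\right),\psi_0\left(\gamma\right)\right)$, so when you reach Theorem~\ref{thm:secs} you will either have to extract that formula separately or replace that proof with an equally abstract one.
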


\begin{proof}
Let $m_U$ be a representable sheaf in $\B\h$. Then

\begin{eqnarray*}
\Gamma\left(\left[\varphi\right]\right)\left(U\right) &\simeq&  \Hom\left(\bar L\left(m_U\right),\left[\varphi\right]\right)\\
&\simeq& \Hom\left(\left[\h \ltimes s^{-1}\left(U\right)\right],\left[\varphi\right]\right).\\
\end{eqnarray*}
Since $T$ is a sheaf, the later is in turn equivalent to $$\Hom_{Gpd/\h}\left(\h \ltimes s^{-1}\left(U\right),\varphi\right).$$ This follows from the canonical equivalence $$\Hom\left(\tilde y\left(\h \ltimes s^{-1}\left(U\right)\right),T\right) \simeq \Hom\left(\left[\h \ltimes s^{-1}\left(U\right)\right],T\right).$$ In fact, this is a set, since $T$ has no arrows, so there are no natural transformations. An element of this set is the data of a groupoid homomorphism $$\psi:\h \ltimes s^{-1}\left(U\right) \to T$$ together with an internal natural transformation
$$\beta:\theta_{m_U} \Rightarrow \varphi \circ \psi.$$
To ease notation, let $\alpha:=\beta^{-1}.$ Since $T$ is a space, $\psi_1$ is determined by $\psi_0$ by the formula $$\psi_1\left(\left(h,\gamma\right)\right)=\psi_0\left(\gamma\right)=\psi_0\left(h\gamma\right).$$ Notice that this also imposes conditions on $\psi_0$, namely that it is constant on orbits. The internal natural transformation is a map of spaces $$\alpha:s^{-1}\left(U\right) \to \h_1$$ such that for all $\gamma \in s^{-1}\left(U\right),$ $$\alpha\left(\gamma\right):\varphi\psi_0\left(\gamma\right) \to t\left(\gamma\right).$$ Because of the constraints on $\psi$, the naturality condition is equivalent to $$\alpha\left(h\gamma\right)=h\alpha\left(\gamma\right).$$  This data defines a map $$m_U \to P\left(\varphi\right)$$ by
\begin{eqnarray*}
s^{-1}\left(U\right) &\to& \h_1\times_{\h_0}T\\
\gamma &\mapsto&\left(\alpha\left(\gamma\right),\psi_0\left(\gamma\right)\right).\\
\end{eqnarray*}
Conversely, any map $f:m_U \to P\left(\varphi\right)$ defines a morphism $$\hat{f}:\h \ltimes s^{-1}\left(U\right) \to T$$ on objects by $pr_2 \circ f$ (and hence determines it on arrows), and since $f$ is $\h$-equivariant, and the $\h$-action on $\h_1\times_{\h_0}T$ does not affect $T$, this map is constant on orbits. The map $f$ induces an internal natural transformation $$\alpha_f:\varphi \circ \hat{f} \to \theta_{m_U}$$ by $\alpha_f=pr_1 \circ f.$ This establishes a bijection
$$\Hom_{Gpd/\h}\left(\h \ltimes s^{-1}\left(U\right),\varphi\right) \cong \Hom_{\B\h}\left(m_U,P\left(\varphi\right)\right).$$
Hence $$\Gamma\left(\left[\varphi\right]\right)\left(U\right) \simeq  \Hom_{\B\h}\left(m_U,P\left(\varphi\right)\right),$$ so we are done by the Yoneda Lemma.
\end{proof}

\begin{thm}\label{thm:secs}
Suppose that $\varphi:\G \to \h$ is a homomorphism of \'etale $S$-groupoids with $\varphi_0$ a local homeomorphism. Then $\bar \Gamma\left(\left[\varphi\right]\right)$ is equivalent to the stack associated to the groupoid object $P\left(\varphi\right)$ in $\B\h$.
\end{thm}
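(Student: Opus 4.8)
The plan is to obtain this as a formal consequence of the abstract equivalence already in hand, rather than to extend the hands-on computation of Lemma \ref{lem:sec1}. Write $\Z:=\left[P\left(\varphi\right)\right]_{\B\h}$ for the stack completion of the groupoid object $P\left(\varphi\right)$. This makes sense because $\h$ is \'etale and $\varphi_0$ is a local homeomorphism, so $P\left(\varphi\right)$ is genuinely a groupoid object in $\B\h$ by Proposition \ref{prop:shver2}; hence $\Z$ is a small stack over $\X$, i.e.\ an object of $\St\left(\X\right)$. The goal is then to identify $\bar\Gamma\left(\left[\varphi\right]\right)$ with $\Z$ by running the adjoint-equivalence of Corollary \ref{cor:real} backwards.

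First I would compute the \'etal\'e realization of $\Z$. By Theorem \ref{thm:finn} the composite $\bar L \circ \left[\, \cdot\, \right]_{\B\h}$ is equivalent to $Y \circ \left(\h \ltimes\right)$, so that
$$\bar L\left(\Z\right) \simeq Y\left(\h \ltimes P\left(\varphi\right)\right).$$
Next I invoke Lemma \ref{lem:eps}: the natural transformation $\varepsilon:\h \ltimes P \Rightarrow id_{\left(S-Gpd\right)/\h}$ has equivalences as components, so $\varepsilon_\varphi:\h \ltimes P\left(\varphi\right) \to \varphi$ is an equivalence in $\left(S-Gpd\right)/\h$. Applying the $2$-functor $Y$, which sends $\varphi:\G \to \h$ to $\left[\varphi\right]:\left[\G\right] \to \X$ and carries equivalences over $\h$ to equivalences over $\X$, yields $\bar L\left(\Z\right) \simeq \left[\varphi\right]$.

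Finally I would close the loop. Since $\varphi$ is a homomorphism of \'etale groupoids with $\varphi_0$ a local homeomorphism, $\left[\varphi\right]$ is a local homeomorphism over $\X$, hence an object of $Et\left(\X\right)$, while $\Z$ lives in $\St\left(\X\right)$. Corollary \ref{cor:real} gives an adjoint-equivalence with unit an equivalence, $id_{\St\left(\X\right)} \simeq \bar\Gamma \circ \bar L$; applying $\bar\Gamma$ to the equivalence $\bar L\left(\Z\right) \simeq \left[\varphi\right]$ therefore gives
$$\Z \simeq \bar\Gamma\bar L\left(\Z\right) \simeq \bar\Gamma\left(\left[\varphi\right]\right),$$
which is exactly the assertion.

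The point to watch is that essentially all the genuine work has already been discharged upstream: the matching statement of Theorem \ref{thm:finn} and the construction of the equivalence $\varepsilon$ in Lemma \ref{lem:eps} are where the content lies, and the present theorem is a formal corollary of them together with the adjoint-equivalence. The only alternative — extending the explicit argument of Lemma \ref{lem:sec1} by describing morphisms $\left[\h \ltimes s^{-1}\left(U\right)\right] \to \left[\G\right]$ as generalized morphisms and matching them, via descent along the atlas $\G_0 \to \G$, with $\B\h$-morphisms $m_U \to P\left(\varphi\right)$ — is considerably more laborious, precisely because $\G$ now carries non-identity arrows; the abstract route sidesteps this bookkeeping entirely.
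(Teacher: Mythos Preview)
Your argument is correct, but it takes a genuinely different route from the paper's own proof. The paper does \emph{not} invoke Theorem \ref{thm:finn} or the adjoint-equivalence abstractly; instead it extends the hands-on computation of Lemma \ref{lem:sec1} precisely along the lines you dismiss as ``considerably more laborious.'' Concretely, the paper considers the atlas $a:\G_0 \to \left[\G\right]$, notes that the induced map $p:\bar\Gamma\left(\left[\varphi\right]\circ a\right) \to \bar\Gamma\left(\left[\varphi\right]\right)$ is an epimorphism from a sheaf, and then explicitly identifies the kernel-pair groupoid
$$\bar\Gamma\left(\left[\varphi\right]\circ a\right) \times_{\bar\Gamma\left(\left[\varphi\right]\right)} \bar\Gamma\left(\left[\varphi\right]\circ a\right) \rightrightarrows \bar\Gamma\left(\left[\varphi\right]\circ a\right)$$
with $P\left(\varphi\right)$ via a point-by-point bijection $\pi\left(U\right)$ on sections over each $U$, checking source, target, and composition by hand.

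What each approach buys: your route is short and conceptually clean, and makes transparent that the theorem is a formal shadow of the equivalence $\h \ltimes P \simeq id$ together with $\bar L \circ \left[\,\cdot\,\right]_{\B\h} \simeq Y \circ \left(\h\ltimes\right)$. The paper's route is logically lighter-weight in one respect --- it depends only on Lemma \ref{lem:sec1} and elementary epimorphism/descent reasoning, not on the appendix proof of Theorem \ref{thm:finn} --- and it yields an \emph{explicit} isomorphism of groupoid objects in $\B\h$, not merely an equivalence of their associated stacks. For the downstream applications in the paper (e.g.\ Theorem \ref{thm:gerbsec}) only the stack-level statement is used, so your version suffices there.
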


\begin{proof}
Let $a:\G_0 \to \left[\G\right]$ denote the atlas of the stack $\left[\G\right]$. There is a canonical map $$p:\bar \Gamma\left(\left[\varphi\right] \circ a\right) \to \bar\Gamma\left(\left[\varphi\right]\right),$$ and since $a$ is an epimorphism, it follows that $p$ is an epimorphism as well. Since $p$ is an epimorphism from a sheaf to a stack, it follows that
$$\bar \Gamma\left(\left[\varphi\right] \circ a\right) \times_{\bar\Gamma\left(\left[\varphi\right]\right)}\bar \Gamma\left(\left[\varphi\right] \circ a\right)\rightrightarrows \bar \Gamma\left(\left[\varphi\right] \circ a\right),$$ is a groupoid object in sheaves (i.e. the classifying topos $\B\h$) whose stackification is equivalent to $\bar \Gamma\left(\left[\varphi\right]\right)$. We will show that this groupoid is isomorphic to $P\left(\varphi\right)$. This isomorphism is clear on objects from the previous lemma.

Since pullbacks are computed object-wise, as a sheaf, $$\bar \Gamma\left(\left[\varphi\right] \circ a\right) \times_{\bar\Gamma\left(\left[\varphi\right]\right)}\bar \Gamma\left(\left[\varphi\right] \circ a\right)$$ assigns $U \in \sit\left(\h\right)$ the pullback groupoid

$$\bar \Gamma\left(\left[\varphi\right] \circ a\right)\left(U\right) \times_{\bar\Gamma\left(\left[\varphi\right]\right)\left(U\right)}\bar \Gamma\left(\left[\varphi\right] \circ a\right)\left(U\right),$$
which is indeed (equivalent to) a set. It is the set of pairs of objects in $\Gamma\left(\left[\varphi\right] \circ a\right)\left(U\right)$ together with a morphism in $\bar\Gamma\left(\left[\varphi\right]\right)\left(U\right)$ between their images under $p\left(U\right)$.

Since for all $S$-groupoids, the induced map $$\Hom\left(\Ll,\K\right) \to \Hom\left(\left[\Ll\right],\left[\K\right]\right)$$ is full and faithful, we may describe this set in terms of maps of groupoids. It has the following description:

An element of $\bar \Gamma\left(\left[\varphi\right] \circ a\right)\left(U\right) \times_{\bar\Gamma\left(\left[\varphi\right]\right)\left(U\right)}\bar \Gamma\left(\left[\varphi\right] \circ a\right)\left(U\right),$ can be represented by two pairs $\left(\sigma_0,\alpha_0\right)$  and $\left(\sigma_1,\alpha_1\right),$ such that for $i=0,1$,
$$\xymatrix{\h \ltimes s^{-1}\left(U\right) \ar@/_1.65pc/_-{\theta_{m_U}}[rrrdd] \ar@{} @<-15pt>  [rrrdd]| (0.4) {}="a" \ar[r]^-{\sigma_i} & \G_0  \ar[rd]^-{\tilde a} \ar@{} @<-8pt>  [rd]| (0.4) {}="b"  \ar @{=>}^{\alpha_i}  "b";"a" \\
& & \G\ar[rd]^-{\varphi}\\
& & & \h,}$$
where $\tilde a:\G_0 \to \G$ is the obvious map such that $\left[\tilde a\right]=a,$
together with a $2$-cell $$\beta:\tilde a \circ \sigma_0 \Rightarrow \tilde a \circ \sigma_1,$$ such that the following diagram commutes:
\begin{equation}\label{diag:tri}
\xymatrix{\varphi\circ \tilde a \circ \sigma_0 \ar@{=>}[rr]^-{\varphi\beta} \ar@{=>}[rd]_-{\alpha_0}& & \varphi\circ \tilde a \circ \sigma_1 \ar@{=>}[ld]^-{\alpha_1}\\
& \theta_{m_U} &\\}.
\end{equation}
Each pair $\left(\sigma_i,\alpha_i\right)$ represents
$$\xymatrix{\left[\h \ltimes s^{-1}\left(U\right)\right] \ar@/_1.65pc/_-{\left[\theta_{m_U}\right]}[rrrdd] \ar@{} @<-15pt>  [rrrdd]| (0.4) {}="a" \ar[r]^-{\left[\sigma_i\right]} & \G_0  \ar[rd]^-{a} \ar@{} @<-8pt>  [rd]| (0.4) {}="b"  \ar @{=>}^{\left[\alpha_i\right]}  "b";"a" \\
& & \left[\G\right]\ar[rd]^-{\left[\varphi\right]}\\
& & & \left[\h\right],}$$
i.e. the element $\left(\left[\sigma_i\right],\left[\alpha_i\right]\right)$ of the set $\bar \Gamma\left(\left[\varphi\right] \circ a\right)\left(U\right).$
The groupoid structure on $$\bar \Gamma\left(\left[\varphi\right] \circ a\right) \times_{\bar\Gamma\left(\left[\varphi\right]\right)}\bar \Gamma\left(\left[\varphi\right] \circ a\right)\rightrightarrows \bar \Gamma\left(\left[\varphi\right] \circ a\right),$$ is such that the data $$\left(\left(\sigma_0,\alpha_0\right),\left(\sigma_1,\alpha_1\right),\beta\right)$$ is an arrow from $\left(\left[\sigma_0\right],\left[\alpha_0\right]\right)$ to $\left(\left[\sigma_0\right],\left[\alpha_0\right]\right).$

Recall that the arrows of $P\left(\varphi\right)$ are the equivariant sheaf described as the fibered product
$$\xymatrix{\h_1 \times_{\h_0} \G_1 \ar[d]_-{pr_1} \ar[r]^-{pr_2} & \G_1 \ar[d]^-{\varphi_0\circ t} \\
\h_1 \ar[r]^-{s} & \h_0,\\}$$
equipped with the left $\h$-action along $s \circ pr_1$ given by $$h \cdot \left(\gamma,g\right)=\left(h\gamma,g\right),$$
and that the source and target maps are given by $$s\left(h,g\right)=\left(h\varphi\left(g\right),s\left(g\right)\right),$$ and $$t\left(h,g\right)=\left(h,t\left(g\right)\right).$$
Viewing the arrows of $P\left(\varphi\right)$ as a sheaf, they assign $U$ the set $$\Hom\left(m_U,P\left(\varphi\right)_1\right).$$
Let $$\pi\left(U\right):\bar \Gamma\left(\left[\varphi\right] \circ a\right)\left(U\right) \times_{\bar\Gamma\left(\left[\varphi\right]\right)\left(U\right)}\bar \Gamma\left(\left[\varphi\right] \circ a\right)\left(U\right) \to \Hom\left(m_U,P\left(\varphi\right)_1\right)$$ be the map that sends $$\zeta:=\left(\left(\sigma_0,\alpha_0\right),\left(\sigma_1,\alpha_1\right),\beta\right)$$ to the morphism
\begin{eqnarray*}
\theta\left(\zeta\right):m_U &\to& \h_1 \times_{\h_0} \G_1\\
\gamma &\mapsto& \left(\alpha_1\left(\gamma\right),\beta\left(\gamma\right)\right).\\
\end{eqnarray*}
It is easy to check that this morphism is $\h$-equivariant, hence is a map in $\B\h$. We will show that under the identification $$\bar \Gamma\left(\left[\varphi\right] \circ a\right)\left(U\right) \cong P\left(\varphi \circ \tilde a\right)\left(U\right)=\Hom\left(m_U,P\left(\varphi \circ \tilde a\right)\right),$$ $\pi\left(U\right)$ respects source and targets. Indeed, suppose we start with a triple $$\zeta:=\left(\left(\sigma_0,\alpha_0\right),\left(\sigma_1,\alpha_1\right),\beta\right).$$ By Lemma \ref{lem:sec1}, each $\left(\sigma_i,\alpha_i\right)$ corresponds to an element of $$\bar \Gamma\left(\left[\varphi\right] \circ a\right)\left(U\right),$$ which in turn corresponds to a morphism

\begin{eqnarray}\label{eq:choochootrain}
m_U=s^{-1}\left(U\right) &\to& \h_1\times_{\h_0}T\nonumber\\
\gamma &\mapsto&\left(\alpha_i\left(\gamma\right),\sigma_i\left(\gamma\right)\right)\nonumber\\
\end{eqnarray}
in $\B\h$. Now $\pi\left(U\right)\left(\zeta\right)$ is a map from $d_0\pi\left(U\right)\left(\zeta\right)$ to $d_1\pi\left(U\right)\left(\zeta\right),$ where we have used simplicial notation for the source and target. For each $i$, we have a map $$m_U \stackrel{\pi\left(U\right)\left(\zeta\right)}{-\!\!\!-\!\!\!-\!\!\!-\!\!\!-\!\!\!\longrightarrow} \h_1 \times_{\h_0} \G_1 \stackrel{d_i}{\longrightarrow} \h_1 \times_{\h_0} \G_0,$$ which we may interpret as an element of $$P\left(\varphi\right)_0\left(U\right)=P\left(\varphi \circ \tilde a\right)\left(U\right).$$ From equation (\ref{eq:choochootrain}) and the definition of the source and target map, it follows that

\begin{eqnarray*}
s\pi\left(U\right)\left(\zeta\right)&=& \gamma \mapsto s\left(\alpha_1\left(\gamma\right),\beta\left(\gamma\right)\right)\\
&=&\gamma \mapsto \left(\alpha_1\varphi\beta\left(\gamma\right),s\beta\left(\gamma\right)\right)\\
&=&\gamma \mapsto \left(\alpha_0\left(\gamma\right),\sigma_0\left(\gamma\right)\right),\\
\end{eqnarray*}
and
\begin{eqnarray*}
t\pi\left(U\right)\left(\zeta\right)&=& \gamma \mapsto t\left(\alpha_1\left(\gamma\right),\beta\left(\gamma\right)\right)\\
&=&\gamma \mapsto \left(\alpha_1\left(\gamma\right),t\beta\left(\gamma\right)\right)\\
&=&\gamma \mapsto \left(\alpha_1\left(\gamma\right),\sigma_1\left(\gamma\right)\right).\\
\end{eqnarray*}
Hence $\pi\left(U\right)$ respects the source and target. We will now show it is an isomorphism. Suppose we are given an arbitrary equivariant map $$\theta:m_U \to \h_1 \times_{\h_0} \G_1.$$ Denote its components by $$\theta\left(\gamma\right)=\left(h\left(\gamma\right),g\left(\gamma\right)\right).$$ Since $\theta$ is $\h$-equivariant, it follows that $h$ is $\h$-equivariant and $g$ is $\h$-invariant. Now
\begin{eqnarray*}
s \circ \theta:m_u &\to& P\left(\varphi\right)_0\\
s\theta\left(\gamma\right)&=&\left(h\left(\gamma\right)\varphi\left(g\left(\gamma\right)\right),s\left(g\left(\gamma\right)\right)\right)\\
\end{eqnarray*}
and
\begin{eqnarray*}
t \circ \theta:m_u &\to& P\left(\varphi\right)_0\\
t\theta\left(\gamma\right)&=&\left(h\left(\gamma\right),t\left(g\left(\gamma\right)\right)\right).\\
\end{eqnarray*}
Each of these maps correspond to an element in $$P\left(\varphi\right)_0\left(U\right)\cong \bar \Gamma\left(\left[\varphi\right] \circ a\right)\left(U\right).$$ By Lemma \ref{lem:sec1}, we know that $s \circ \theta$ corresponds to the morphism of groupoids
\begin{equation*}
\widehat{s\circ\theta}:\h \ltimes s^{-1}\left(U\right)=s^{-1}\left(U\right) \to \G_0
\end{equation*}
given on objects as $$\gamma \mapsto s\left(g\left(\gamma\right)\right),$$ together with a $2$-cell $$\alpha_{s\theta}:\left[\varphi\right] \circ a \circ \widehat{s\circ\theta} \Rightarrow \theta_{m_U},$$ given by $$\alpha_{s\theta}=pr_1 \circ s \circ \theta.$$ Explicitly we have:
$$\alpha_{s\theta}\left(\gamma\right)=h\left(\gamma\right)\varphi\left(g\left(\gamma\right)\right).$$
Similarly, we know that $t \circ \theta$ corresponds to the morphism
\begin{equation*}
\widehat{t\circ\theta}:\h \ltimes s^{-1}\left(U\right)=s^{-1}\left(U\right) \to \G_0
\end{equation*}
given on objects as $$\gamma \mapsto t\left(g\left(\gamma\right)\right),$$ together with a $2$-cell $$\alpha_{t\theta}:\left[\varphi\right] \circ a \circ \widehat{t\circ\theta} \Rightarrow \theta_{m_U},$$ given by $$\alpha_{t\theta}=pr_1 \circ t \circ \theta,$$ and we have:
$$\alpha_{t\theta}\left(\gamma\right)=h\left(\gamma\right).$$
The map $$\beta\left(\theta\right):=pr_2\circ \theta:s^{-1}\left(U\right) \to \G_1$$ which assigns $\gamma \mapsto g\left(\gamma\right)$ encodes a natural transformation $$\beta\left(\theta\right):\widehat{s\circ\theta} \Rightarrow \widehat{s\circ\theta}.$$ Moreover, we have that

\begin{eqnarray*}
\alpha_{t\theta}\varphi\beta\left(\gamma\right)&=&h\left(\gamma\right)\circ\varphi\left(g\left(\gamma\right)\right)\\
&=&\alpha{s\theta}\left(\gamma\right),\\
\end{eqnarray*}
which implies the diagram (\ref{diag:tri}) commutes.

Define a map
$$\Xi\left(U\right):\Hom\left(m_U,P\left(\varphi\right)_1\right) \to \bar \Gamma\left(\left[\varphi\right] \circ a\right)\left(U\right) \times_{\bar\Gamma\left(\left[\varphi\right]\right)\left(U\right)}\bar \Gamma\left(\left[\varphi\right] \circ a\right)\left(U\right)$$
which assigns the morphism $\theta:m_U \to P\left(\varphi\right)_1$ the triple $$\left(\left(\widehat{s\circ\theta},\alpha_{s\theta}\right),\left(\widehat{t\circ\theta},\alpha_{t\theta}\right),\beta\left(\theta\right)\right).$$
This map is clearly inverse to $\pi$. We leave it to the reader to check that $\pi\left(U\right)$ respects composition. It then follows that the groupoids in sheaves $$\bar \Gamma\left(\left[\varphi\right] \circ a\right) \times_{\bar\Gamma\left(\left[\varphi\right]\right)}\bar \Gamma\left(\left[\varphi\right] \circ a\right)$$ and $P\left(\varphi\right)$ are isomorphic.
\end{proof}

\section{Effective Stacks}\label{sec:effective}
\subsection{Basic definitions}
In this section, we recall a special class of \'etale stacks, called effective \'etale stacks. This is a summary of results well known in the groupoid literature, expressed in a more stack-oriented language. We claim no originality for the ideas. We start with defining effectiveness for orbifolds, as this definition is more intuitive. This will also make the general definition for an arbitrary \'etale stack more clear.

\begin{rmk}
For the rest of the paper, we shall assume that the category $S$ of spaces is either manifolds or topological spaces, as it will be convenient to work point-set theoretically in many of the following proofs. The results proven do hold for locales as well, once phrased in a point-free way, but require a different proof.
\end{rmk}

\begin{dfn}
A differentiable stack $\X$ is called an \textbf{differentiable orbifold} if it is \'etale and the diagonal map $\Delta:\X \to \X \times \X$ is proper. If $\X$ is instead a topological stack, we call $\X$ a \textbf{topological orbifold}. To simplify things, we will refer to differentiable orbifolds and topological orbifolds, simply as orbifolds.
\end{dfn}

\begin{rmk}
We should explain what we mean in saying that the diagonal map is proper. In the differentiable setting, this map is not representable, even for manifolds. We say that a map of $f:\X \to \Z$ between differentiable stacks is proper if and only if for any \emph{representable} map $M \to \Z$ from a manifold, the induced map $M \times_{\Z} \X \to M$ is a proper map of manifolds. Equivalently, as properness is a topological property, and the diagonal map of any topological stack is representable \cite{NoohiF}, (and proper maps are invariant under restriction and local on the target) stating that the diagonal of a differentiable stack is proper in the above sense is equivalent to saying that the diagonal of the underlying topological stack is a representable proper map. Yet another characterization is viewing $\X$ and $\X \times \X$ as etendue and asking the map to be a proper map of topoi in the sense of \cite{elephant2}.
\end{rmk}

\begin{dfn}
An $S$-groupoid is an \textbf{orbifold groupoid} if it is \'etale and \textbf{proper}, i.e. the map $$\left(s,t\right):\h_1 \to \h_0 \times \h_0$$ is proper.
\end{dfn}

\begin{prop}
$\X$ is an orbifold if and only if there exists an orbifold groupoid $\h$ such that $\X \simeq \left[\h\right]$.
\end{prop}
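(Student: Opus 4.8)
The plan is to reduce properness of the diagonal of $\X$ to properness of the anchor map $\left(s,t\right)$ of a presenting groupoid, by means of a single $2$-Cartesian square, and then to invoke the base-change and descent properties of proper maps recorded in Definition \ref{dfn:local} and the preceding remark. The key geometric input is that for an \'etale groupoid $\h$ with canonical atlas $p:\h_0 \to \left[\h\right]$, the product $p \times p:\h_0 \times \h_0 \to \X \times \X$ is again a representable epimorphism (an \'etale cover, since $\h$ is \'etale), so it serves as an atlas for $\X \times \X$ against which the diagonal may be tested.

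The heart of the argument is the claim that there is a $2$-Cartesian square
$$\xymatrix{\h_1 \ar[r] \ar[d]_-{\left(s,t\right)} & \X \ar[d]^-{\Delta}\\ \h_0 \times \h_0 \ar[r]^-{p \times p} & \X \times \X.}$$
To verify it, I would compute the weak pullback $\left(\h_0 \times \h_0\right) \times_{\X \times \X} \X$ objectwise: over a test space $T$ its objects are a pair $\left(a,b\right):T \to \h_0 \times \h_0$, an object $x:T \to \X$, and an isomorphism $\left(p a, p b\right) \cong \left(x,x\right)$ in $\left(\X \times \X\right)\left(T\right)$, i.e.\ a pair of isomorphisms $p a \cong x$ and $p b \cong x$. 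Composing these yields an isomorphism $p a \cong p b$, and since $x$ is then redundant, this groupoid is equivalent to $\h_0 \times_\X \h_0 \simeq \h_1$ (using the $2$-Cartesian square defining $\h_1$ given earlier in the excerpt). Tracking the projection to $\h_0 \times \h_0$ shows it recovers the source--target data of the arrow, i.e.\ it is exactly $\left(s,t\right)$. Thus the pullback of $\Delta$ along the atlas $p \times p$ is the honest map of spaces $\left(s,t\right):\h_1 \to \h_0 \times \h_0$.

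With the square in hand both implications are formal. For the ``if'' direction, if $\h$ is an orbifold groupoid then $\X := \left[\h\right]$ is \'etale, and the pullback of $\Delta$ along $p \times p$ is $\left(s,t\right)$, which is proper by hypothesis; since the diagonal of a (topological) stack is representable \cite{NoohiF} and properness of maps of spaces is invariant under base change and local on the target for \'etale covers, properness of this pullback forces $\Delta$ to be proper in the sense of the remark, so $\X$ is an orbifold. For the ``only if'' direction, an orbifold $\X$ is in particular \'etale, so by the proposition characterizing \'etale stacks we may choose an \'etale $S$-groupoid $\h$ with $\X \simeq \left[\h\right]$; reading the square the other way, $\left(s,t\right)$ is the pullback of the proper map $\Delta$ along the representable map $p \times p$, hence proper by base-change invariance, so $\h$ is an orbifold groupoid.

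The main obstacle is establishing the $2$-Cartesian square, and in particular pinning down that the pullback corner is $\h_1$ with projection precisely $\left(s,t\right)$ rather than some twist of it; the remaining descent step for the ``if'' direction requires knowing that properness of maps of spaces descends along the \'etale cover $p \times p$ (not merely along open covers), which is exactly the local-on-the-target property invoked in the remark following the orbifold definition. Once these are secured, no further computation is needed.
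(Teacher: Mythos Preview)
Your proposal is correct and takes essentially the same approach as the paper: the paper's proof consists of nothing more than displaying the very $2$-Cartesian square you identify (with the atlas denoted $a$ rather than $p$) and leaving the base-change and descent conclusions to the reader. Your write-up is considerably more detailed than the paper's, in particular in verifying the square objectwise and in flagging the descent of properness along the atlas $p \times p$, but the underlying argument is the same.
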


\begin{proof}
For any \'etale $\h$ such that $\left[\h\right] \simeq \X$,
$$\xymatrix{\h_1 \ar[d]_{\left(s,t\right)} \ar[r] & \X \ar[d]^{\Delta}\\
\h_0 \times \h_0 \ar[r]^{a \times a} & \X \times \X}$$
is a weak pullback diagram, where $a:\h_0 \to \X$ is the atlas associated to $\h$.
\end{proof}

Recall the following definition:
\begin{dfn}
If $G$ is a $S$-group acting on a space $X$, the action is \textbf{effective} (or faithful), if $\bigcap\limits_{x \in X} G_x=e$, i.e., for all non-identity elements $g \in G$, there exists a point $x \in X$ such that $g \cdot x \neq x.$ Equivalently, the induced homomorphism $$\rho:G \to \mathit{Diff}\left(X\right),$$ where $\mathit{Diff}\left(X\right)$ is the group of diffeomorphisms (homeomorphisms) of $X$ is a monomorphism. (These two definitions are equivalence since $$Ker\left(\rho\right)=\bigcap\limits_{x \in X} G_x=e.)$$
\end{dfn}
If $\rho$ above has a non-trivial kernel $K,$ then there is an inclusion of $K$ into each isotropy group of the action, or equivalently into each automorphism group of the quotient stack (the stack associated to the action groupoid). In this case $K$ is ``tagged-along'' as extra data in each automorphism group. Each of these copies of $K$ is the kernel of the induced homomorphism $$\left(\rho\right)_x:Aut\left(\left[x\right]\right) \to \mathit{Diff}\left(M\right)_x,$$ where $\mathit{Diff}\left(M\right)_x$ is the group of diffeomorphisms of $M$ which fix $x$. In the \emph{differentiable} setting, when $G$ is \emph{finite}, these kernel are called the \emph{ineffective isotropy groups} of the associated \'etale stack. In this case, the \emph{effective part} of this stack is the stacky-quotient of $X$ by the induced action of $G/K$. This latter stack has only trivial ineffective isotropy groups.

\begin{rmk}
If $G$ is not finite, this notion of ineffective isotropy group may not agree with Definition \ref{dfn:ineffgp}, since non-identity elements can induce the germ of the identity around a point. It the topological setting, this problem can occur even when $G$ is finite.
\end{rmk}

\begin{prop}\label{prop:orbeff}
Suppose $\X$ is an orbifold and $x:* \to \X$ is a point. Then there exists a local homeomorphism $p:V_x \to \X$ from a space $V_x$ such that:
\begin{itemize}
\item[i)] the point $x$ factors (up to isomorphism) as $* \stackrel{\tilde x}{\longrightarrow} V_x \stackrel{p}{\longrightarrow} \X$
\item[ii)] the automorphism group $Aut\left(x\right)$ acts on $V_x$.
\end{itemize}
\end{prop}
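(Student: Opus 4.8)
The plan is to translate the statement into groupoid language and then build the action by hand, using étale-ness together with the finiteness of the isotropy group that properness forces. First I would choose an orbifold groupoid $\h$ (étale and proper) with $\X \simeq \left[\h\right]$ and its étale atlas $a:\h_0 \to \X$. Since $a$ is a representable epimorphism, the point $x$ lifts, up to isomorphism, to a point $\tilde x \in \h_0$ with $x \cong a \circ \tilde x$. Under this identification $Aut\left(x\right)$ is the isotropy group $\Gamma := \left\{g \in \h_1 \mid s\left(g\right)=t\left(g\right)=\tilde x\right\}$; because $\h$ is étale this group is discrete, and because $\left(s,t\right)$ is proper the fibre over $\left(\tilde x,\tilde x\right)$ is compact, so $\Gamma$ is finite. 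This finiteness is the only place properness is used, and it is exactly what makes an honest action (rather than merely a homomorphism of germs, as in Definition \ref{dfn:ineffgp}) available.

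Next, for each $g \in \Gamma$ I would use that $s$ is a local homeomorphism to produce the unique local section $\sigma_g$ of $s$ with $\sigma_g\left(\tilde x\right)=g$, and set $\phi_g := t \circ \sigma_g$, a locally defined open embedding fixing $\tilde x$; since $\Gamma$ is finite these may be taken to share a common domain $U \ni \tilde x$. The central observation is that $\phi_g \circ \phi_h$ and $\phi_{gh}$ have the same germ at $\tilde x$: the arrow $\sigma_g\left(\phi_h\left(y\right)\right)\cdot\sigma_h\left(y\right)$ is a section of $s$ whose value at $\tilde x$ is $g\cdot h = gh$, and any two sections of $s$ through the same arrow coincide on a neighbourhood because $s$ is a local homeomorphism. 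Hence $\sigma_g\left(\phi_h\left(\cdot\right)\right)\cdot\sigma_h\left(\cdot\right)=\sigma_{gh}$ near $\tilde x$, giving $\phi_g\circ\phi_h=\phi_{gh}$ there; intersecting over the finitely many pairs $\left(g,h\right)$ yields one neighbourhood $W$ on which all these relations, together with $\phi_e=\mathrm{id}$ and $\phi_{g^{-1}}=\phi_g^{-1}$, hold exactly.

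Then I would pass to the invariant neighbourhood $V := \bigcap_{g \in \Gamma}\phi_g^{-1}\left(W\right)$, having shrunk $U$ first so that $\phi_g\left(W\right)\subseteq U$ for all $g$. A short check gives $\phi_g\left(V\right)\subseteq V$: for $y \in V$ one has $y \in W$, so $\phi_h\left(\phi_g\left(y\right)\right)=\phi_{hg}\left(y\right)\in W$ for every $h$, whence $\phi_g\left(y\right)\in V$. Consequently $g \mapsto \phi_g|_V$ is a genuine action of $\Gamma \cong Aut\left(x\right)$ on $V$ fixing $\tilde x$. Finally I would set $V_x := V$ and $p := a|_V : V_x \to \X$; this is a local homeomorphism because $V_x$ is open in $\h_0$ and the atlas $a$ is étale, while $x \cong a \circ \tilde x = p \circ \tilde x$ supplies the required factorization $* \stackrel{\tilde x}{\longrightarrow} V_x \stackrel{p}{\longrightarrow} \X$.

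I expect the main obstacle to be the middle step: upgrading the germ-level assignment $g \mapsto \phi_g$ to strict identities $\phi_g\circ\phi_h=\phi_{gh}$ valid on an honest neighbourhood, and then extracting from these a neighbourhood that is simultaneously invariant under all the $\phi_g$. Both steps lean essentially on the finiteness of $\Gamma$ (to intersect finitely many neighbourhoods) and on the fact that sections of $s$ through a common arrow agree locally; without properness $\Gamma$ could be infinite and only the homomorphism of germs would survive.
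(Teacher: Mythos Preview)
Your proposal is correct and follows essentially the same route as the paper's proof: choose an orbifold groupoid presentation, lift $x$ to $\tilde x \in \h_0$, identify $Aut(x)$ with the finite isotropy group $\h_{\tilde x}$, use local sections of the \'etale source map to define the $\phi_g$'s, exploit uniqueness of local sections to upgrade the germ relation $\phi_g\circ\phi_h=\phi_{gh}$ to an honest identity on a neighbourhood, and then intersect over the finite group to obtain an invariant $V_x$. The paper's argument (attributed to Moerdijk--Pronk) differs only in notation, writing $t\circ s^{-1}|_{U_h}$ where you write $\phi_h$ and taking the invariant set as $\bigcap_h \phi_h(W)$ rather than $\bigcap_h \phi_h^{-1}(W)$, which amounts to the same thing since $\phi_{h^{-1}}=\phi_h^{-1}$.
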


\begin{proof}
The crux of this proof comes from \cite{orbcon}. Recall that for a point $x$ of a topological or differentiable stack $\X$, $Aut\left(x\right)$ fits into the $2$-Cartesian diagram \cite{NoohiF}

$$\xymatrix{Aut\left(x\right) \ar[d] \ar[r] & \mathrm{*} \ar[d]^-{x}\\
\mathrm{*} \ar[r]^-{x} & \X,}$$
and is a group object in spaces. If $\X \simeq \left[\h\right]$ for an $S$-groupoid $\h$, there is a point $\tilde x \in \h_0$ such that $x \cong a \circ \tilde x,$ where $a:\h_0 \to \X$ is the atlas associated to the groupoid $\h,$ and moreover, $\h_{\tilde x} \cong Aut\left(\tilde x\right)$, where $\h_{\tilde x}=s^{-1}\left(\tilde x\right) \cap t^{-1}\left(\tilde x\right)$ is the $S$-group of automorphisms of $\tilde x$. (In particular, this implies that if $\X$ is \'etale, then $Aut\left(x\right)$ is discrete for all $x$.) Suppose now that $\X$ and $\h$ are \'etale. Then for each $h \in \h_{\tilde x},$ there exists an open neighborhood $U_h$ such that the two maps $$s:U_h \to s\left(U_h\right)$$ $$t:U_h \to t\left(U_h\right)$$ are homeomorphisms. Now, suppose that $\X$ is in fact an orbifold (so that $\h$ is an orbifold groupoid). Then, it follows that $\h_{\tilde x}$ is finite. Given $f$ and $g$ in $\h_{\tilde x}$, we can find a small enough neighborhood $W$ of $\tilde x$ in $\h_0$ such that for all $z$ in $W$, $$t \circ s^{-1}|_{U_g}\left(z\right) \in s\left(U_f\right),$$ $$t \circ s^{-1}|_{U_g} \left(t \circ s^{-1}|_{U_f}\left(z\right) \right) \in W,$$ and

\begin{equation}\label{eq:391}
s^{-1}|_{U_g}\left(z\right) \cdot s^{-1}|_{U_f}\left(z\right) \in U_{gf}.
\end{equation}
In this case, by plugging in $z=\tilde x$ in (\ref{eq:391}), we see that (\ref{eq:391}) as a function of $z$ must be the same as $$s^{-1}|_{U_{gf}}.$$ Therefore, on $W$, the following equation holds
\begin{equation}\label{eq:lnof5}
t \circ s^{-1}|_{U_g} \left(t \circ s^{-1}|_{U_f}\left(z\right) \right)= t \circ s^{-1}|_{U_{gf}}.
\end{equation}
Since $\h_{\tilde x}$ is finite, we may shrink $W$ so that equation (\ref{eq:lnof5}) holds for all composable arrows in $\h_{\tilde x}$. Let $$V_x:=\bigcap\limits_{h \in \h_{\tilde x}} \left(t \circ s^{-1}|_{U_f}\left(W\right)\right).$$ Then, for all $h \in h_{\tilde x}$, $$t \circ s^{-1}|_{U_h}\left(V_x\right)=V_x.$$ So $t \circ s^{-1}|_{U_h}$ is a homeomorphism from $V_x$ to itself for all $x$, and since equation (\ref{eq:lnof5}) holds, this determines an action of $\h_{\tilde x}\cong Aut\left(x\right)$ on $V_x$. Finally, define $p$ to be the atlas $a$ composed with the inclusion $V_x \hookrightarrow \h_0$
\end{proof}

\begin{dfn}
An orbifold $\X$ is an \textbf{effective} orbifold, if the actions of $Aut\left(x\right)$ on $V_x$ as in the previous lemma can be chosen to be effective.
\end{dfn}

The finiteness of the stabilizer groups played a crucial role in the proof of Proposition \ref{prop:orbeff}. Without this finiteness, one cannot arrange (in general) for even a single arrow in an \'etale groupoid to induce a self-diffeomorphism of an open subset of the object space. Additionally, even if each arrow had such an action, there is no guarantee that the (infinite) intersection running over all arrows in the stabilizing group of these neighborhoods will be open. Hence, for a general \'etale groupoid, the best we can get is a germ of a locally defined diffeomorphism. It is using these germs that we shall extend the definition of effectiveness to arbitrary \'etale groupoids and stacks.

Given a space $X$ and a point $x \in X$, let $\mathit{Diff}_x\left(X\right)$ denote the group of germs of (locally defined) diffeomorphisms (homeomorphisms if $X$ is a topological space) that fix $x$.

\begin{prop}
Let $\X$ be an \'etale stack and pick an \'etale atlas $$V \to \X.$$ Then for each point $x:* \to \X$,
\begin{itemize}
\item[i)] the point $x$ factors (up to isomorphism) as $* \stackrel{\tilde x}{\longrightarrow} V\stackrel{p}{\longrightarrow} \X,$ and
\item[ii)] there is a canonical homomorphism $Aut\left(x\right) \to \mathit{Diff}_{\tilde x}\left(V\right)$.
\end{itemize}
\end{prop}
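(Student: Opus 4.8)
The plan is to present $\X$ by the \'etale groupoid $\h:=V\times_{\X}V \rightrightarrows V$ determined by the atlas $p$, so that $\h_0=V$ and, by the proposition characterizing \'etale stacks via \'etale atlases, $\h$ is \'etale; in particular its source and target maps $s,t$ are local homeomorphisms. For part (i), I would use that an atlas is an epimorphism: applied to the map $x:*\to\X$ out of the one-point space, the epimorphism condition produces an open cover of $*$ over which $x$ factors through $p$ up to isomorphism. Since the only open cover of the one-point space is the trivial one, this factorization is global, yielding $\tilde x:*\to V$ with $x\cong p\circ\tilde x$, i.e. a point $\tilde x\in V$. I would then record, exactly as in the proof of Proposition~\ref{prop:orbeff}, the identification $Aut(x)\cong\h_{\tilde x}$, where $\h_{\tilde x}=s^{-1}(\tilde x)\cap t^{-1}(\tilde x)$ is the isotropy group of $\tilde x$; this comes from the $2$-Cartesian square defining $Aut(x)$ together with $\h_1=V\times_{\X}V$.

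For part (ii), the idea is to send each arrow of $\h_{\tilde x}$ to the germ of a locally defined diffeomorphism of $V$ fixing $\tilde x$. Fix $h\in\h_{\tilde x}$, so $s(h)=t(h)=\tilde x$. Since $s$ is a local homeomorphism, there is an open neighborhood $U_h\ni h$ in $\h_1$ on which $s$ restricts to a homeomorphism onto an open neighborhood of $\tilde x$; let $\sigma_h:=\left(s|_{U_h}\right)^{-1}$ be the corresponding local section of $s$, with $\sigma_h(\tilde x)=h$, and put $\tau_h:=t\circ\sigma_h$. Then $\tau_h$ is a locally defined homeomorphism (diffeomorphism) of $V$ with $\tau_h(\tilde x)=t(h)=\tilde x$, so its germ at $\tilde x$ lies in $\mathit{Diff}_{\tilde x}(V)$, and I would set $\tilde\rho_x(h):=[\tau_h]_{\tilde x}$. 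The first point to verify is well-definedness: a local homeomorphism admits a unique germ of section through a given point, so any two choices of $U_h$ give local sections of $s$ agreeing near $\tilde x$, whence the germ of $\tau_h$ is independent of the choice.

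The substance of the argument, and the step I expect to be the main obstacle, is showing that $h\mapsto[\tau_h]_{\tilde x}$ is a group homomorphism, since this must be done purely at the level of germs. Given $h_1,h_2\in\h_{\tilde x}$, for $z$ near $\tilde x$ the product $\sigma_{h_2}(\tau_{h_1}(z))\cdot\sigma_{h_1}(z)$ is defined --- its left factor has source $\tau_{h_1}(z)=t(\sigma_{h_1}(z))$, the target of its right factor --- and depends continuously on $z$, so it is a local section of $s$ near $\tilde x$ taking the value $h_2 h_1$ at $\tilde x$. By the germ-uniqueness of sections of the local homeomorphism $s$, this section has the same germ at $\tilde x$ as $\sigma_{h_2 h_1}$. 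Applying $t$ and using $t\bigl(\sigma_{h_2}(\tau_{h_1}(z))\cdot\sigma_{h_1}(z)\bigr)=\tau_{h_2}(\tau_{h_1}(z))$ gives $\tau_{h_2 h_1}=\tau_{h_2}\circ\tau_{h_1}$ as germs at $\tilde x$, which is precisely multiplicativity; this is the germ-theoretic analogue of equation~(\ref{eq:lnof5}), now requiring no finiteness hypothesis. Composing with the identification $Aut(x)\cong\h_{\tilde x}$ then produces the desired homomorphism.

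Finally, I would remark on canonicity. Although the construction uses the chosen lift $\tilde x$, any other lift is connected to $\tilde x$ by an arrow of $\h$, whose germ of local bisection conjugates the two resulting homomorphisms; thus the homomorphism is canonical up to this inner automorphism, and in particular genuinely canonical once the pointing $\tilde x$ is fixed as in part (i).
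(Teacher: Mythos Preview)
Your proposal is correct and follows essentially the same approach as the paper: the paper's proof is a one-line reference to the orbifold case, sending each $h\in\h_{\tilde x}$ to the germ of $t\circ s^{-1}|_{U_h}$, and you have spelled out exactly this construction in full detail, including the germ-level verification of multiplicativity and well-definedness that the paper leaves implicit.
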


\begin{proof}
Following the proof of Proposition \ref{prop:orbeff}, let $V=\h_0$ and let the homomorphism send each $h \in \h_x$ to the germ of $t \circ s^{-1}|_{U_h}$, which is a locally defined diffeomorphism of $V$ fixing $\tilde x$.
\end{proof}

\begin{cor}
For $\h$ an \'etale $S$-groupoid, for each $x \in \h_0$, there exists a canonical homomorphism of groups $\h_x \to \mathit{Diff}_{x}\left(\h_0\right)$.
\end{cor}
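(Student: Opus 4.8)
The plan is to obtain this as a direct specialization of the Proposition immediately preceding it. I would take $\X \simeq \left[\h\right]$ together with its canonical \'etale atlas $a:\h_0 \to \X$, and apply that Proposition with $V = \h_0$. Under the identification $\h_{\tilde x} \cong Aut\left(x\right)$ recorded in the proof of Proposition \ref{prop:orbeff} (valid whenever $x \cong a \circ \tilde x$), part (ii) of the Proposition then furnishes a homomorphism $\h_x \to \mathit{Diff}_x\left(\h_0\right)$, which is exactly the claim. So the only real content is to exhibit and verify the map explicitly at the level of the groupoid.

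Concretely, I would first build the underlying assignment. Fix $x \in \h_0$ and take $h \in \h_x = s^{-1}\left(x\right) \cap t^{-1}\left(x\right)$. Since $\h$ is \'etale, $s$ is a local homeomorphism, so there is an open $U_h \ni h$ on which $s|_{U_h}$ is a homeomorphism (diffeomorphism) onto an open neighbourhood of $x$; write $\sigma_h := \left(s|_{U_h}\right)^{-1}$ for the resulting local section of $s$ through $h$, and set $\phi_h := t \circ \sigma_h$, a locally defined self-homeomorphism of $\h_0$ near $x$. Because any two local sections of a local homeomorphism through a common point agree on a neighbourhood, the germ of $\phi_h$ at $x$ does not depend on the choice of $U_h$, so $h \mapsto \left[\phi_h\right]_x \in \mathit{Diff}_x\left(\h_0\right)$ is well defined. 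Moreover $\phi_h\left(x\right) = t\left(\sigma_h\left(x\right)\right) = t\left(h\right) = x$, so the germ indeed fixes $x$.

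The step I expect to be the crux is verifying multiplicativity; this is precisely where passing to germs removes the finiteness hypothesis needed in Proposition \ref{prop:orbeff}. Given $g,h \in \h_x$, I would argue that for $z$ in a small neighbourhood of $x$ the arrows $\sigma_h\left(z\right):z \to \phi_h\left(z\right)$ and $\sigma_g\left(\phi_h\left(z\right)\right):\phi_h\left(z\right) \to \phi_g\left(\phi_h\left(z\right)\right)$ are composable, and their composite is an arrow with source $z$ which equals $g \circ h$ at $z = x$. Since $z \mapsto \sigma_g\left(\phi_h\left(z\right)\right)\circ \sigma_h\left(z\right)$ is then a local section of $s$ through $g \circ h$, uniqueness of germs of sections forces it to agree with $\sigma_{g \circ h}$ near $x$; applying $t$ yields $\phi_g \circ \phi_h = \phi_{g \circ h}$ as germs, hence $\left[\phi_{g \circ h}\right]_x = \left[\phi_g\right]_x \circ \left[\phi_h\right]_x$. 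The unit arrow gives $\phi_{\mathbb{1}_x} = \mathrm{id}$, and inverses are handled by $\phi_{h^{-1}} = \phi_h^{-1}$, so the assignment is a group homomorphism into $\mathit{Diff}_x\left(\h_0\right)$. The only bookkeeping to watch is the composition convention of $\h$, which merely fixes the variance of the homomorphism and does not affect the germ-level argument.
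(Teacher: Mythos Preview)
Your proposal is correct and matches the paper's intended approach: the paper states this as an immediate corollary of the preceding Proposition (with $V=\h_0$ and the identification $\h_{\tilde x}\cong Aut(x)$) and gives no separate proof, the explicit germ map $h\mapsto \text{germ}_x(t\circ s|_{U_h}^{-1})$ having already appeared in that Proposition's proof. Your additional verification of multiplicativity via uniqueness of germs of local sections is more detail than the paper provides, but is entirely in line with its treatment.
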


\begin{dfn}\label{dfn:ineffgp}
Let $x$ be a point of an \'etale stack $\X$. The \textbf{ineffective isotropy group} of $x$ is the kernel of the induced homomorphism $$Aut\left(x\right) \to \mathit{Diff}_{\tilde x}\left(V\right).$$ Similarly for $\h$ an \'etale groupoid.
\end{dfn}

\begin{dfn}
An \'etale stack $\X$ is \textbf{effective} if the ineffective isotropy group of each of its points is trivial. Similarly for $\h$ an \'etale groupoid.
\end{dfn}

\begin{prop}
An orbifold $\X$ is an effective orbifold if and only if it is effective when considered as an \'etale stack.
\end{prop}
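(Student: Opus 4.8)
The plan is to recognize that both notions of effectiveness are statements about the single finite group $\mathrm{Aut}(x)\cong\h_{\tilde x}$, and to compare the two homomorphisms out of it. First I would fix an \'etale orbifold groupoid $\h$ with $[\h]\simeq\X$ and an \'etale atlas $V=\h_0\to\X$; by Proposition~\ref{prop:orbeff} each $\mathrm{Aut}(x)\cong\h_{\tilde x}$ is \emph{finite} and acts on the neighbourhood $V_x$ through the genuine homeomorphisms $\phi_h:=t\circ s^{-1}|_{U_h}$. The one observation that drives everything is that the canonical homomorphism $\rho_x\colon\mathrm{Aut}(x)\to\mathit{Diff}_{\tilde x}(V)$ of Definition~\ref{dfn:ineffgp} is exactly $h\mapsto\mathrm{germ}_{\tilde x}(\phi_h)$; that is, $\rho_x$ is the composite of the action homomorphism $\mathrm{Aut}(x)\to\mathrm{Homeo}(V_x)$ with passage to germs at $\tilde x$. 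Consequently one always has
\[
\ker\!\big(\mathrm{Aut}(x)\to\mathrm{Homeo}(V_x)\big)\ \subseteq\ \ker(\rho_x),
\]
since a map that is the identity on all of $V_x$ certainly has trivial germ at $\tilde x$.

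For the easy implication, assume $\X$ is effective as an \'etale stack, so $\ker(\rho_x)=\{e\}$ for every $x$. The displayed inclusion then forces the $\mathrm{Aut}(x)$-action on \emph{every} admissible $V_x$ to be faithful, so the actions of Proposition~\ref{prop:orbeff} are automatically effective and $\X$ is an effective orbifold.

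The substance is the converse, and this is where finiteness enters. Given $e\neq h\in\ker(\rho_x)$, by definition $\phi_h$ restricts to the identity on some neighbourhood $N$ of $\tilde x$; setting $V_x':=\bigcap_{g\in\mathrm{Aut}(x)}\phi_g^{-1}(N)$ produces, by finiteness of $\mathrm{Aut}(x)$, an \emph{invariant} open neighbourhood of $\tilde x$ contained in $N$ on which every element of $\ker(\rho_x)$ acts as the identity. On such a sufficiently small invariant chart the inclusion above becomes an equality,
\[
\ker\!\big(\mathrm{Aut}(x)\to\mathrm{Homeo}(V_x')\big)\ =\ \ker(\rho_x),
\]
so the ineffective isotropy group is precisely the kernel of the honest local action. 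Thus on the (inherently local) charts supplied by Proposition~\ref{prop:orbeff} the action is effective if and only if $\ker(\rho_x)=\{e\}$, and running this over all $x$ yields that $\X$ is an effective orbifold precisely when it is effective as an \'etale stack.

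The step I expect to be the main obstacle is making this last equivalence insensitive to the size of the chart: the kernel of the $\mathrm{Aut}(x)$-action can only \emph{grow} as $V_x$ is shrunk, so one must ensure that the charts at which effectiveness is tested are local enough that their kernel has stabilised to $\ker(\rho_x)$. Finiteness of $\mathrm{Aut}(x)$ (through the finite intersection above) is exactly what pins this down in general; in the differentiable setting it can be made transparent by Bochner's linearisation theorem, which identifies the action near $\tilde x$ with its linear isotropy representation --- a representation determined by its germ --- so that a germ-trivial element acts trivially on the whole linearisable chart. This is the phenomenon anticipated in the remark preceding Proposition~\ref{prop:orbeff}: purely topologically one must genuinely work with germs, which is why effectiveness was defined through $\rho_x$ in the first place, and why the two notions are arranged to agree.
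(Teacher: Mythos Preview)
The paper does not prove this itself; it simply cites Lemma~2.11 of \cite{Fol}. Your argument therefore supplies what the paper omits, and the strategy---factoring $\rho_x$ as the local action followed by passage to germs, so that $\ker(\text{action on }V_x)\subseteq\ker(\rho_x)$ with equality on small enough charts---is exactly right.

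The one step not fully closed is the converse. You establish that on a \emph{sufficiently small} invariant $V_x'$ the action-kernel equals $\ker(\rho_x)$, but the definition of effective orbifold only demands effectiveness on \emph{some} $V_x$, and a larger chart could in principle have smaller kernel. The missing ingredient in the differentiable case (which is what the cited lemma actually supplies) is that a finite-order diffeomorphism of a connected manifold which is the identity on an open set is the identity everywhere: Bochner linearisation at any boundary point of the ``identity region'' forces the derivative there to be the identity, so this region is closed as well as open. Hence a trivial germ at $\tilde x$ forces triviality on the entire connected chart, and effectiveness on any connected $V_x$ really does force $\ker(\rho_x)=\{e\}$. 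You invoke Bochner correctly but only to conclude triviality on the small linearisable chart; it is the open--closed argument that propagates this to an arbitrary connected $V_x$, and that step is worth making explicit---particularly since in the purely topological setting it can genuinely fail, and there the equivalence rests on reading the chart condition as a germ condition.
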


\begin{proof}
This follows from \cite{Fol}, Lemma 2.11.
\end{proof}

\begin{dfn}
Let $X$ be a space. Consider the presheaf $$\mathit{Emb}:\mathcal{O}\left(X\right)^{op} \to \Set,$$ which assigns an open subset $U$ the set of embeddings of $U$ into $X$. Denote by $\Ha\left(X\right)_1$ the total space  of the \'etal\'e space of the associated sheaf. Denote the map to $X$ by $s$. The stalk at $x$ is the set of germs of locally defined diffeomorphisms (which no longer need to fix $x$). If $germ_x\left(f\right)$ is one such germ, the element $f\left(x\right) \in X$ is well-defined. We assemble this into a map $$t:\Ha\left(X\right)_1 \to X.$$ This extends to a natural structure of an \'etale $S$-groupoid $\Ha\left(X\right)$ with objects $X$, called the \textbf{Haefliger groupoid} of $X$.
\end{dfn}

\begin{rmk}
In literature, the Haefliger groupoid is usually denoted by $\Gamma\left(X\right)$, but, we wish to avoid the clash of notation with the stack of sections $2$-functor.
\end{rmk}

\begin{prop}
For $\h$ an \'etale $S$-groupoid, there is a canonical map $$\tilde \iota_{\h}:	\h \to \Ha\left(\h_0\right).$$
\end{prop}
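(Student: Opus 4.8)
The plan is to define $\tilde\iota_\h$ as the identity on objects, $(\tilde\iota_\h)_0 = \mathrm{id}_{\h_0}$, and on arrows to send $h \in \h_1$ to the germ at $s(h)$ of the locally defined homeomorphism (diffeomorphism) $t \circ (s|_{U_h})^{-1}$, exactly as in the proof of Proposition \ref{prop:orbeff}. Since $\h$ is \'etale, each $h$ admits an open neighborhood $U_h$ on which both $s$ and $t$ restrict to open embeddings, so that $t \circ (s|_{U_h})^{-1}$ is an open embedding of $s(U_h)$ into $\h_0$ carrying $s(h)$ to $t(h)$; this represents a genuine point of the stalk of $\Ha\left(\h_0\right)_1$ over $s(h)$. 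Because $s$ is a local homeomorphism, any two local sections of $s$ agreeing at a point agree on a neighborhood, so the germ is independent of the choice of $U_h$, and $(\tilde\iota_\h)_1:\h_1 \to \Ha\left(\h_0\right)_1$ is a well-defined map of sets.

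Next I would verify continuity (smoothness). The open embedding $t \circ (s|_{U_h})^{-1}$ is an element of $\mathit{Emb}\left(s(U_h)\right)$, hence determines a section $\sigma_h$ of the \'etal\'e map $s:\Ha\left(\h_0\right)_1 \to \h_0$ over $s(U_h)$. For every $h' \in U_h$ the same neighborhood $U_h$ witnesses the construction at $h'$, so $(\tilde\iota_\h)_1(h') = \sigma_h\left(s(h')\right)$; that is, $(\tilde\iota_\h)_1|_{U_h} = \sigma_h \circ (s|_{U_h})$, a composite of continuous (smooth) maps. As the $U_h$ cover $\h_1$, the arrow map is continuous (smooth).

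Finally I would check that $\tilde\iota_\h$ is a homomorphism of $S$-groupoids. Source and target are immediate: the germ at $s(h)$ has base point $s(h)$ and its representing embedding sends $s(h)\mapsto t(h)$, matching $s$ and $t$ in $\Ha\left(\h_0\right)$. For units, near $\mathbb{1}_x$ one takes $U=\mathbb{1}(W)$, so $(s|_U)^{-1}=\mathbb{1}|_W$ and $t\circ\mathbb{1}=\mathrm{id}_W$, giving the germ of the identity, i.e.\ the unit of $\Ha\left(\h_0\right)$. The essential point is compatibility with composition: for composable $h:x\to y$ and $h':y\to z$, the assignment $w \mapsto (s|_{U_{h'}})^{-1}\!\big(t\big((s|_{U_h})^{-1}(w)\big)\big)\cdot (s|_{U_h})^{-1}(w)$ is a local section of $s$ passing through $h'h$ at $w=x$, so its germ computes $\tilde\iota_\h(h'h)$, whose underlying embedding is $\big(t\circ(s|_{U_{h'}})^{-1}\big)\circ\big(t\circ(s|_{U_h})^{-1}\big)$ near $x$, i.e.\ exactly the composite germ $\tilde\iota_\h(h')\circ\tilde\iota_\h(h)$; this reproduces the identity behind equation (\ref{eq:lnof5}). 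Preservation of inverses is then automatic for a unit- and composition-preserving homomorphism of groupoids. The only non-formal steps are the continuity argument into the \'etal\'e space $\Ha\left(\h_0\right)_1$ and this composition identity, and both are handled by the local-section description above together with the uniqueness of germs of sections of the local homeomorphism $s$.
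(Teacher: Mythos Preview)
Your construction is correct and is exactly the one the paper gives: identity on objects, and on arrows $h\mapsto \mathrm{germ}_{s(h)}\big(t\circ (s|_{U})^{-1}\big)$ for any small enough $U\ni h$, noting independence of $U$. You supply considerably more detail than the paper (continuity via local sections of the \'etal\'e space and the verification of the groupoid homomorphism axioms), but the approach is identical.
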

\begin{proof}
For each $h \in \h_1$, choose a neighborhood $U$ such that $s$ and $t$ restrict to embeddings. Then $h$ induces a homeomorphism $$s\left(h\right) \in s\left(U\right) \to t\left(U\right) \ni t\left(h\right),$$ namely $t\circ s^{-1}|_{U}$. Define $\tilde \iota_{\h}$ by having it be the identity on objects and having it send an arrow $h$ to the germ at $s\left(h\right)$ of $t\circ s^{-1}|_{U}$. This germ clearly does not depend on the choice of $U$.
\end{proof}

The following proposition is immediate:

\begin{prop}
An \'etale $S$-groupoid $\h$ is effective if and only if $\tilde \iota_{\h}$ is faithful.
\end{prop}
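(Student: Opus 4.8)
The plan is to reduce faithfulness of the functor $\tilde\iota_\h$, which is the identity on objects, to injectivity of its restriction to each isotropy group, and then to recognize that restriction as exactly the homomorphism whose kernel defines the ineffective isotropy group. Since both directions rest on the same identification, I would set it up once and read off both implications.

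First I would record the standard fact that a functor of groupoids which is a bijection on objects (here the identity) is faithful if and only if it is injective on each automorphism group. Indeed, if $g,h\in\h\left(x,y\right)$ satisfy $\tilde\iota_\h\left(g\right)=\tilde\iota_\h\left(h\right)$, then since $\tilde\iota_\h$ is a homomorphism $\tilde\iota_\h\left(h^{-1}g\right)=\tilde\iota_\h\left(h\right)^{-1}\tilde\iota_\h\left(g\right)=\mathbb{1}$, so $h^{-1}g\in\h_x$ lies in the kernel of the restriction $\tilde\iota_\h|_{\h_x}$; conversely, injectivity on all hom-sets trivially includes injectivity on each $\h_x=\h\left(x,x\right)$. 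Hence faithfulness of $\tilde\iota_\h$ is equivalent to $\tilde\iota_\h|_{\h_x}$ having trivial kernel for every $x\in\h_0$.

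Next I would identify this restriction. For $h\in\h_x$ we have $s\left(h\right)=t\left(h\right)=x$, so choosing a neighborhood $U$ on which $s$ and $t$ restrict to embeddings, the germ at $s\left(h\right)=x$ of $t\circ s^{-1}|_U$ is the germ of a locally defined diffeomorphism fixing $x$ (as $t\circ s^{-1}|_U\left(x\right)=t\left(h\right)=x$), i.e. an element of $\mathit{Diff}_x\left(\h_0\right)=\Ha\left(\h_0\right)\left(x,x\right)$. Thus $\tilde\iota_\h|_{\h_x}$ is precisely the canonical homomorphism $\h_x\to\mathit{Diff}_x\left(\h_0\right)$ of the preceding corollary, and its kernel is, by Definition \ref{dfn:ineffgp}, the ineffective isotropy group of $x$.

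Combining these, $\tilde\iota_\h$ is faithful if and only if the ineffective isotropy group of every point $x$ is trivial, which is exactly the definition of $\h$ being effective. The argument is genuinely immediate; the only point demanding a moment's care is the bookkeeping that the germ $\tilde\iota_\h\left(h\right)$ fixes $x$ precisely when $h$ is an automorphism, so that $\tilde\iota_\h|_{\h_x}$ indeed lands in the subgroup $\mathit{Diff}_x\left(\h_0\right)$ and coincides with the homomorphism defining the ineffective isotropy group. No nontrivial obstacle arises.
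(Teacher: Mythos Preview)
Your proposal is correct and is precisely the unpacking of what the paper treats as immediate (the paper gives no proof, merely stating ``The following proposition is immediate''). Your reduction of faithfulness to injectivity on isotropy groups via $h^{-1}g$, followed by the identification of $\tilde\iota_\h|_{\h_x}$ with the canonical homomorphism $\h_x\to\mathit{Diff}_x\left(\h_0\right)$, is exactly the intended argument.
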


\begin{dfn}
Let $\h$ be an \'etale $S$-groupoid. The \textbf{effective part} of $\h$ is the image in $\Ha\left(\h_0\right)$ of $\tilde \iota_{\h}$. It is denoted by  $\Ef\left(\h\right)$. This is an open subgroupoid, so it is clearly effective and \'etale. We will denote the canonical map $\h \to \Ef \h$ by $\iota_{\h}$.
\end{dfn}

\begin{rmk}
$\h$ is effective if and only if $\iota_{\h}$ is an isomorphism.
\end{rmk}

\subsection{\'Etale invariance}
Unfortunately, the assignment $\h \mapsto \Ef\left(\h\right)$ is not functorial with respect to all maps, that is, a morphism of \'etale $S$-groupoids need not induce a morphism between their effective parts. However, there are classes of maps for which this assignment is indeed functorial. In this subsection, we shall explore this functoriality.
\begin{dfn}
Let $P$ be a property of a map of spaces which forms a subcategory of $S$. We say that $P$ is \textbf{\'etale invariant} if the following two properties are satisfied:
\begin{itemize}
\item[i)] $P$ is stable under pre-composition with local homeomorphisms
\item[ii)] $P$ is stable under pullbacks along local homeomorphisms.
\end{itemize}
If in addition, every morphism in $P$ is open, we say that $P$ is a class of \textbf{open \'etale invariant} maps. Examples of such open \'etale invariant maps are open maps, local homeomorphisms, or, in the smooth setting, submersions. We say a map $\psi:\G \to \h$ of \'etale $S$-groupoids has property $P$ if both $\psi_0$ and $\psi_1$ do. We denote corresponding $2$-category of $S$-groupoids as $\left(S-Gpd\right)^{et}_{P}$. We say a morphism $$\varphi:\Y \to \X$$ has property $P$ is there exists a homomorphism of \'etale $S$-groupoids $$\psi:\G \to \h$$ with property $P$, such that $$\varphi \cong \left[\psi\right].$$
\end{dfn}

Warning: Do not confuse notation with $\left(S^{et}-Gpd\right)$, the $2$-category of \'etale $S$-groupoids and only local homeomorphisms. This only agrees with $\left(S-Gpd\right)^{et}_{P}$ when $P$ is local homeomorphisms.

\begin{rmk}
This agrees with our previous definition of a local homeomorphism of \'etale stacks in the case $P$ is local homeomorphisms. When $P$ is open maps, under the correspondence between \'etale stacks and \'etendues, this agrees with the notion of an open map of topoi in the sense of \cite{elephant2}. When $P$ is submersions, this is equivalent to the definition of a submersion of smooth \'etendues given in \cite{Ie}.
\end{rmk}

\begin{rmk}
Notice that being \'etale invariant implies being invariant under restriction and local on the target, as in Definition \ref{dfn:local}.
\end{rmk}

\begin{prop}
Let $P$ be a property of a map of spaces which forms a subcategory of $S$. $P$ is \'etale invariant if and only if the following conditions are satisfied:
\begin{itemize}
\item[i)] every local homeomorphism is in $P$
\item[ii)] for any commutative diagram
$$\xymatrix{W \ar[d]_-{g} \ar[r]^-{f'} & Y \ar[d]^-{g'}\\
X \ar[r]^-{f} & Z,}$$
with both $g$ and $g'$ local homeomorphisms, if $f$ has property $P$, then so does $f'$.
\end{itemize}
\end{prop}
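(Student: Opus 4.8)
The plan is to prove the two implications separately, relying throughout on two elementary facts about local homeomorphisms in $S$: first, that a pullback of a local homeomorphism along any map is again a local homeomorphism; and second, the right-cancellation property that if $p$ and $p\circ q$ are both local homeomorphisms then so is $q$. I will also use that identities are local homeomorphisms, and hence, since $P$ is assumed to form a subcategory of $S$ containing all identities, that every identity lies in $P$.

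For the forward implication, assume $P$ is \'etale invariant. Condition (i) is immediate: given a local homeomorphism $\ell\colon X\to Y$, write $\ell=\mathrm{id}_Y\circ\ell$; since $\mathrm{id}_Y\in P$ and $P$ is stable under pre-composition with local homeomorphisms, $\ell\in P$. For condition (ii), start from a commuting square $g'\circ f'=f\circ g$ with $g,g'$ local homeomorphisms and $f\in P$. Form the fibre product $X\times_Z Y$ with projections $\pi_X,\pi_Y$. The projection $\pi_Y$ is the pullback of $f$ along the local homeomorphism $g'$, so stability under pullbacks along local homeomorphisms gives $\pi_Y\in P$; dually $\pi_X$ is the pullback of $g'$ along $f$, hence a local homeomorphism. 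The square furnishes, by the universal property of the pullback, a unique map $u\colon W\to X\times_Z Y$ with $\pi_X\circ u=g$ and $\pi_Y\circ u=f'$. Since $\pi_X\circ u=g$ with both $\pi_X$ and $g$ local homeomorphisms, right-cancellation shows $u$ is a local homeomorphism. Finally $f'=\pi_Y\circ u$ is $\pi_Y\in P$ pre-composed with the local homeomorphism $u$, so $f'\in P$, as desired.

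For the reverse implication, assume (i) and (ii); I will recover both clauses of \'etale invariance as instances of (ii). For pre-composition stability, take $f\colon Y\to Z$ in $P$ and a local homeomorphism $\ell\colon X\to Y$, and consider the commuting square with top edge $f\circ\ell$, bottom edge $f$, left edge $\ell$, and right edge $\mathrm{id}_Z$; both verticals are local homeomorphisms and the bottom edge is in $P$, so (ii) yields $f\circ\ell\in P$. For pullback stability, take $f\colon X\to Z$ in $P$ and a local homeomorphism $g'\colon Y\to Z$, and consider the pullback square in which the top edge is $\pi_Y$, the bottom edge is $f$, the left edge is $\pi_X$, and the right edge is $g'$; here $g'$ is a local homeomorphism by hypothesis and $\pi_X$ is a local homeomorphism as a pullback of $g'$, so again both verticals are local homeomorphisms and (ii) delivers $\pi_Y\in P$. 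This establishes \'etale invariance.

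The only genuinely non-formal step is the construction and analysis of the comparison map $u$ in the forward direction: one must invoke the universal property of the fibre product to produce it and then apply the right-cancellation property of local homeomorphisms to see that it is itself a local homeomorphism. Everything else is a matter of orienting the squares correctly so that the two verticals are local homeomorphisms and the designated edge carries property $P$. I expect the cancellation lemma to be the main point requiring care, though it is standard for topological spaces and manifolds, and can be phrased point-free for locales.
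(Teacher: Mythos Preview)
Your proof is correct and follows essentially the same route as the paper: in both directions you form the comparison map into the fibre product, use that the projection $\pi_X$ is a local homeomorphism (as a pullback of one) and then right-cancellation to see that $u$ is a local homeomorphism, and for the converse you instantiate (ii) with the same two squares (the one with right vertical $\mathrm{id}_Z$ for pre-composition, and the pullback square itself for pullback stability). The only difference is that you make the right-cancellation step explicit, whereas the paper simply asserts that the induced map $W\to X\times_Z Y$ is a local homeomorphism.
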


\begin{proof}
Suppose that $P$ is \'etale invariant. Then, as $P$ is a subcategory, it contains all the identity arrows, and since it is stable under pre-composition with local homeomorphisms, this implies that every local homeomorphism is in $P$. Now suppose that $f \in P$, and $$\xymatrix{W \ar[d]_-{g} \ar[r]^-{f'} & Y \ar[d]^-{g'}\\
X \ar[r]^-{f} & Z,}$$ is commutative with both $g$ and $g'$ local homeomorphisms. Then as $P$ is stable under pullbacks along local homeomorphisms, the induced map $X \times_{Z} Y \to Y$ has property $P$. Moreover, as local homeomorphisms are invariant under change of base (Definition \ref{dfn:local}), the induced map $X \times_{Z} Y \to X$ is a local homeomorphism. It follows that the induced map $W \to X \times_{Z} Y$ is a local homeomorphism, and since $f'$ can be factored as $$W \to X \times_{Z} Y \to Y,$$ and $P$ is stable under pre-composition with local homeomorphisms, it follows that $f'$ has property $P$.
Conversely, suppose that the conditions of the proposition are satisfied. Condition $ii)$ clearly implies that $P$ is stable under pullbacks along local homeomorphisms. Suppose that $e:W \to X$ is a local homeomorphism and $f:X \to Z$ is in $P$. Then as
$$\xymatrix{W \ar[d]_-{e} \ar[r]^-{f \circ e} & Z \ar[d]^-{id_Z}\\
X \ar[r]^-{f} & Z,}$$
commutes, it follows  that the $f \circ e$ has property $P$.
\end{proof}

\begin{lem}\label{lem:etin1}
For any open \'etale invariant $P$, the assignment $$\h \mapsto \Ef\left(\h\right)$$ extends to a $2$-functor

$$\Ef_P:\left(S-Gpd\right)^{et}_{P} \to \Ef\left(S-Gpd\right)^{et}_{P}$$
from \'etale $S$-groupoids and $P$-morphisms to effective \'etale $S$-groupoids and $P$-morphisms.
\end{lem}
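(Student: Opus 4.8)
The plan is to build $\Ef_P(\psi)$ directly on the three levels of data of a groupoid homomorphism, exploiting that $\Ef(\G)$ and $\Ef(\h)$ have the same object spaces $\G_0,\h_0$ as $\G,\h$, and that every arrow of $\Ef(\G)$ is of the form $\tilde\iota_\G(g)$ for some $g\in\G_1$. Given $\psi:\G\to\h$ with property $P$, I would set $\Ef_P(\psi)$ equal to $\psi_0$ on objects and send an arrow $\tilde\iota_\G(g)$ to $\tilde\iota_\h(\psi_1(g))$; on a $2$-cell $\alpha:\psi\Rightarrow\psi'$, viewed as a map $\alpha:\G_0\to\h_1$, I would set $\Ef_P(\alpha):=\tilde\iota_\h\circ\alpha$, which lands in $\Ef(\h)_1$ by definition of the effective part. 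Source and target are then matched by $\psi_0$, since $s(\psi_1(g))=\psi_0(s(g))$ and $t(\psi_1(g))=\psi_0(t(g))$.

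The crux, and the one place the \emph{openness} in ``open \'etale invariant'' is indispensable, is the well-definedness of the assignment on arrows: one must show that $\tilde\iota_\h(\psi_1(g))$ depends only on the germ $\tilde\iota_\G(g)$ and not on $g$. I would first record the equivariance identity
$$\psi_0\circ\overline{g}=\overline{\psi_1(g)}\circ\psi_0$$
of locally defined homeomorphisms, valid near $s(g)$, where $\overline{g}=t\circ s^{-1}|_U$ is the local homeomorphism of $\G_0$ representing $\tilde\iota_\G(g)$ and $\overline{\psi_1(g)}$ is the corresponding local homeomorphism of $\h_0$. This holds because $s$ is a local homeomorphism on both groupoids: the local section $\sigma$ of $s_\G$ through $g$ satisfies $\psi_1\circ\sigma=\tau\circ\psi_0$ near $s(g)$, where $\tau$ is the local section of $s_\h$ through $\psi_1(g)$ (two lifts of $\psi_0$ through the \'etale map $s_\h$ agreeing at a point agree near it), whence $\psi_0\circ\overline{g}=\psi_0\circ t_\G\circ\sigma=t_\h\circ\psi_1\circ\sigma=t_\h\circ\tau\circ\psi_0=\overline{\psi_1(g)}\circ\psi_0$. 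Now if $\tilde\iota_\G(g)=\tilde\iota_\G(g')$, then $\overline{g}=\overline{g'}$ on a neighborhood $W$ of $x:=s(g)=s(g')$, so $\overline{\psi_1(g)}\circ\psi_0=\overline{\psi_1(g')}\circ\psi_0$ on $W$; since $\psi_0$ is \emph{open}, $\psi_0(W)$ is a neighborhood of $\psi_0(x)$ on which $\overline{\psi_1(g)}$ and $\overline{\psi_1(g')}$ coincide, giving $\tilde\iota_\h(\psi_1(g))=\tilde\iota_\h(\psi_1(g'))$. I expect this germ-compatibility step to be the main obstacle; everything else is formal.

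With well-definedness in hand, the $2$-functoriality of $\Ef_P$ is routine: because $\tilde\iota_\G,\tilde\iota_\h$ and $\psi$ are groupoid homomorphisms, $\Ef_P(\psi)$ preserves composition, identities, and inverses, and $\Ef_P$ preserves horizontal and vertical composites; the assignment $\Ef_P(\alpha)=\tilde\iota_\h\circ\alpha$ needs no separate well-definedness check since it is defined on the object space directly, and its naturality over an arrow $\tilde\iota_\G(g)$ is obtained by applying the functor $\tilde\iota_\h$ to the naturality square of $\alpha$ at $g$.

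Finally, I would check that $\Ef_P(\psi)$ is a $P$-morphism, i.e. that both components lie in $P$. On objects this is immediate since $\Ef_P(\psi)_0=\psi_0\in P$. For the arrow component I would invoke the characterization of \'etale invariance established in the Proposition above: the square
$$\xymatrix{\Ef\left(\G\right)_1 \ar[r]^-{\Ef_P\left(\psi\right)_1} \ar[d]_-{s} & \Ef\left(\h\right)_1 \ar[d]^-{s}\\ \G_0 \ar[r]^-{\psi_0} & \h_0}$$
commutes (it expresses compatibility of the homomorphism $\Ef_P(\psi)$ with the source maps), its vertical maps are local homeomorphisms because $\Ef(\G)$ and $\Ef(\h)$ are \'etale, and its bottom map $\psi_0$ lies in $P$; hence the top map $\Ef_P(\psi)_1$ lies in $P$ as well. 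This shows $\Ef_P$ lands in $\Ef\left(S-Gpd\right)^{et}_{P}$ and completes the construction.
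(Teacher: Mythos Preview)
Your proof is correct and follows essentially the same strategy as the paper: define $\Ef_P(\psi)$ as $\psi_0$ on objects and $[g]\mapsto[\psi_1(g)]$ on arrows, establish well-definedness via the equivariance relation between the germs induced by $g$ and by $\psi_1(g)$, and then deduce that $\Ef_P(\psi)_1\in P$ from the commuting square with the source maps. The only minor difference is that you invoke openness of $\psi_0$ (so that $\psi_0(W)$ is a neighborhood on which the two germs in $\h_0$ agree), whereas the paper invokes openness of $\psi_1$ (so that $\psi_1(V_g)$ is a neighborhood in $\h_1$ on which one can compute the induced germ); both are available since $P$ consists of open maps, and your formulation via the identity $\psi_0\circ\overline{g}=\overline{\psi_1(g)}\circ\psi_0$ is arguably cleaner.
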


\begin{proof}
Suppose $\varphi:\G \to \h$ has property $P$. Since $\Ef$ does not affect objects, we define $$\Ef\left(\varphi\right)_0=\varphi_0.$$ Given $g \in \G_1$, denote its image in $\Ef\left(\G\right)_1$ by $[g]$. Define $$\Ef\left(\varphi\right)_1\left([g]\right)=\left[\varphi\left(g\right)\right].$$ We need to show that this is well defined. Suppose that $[g]=[g']$. Let $V_g$ and $V_{g'}$ be neighborhoods of $g$ and $g'$ respectively, on which both $s$ and $t$ restrict to embeddings. Denote by $x$ the source of $g$ and $g'$. Then there exists a neighborhood $W$ of $x$ over which $$t \circ s^{-1}|_{g}$$ and $$t \circ s^{-1}|_{g'}$$ agree. Since $\varphi_1$ has property $P$, it is open, so $\varphi_1\left(V_g\right)$ is a neighborhood of $\varphi_1\left(g\right)$, and similarly for $g'$. By making $V_g$ and $V_g'$ smaller if necessary, we may assume that $s$ and $t$ restrict to embeddings on $\varphi_1\left(V_g\right)$ and $\varphi_1\left(V_g'\right)$. Since $\varphi$ is a groupoid homomorphism, it follows that $$t \circ s^{-1}|_{\varphi_1\left(V_g\right)}$$ and  $$\varphi_0 \circ t \circ s^{-1}|_{V_g}$$ agree on $W$, and similarly for $g'$. Hence, if $g$ and $g'$ induce the same germ of a locally defined homeomorphism, so do $\varphi_1\left(g\right)$ and $\varphi_1\left(g'\right)$. It is easy to check that $\Ef\left(\varphi\right)$ as defined is a homomorphism of $S$-groupoids. In particular, the following diagram commutes:

$$\xymatrix@C=1.5cm{\Ef\left(\G\right)_1 \ar[d]_-{s} \ar[r]^-{\Ef\left(\varphi\right)_1} & \Ef\left(\h\right) \ar[d]^-{s}\\
\G_0 \ar[r]^-{\varphi_0} & \h_0.}$$
Since $P$ is \'etale invariant and the source maps are local homeomorphisms, it implies that $\Ef\left(\varphi\right)_1$ has property $P$. The rest is proven similarly.
\end{proof}

\begin{thm}\label{thm:bungalo}
Let $j_P:\Ef\left(S-Gpd\right)^{et}_{P} \hookrightarrow \left(S-Gpd\right)^{et}_{P}$ be the inclusion. Then $\Ef_P$ is left-adjoint to $j_P$.
\end{thm}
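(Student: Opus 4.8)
The plan is to exhibit $\iota$ as the unit of the desired $2$-adjunction. For each \'etale $S$-groupoid $\h$, the canonical map $\iota_\h : \h \to \Ef(\h)$ is the identity on objects and is, on arrows, the corestriction $(\iota_\h)_1 : \h_1 \to \Ef(\h)_1$ onto the image of $\tilde\iota_\h$ inside $\Ha(\h_0)$. Since both $\h_1$ and $\Ef(\h)_1$ are \'etale over $\h_0$ via their source maps and $(\iota_\h)_1$ commutes with these source maps, $(\iota_\h)_1$ is automatically a local homeomorphism; as local homeomorphisms belong to $P$, the morphism $\iota_\h$ is a $P$-morphism, hence a $1$-cell of $\left(S-Gpd\right)^{et}_{P}$. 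Moreover $\iota$ is strictly natural: unwinding the definition of $\Ef_P$ from Lemma \ref{lem:etin1}, for any $P$-morphism $\varphi : \h \to \K$ one has $\Ef(\varphi) \circ \iota_\h = \iota_\K \circ \varphi$ on the nose, since $\Ef(\varphi)_1([g]) = [\varphi(g)]$ while $(\iota_\h)_1(g) = [g]$.

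Since $j_P$ is the inclusion of a full sub-$2$-category, it suffices to verify that each $\iota_\h$ is a universal arrow, i.e. that for every effective \'etale $S$-groupoid $\K$ precomposition
$$\left(-\right)\circ \iota_\h : \Hom_{\Ef\left(S-Gpd\right)^{et}_{P}}\!\left(\Ef(\h),\K\right) \longrightarrow \Hom_{\left(S-Gpd\right)^{et}_{P}}\!\left(\h,\K\right)$$
is an equivalence of groupoids. For essential surjectivity, given a $P$-morphism $\varphi : \h \to \K$, I would use that $\K$ is effective, so $\iota_\K$ is an isomorphism (by the Remark stating $\h$ is effective iff $\iota_\h$ is an isomorphism), and set $\bar\varphi := \iota_\K^{-1} \circ \Ef_P(\varphi) : \Ef(\h) \to \K$. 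This is a composite of a $P$-morphism with an isomorphism, hence itself a $P$-morphism because $P$ is a subcategory containing all local homeomorphisms; and by the strict naturality above $\bar\varphi \circ \iota_\h = \iota_\K^{-1}\circ \Ef(\varphi)\circ \iota_\h = \iota_\K^{-1}\circ \iota_\K \circ \varphi = \varphi$, so $\varphi$ lies strictly in the image.

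The heart of the argument, and the step I expect to require the most care, is full faithfulness on $2$-cells; here the key structural features of $\iota_\h$ are that it is the identity on objects and an epimorphism on arrows. Given $P$-morphisms $\psi,\psi' : \Ef(\h) \to \K$, an internal natural transformation $\psi \Rightarrow \psi'$ is precisely a map $\theta : \Ef(\h)_0 = \h_0 \to \K_1$ with $s\theta = \psi_0$, $t\theta = \psi'_0$, and $\theta(t\bar g)\,\psi(\bar g) = \psi'(\bar g)\,\theta(s\bar g)$ for all $\bar g \in \Ef(\h)_1$, whereas a natural transformation $\psi\iota_\h \Rightarrow \psi'\iota_\h$ is the same type of map $\h_0 \to \K_1$ subject to the naturality condition indexed instead by $g \in \h_1$. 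Because $(\iota_\h)_0 = \mathrm{id}$ and $(\iota_\h)_1$ is surjective with $s,t$ intertwined, every $\bar g$ has the form $\iota_\h(g)$ with $s\bar g = s g$ and $t\bar g = t g$, so the two naturality conditions coincide verbatim; thus precomposition is a bijection on $2$-cells, i.e. fully faithful. Combined with the essential surjectivity above, it is an equivalence, so each $\iota_\h$ is universal and $\Ef_P \dashv j_P$ with unit $\iota$. Finally, since $\iota_\K$ is invertible for effective $\K$, the counit is an isomorphism, exhibiting effective \'etale $S$-groupoids as a reflective sub-$2$-category.
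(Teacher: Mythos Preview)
Your proof is correct and follows essentially the same approach as the paper: both use $\iota$ as the unit and exploit that $\iota_\K$ is an isomorphism for effective $\K$. The paper's own proof is extremely terse---it simply names the unit $\iota$ and the counit (coming from the canonical isomorphism $\Ef_P\circ j_P\Rightarrow \mathrm{id}$) and asserts the triangle identities are easy---whereas you instead verify the universal property of the unit directly and supply the details (that $\iota_\h$ is a $P$-morphism, strict naturality, and the bijection on $2$-cells via surjectivity of $(\iota_\h)_1$). This is a minor repackaging of the same argument rather than a genuinely different route.
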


\begin{proof}
There is a canonical natural isomorphism $$\Ef_P \circ j_P \Rightarrow id_{\Ef\left(S-Gpd\right)^{et}_{P}}$$ since any effective \'etale groupoid is canonically isomorphic to its effective part. Furthermore, the maps $\iota_{\h}$ assemble into a natural transformation $$\iota:id_{\left(S-Gpd\right)^{et}_{P}} \Rightarrow j_P \circ \Ef_P.$$ It is easy to check that these define a $2$-adjunction.
\end{proof}

\begin{thm}
$\Ef_P$ sends Morita equivalences to Morita equivalences.
\end{thm}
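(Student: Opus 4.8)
The plan is to prove this directly from the groupoid definition of a Morita equivalence (Definition \ref{dfn:Morita}), exploiting the fact that $\Ef_P$ does not alter the object space: $\Ef(\G)_0=\G_0$, $\Ef(\h)_0=\h_0$, and $\Ef(\varphi)_0=\varphi_0$. So let $\varphi:\G\to\h$ be an internal functor which is a Morita equivalence. My first step is a reduction I expect to use throughout: \emph{any} Morita equivalence of \'etale $S$-groupoids has $\varphi_0$ a local homeomorphism, and hence $\varphi_1$ a local homeomorphism as well (being, via the fully faithful pullback square, a base change of the \'etale map $\varphi_0\times\varphi_0$ along $(s,t):\h_1\to\h_0\times\h_0$). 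To see the first claim I would unwind essential surjectivity: the map $t\circ pr_1:\h_1\times_{\h_0}\G_0\to\h_0$ admits local sections, and since $s$ is \'etale both projections out of $\h_1\times_{\h_0}\G_0$ are \'etale; choosing for each $x\in\G_0$ a local section through the point $(\mathbb 1_{\varphi_0(x)},x)$ exhibits $\varphi_0$ locally as a composite of \'etale maps. In particular every Morita equivalence automatically lies in $(S\text{-}Gpd)^{et}_P$ for every open \'etale invariant $P$, so the statement is well-posed.

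Next I would verify the two conditions of Definition \ref{dfn:Morita} for $\Ef(\varphi)$. Essential surjectivity is the easy half: the canonical map $\iota_\h:\h_1\to\Ef(\h)_1$ is a surjective local homeomorphism preserving $s$ and $t$ (it is a map of spaces \'etale over $\h_0$ via $s$ commuting with the source projections) and is the identity on objects. Hence $\iota_\h\times\mathrm{id}_{\G_0}$ carries a local section of $t\circ pr_1$ over $\h_1\times_{\h_0}\G_0$ to a local section of $t\circ pr_1$ over $\Ef(\h)_1\times_{\h_0}\G_0$, and since these cover $\h_0=\Ef(\h)_0$, the functor $\Ef(\varphi)$ is essentially surjective.

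The main work, and the step I expect to be the genuine obstacle, is full faithfulness, i.e. showing that
$$\Ef(\G)_1 \xrightarrow{\ ((s,t),\,\Ef(\varphi)_1)\ } (\G_0\times\G_0)\times_{\h_0\times\h_0}\Ef(\h)_1$$
is an isomorphism. Surjectivity of this comparison map is immediate from full faithfulness of $\varphi$: a pair $((a,b),[k])$ with $s(k)=\varphi_0(a)$ and $t(k)=\varphi_0(b)$ lifts a representative $k$ to a unique $g\in\G_1$ with $\varphi(g)=k$, and then $[g]$ maps to $((a,b),[k])$. The subtle point is injectivity: if $g,g'$ have equal source and target and $\varphi(g),\varphi(g')$ induce the \emph{same germ} of local homeomorphism on $\h_0$, I must conclude that $g,g'$ induce the same germ on $\G_0$. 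For this I would use that $\varphi_0$, being a local homeomorphism, induces a functor $\Ha(\varphi_0):\Ha(\G_0)\to\Ha(\h_0)$ satisfying $\Ha(\varphi_0)\circ\tilde\iota_\G=\tilde\iota_\h\circ\varphi$ (a direct germ computation using $s_\h\varphi_1=\varphi_0 s_\G$ and $t_\h\varphi_1=\varphi_0 t_\G$), and that $\Ha(\varphi_0)$ is \emph{faithful}: if $f$ is a local homeomorphism then $f\circ\alpha$ and $f\circ\beta$ agreeing near a point forces $\alpha$ and $\beta$ to agree there, so distinct germs with a fixed source cannot be identified. Applying faithfulness at the common source $a=s(g)=s(g')$ yields $[g]=[g']$. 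Finally, the comparison map is a continuous bijection between two spaces \'etale over $\G_0\times\G_0$ (via $(s,t)$ and the pullback projection) commuting with those \'etale projections, hence a homeomorphism, which closes full faithfulness. The one place demanding care is tracking germs through $\varphi_0$ and confirming the compatibility $\Ha(\varphi_0)\circ\tilde\iota_\G=\tilde\iota_\h\circ\varphi$ holds on the nose as germs; once that is pinned down, everything else is formal.
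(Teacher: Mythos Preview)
Your proof is correct and follows essentially the same strategy as the paper's: verify the two conditions of Definition~\ref{dfn:Morita} for $\Ef(\varphi)$ directly, using that $\Ef$ is the identity on object spaces and that $\varphi_0$ is \'etale. The paper handles essential surjectivity by showing $t\circ pr_1:\Ef(\h)_1\times_{\h_0}\G_0\to\h_0$ is itself a local homeomorphism via a commutative triangle over $\h_1\times_{\h_0}\G_0$, which is the same content as your ``push local sections forward along $\iota_\h$'' argument.

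The one genuine difference is in the injectivity half of full faithfulness. The paper does an explicit hands-on germ computation: it chooses small neighbourhoods $V_g,V_{g'}$ on which $s,t$ are embeddings and on which $\varphi_0$ restricts to an embedding, and unwinds $t\circ s^{-1}|_{\varphi_1(V_g)}=\varphi_0\circ t\circ s^{-1}|_{V_g}$ to cancel $\varphi_0$ from both sides. Your argument packages this same computation as the functorial statement ``$\Ha(\varphi_0)$ is faithful and satisfies $\Ha(\varphi_0)\circ\tilde\iota_\G=\tilde\iota_\h\circ\varphi$,'' which is cleaner and reusable. One minor slip: the two sides of your comparison map are not \'etale over $\G_0\times\G_0$ via $(s,t)$ in general (that map is typically only an immersion), but they \emph{are} \'etale over $\G_0$ via the source projection, and your ``continuous bijection commuting with \'etale projections'' argument goes through verbatim with that correction. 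The paper, incidentally, leaves the continuity/homeomorphism issue implicit, so your treatment is actually more complete on that point.
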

\begin{proof}
Suppose $\varphi:\G \to \h$ is a Morita equivalence. Since $\G$ and $\h$ are \'etale, this implies $\varphi$ is a local homeomorphism. Hence, in the pullback diagram
$$\xymatrix{\h_1\times_{\h_0}\G_0 \ar[d]_-{pr_1} \ar[r]^-{pr_2} & \G_0 \ar[d]^-{\varphi_0}\\
\h_1 \ar[r]^-{s} & \h_0,}$$
$pr_1$ is a local homeomorphism, and hence the map $$t \circ pr_1:\h_1 \times_{\h_0} \G_0 \to \h_0$$ is as well. We have a commutative diagram

$$\xymatrix{\h_1 \times_{\h_0} \G_0 \ar[r]^{t \circ pr_1} \ar[d] & \h_0\\
\Ef\left(\h\right)_1 \times_{\h_0} \G_0. \ar[ur]_-{t \circ pr_1}  &   }$$
The map $$\Ef\left(\h\right)_1 \times_{\h_0} \G_0 \to \h_1\times_{\h_0} \G_0 $$ is the pullback of a local homeomorphism, hence one itself, and the upper arrow $t \circ pr_1$ is a local homeomorphism. This implies $$t \circ pr_1:\Ef\left(\h\right)_1 \times_{\h_0} \G_0 \to \h_0$$ is a local homeomorphism as well. In particular, it admits local sections, and if $S$ is manifolds, is a surjective submersion. Therefore $\Ef\left(\varphi\right)$ is essentially surjective.
Now suppose that $$\left[h\right]:\varphi\left(x\right) \to \varphi\left(y\right).$$ Then  $$h:\varphi\left(x\right) \to \varphi\left(y\right).$$ So there is a unique $g:x \to y$ such that $\varphi\left(g\right)=h$. Now suppose $$\left[h\right]=\left[h'\right].$$ We can again choose a unique $g'$ such that $\varphi\left(g'\right)=h'.$ We need to show that $$\left[g\right]=\left[g'\right].$$ Let $V_g$ and $V_{g'}$ be neighborhoods of $g$ and $g'$ respectively chosen so small that $s$ and $t$ of $\G_1$ restrict to embeddings on them, $s$ and $t$ of $\h_1$ restrict to embeddings on $\varphi_1\left(V_g\right)$ and $\varphi_1\left(V_g'\right),$ $\varphi_0$ restricts to an embedding on $s\left(V_g\right),$ which is possible since $\varphi_0$ is a local homeomorphism, and $$t \circ s^{-1}|_{\varphi_1\left(V_g\right)}$$ and $$t\circ s^{-1}|_{\varphi_1\left(V_g'\right)}$$ agree on $\varphi_0\left(s\left(V_g\right)\right),$ which is possible since $$\left[\varphi\left(g\right)\right]=\left[\varphi\left(g'\right)\right].$$ Then by the proof of Lemma \ref{lem:etin1}, $$t \circ s^{-1}|_{\varphi_1\left(V_g\right)}$$ and  $$\varphi_0 \circ t \circ s^{-1}|_{V_g}$$ agree on $s\left(V_g\right)$, and similarly for $g'$. Hence $$\varphi_0 \circ t \circ s^{-1}|_{V_g}$$ and $$\varphi_0 \circ t \circ s^{-1}|_{V_g'}$$ agree on $W$, but $\varphi_0$ is an embedding when restricted to $W$, hence $$t \circ s^{-1}|_{V_g}$$ and $$t \circ s^{-1}|_{V_g'}$$ agree on $W$ so $\left[g\right]=\left[g'\right].$
\end{proof}

\begin{lem}
Let $\mathcal{U}$ be an \'etale cover of $\h_0$, with $\h$ an \'etale $S$-groupoid. Then there is a canonical isomorphism between $\Ef\left(\h_{\mathcal{U}}\right)$ and $\left(\Ef\left(\h\right)\right)_{\mathcal{U}}$ (See Definition \ref{dfn:cech}).
\end{lem}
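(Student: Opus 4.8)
The plan is to exhibit an explicit comparison homomorphism, the identity on objects, and check it is an isomorphism, using the functoriality of $\Ef$ already established. Write $\pi\colon U\to\h_0$ for the cover map $\mathcal U$ and $q\colon\h_{\mathcal U}\to\h$ for the canonical Morita equivalence of Definition \ref{dfn:cech}, so that $q_0=\pi$ and $q_1\colon (U\times U)\times_{\h_0\times\h_0}\h_1\to\h_1$ is the projection. Both $\h_{\mathcal U}$ and $\Ef(\h_{\mathcal U})$ have object space $U$, and since $\Ef$ leaves objects untouched, so does $(\Ef(\h))_{\mathcal U}$ (the \v{C}ech groupoid of $\Ef(\h)$ for the cover $\mathcal U$ of $(\Ef(\h))_0=\h_0$); thus all the groupoids in sight share the object space $U$. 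The key geometric input is that $\pi$ intertwines the locally defined homeomorphisms defining the two effective parts: near any arrow $(u,u',h)$ of $\h_{\mathcal U}$, the unique nearby arrow with source $v$ is $(v,v',h_v)$, where $h_v$ is the unique arrow of $\h$ near $h$ with $s(h_v)=\pi(v)$ and $v'$ is the preimage of $t(h_v)$ near $u'$, whence
$$\pi\circ\bigl(t_{\h_{\mathcal U}}\circ s_{\h_{\mathcal U}}^{-1}\bigr)=\bigl(t_{\h}\circ s_{\h}^{-1}\bigr)\circ\pi$$
as germs at $u$. In particular $\tilde\iota_{\h_{\mathcal U}}(u,u',h)$ is the unique germ of a local homeomorphism of $U$ based at $u$, sending $u$ to $u'$, that is $\pi$-related to the germ $\tilde\iota_{\h}(h)$.

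First I would construct the comparison map. Since local homeomorphisms form an open \'etale invariant class and $q$ is a local homeomorphism (both groupoids being \'etale), Lemma \ref{lem:etin1} yields a homomorphism $\Ef(q)\colon\Ef(\h_{\mathcal U})\to\Ef(\h)$ with $\Ef(q)_0=\pi$ and $\Ef(q)_1(\tilde\iota_{\h_{\mathcal U}}(x))=\tilde\iota_{\h}(q_1(x))$. Because $\Ef(q)$ is a homomorphism over $\pi$, the pair $\bigl((s,t),\,\Ef(q)_1\bigr)$ satisfies the compatibility $(\pi\times\pi)\circ(s,t)=(s_{\Ef(\h)},t_{\Ef(\h)})\circ\Ef(q)_1$, so by the universal property of the arrow space of $(\Ef(\h))_{\mathcal U}$ as the pullback $(U\times U)\times_{\h_0\times\h_0}\Ef(\h)_1$ it factors through a canonical map on arrows. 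This defines a homomorphism $\Psi\colon\Ef(\h_{\mathcal U})\to(\Ef(\h))_{\mathcal U}$, the identity on objects, with $\Psi_1(a)=\bigl((s(a),t(a)),\Ef(q)_1(a)\bigr)$; that $\Psi$ respects units and composition is immediate, as all the structure maps involved do.

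Next I would show $\Psi_1$ is bijective. For surjectivity, an arrow of $(\Ef(\h))_{\mathcal U}$ is a triple $((u,u'),\gamma)$ with $\gamma\in\Ef(\h)_1$, $s(\gamma)=\pi(u)$, $t(\gamma)=\pi(u')$; writing $\gamma=\tilde\iota_{\h}(h)$ (possible since $\Ef(\h)_1$ is the image of $\tilde\iota_{\h}$) we get $s(h)=\pi(u)$ and $t(h)=\pi(u')$, so $(u,u',h)\in(\h_{\mathcal U})_1$, and $a:=\tilde\iota_{\h_{\mathcal U}}(u,u',h)$ satisfies $\Psi_1(a)=((u,u'),\tilde\iota_{\h}(h))=((u,u'),\gamma)$. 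For injectivity, suppose $\Psi_1(a)=\Psi_1(b)$; then $a,b$ are germs of local homeomorphisms of $U$ with the same base point $u=s(a)=s(b)$, the same value $u'=t(a)=t(b)$ at $u$, and equal pushforwards $\Ef(q)_1(a)=\Ef(q)_1(b)$. By the intertwining relation above, $a$ is the unique germ at $u$ with $a(u)=u'$ that is $\pi$-related to $\Ef(q)_1(a)$: the relation $\pi\circ a=\Ef(q)_1(a)\circ\pi$ determines $a$ near $u$ once the branch of $\pi^{-1}$ near $u'$ is fixed by $a(u)=u'$, because $\pi$ is a local homeomorphism at both $u$ and $u'$; the same holds for $b$, hence $a=b$.

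Finally, $\Psi_1$ is a continuous bijection commuting with the source maps of $\Ef(\h_{\mathcal U})$ and $(\Ef(\h))_{\mathcal U}$ down to $U$, both of which are local homeomorphisms since the groupoids are \'etale. A continuous bijection over $U$ between two spaces that are \'etale over $U$ is itself a local homeomorphism, hence a homeomorphism; so $\Psi_1$ is an isomorphism in $S$ and $\Psi$ is the desired canonical isomorphism $\Ef(\h_{\mathcal U})\cong(\Ef(\h))_{\mathcal U}$. The main obstacle is the injectivity step, i.e. verifying that the triple recorded by $\Psi_1$ genuinely recovers the germ on $U$; this is exactly where the intertwining identity for the cover $\pi$ is needed, and everything else is formal once that identity is in hand.
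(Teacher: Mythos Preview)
Your proof is correct and follows essentially the same approach as the paper: both hinge on the germ computation that the local homeomorphism of $U$ induced by an arrow $(u,u',h)$ of $\h_{\mathcal U}$ is $\pi^{-1}\circ(\text{germ of }h)\circ\pi$ with the branches of $\pi^{-1}$ fixed by the endpoints, which is exactly your intertwining relation and exactly the paper's explicit formula $a\mapsto e|_{V_q}^{-1}\bigl(t\circ s|_{U_h}^{-1}(e(a))\bigr)$. The only organizational difference is that you build the comparison map by invoking the functoriality of $\Ef$ (Lemma~\ref{lem:etin1}) and the pullback universal property, whereas the paper writes the map $(h,p,q)\mapsto([h],p,q)$ by hand and then checks it descends; your final step (a continuous bijection over $U$ between spaces \'etale over $U$ is a homeomorphism) is a clean replacement for the paper's ``easy to check''.
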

\begin{proof}
Both of these groupoids have the same object space. It suffices to show that their arrow spaces are isomorphic (and that this determines an internal functor). Suppose the cover $\mathcal{U}$ is given by a local homeomorphism $e:U \to \h_0$. An arrow in $\h_{\mathcal{U}}$ is a triple $$\left(h,p,q\right)$$ with $$h:e\left(p\right) \to e\left(p\right).$$ An arrow in $\left(\Ef\left(\h\right)\right)_{\mathcal{U}}$ is a triple $$\left(\left[h\right],p,q\right)$$ such that $\left[h\right]$ is the image of an arrow $h \in \h_1$ under $\iota_\h$ such that $$h:e\left(p\right) \to e\left(p\right).$$ Define a map
\begin{align}
\left(\h_{\mathcal{U}}\right)_1 &\to  \left(\left(\Ef\left(\h\right)\right)_{\mathcal{U}}\right)_1 \nonumber\\
\left(h,p,q\right) &\mapsto \left(\left[h\right],p,q\right)\label{eq:cake}.\end{align}
This map is clearly surjective.

We make the following claim:

$$\left[h\right]=\left[h'\right]$$
if and only if
$$\left[\left(h,p,q\right)\right]=\left[\left(h',p,q\right)\right].$$
Suppose that $$\left[h\right]=\left[h'\right].$$
Pick a neighborhood $U_h$ of $h$ in $\h_1$ such that both $s$ and $t$ are injective over it, and $U_h'$ an analogous neighborhood of $h'$. Let $W$ be a neighborhood of $s\left(h\right)=s\left(h'\right)$ over which
\begin{equation}\label{eq:yikes}
t\circ s|_{U_h}^{-1}=t\circ s|_{U_h'}^{-1}.
\end{equation}
Pick neighborhoods $V_p$ and $V_q$ of $p$ and $q$ respectively so small that $e$ is injective over them, and for all $a \in V_p$, $$e\left(a\right) \in W$$ and $$t\circ s|_{U_h}^{-1}\left(e\left(a\right)\right) \in e\left(V_q\right).$$
As the arrow space $\left(\h_{\mathcal{U}}\right)_1$ fits into the pullback diagram $$\xymatrix{\left(\h_{\mathcal{U}}\right)_1 \ar[r] \ar[d]_-{\left(s,t\right)} & \h_1 \ar[d]^-{\left(s,t\right)} \\
U \times U \ar[r] & \h_0 \times \h_0,}$$
$\left(V_p \times V_q \times U_h \right) \cap \left(\h_{\mathcal{U}}\right)_1$ is a neighborhood of $\left(h,p,q\right)$ over which both the source and target maps are injective. The set $\left(V_p \times V_q \times U_h' \right) \cap \left(\h_{\mathcal{U}}\right)_1$ is an analogous neighborhood of $\left(h',p,q\right)$. The local inverse of $s$ through $\left(h,p,q\right)$ is then given by
$$a \mapsto \left(s|_{U_h}^{-1}\left(e\left(a\right)\right),a,e|_{V_p}^{-1}\left(t\circ s|_{U_h}^{-1}\left(e\left(a\right)\right)\right)\right).$$
Hence, the germ associated to $\left(h,p,q\right)$ is the germ of $$a \mapsto e|_{V_p}^{-1}\left(t\circ s|_{U_h}^{-1}\left(e\left(a\right)\right)\right).$$ Similarly the germ associated to $\left(h',p,q\right)$ is the germ of $$a \mapsto e|_{V_p}^{-1}\left(t\circ s|_{U_h'}^{-1}\left(e\left(a\right)\right)\right).$$ From equation (\ref{eq:yikes}), it follows that these maps are identical. Moreover, supposing instead that $$\left[\left(h,p,q\right)\right]=\left[\left(h',p,q\right)\right],$$ by the above argument, it follows that $\left[h\right]=\left[h'\right]$ since $e$ is injective over $V_q$.

Hence the assignment (\ref{eq:cake}) depends only  on the  image of $\left(h,p,q\right)$ in $\Ef\left(\h_{\mathcal{U}}\right)$. So there is an induced well defined and surjective map
\begin{equation}\label{eq:turtle}
\left(\Ef\left(\h_{\mathcal{U}}\right)\right)_1 \to  \left(\left(\Ef\left(\h\right)\right)_{\mathcal{U}}\right)_1.
\end{equation}
Since, $\left[h\right]=\left[h'\right]$ implies $\left[\left(h,p,q\right)\right]=\left[\left(h',p,q\right)\right],$ it follows  that this map is also injective, hence bijective. It is easy to check that it is moreover a homeomorphism. It clearly defines a groupoid homomorphism
\end{proof}

\begin{cor}\label{cor:etadj}
There is an induced $2$-adjunction
$$\xymatrix{\Eft_{P}  \ar@<-0.5ex>[r]_-{j_P}  & \Et_{P}\ar@<-0.5ex>[l]_-{\Ef_P},}$$
between \'etale stacks with $P$-morphisms and effective \'etale stacks with $P$-morphisms, where $j_P$ is the canonical inclusion.
\end{cor}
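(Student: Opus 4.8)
The plan is to obtain the $2$-adjunction by descending the groupoid-level adjunction of Theorem \ref{thm:bungalo} along the localization that turns $S$-groupoids into stacks. Recall from \cite{Dorette} that $\Et_P$ is the bicategory of fractions obtained from $\left(S-Gpd\right)^{et}_{P}$ by formally inverting Morita equivalences, and likewise $\Eft_P$ is the corresponding localization of $\Ef\left(S-Gpd\right)^{et}_{P}$. The first thing I would check is that this calculus of fractions is available in the $P$-restricted setting: since any Morita equivalence between \'etale groupoids is a local homeomorphism, and every local homeomorphism lies in $P$, the Morita equivalences form a subclass of the $P$-morphisms admitting a bicalculus of fractions, so both localizations make sense and compute $\Hom$-groupoids by the colimit formula \eqref{eq:pizza}.

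The second step is to descend the two $2$-functors. Both $\Ef_P$ and $j_P$ send Morita equivalences to Morita equivalences: for $\Ef_P$ this is the theorem proved just above, and for the inclusion $j_P$ it is immediate. By the universal property of the bicategory of fractions, each therefore induces a $2$-functor on the localizations, which I continue to denote $\Ef_P:\Et_P \to \Eft_P$ and $j_P:\Eft_P \to \Et_P$.

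Third, I would descend the adjunction data. The counit of Theorem \ref{thm:bungalo} is a natural isomorphism $\Ef_P \circ j_P \Rightarrow \mathrm{id}$ and passes to the localization unchanged. The unit $\iota:\mathrm{id} \Rightarrow j_P \circ \Ef_P$ has components $\iota_{\h}:\h \to \Ef\left(\h\right)$ that are genuine morphisms of $S$-groupoids; these become $1$-cells in the localized bicategory and their naturality squares are inherited. Since the triangle identities hold in the groupoid bicategories and are preserved by the localization pseudofunctor, the descended unit and counit satisfy them as well, yielding the asserted $2$-adjunction.

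The main obstacle is ensuring that $\Ef$ is genuinely compatible with the localization at the level of $\Hom$-groupoids, rather than merely on objects and single arrows. This is exactly where the \v{C}ech-groupoid lemma $\Ef\left(\h_{\mathcal{U}}\right) \cong \left(\Ef\left(\h\right)\right)_{\mathcal{U}}$ is indispensable: $\Hom$-groupoids of \'etale stacks are computed via \eqref{eq:pizza} as weak colimits over \'etale covers, and this isomorphism guarantees that applying $\Ef_P$ commutes with that colimit, so that the induced $2$-functor is well defined and the unit and counit remain natural after inverting Morita equivalences.
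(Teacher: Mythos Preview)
Your proposal is correct, and it takes a genuinely different route from the paper's proof. The paper argues directly at the level of $\Hom$-groupoids: using the colimit formula \eqref{eq:pizza} and the lemma $\Ef\left(\h_{\mathcal{U}}\right)\cong\left(\Ef\left(\h\right)\right)_{\mathcal{U}}$, it computes
\[
\Hom\left(\left[\Ef\left(\h\right)\right],\left[\G\right]\right)\;\simeq\;\underset{\mathcal{U}}\hc\,\Hom\left(\Ef\left(\h_{\mathcal{U}}\right),\G\right)\;\simeq\;\underset{\mathcal{U}}\hc\,\Hom\left(\h_{\mathcal{U}},j_P\G\right)\;\simeq\;\Hom\left(\left[\h\right],j_P\left[\G\right]\right),
\]
invoking the groupoid-level adjunction (Theorem~\ref{thm:bungalo}) at each stage of the colimit. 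You instead descend the entire adjunction of Theorem~\ref{thm:bungalo} through the bicategory-of-fractions localization, using the universal property together with the theorem that $\Ef_P$ preserves Morita equivalences.

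One correction to your final paragraph: in your localization approach the \v{C}ech-groupoid lemma is \emph{not} indispensable. Once you know $\Ef_P$ sends Morita equivalences to Morita equivalences, the universal property of the bicategory of fractions already guarantees that the induced $2$-functor is well defined on $\Hom$-groupoids and that the unit and counit descend pseudonaturally; no further compatibility check is required. The \v{C}ech lemma is precisely what the paper's direct computation needs, but your argument bypasses it. What your approach buys is a cleaner conceptual picture at the cost of invoking the fractions machinery; the paper's approach is more explicit about exactly how the $\Hom$-groupoids match and uses only the concrete description \eqref{eq:pizza}.
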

\begin{proof}
Let $\mathcal{U}$ be an \'etale cover of $\h_0$, with $\h$ an \'etale $S$-groupoid. From the previous lemma, there is a canonical isomorphism between $\Ef\left(\h_{\mathcal{U}}\right)$ and $\left(\Ef\left(\h\right)\right)_{\mathcal{U}}.$ Let $\G$ be an effective \'etale $S$-groupoid. Then
\begin{eqnarray*}
\Hom\left(\left[\Ef\left(\h\right)\right],\left[\G\right]\right)  &\simeq& \underset{\mathcal{U}} \hc \Hom\left(\left(\Ef\left(\h\right)\right)_{\mathcal{U}}, \G\right)\\
&\simeq& \underset{\mathcal{U}} \hc \Hom\left(\Ef\left(\h_{\mathcal{U}}\right), \G \right) \\
&\simeq& \underset{\mathcal{U}} \hc \Hom\left(\h_{\mathcal{U}},j_P\G\right)\\
&\simeq& \Hom\left(\left[\h\right],j_P\left[\G\right]\right).
\end{eqnarray*}
\end{proof}
Note that this implies that $\Eft_{P}$ is a localization of $\Et_{P}$ with respect to those morphisms whose image under $\Ef_P$ become equivalences. When $P$ is local homeomorphisms, denote $P=et$. We make the following definition for later:

\begin{dfn}
A morphism $\varphi:\Y \to \X$ between \'etale stacks is called an \textbf{effective local equivalence} if $\varphi$ is a local homeomorphism and $\Ef_{et}\left(\varphi\right)$ is an equivalence.
\end{dfn}

\section{Small Gerbes}\label{sec:gerbe}

\subsection{Gerbes}
Gerbes are a special type of stack. Gerbes were first introduced by Jean Giraud in \cite{Giraud}. Intuitively, gerbes are to stacks what groups are to groupoids. In some sense, a gerbe is ``locally'' a sheaf of groups. The most concise definition of a gerbe is:
\begin{dfn}\label{dfn:gerbe}
A \textbf{gerbe} over a Grothendieck site $\left(\C,J\right)$ is a stack $\g$ over $\C$ such that
\begin{itemize}
\item[i)] the unique map $\g \to *$ to the terminal sheaf is an epimorphism, and
\item[ii)] the diagonal map $\g \to \g \times \g$ is an epimorphism.
\end{itemize}
\end{dfn}
The first condition means that for any object $C \in \C_0$, the unique map $C \to *$ \emph{locally} factors through $\g \to *$, up to isomorphism. Spelling this out means that there exists a cover $\left(f_\alpha:C_\alpha \to C\right)$ of $C$ such that each groupoid $\g\left(C_\alpha\right)$ is non-empty. This condition is often phrased by saying $\g$ is \emph{locally non-empty}.

The second condition means that for all $C$, any map $C \to \g \times \g$ \emph{locally} factors through the diagonal $\g \to \g \times \g$ up to isomorphism. Spelling this out, any map $C \to \g \times \g$, by Yoneda, corresponds to objects $x$ and $y$ of the groupoid $\g\left(C\right)$. The fact that this map locally factors through the diagonal means that, given two such objects $x$ and $y$, there exists a cover $\left(g_\beta:C_\beta \to C\right)$ of $C$ such that for all $\beta$, $$\g\left(f_\beta\right)\left(x\right) \cong \g\left(f_\beta\right)\left(y\right)$$ in $\g\left(C_\beta\right)$. This condition is often phrased by saying $\g$ is \emph{locally connected}.

If it were not for the locality of these properties, then this would mean that each $\g\left(C\right)$ would be a non-empty and connected groupoid, hence, equivalent to a group.

\begin{dfn}
The full sub-2-categeory of $\St\left(\C\right)$ on all gerbes, is called the $2$-category of gerbes and is denoted by $\scr{Gerbe}\left(\C\right)$.
\end{dfn}

\begin{dfn}\label{dfn:bouqet}
Let $\left(\C,J\right)$ be a Grothendieck site, then a \textbf{bouquet} over $\left(\C,J\right)$ is a groupoid object in sheaves, $\bq,$ such that
\begin{itemize}
\item[i)] the canonical map $\bq_0 \to *$ to the terminal sheaf is an epimorphism, and
\item[ii)] the canonical map $\left(s,t\right):\bq_1 \to \bq_0 \times \bq_0$ is an epimorphism.
\end{itemize}
\end{dfn}

Notice the similarity of this definition with that of Definition \ref{dfn:gerbe}.

\begin{thm}\label{thm:bq}
A stack $\Z$ over $\left(\C,J\right)$ is a gerbe if and only if it is equivalent to the stack associated to a bouquet $\bq \in Gpd\left(\Sh\left(\C\right)\right)$. \cite{Duskin}
\end{thm}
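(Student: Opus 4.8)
The plan is to set up a precise dictionary between the two defining conditions of a gerbe and the two defining conditions of a bouquet, exploiting the fact (recorded earlier in the paper and detailed in the appendix) that every stack over $\left(\C,J\right)$ is the stackification $\left[\bq\right]$ of a groupoid object $\bq$ in sheaves arising from an atlas. Concretely, I would present $\Z$ by choosing an epimorphism $a:\bq_0 \to \Z$ from a sheaf $\bq_0$ and setting $\bq_1:=\bq_0 \times_\Z \bq_0$, so that $\bq=\left(\bq_1 \rightrightarrows \bq_0\right)$ is a groupoid object in sheaves with $\left[\bq\right]\simeq \Z$, the atlas $\bq_0 \to \left[\bq\right]$ is an epimorphism, and the square with corners $\bq_1,\bq_0,\bq_0,\Z$ is $2$-Cartesian (exactly as in the Proposition characterizing \'etale stacks). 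The whole argument then hinges on translating ``$\bq_0 \to *$ epi'' into ``$\Z \to *$ epi'' and ``$\left(s,t\right):\bq_1 \to \bq_0 \times \bq_0$ epi'' into ``$\Delta:\Z \to \Z \times \Z$ epi,'' in both directions.

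For the backward implication, assume $\bq$ is a bouquet and $\Z\simeq\left[\bq\right]$. The terminal condition is immediate: the composite $\bq_0 \to \Z \to *$ is the map $\bq_0 \to *$, which is an epimorphism by the first bouquet axiom, and since a composite being an epimorphism forces its later factor to be one, $\Z \to *$ is an epimorphism. For the diagonal, the $2$-Cartesian atlas square identifies $\left(s,t\right):\bq_1 \to \bq_0 \times \bq_0$ with the pullback of $\Delta:\Z \to \Z \times \Z$ along $q:=a \times a:\bq_0 \times \bq_0 \to \Z \times \Z$, because $\bq_1=\bq_0 \times_\Z \bq_0 \simeq \left(\bq_0 \times \bq_0\right)\times_{\Z \times \Z} \Z$. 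Since $q$ is an epimorphism (a product of atlases) and the pullback of $\Delta$ along $q$, namely $\left(s,t\right)$, is an epimorphism by the second bouquet axiom, $\Delta$ is itself an epimorphism: spelling out the definition, any $T \to \Z \times \Z$ lifts locally through $q$, and each such local lift lifts further through $\left(s,t\right)$, yielding a local factorization through $\Delta$. Hence $\Z$ is a gerbe.

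For the forward implication, assume $\Z=\g$ is a gerbe and present it by an atlas as above. Then $\bq_0 \to *$ factors as $\bq_0 \stackrel{a}{\to} \g \to *$; the first map is an epimorphism (atlas) and the second is an epimorphism (first gerbe axiom), so their composite $\bq_0 \to *$ is an epimorphism, giving the first bouquet axiom. For the second, the same $2$-Cartesian square exhibits $\left(s,t\right):\bq_1 \to \bq_0 \times \bq_0$ as the pullback of $\Delta:\g \to \g \times \g$ along $q$; since $\Delta$ is an epimorphism (second gerbe axiom) and epimorphisms of stacks are stable under pullback, $\left(s,t\right)$ is an epimorphism. Thus $\bq$ is a bouquet.

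The routine-but-essential core to nail down is the bookkeeping of epimorphisms of stacks in the ``locally essentially surjective'' sense defined earlier: right-cancellation (a composite epimorphism forces the later factor to be an epimorphism), stability under pullback, stability of products of epimorphisms, and above all the descent property used in the backward direction, that an arrow is an epimorphism once its pullback along an epimorphism is. This last point, together with the $2$-Cartesianness of the atlas square (so that $\bq_1$ genuinely computes $\bq_0 \times_\Z \bq_0$ and $\left(s,t\right)$ genuinely is the base change of the diagonal), is where the real content lies; the rest is formal. I expect verifying this local-on-target property of epimorphisms directly from the ``locally eso'' definition to be the main technical obstacle, although it is standard for the canonical topology on a site of stacks.
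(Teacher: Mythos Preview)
Your argument is correct and is essentially the standard one. Note, however, that the paper does not actually prove this theorem: it is stated with a citation to Duskin and no proof is given. So there is no ``paper's proof'' to compare against; you have supplied what the paper outsources to the literature. The dictionary you set up---identifying $(s,t):\bq_1\to\bq_0\times\bq_0$ with the pullback of $\Delta:\Z\to\Z\times\Z$ along the epimorphism $a\times a$, and then using right-cancellation, pullback-stability, and the local-on-target property of epimorphisms---is exactly the expected route. The only point to make explicit in the forward direction is that the existence of an atlas $\bq_0\to\Z$ from a sheaf is guaranteed by Corollary~\ref{ref:stksh} in the appendix (every stack is the stackification of a groupoid object in sheaves), so you are not assuming anything beyond what the paper already records.
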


\subsection{Small gerbes over an \'etale stack}
\begin{dfn}\label{dfn:smgb}
A \textbf{small gerbe} over an \'etale stack $\X$ is a small stack $\g$ over $\X$ which is a gerbe. To be more concrete, a small gerbe over $\left[\h\right]$ is a gerbe over the site $\sit\left(\h\right)$.
\end{dfn}

\begin{rmk}
Under the correspondence  between \'etale stacks and etendues, we could also define a small gerbe over $\X$ as a gerbe over the topos $\Sh\left(\X\right)$.
\end{rmk}

\begin{lem}
Let $\X$ be an \'etale  stack and let $f:\Z \to \X$ be a local homeomorphism. Then $f$ is an epimorphism in $\St\left(S\right)$ if and only if $f$ is an epimorphism when considered as a map from $\Z \to \X$ in $Et\left(\X\right)$ to the terminal object $\X \to \X$, where $Et\left(\X\right)$ is the $2$-category of local homeomorphisms over $\X$.
\end{lem}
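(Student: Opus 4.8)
The plan is to fix an \'etale $S$-groupoid $\h$ with $\X \simeq \left[\h\right]$, write $a:\h_0 \to \X$ for its associated atlas, and translate both conditions into a single concrete statement about open covers of $\h_0$. By Corollary \ref{cor:real} the functors $\bar L \dashv \bar\Gamma$ give an equivalence $Et\left(\X\right) \simeq \St\left(\X\right)$ which carries the terminal object $\left(\X \xrightarrow{id} \X\right)$ to the terminal small stack $*$; since an equivalence of $2$-categories preserves and reflects epimorphisms (and matches the canonical topologies), the map $f:\Z \to \X$ is an epimorphism in $Et\left(\X\right)$ as a morphism to the terminal object if and only if $\bar\Gamma\left(f\right) \to *$ is an epimorphism in $\St\left(\X\right)$. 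Writing $\W := \bar\Gamma\left(f\right)$, local essential surjectivity of $\W \to *$ with respect to the site $\sit\left(\h\right)$ unwinds—taking the object $\h_0$, recalling that covers in $\sit\left(\h\right)$ come from open covers of $\h_0$, and noting that the condition restricts to all smaller opens—to the statement that there is an open cover $\left(U_i\right)$ of $\h_0$ with each $\W\left(U_i\right) \neq \emptyset$. Finally, $\W\left(U\right) = \bar\Gamma\left(f\right)\left(U\right)$ is the groupoid of sections of $f$ over $U \hookrightarrow \h_0 \to \X$, so $\W\left(U_i\right) \neq \emptyset$ means precisely that $a|_{U_i}$ factors through $f$ up to isomorphism. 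Thus the right-hand condition is equivalent to: there is an open cover $\left(U_i\right)$ of $\h_0$ over which $a$ lifts along $f$.

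With this reformulation in hand, the two implications are straightforward. For the forward direction, I would apply the definition of epimorphism in $\St\left(S\right)$ directly to the space $T = \h_0$ and the map $g = a$; this yields an open cover $\left(U_i\right)$ of $\h_0$ together with lifts $\tilde a_i:U_i \to \Z$ satisfying $f \circ \tilde a_i \cong a|_{U_i}$, which is exactly the reformulated condition. For the converse, I would take the reformulated cover $\left(U_i\right)$ with chosen lifts $\sigma_i:U_i \to \Z$, $f\circ\sigma_i \cong a|_{U_i}$, and argue that $f$ is an epimorphism in $\St\left(S\right)$ as follows: given any $g:T \to \X$ from a space, the fact that $a$ is a \emph{representable epimorphism} (an atlas) lets $g$ be locally factored through $a$, i.e. there is an open cover $\left(T_k\right)$ of $T$ and maps $h_k:T_k \to \h_0$ with $a \circ h_k \cong g|_{T_k}$. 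The opens $W_{k,i} := h_k^{-1}\left(U_i\right)$ then cover $T$, and on each of them $\sigma_i \circ h_k$ provides a lift of $g$ along $f$, since $f\circ\sigma_i\circ h_k \cong a|_{U_i}\circ h_k \cong g|_{W_{k,i}}$. This exhibits $f$ as an epimorphism in $\St\left(S\right)$.

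The main obstacle is the converse direction: one must genuinely use that $a:\h_0 \to \X$ is an atlas rather than an arbitrary local homeomorphism, so that every test map $g:T \to \X$ from a space can itself be refined through $a$. The essential point is that the single cover of $\h_0$ witnessing local non-emptiness of $\W$ can be pulled back along the local lifts $h_k$ to produce a cover of an arbitrary $T$, and that the $2$-cells compose coherently; checking the coherence of the isomorphisms $f\circ\sigma_i\circ h_k \cong g|_{W_{k,i}}$ is the only place where care is needed. The one preliminary point to pin down is the identification of epimorphisms in $Et\left(\X\right)$ with local essential surjectivity in $\St\left(\X\right)$, which is handed to us by the equivalence of Corollary \ref{cor:real}.
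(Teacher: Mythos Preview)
Your proof is correct and its core idea for the converse direction---reduce to the atlas $a:\h_0 \to \X$, then use that every $g:T \to \X$ locally factors through $a$---is exactly the paper's argument. The paper does the forward direction more directly, though: it simply observes that if $f$ is an epimorphism in $\St\left(S\right)$ then every map $T \to \X$ from a space locally factors through $f$, and in particular every local homeomorphism $T \to \X$ from a space does, which immediately gives the epimorphism condition in $Et\left(\X\right)$. Your detour through the equivalence $Et\left(\X\right) \simeq \St\left(\X\right)$ and the reformulation in terms of $\bar\Gamma\left(f\right)\left(U_i\right) \neq \emptyset$ is valid and has the virtue of making the meaning of ``epimorphism in $Et\left(\X\right)$'' completely explicit, but it is more elaborate than needed: both directions can be argued in two sentences once one notes that the atlas is itself an object of $Et\left(\X\right)$ and that every space over $\X$ locally lifts to the atlas.
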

\begin{proof}
Fix an \'etale $S$-groupoid $\h$ such that $\X \simeq \left[\h\right].$ If $f$ is an epimorphism in $\St\left(S\right)$, then any map $T \to \X$ from a space $T$ locally factors through $f$ up to isomorphism. In particular, this holds for every local homeomorphism $T \to \X$ from a space. Hence, $f$ is an epimorphism in $Et\left(\X\right)$. Conversely, suppose that $f$ is an epimorphism in $Et\left(\X\right)$. Then the atlas $a:\h_0 \to \X$ locally factors through $f$ up to isomorphism. However, every map $T \to \X$ from a space locally factors through $a$ up to isomorphism as well. It follows that $f$ is an epimorphism in $\St\left(S\right)$.
\end{proof}

\begin{cor}
Let $f:\Z \to \X$ be a local homeomorphism of \'etale stacks. Then the stack in $\St\left(\X\right)$ represented by $f$ is a gerbe over $\X$ if and only if
\begin{itemize}
\item[i)] $f$ is an epimorphism in $\St\left(S\right),$ and
\item[ii)] the induced map $\Z \to \Z \times_{\X} \Z$ is an epimorphism in $\St\left(S\right)$.
\end{itemize}
In other words, when identifying $f$ with an object of $\St\left(S/\X\right)$, it is a gerbe.
\end{cor}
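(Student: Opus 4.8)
The plan is to transport the two defining conditions of a gerbe (Definition \ref{dfn:gerbe}) across the adjoint-equivalence $\bar L\colon \St\left(\X\right) \xrightarrow{\sim} Et\left(\X\right)$ of Corollary \ref{cor:real}, and then invoke the preceding lemma to rewrite the resulting epimorphism conditions as epimorphism conditions in $\St\left(S\right)$. Write $\X \simeq \left[\h\right]$ for an \'etale groupoid $\h$, and let $\g := \bar\Gamma\left(f\right)$ be the small stack over $\sit\left(\h\right)$ represented by $f$, so that $\bar L\left(\g\right) \simeq f$. By Definition \ref{dfn:gerbe}, $\g$ is a gerbe precisely when (i) the map $\g \to *$ to the terminal object is an epimorphism in $\St\left(\X\right)$, and (ii) the diagonal $\g \to \g \times \g$ is an epimorphism in $\St\left(\X\right)$.

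First I would observe that $\bar L$, being an equivalence of $2$-categories, preserves the terminal object, finite products, and hence the diagonal, and that it preserves and reflects epimorphisms, since the epimorphism notion in the $2$-topos $\St\left(\X\right)$ is intrinsic (local essential surjectivity, equivalently the canonical topology). The terminal object of $Et\left(\X\right)$ is $id_\X$; the product $\g \times \g$ is carried to the fibre product $\Z \times_\X \Z \to \X$, which is the product in $Et\left(\X\right) \subseteq \St\left(S\right)/\X$; and the diagonal $\g \to \g \times \g$ is carried to the diagonal $\Z \to \Z \times_\X \Z$ over $\X$. Thus condition (i) becomes ``$f$ is an epimorphism in $Et\left(\X\right)$'' and condition (ii) becomes ``the diagonal $\Z \to \Z \times_\X \Z$ is an epimorphism in $Et\left(\X\right)$.''

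Next I would apply the preceding lemma to convert these into statements in $\St\left(S\right)$. For (i) the lemma applies verbatim: the local homeomorphism $f\colon \Z \to \X$ is an epimorphism to the terminal object in $Et\left(\X\right)$ if and only if it is an epimorphism in $\St\left(S\right)$. For (ii) I first need that $\Z \times_\X \Z$ is again an \'etale stack and that the diagonal $\Z \to \Z \times_\X \Z$ is a local homeomorphism; this follows from a groupoid presentation, since (choosing $f = \left[\varphi\right]$ with $\varphi_0$ a local homeomorphism) $\Z \times_\X \Z$ is presented by the \'etale groupoid with object space $\G_0 \times_{\h_0} \h_1 \times_{\h_0} \G_0$, and the diagonal is represented on objects by $x \mapsto \left(x, \mathbb{1}_{\varphi_0\left(x\right)}, x\right)$, which is an open embedding because the unit section of the \'etale map $\left(s,t\right)\colon \h_1 \to \h_0 \times \h_0$ is open. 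Since being an epimorphism means local essential surjectivity of the map itself, a notion independent of which slice the map is viewed in, I may then regard the diagonal as an object of $Et\left(\Z \times_\X \Z\right)$ and apply the preceding lemma a second time, concluding that it is an epimorphism in $Et\left(\X\right)$ if and only if it is an epimorphism in $\St\left(S\right)$.

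Assembling the two equivalences gives exactly the statement: $\g$ is a gerbe if and only if both $f$ and its diagonal $\Z \to \Z \times_\X \Z$ are epimorphisms in $\St\left(S\right)$, which is what it means for $f$, viewed as an object of $\St\left(S/\X\right)$, to be a gerbe. The main obstacle I anticipate is the careful bookkeeping of the epimorphism notion across the equivalence $\bar L$ and across the different slices in step (ii): one must check that the intrinsic $2$-topos epimorphisms of $\St\left(\X\right)$ correspond under $\bar L$ to local essential surjectivity of the underlying maps of \'etale stacks, so that the preceding lemma, whose role is precisely to bridge this intrinsic notion and epimorphisms in $\St\left(S\right)$, can be invoked for the diagonal as well as for $f$.
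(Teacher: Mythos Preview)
Your approach is correct and matches the paper's strategy: transport the gerbe conditions through the equivalence $\bar L$ and then invoke the preceding lemma. The paper handles (ii) more directly---rather than changing base to $\Z \times_{\X} \Z$ and re-applying the lemma, it simply chooses an \'etale atlas $Y \to \Z \times_{\X} \Z$, observes that this atlas lies in $Et\left(\X\right)$ and hence locally factors through $q$ whenever $q$ is an epimorphism there, and concludes since any map from a space into $\Z \times_{\X} \Z$ locally factors through $Y$. One small correction: the map $\left(s,t\right)\colon \h_1 \to \h_0 \times \h_0$ is \emph{not} \'etale for a general \'etale groupoid, so your justification that the diagonal is an open embedding on objects is misphrased; the cleaner reason the diagonal $\Z \to \Z \times_{\X} \Z$ is a local homeomorphism is that every morphism in $Et\left(\X\right)$ is automatically one, since local homeomorphisms satisfy left cancellation (if $\Y \to \X$ and $\Z \to \X$ are local homeomorphisms, so is any $\Y \to \Z$ over $\X$).
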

\begin{proof}
It suffices to show that if $q:\Z \to \Z \times_{\X} \Z$ is an epimorphism in $Et\left(\X\right)$, then it is an epimorphism in $\St\left(S\right)$. Choose an \'etale atlas $$Y \to \Z \times_{\X} \Z$$ for $\Z \times_{\X} \Z$. Then this atlas locally factors through $q$ up to isomorphism. However, any map $T  \to \Z \times_{\X} \Z$ locally factors  through $Y$ up to isomorphism.
\end{proof}

\begin{rmk}
In the differentiable setting, this implies that the \'etal\'e realization of a small gerbe $\g$ over an \'etale stack $\X$, in particular, is a differentiable gerbe over $\X$ in the sense of \cite{diffg}, Definition 4.7.
\end{rmk}

\begin{dfn}
Let $\h$ be an \'etale $S$-groupoid. By a bouquet over $\h$, we mean a bouquet over $\sit\left(\h\right)$. Explicitly, this is a groupoid object $\bq$ in $\B\h$ such that
\begin{itemize}
\item[i)]$\mu_0:\bq_0 \to \h_0$ is surjective, and
\item[ii)]$\left(s,t\right):\bq_1 \to \bq_0\times_{\h_0}\bq_0$ is surjective.
\end{itemize}
\end{dfn}

In light of Theorem \ref{thm:bq} and the adjoint-equivalence
$$\xymatrix{\St\left(\X\right)  \ar@<-0.5ex>[r]_-{\bar L}  & Et\left(\X\right)\ar@<-0.5ex>[l]_-{\bar \Gamma}},$$
of Corollary \ref{cor:real}, we have the following corollary:

\begin{cor}
For an \'etale stack $\X \simeq \left[\h\right]$, the $2$-category of small gerbes over $\X$, $\gb,$ is equivalent to the full sub-2-category of those local homeomorphisms $\Z \to \X$ in $Et\left(\X\right)$ which are of the form $\bar L\left(\bq\right)$ for a bouquet $\bq$ over $\h$.
\end{cor}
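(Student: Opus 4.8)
The plan is to deduce the statement almost entirely from the adjoint-equivalence $\bar L \dashv \bar\Gamma$ of Corollary \ref{cor:real} together with Duskin's Theorem \ref{thm:bq}. Since $\gb$ is by definition the full sub-$2$-category of $\St\left(\X\right) = \St\left(\sit\left(\h\right)\right)$ spanned by those small stacks which are gerbes, and since $\bar L$ is an equivalence of $2$-categories, $\bar L$ restricts to an equivalence between $\gb$ and its essential image inside $Et\left(\X\right)$. Thus it suffices to show that this essential image coincides with the full sub-$2$-category on the objects of the form $\bar L\left(\bq\right)$ with $\bq$ a bouquet over $\h$.

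First I would record that a bouquet over $\sit\left(\h\right)$ in the sense of Definition \ref{dfn:bouqet} is the same datum as a bouquet over $\h$. This uses the equivalence $\Sh\left(\sit\left(\h\right)\right) \simeq \B\h$: under it a groupoid object in $\Sh\left(\sit\left(\h\right)\right)$ becomes a groupoid object $\bq$ in $\B\h$, and the two epimorphism conditions on $\bq_0 \to *$ and on $\left(s,t\right):\bq_1 \to \bq_0 \times \bq_0$ translate respectively into surjectivity of $\mu_0:\bq_0 \to \h_0$ and of $\left(s,t\right):\bq_1 \to \bq_0 \times_{\h_0} \bq_0$, since the terminal object of $\B\h$ is $\h_0$ and binary products in $\B\h$ are fibred products over $\h_0$.

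For the forward inclusion, given a small gerbe $\g$, Theorem \ref{thm:bq} produces a bouquet $\bq$ in $\Sh\left(\sit\left(\h\right)\right) \simeq \B\h$ with $\g \simeq a\left(\bq\right)$, where $a$ denotes stackification. By Proposition \ref{relp}, $\bar L$ does not distinguish a weak presheaf from its stackification, so $\bar L\left(\g\right) \simeq \bar L\left(a\left(\bq\right)\right) \simeq \bar L\left(\bq\right)$ (the last expression being the notational convention of the statement for $L$ applied to the presheaf underlying $\bq$); hence every $\bar L\left(\g\right)$ has the required form. For the converse, given a bouquet $\bq$ over $\h$, the theorem identifying $\bar\Gamma \circ \bar L$ with stackification gives $\bar\Gamma\left(\bar L\left(\bq\right)\right) \simeq a\left(\bq\right)$, which is a gerbe by Theorem \ref{thm:bq}; thus $\bar L\left(\bq\right) \simeq \bar L\left(a\left(\bq\right)\right)$ lies in the essential image of $\gb$ under $\bar L$. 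Combining the two inclusions identifies the essential image and yields the asserted equivalence.

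The only genuinely technical point, and the one I expect to be the main obstacle, is the translation in the second step: one must verify carefully that the terminal object and binary products of the topos $\B\h$ are computed as claimed (namely $\h_0$ and the fibred product over $\h_0$), so that the abstract epimorphism conditions defining a bouquet over the site $\sit\left(\h\right)$ coincide with the concrete surjectivity conditions defining a bouquet over $\h$. Everything else is a formal consequence of the equivalence $\bar L \dashv \bar\Gamma$ together with the fact that $\bar L$ factors through stackification.
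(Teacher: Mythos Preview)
Your proof is correct and follows essentially the same approach as the paper: the corollary is stated there as an immediate consequence of Theorem \ref{thm:bq} together with the adjoint-equivalence of Corollary \ref{cor:real}, and you have simply unpacked those two ingredients (including the routine identification of bouquets over $\sit(\h)$ with bouquets over $\h$ via $\Sh(\sit(\h))\simeq\B\h$) in the natural way.
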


\subsection{Characterizing gerbes by their stalks}
In this subsection, we will show that small gerbes over an \'etale stack have a simple characterization in terms of their stalks:

\begin{thm}
Let $\X$ be an \'etale stack. A small stack $\Z$ over $\X$ is a small gerbe if and only if for every point $$x:* \to \X,$$ the stalk $\Z_x$ of $\Z$ at $x$ is equivalent to a group.
\end{thm}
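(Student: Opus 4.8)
The plan is to recast the two defining conditions of a gerbe (Definition \ref{dfn:gerbe})---being \emph{locally non-empty} and \emph{locally connected}---as conditions on stalks, using the elementary fact that a groupoid is equivalent to a group precisely when it is non-empty and connected. Fix an \'etale $S$-groupoid $\h$ with $\X \simeq \left[\h\right]$, so that a small stack $\Z$ over $\X$ is a stack over the site $\sit\left(\h\right)$, whose objects are open subsets of $\h_0$ and whose covering families are induced by open covers of $\h_0$. By Corollary \ref{cor:stalk}, for a point $\tilde x \in \h_0$ with image $x \cong p \circ \tilde x$ under the atlas $p:\h_0 \to \X$, the stalk is the filtered colimit $\Z_x \simeq \underset{\tilde x \in U} \hc \Z\left(U\right)$ over the open neighborhoods of $\tilde x$, ordered by reverse inclusion; this poset is filtered since the intersection of two neighborhoods is again a neighborhood. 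The key computational input I would recall explicitly is the description of a filtered weak colimit of groupoids: every object of $\Z_x$ is the image of an object of $\Z\left(U\right)$ for some neighborhood $U$ of $\tilde x$, and two such objects become isomorphic in $\Z_x$ if and only if their restrictions are already isomorphic in $\Z\left(W\right)$ for some smaller neighborhood $W$.

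For the forward direction, assume $\Z$ is a gerbe. Local non-emptiness says every open $U$ admits an open cover $\left(U_\alpha\right)$ with each $\Z\left(U_\alpha\right)$ non-empty; given $\tilde x \in \h_0$, choosing $U_\alpha \ni \tilde x$ produces an object of $\Z\left(U_\alpha\right)$, hence a non-empty stalk $\Z_x$. For connectedness, take two objects of $\Z_x$; by filteredness both are represented by objects $a,b \in \Z\left(W\right)$ on a common neighborhood $W$ of $\tilde x$. Local connectedness yields an open cover $\left(W_\beta\right)$ of $W$ on which $a|_{W_\beta} \cong b|_{W_\beta}$, and choosing $W_\beta \ni \tilde x$ shows the two objects are isomorphic in the colimit. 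Thus $\Z_x$ is non-empty and connected, i.e. equivalent to a group.

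For the converse, assume every stalk is equivalent to a group. Since $p$ is an epimorphism, every point of $\X$ is isomorphic to $p \circ \tilde x$ for some $\tilde x \in \h_0$, so it suffices to verify the two epimorphism conditions of Definition \ref{dfn:gerbe} over the objects of $\sit\left(\h\right)$ using points of $\h_0$. To establish local non-emptiness, fix an open $U$; for each $\tilde x \in U$ the stalk $\Z_x$ is non-empty, so by the colimit description there is a neighborhood $V_{\tilde x} \subseteq U$ of $\tilde x$ with $\Z\left(V_{\tilde x}\right)$ non-empty, and the family $\left(V_{\tilde x}\right)_{\tilde x \in U}$ is an open cover of $U$ witnessing local non-emptiness. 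To establish local connectedness, fix $U$ and objects $a,b \in \Z\left(U\right)$; for each $\tilde x \in U$ the stalk $\Z_x$ is connected, so the images of $a$ and $b$ there are isomorphic, and by the colimit description this isomorphism is already realized on some neighborhood $V_{\tilde x} \subseteq U$, i.e. $a|_{V_{\tilde x}} \cong b|_{V_{\tilde x}}$. Once more $\left(V_{\tilde x}\right)$ is an open cover of $U$, giving local connectedness, so $\Z$ is a gerbe.

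The main point to get right is the explicit combinatorics of the filtered colimit of groupoids computing the stalk---specifically that both ``an object exists'' and ``two objects are isomorphic'' are detected already at a finite stage, i.e. on some neighborhood. This is exactly what allows one to pass back and forth between pointwise stalk conditions and the existence of an \emph{open cover} realizing the gerbe conditions. A secondary point worth stating carefully is that it suffices to test the conditions of Definition \ref{dfn:gerbe} over the open subsets of $\h_0$ (the objects of $\sit\left(\h\right)$), and that covers in this site are precisely the images of open covers, so that the family $\left(V_{\tilde x}\right)$ produced pointwise in the converse is genuinely a covering family in $\sit\left(\h\right)$.
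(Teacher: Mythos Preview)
Your proof is correct and follows essentially the same approach as the paper's own proof, using Corollary \ref{cor:stalk} to express stalks as filtered weak colimits over open neighborhoods and then translating the locally non-empty/locally connected conditions into non-emptiness/connectedness of these colimits. The paper's argument is considerably terser---it simply asserts that local non-emptiness and local connectedness imply the stalk is non-empty and connected, and conversely---whereas you have spelled out the finite-stage detection property of filtered colimits and the construction of the covering families $\left(V_{\tilde x}\right)$ explicitly, which is exactly the content the paper leaves implicit.
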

\begin{proof}
Fix $\h$ an \'etale groupoid such that $\X \simeq \left[\h\right]$ and $\tilde x \in \h_0$ a point such  that $x \cong p \circ \tilde x,$ where $p:\h_0 \to \X$ is the atlas associated to $\h$. Suppose that $\g$ is a small gerbe over $\X$. Then, since $\g$ is locally non-empty, $$\Z_x\simeq  \underset{\tilde x \in U} \hc\Z\left(U\right),$$ is a non-empty groupoid. Furthermore, since $\g$ is locally connected, it follows that $$\underset{\tilde x \in U} \hc\Z\left(U\right)$$ is also connected, hence, equivalent to a group.

Conversely, suppose that $\Z$ is a small stack and that $$\Z_x\simeq  \underset{\tilde x \in U} \hc\Z\left(U\right)$$ is equivalent to a group. This means it is a non-empty and connected groupoid. It follows that $\Z$ is locally non-empty and locally connected, hence a gerbe.
\end{proof}

The significance of this theorem is the following:

Suppose we are given an effective \'etale stack $\X$ and a small gerbe $\g$ over it. By taking stalks, we get an assignment to each point $x$ of $\X$ a group $\g_x.$ From this data, we can build a new \'etale stack by taking the \'etale realization of $\g$. Denote this new \'etale stack by $\Y$. As it will turn out, if $\g$ is non-trivial, $\Y$ will not be effective, but it will have $\X$ as its effective part and, for each point $x$ of $\X$, the stalk $\g_x$ will be equivalent to the ineffective isotropy group of $x$ in $\Y$. In particular, if $\X$ is a space $X$, $\Y$ will be an \'etale stack which ``looks like $X$'' except that each point $x \in X,$ instead of having a trivial automorphism group, will have a group equivalent to $\g_x$ as an automorphism group. In this case, every automorphism group will consist entirely of purely ineffective automorphisms.

\subsection{Gerbes are full effective local equivalences}

In this subsection, we will characterize which small stacks $\Z$ over an \'etale stack $\X$ are gerbes in terms of their \'etale realization. In particular, we will show that when $\X$ is effective, gerbes over $\X$ are the same as \'etale stacks $\Y$ whose effective part are equivalent to $\X$.

Let $\h$ be an \'etale $S$-groupoid and let $\K$, with $$\mu_i:\K_i \to \h_0$$ for $i=0,1$, be a groupoid object in $\B\h$. Then the map $$\theta_{\K}:\h \ltimes \K \to \h$$ factors through the canonical map $$p_{\K}:\h_{\mu_0} \to \h.$$ Recall (Definition \ref{dfn:cech}) that $\h_{\mu_0}$ has $\K_0$ as object space, and an arrow from $x$ to $y$ is an arrow $$h:\mu_0\left(x\right) \to \mu_0\left(y\right)$$ in $\h$, and we write such an arrow as $\left(h,x,y\right)$. Define $\theta'_{\K}$ on objects to be the identity, and on arrows by sending an arrow $\left(h,k\right)$ in $\h \ltimes \K$ to $$\left(h,s\left(k\right),t\left(k\right)\right).$$ Then $\theta_{\K}=p_{\K} \circ \theta'_{\K}.$

\begin{lem}\label{lem:situation}
In the situation above, let $\h$ be effective. Then for two arrows $\left(h_i,k_i\right)$ $i=1,2$, in $\h \ltimes \K$,
\begin{equation}\label{eq:1}
\theta'_{\K}\left(h_1,k_1\right)=\theta'_{\K}\left(h_2,k_2\right)
\end{equation}\label{eq:2}
if and only if
\begin{equation}
\left[\left(h_1,k_1\right)\right]=\left[\left(h_1,k_1\right)\right],
\end{equation}
where the bracket denotes the image in $\Ef\left(\h \ltimes \K\right).$
\end{lem}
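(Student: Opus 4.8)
The plan is to deduce the equivalence from the functoriality of the effective-part construction together with a rigidity property of germs lying over the local homeomorphism $\mu_0\colon \K_0 \to \h_0$. By Proposition \ref{prop:shver}, $\h \ltimes \K$ is \'etale and the components of $\theta_\K\colon \h \ltimes \K \to \h$ are local homeomorphisms; hence, taking $P$ to be the class of local homeomorphisms, Lemma \ref{lem:etin1} applies and produces a morphism $\Ef(\theta_\K)\colon \Ef(\h \ltimes \K) \to \Ef(\h)$ which is $\mu_0$ on objects and sends the class $[(h,k)]$ of an arrow to $\iota_\h(h)$. Recall that $\Ef(\h \ltimes \K)$ is the image of $\h \ltimes \K$ in the Haefliger groupoid $\Ha(\K_0)$, so two arrows of $\h \ltimes \K$ acquire the same image there exactly when they induce the same germ of a locally defined homeomorphism of $\K_0$; in particular such arrows must then share a source $x_0$ and a target $y_0$.

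The key technical ingredient I would isolate first is the following rigidity statement. Because $\mu_0$ is a local homeomorphism, a germ $\gamma$ of a local homeomorphism of $\K_0$ at $x_0$ is completely determined by its source $x_0$, its image point $\gamma(x_0)$, and the germ $\bar\gamma$ on $\h_0$ that it covers (the germ satisfying $\mu_0 \circ \gamma = \bar\gamma \circ \mu_0$ near $x_0$): since $\mu_0$ restricts to a homeomorphism on small neighbourhoods of $x_0$ and of $\gamma(x_0)$, one is forced to have $\gamma = \mu_0^{-1}|_{\gamma(x_0)} \circ \bar\gamma \circ \mu_0|_{x_0}$. Reformulated, an arrow of $\Ef(\h \ltimes \K)$ is uniquely determined by its source, its target, and its image under $\Ef(\theta_\K)$. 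I would also record the elementary compatibility that the germ of $(h,k)$ covers the germ $\iota_\h(h)$, which is immediate from $\theta_\K$ being a homomorphism of \'etale groupoids whose object component is $\mu_0$.

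With these in hand the two implications are short. For $(\ref{eq:1}) \Rightarrow (\ref{eq:2})$, the hypothesis gives $h_1 = h_2 =: h$ together with $s(k_1) = s(k_2)$ and $t(k_1) = t(k_2)$, so the two arrows have equal source $h^{-1}s(k_1)$ and equal target $t(k_1)$, and both map under $\Ef(\theta_\K)$ to $\iota_\h(h)$; by the rigidity statement their germs coincide, i.e.\ $[(h,k_1)] = [(h,k_2)]$. For $(\ref{eq:2}) \Rightarrow (\ref{eq:1})$, equality of germs forces a common source $x_0$ and target $y_0$, and applying $\Ef(\theta_\K)$ yields $\iota_\h(h_1) = \iota_\h(h_2)$; as equal germs take the same value at $\mu_0(x_0) = s(h_1) = s(h_2)$, we get $t(h_1) = t(h_2)$, so $h_1$ and $h_2$ are parallel arrows of $\h$ with the same germ. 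This is where effectiveness of $\h$ is used: faithfulness of $\iota_\h$ (Proposition characterizing effectiveness) forces $h_1 = h_2$, whence $s(k_i) = h_i x_0$ and $t(k_i) = y_0$ agree and $(\ref{eq:1})$ holds.

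The main obstacle is not any individual implication but the germ bookkeeping: one must verify carefully that the germ of $(h,k)$ genuinely covers $\iota_\h(h)$ along $\mu_0$, and pin down the rigidity lemma with the correct treatment of the source and target points (in particular the $h^{-1}$ occurring in the source map of $\h \ltimes \K$, which makes the source $h^{-1}s(k)$ rather than $s(k)$). Once the rigidity of germs over the local homeomorphism $\mu_0$ is in place, effectiveness of $\h$ enters exactly once, to promote the equality $\iota_\h(h_1) = \iota_\h(h_2)$ to $h_1 = h_2$.
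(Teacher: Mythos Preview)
Your argument is correct and, at its core, rests on the same observation as the paper's proof: a germ of a local homeomorphism of $\K_0$ that covers a germ on $\h_0$ via the local homeomorphism $\mu_0$ is completely determined by its source point, its target point, and the covered germ. The difference is organizational. The paper establishes this rigidity from scratch by an explicit point-set computation: it chooses neighbourhoods $U_h$, $W_i$, $V$ on which the relevant source and target maps are embeddings, writes down the local inverse $\sigma_i$ of the target map of $\h\ltimes\K$ through $(h,k_i)$, and checks by hand that $s\circ\sigma_1 = s\circ\sigma_2$ on a neighbourhood (working with $(h_i,k_i)^{-1}$ rather than $(h_i,k_i)$). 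Your version instead isolates the rigidity statement abstractly and invokes Lemma~\ref{lem:etin1} (applied to $\theta_\K$) to obtain the covering relation $\mu_0\circ\gamma_{(h,k)} = \iota_\h(h)\circ\mu_0$ without repeating that computation. This makes the forward implication a one-liner and cleanly pinpoints where effectiveness enters in the converse. Both approaches use effectiveness in exactly the same way, to upgrade $\iota_\h(h_1)=\iota_\h(h_2)$ to $h_1=h_2$; your route simply packages the germ bookkeeping more conceptually, at the cost of depending on Lemma~\ref{lem:etin1} rather than being self-contained.
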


\begin{proof}
Suppose that (\ref{eq:1}) holds, with $k_i:h_i \cdot x_i \to y_i$. Then $$h_1=h_2=:h,$$ $$x_1 = x_2:=x,$$ and $$y_1=y_2=:y.$$ Let $V$ be a neighborhood of $hx$ in $\K_0$ over which $\mu_0$ restricts to an embedding. Let $U$ be a neighborhood of $h$ in $\h_1$ over which $s$ and $t$ restrict to embeddings, and $W_i$ be analogous neighborhoods of $k_1$ and $k_2$ in $\K_1$. For all $i$, let $$\mathcal{O}_i:=\left(\left(W_i \cap \mu_1^{-1}\left(U\right)\right) \times U\right) \cap \left(\h \ltimes \K\right)_1 \subset \left(\h \ltimes \K\right)_1.$$ Let $$M:=\bigcap\limits_{i=1,2}\left(t\left(\mathcal{O}_i \cap s \circ t|_{W_i}^{-1}\left(V\right)\right)\right) \subset \K_0,$$ and $$f_i:M \to \K_1$$ be given by $f_i:= \left(t|_{W_i}^{-1}\right)|_{M}$. Then the target map of $\h \ltimes \K$ restricts to an embedding over $\mathcal{O}_i$, and letting $$\sigma_i:=\left(t|_{\mathcal{O}_i}^{-1}\right)|_{M},$$ we have $$\left[\left(h_i,k_i\right)^{-1}\right]= s \circ \sigma_i.$$ Moreover, for each $x \in \mathcal{O}_i$, $$\sigma_i\left(x\right)=\left(t|_{U}^{-1}\left(\mu_1\left(f_i\left(x\right)\right)\right),f_i\left(x\right)\right).$$ Since for all $i$, $$\mu_1\left(f_i\left(x\right)\right)=\mu_0\left(t \circ f_i\left(x\right)\right)=\mu_0\left(x\right),$$ this simplifies to $$\sigma_i\left(x\right)=\left(t|_{U}^{-1}\left(\mu_0\left(x\right)\right),f_i\left(x\right)\right).$$ So, for all $i$, $$s\circ \sigma_i\left(x\right)=\left(t|_{U}^{-1}\left(\mu_0\left(x\right)\right)\right)^{-1} \cdot s\left(f_i\left(x\right)\right).$$ But $$\mu_0\left(x\right)=\mu_0\left( s \circ f_1\left(x\right)\right) = \mu_0\left( s \circ f_2\left(x\right)\right),$$ and $s \circ f_i\left(x\right) \in V$ for all $i$, so, $$s \circ f_1\left(x\right)=s \circ f_2\left(x\right).$$ This implies $$s \circ \sigma_1\left(x\right)=s \circ \sigma_2\left(x\right),$$ i.e. $$\left[\left(h_1,k_1\right)^{-1}\right]=\left[\left(h_2,k_2\right)^{-1}\right].$$ Hence $$\left[\left(h_1,k_1\right)\right]=\left[\left(h_2,k_2\right)\right].$$
Conversely, suppose $$\left[\left(h_1,k_1\right)\right]=\left[\left(h_2,k_2\right)\right].$$ Then
\begin{equation}\label{eq:3}
\left(t|_{U_1}^{-1}\left(\mu_0\left(x\right)\right)\right)^{-1} \cdot s\left(f_1\left(x\right)\right)=\left(t|_{U_2}^{-1}\left(\mu_0\left(x\right)\right)\right)^{-1} \cdot s\left(f_2\left(x\right)\right)
\end{equation}
on some neighborhood $\Omega$ of $x$, which we may assume maps homeomorphically onto its image under $\mu_0$. Applying $\mu_0$ to (\ref{eq:3}) yields $$s \circ t|_{U_1}^{-1}\left(y\right)=s \circ t|_{U_2}^{-1}\left(y\right),$$ for all $y \in \mu_0\left(\Omega\right)$. This implies that $h_1$ and $h_2$ have the same germ. As $\h$ is effective, this implies $h_1=h_2$. This in turn implies that $s \circ f_1$ and $s \circ f_2$ agree on $\Omega$, hence $k_1$ and $k_2$ have the same germ. In particular, they have the same source and target, hence $$\theta_{\K}'\left(h_1,k_1\right)=\theta_{\K}'\left(h_2,k_2\right).$$
\end{proof}

\begin{thm}
Let $\h$ be an effective \'etale $S$-groupoid and let $\bq$ be a bouquet over $\h$. Then $$\Ef\left(\h \ltimes \bq\right) \cong \h_{\mu_0}.$$
\end{thm}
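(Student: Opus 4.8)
The plan is to build an explicit isomorphism of $S$-groupoids that is the identity on the common object space $\bq_0$ and a homeomorphism on arrows, using the two homomorphisms out of $\h \ltimes \bq$ already at hand: the germ map $\iota = \iota_{\h \ltimes \bq} : \h \ltimes \bq \to \Ef\left(\h \ltimes \bq\right)$ and the map $\theta'_{\bq} : \h \ltimes \bq \to \h_{\mu_0}$ defined above, which sends an arrow $\left(h,k\right) : x \to y$ to the arrow $h : \mu_0\left(x\right) \to \mu_0\left(y\right)$ of the \v{C}ech groupoid. Both are the identity on objects, so everything reduces to the arrow spaces. The essential input is Lemma \ref{lem:situation}, which says that, because $\h$ is effective, $\theta'_{\bq}$ and $\iota$ induce exactly the same equivalence relation on $\left(\h \ltimes \bq\right)_1$.

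First I would define $\phi : \Ef\left(\h \ltimes \bq\right)_1 \to \left(\h_{\mu_0}\right)_1$ by $\phi\left(\iota\left(h,k\right)\right) := \theta'_{\bq}\left(h,k\right)$; one direction of Lemma \ref{lem:situation} makes $\phi$ well defined and the other makes it injective. For surjectivity I would use the bouquet hypothesis: an arrow of $\h_{\mu_0}$ is some $h : \mu_0\left(x\right) \to \mu_0\left(y\right)$ with $x,y \in \bq_0$, and since $\mu_0\left(hx\right) = t\left(h\right) = \mu_0\left(y\right)$ the pair $\left(hx,y\right)$ lies in $\bq_0 \times_{\h_0} \bq_0$. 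Condition ii) for a bouquet says $\left(s,t\right) : \bq_1 \to \bq_0 \times_{\h_0} \bq_0$ is an epimorphism in $\B\h$; as epimorphisms of equivariant sheaves over an \'etale groupoid are surjective on points, there is $k : hx \to y$ in $\bq_1$, whence $\left(h,k\right) : x \to y$ is an arrow of $\h \ltimes \bq$ with $\theta'_{\bq}\left(h,k\right)$ the given arrow $h : \mu_0\left(x\right) \to \mu_0\left(y\right)$. Thus $\phi$ is a bijection on arrows, and because $\iota$ and $\theta'_{\bq}$ are both homomorphisms that are the identity on objects, $\phi$ automatically respects source, target, units, and composition.

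It then remains to verify that $\phi$ is a homeomorphism, which I expect to be the only delicate (though soft) point. By Proposition \ref{prop:shver} the groupoid $\h \ltimes \bq$ is \'etale, and both $\Ef\left(\h \ltimes \bq\right)$ (an open subgroupoid of the Haefliger groupoid) and $\h_{\mu_0}$ are \'etale, so their source maps realise all three arrow spaces as local homeomorphisms over $\bq_0$. The germ map $\iota$, being a continuous map over $\bq_0$ between such \'etal\'e spaces, is a local homeomorphism, hence an open surjection and so a quotient map; since $\theta'_{\bq} = \phi \circ \iota$ is continuous, $\phi$ is continuous. Finally $\phi$ is a continuous bijection over $\bq_0$ commuting with the source projections, and any such map between spaces \'etal\'e over $\bq_0$ is itself a local homeomorphism, hence a homeomorphism. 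This makes $\phi$ an isomorphism $\Ef\left(\h \ltimes \bq\right) \cong \h_{\mu_0}$. The whole argument's mathematical weight sits in Lemma \ref{lem:situation}, which is exactly where effectiveness of $\h$ is used; surjectivity is precisely where the bouquet condition enters, and the topological claim is a routine \'etal\'e-space argument, so I anticipate no serious obstacle beyond assembling these three ingredients.
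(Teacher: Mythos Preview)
Your proof is correct and takes essentially the same approach as the paper: both hinge on Lemma \ref{lem:situation} for the bijection on arrows and on the bouquet axiom for surjectivity, with a routine \'etal\'e-space argument for bicontinuity. The only difference is directional: the paper constructs the inverse map $\kappa:\h_{\mu_0}\to\Ef\left(\h\ltimes\bq\right)$ by choosing lifts along the surjection $\left(s,t\right)$ and then observes that $\kappa^{-1}=\Ef\left(\theta'_{\bq}\right)$, whereas you begin directly with $\phi=\Ef\left(\theta'_{\bq}\right)$ (factoring $\theta'_{\bq}$ through the quotient $\iota$) and check it is a homeomorphism via the \'etal\'e-over-$\bq_0$ argument; your route avoids the explicit choice of local sections but is otherwise the same proof read in reverse.
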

\begin{proof}
Define a map $\varphi:\h_{\mu_0} \to \Ef\left(\h \ltimes \bq\right)$ as follows. On objects define it as the identity. Notice that the arrows of $\h_{\mu_0}$ are triples $\left(h,x,y\right)$ such that $$h:\mu_0\left(x\right) \to \mu_0\left(y\right).$$ For such a triple,  $$\left(hx,y\right) \in \bq_0 \times_{\h_0} \bq_0.$$ Recall that $$\left(s,t\right):\bq_1 \to \bq_0 \times_{\h_0} \bq_0$$ is surjective. For each $\left(hx,y\right) \in \bq_1 \to \bq_0 \times_{\h_0} \bq_0$, choose a $\gamma \in \bq_0$ such that $$\gamma:hx \to y,$$ and define $$\kappa\left(h,x,y\right):=\left[\left(h,\gamma\right)\right].$$ From the previous lemma, $\kappa$ does not depend on our choice. Moreover, $\left(s,t\right)$ admits continuous local sections, so it follows that $\kappa$ is continuous. Suppose that $$\kappa\left(h,x,y\right)=\kappa\left(h',x',y'\right).$$ Then $$\theta\left(h,\gamma\right)=\theta\left(h',\gamma'\right)$$ which in turn implies $$\left(h,x,y\right)=\kappa\left(h',x',y'\right).$$ Hence $\kappa$ is injective. Now let $\left[\left(h,\gamma\right)\right] \in \Ef\left(\h \ltimes \bq\right)_1$ be arbitrary. Then $$\left[\left(h,\gamma\right)\right]=\kappa\left(h,s\left(\gamma\right),t\left(\gamma\right)\right),$$ so $\kappa$ is bijective, hence an isomorphism. Moreover, identifying $\h_{\mu_0}$ with its effective part, $$\kappa^{-1} = \Ef\left(\theta'_{\bq}\right),$$ and in particular, is continuous.
\end{proof}

\begin{cor}
For $\bq$ a bouquet over an effective \'etale $S$-groupoid $\h$, $$L\left(\bq\right)=\g \to \X$$ is an effective local equivalence over $\X\simeq\left[\h\right]$.
\end{cor}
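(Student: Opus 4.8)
The plan is to check the two defining conditions of an effective local equivalence for the morphism $\varphi := [\theta_\bq]\colon \g \to \X$. By the remark following Theorem \ref{thm:finn} we identify $L(\bq) \simeq Y(\h \ltimes \bq)$, so that $\g \simeq [\h \ltimes \bq]$ and $\varphi$ is the morphism of stacks induced by the canonical groupoid homomorphism $\theta_\bq\colon \h \ltimes \bq \to \h$. Thus it suffices to show (a) that $\varphi$ is a local homeomorphism, and (b) that $\Ef_{et}(\varphi)$ is an equivalence.

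First I would dispatch (a): since $\h$ is \'etale and $\bq$ is a groupoid object in $\B\h$, Proposition \ref{prop:shver} says that $\h \ltimes \bq$ is \'etale and that the components of $\theta_\bq$ are local homeomorphisms, so $\varphi$ is a local homeomorphism of \'etale stacks. For (b) I would exploit the factorization $\theta_\bq = p_\bq \circ \theta'_\bq$ recorded in the discussion preceding Lemma \ref{lem:situation}, where $\theta'_\bq\colon \h \ltimes \bq \to \h_{\mu_0}$ and $p_\bq\colon \h_{\mu_0} \to \h$ is the canonical projection. Because $\bq$ is a bouquet, its moment map $\mu_0\colon \bq_0 \to \h_0$ is a local homeomorphism (being the moment map of an equivariant sheaf) which is surjective, hence an \'etale cover; therefore $\h_{\mu_0}$ is the \v{C}ech groupoid of $\mu_0$ and $p_\bq$ is a Morita equivalence (Definition \ref{dfn:cech}). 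Being a homomorphism of \'etale groupoids, $p_\bq$ is itself a local homeomorphism, so it lies in the class $P = et$.

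Now I would apply the $2$-functor $\Ef_{et}$ (Lemma \ref{lem:etin1}) and use its functoriality to get $\Ef(\theta_\bq) = \Ef(p_\bq) \circ \Ef(\theta'_\bq)$. By the preceding theorem, $\Ef(\h \ltimes \bq) \cong \h_{\mu_0}$ with $\Ef(\theta'_\bq) = \kappa^{-1}$ an isomorphism of groupoids; in particular $\h_{\mu_0}$ is effective, so by the counit of the adjunction in Theorem \ref{thm:bungalo} it is canonically isomorphic to its own effective part, whence $\Ef(p_\bq) \cong p_\bq$. Passing to stacks through Corollary \ref{cor:etadj},
$$\Ef_{et}(\varphi) \simeq [\Ef(\theta_\bq)] \simeq [p_\bq] \circ [\kappa^{-1}].$$
Here $[\kappa^{-1}]$ is an equivalence because $\kappa$ is an isomorphism of $S$-groupoids, and $[p_\bq]$ is an equivalence because $p_\bq$ is a Morita equivalence; a composite of equivalences is an equivalence, so $\Ef_{et}(\varphi)$ is an equivalence. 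Together with (a), this shows $L(\bq)\colon \g \to \X$ is an effective local equivalence.

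The step requiring the most care is identifying the induced map $\Ef_{et}(\varphi)$ with the Morita equivalence $[p_\bq]$, rather than merely checking that $\Ef(\g)$ and $\X$ are abstractly equivalent. This identification is exactly what the preceding theorem supplies: the equality $\Ef(\theta'_\bq) = \kappa^{-1}$ pins down the first factor, and the effectiveness of $\h_{\mu_0}$ lets one replace $\Ef(p_\bq)$ by $p_\bq$ itself, so that the composite is visibly a Morita equivalence at the level of stacks.
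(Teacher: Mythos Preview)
Your proof is correct and follows the same approach as the paper: factor $\theta_\bq = p_\bq \circ \theta'_\bq$, observe that $p_\bq$ is a Morita equivalence (using that $\mu_0$ is a surjective local homeomorphism since $\bq$ is a bouquet in $\B\h$), and invoke the preceding theorem to conclude that $\Ef(\theta'_\bq) = \kappa^{-1}$ is an isomorphism. The paper's version is terser---it omits the verification of the local-homeomorphism condition (a) and the reason $p_\bq$ is Morita---but the substance is identical.
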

\begin{proof}
As $\theta_{\bq}=p_{\bq} \circ \theta'_{\bq}$ and $p_{\bq}$ is a Morita equivalence, it suffices to show that $\Ef\left(\theta'_{\bq}\right)$ is an equivalence, but this is clear as $\kappa$ is its inverse, by construction.
\end{proof}

\begin{cor}\label{cor:gg}
If $\g=\left(\rho: \underline{\g} \to \X\right) \in \gb$ is a small gerbe over an effective \'etale stack $\X$, $\underline{\g} \to \X$ is an effective local equivalence.
\end{cor}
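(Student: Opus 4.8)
The plan is to reduce the statement to the immediately preceding corollary, which already proves that the \'etal\'e realization of a bouquet over an \emph{effective} \'etale groupoid is an effective local equivalence. Once the intrinsic gerbe $\g$ is repackaged as such a bouquet, nothing further needs to be computed, since the substantive content has already been carried out in Lemma \ref{lem:situation} and in the theorem identifying $\Ef\left(\h \ltimes \bq\right)$ with $\h_{\mu_0}$. So the whole proof is a matter of setting up the right presentation and citing what is already available.

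First I would fix a presentation $\X \simeq \left[\h\right]$ by an \emph{effective} \'etale $S$-groupoid $\h$, which is possible precisely because $\X$ is effective. Indeed, computing the ineffective isotropy groups of $\X$ through the atlas $\h_0 \to \X$, the ineffective isotropy group of a point $x$ is the kernel of the canonical homomorphism $\h_{\tilde x} \to \mathit{Diff}_{\tilde x}\left(\h_0\right)$ (Definition \ref{dfn:ineffgp}). Effectiveness of $\X$ makes all of these kernels trivial, and a short check shows this forces $\iota_\h$ to be injective: if two arrows $h,h'$ have the same germ they share a source $x$ and target $y$, so $h^{-1}h' \in \h_x$ induces the identity germ and hence lies in the (trivial) ineffective isotropy group, giving $h=h'$. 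Thus $\iota_\h$ is an isomorphism, i.e. $\h$ itself is effective.

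Next, since by Definition \ref{dfn:smgb} the small gerbe $\g$ is a gerbe over the site $\sit\left(\h\right)$, whose topos of sheaves is the classifying topos $\B\h$, Theorem \ref{thm:bq} lets me represent it by a bouquet: there is a bouquet $\bq$ over $\h$ (Definition \ref{dfn:bouqet}) with $\g \simeq a\left(\bq\right)$, where $a$ denotes stackification. Passing to \'etal\'e realizations and invoking Proposition \ref{relp}, which records that $L$ is insensitive to stackification, I obtain over $\X$ an equivalence
$$\rho \;=\; \bar L\left(\g\right) \;\simeq\; \bar L\left(a\left(\bq\right)\right) \;\simeq\; L\left(\bq\right),$$
identifying the structure map $\rho:\underline{\g} \to \X$ with the \'etal\'e realization of the bouquet. (This is exactly the content of the earlier corollary characterizing small gerbes as those local homeomorphisms of the form $\bar L\left(\bq\right)$.)

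Finally, the preceding corollary applies verbatim to the effective groupoid $\h$ and the bouquet $\bq$, asserting that $L\left(\bq\right) \to \X$ is an effective local equivalence, that is, a local homeomorphism whose image under $\Ef_{et}$ is an equivalence; transporting this property across the equivalence $\underline{\g} \simeq L\left(\bq\right)$ over $\X$ yields the claim. I expect the main (and essentially the only) obstacle to be the first step: promoting effectiveness of the \emph{stack} $\X$ to an effective \emph{groupoid} presentation, since the preceding corollary is phrased for effective groupoids. Everything downstream is formal, as the real work lives in the results already established.
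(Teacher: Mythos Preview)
Your proposal is correct and matches the paper's approach: the paper states this corollary without proof, treating it as immediate from the preceding corollary (that $L(\bq)\to\X$ is an effective local equivalence for $\bq$ a bouquet over an effective $\h$) together with Theorem \ref{thm:bq} representing gerbes by bouquets. Your careful justification that an effective \'etale stack admits an effective groupoid presentation is a welcome elaboration of a point the paper leaves implicit; the rest of your argument is exactly the intended reduction.
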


\begin{thm}\label{thm:gerbsec}
Consider the map of \'etale $S$-groupoids $$\iota_{\G}:\G \to \Ef\left(\G\right)=:\h.$$ Then $$\bar\Gamma\left(\left[\iota_{\G}\right]\right) \in \St\left(\left[\h\right]\right)$$ is a gerbe.
\end{thm}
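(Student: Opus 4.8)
The plan is to combine Theorem~\ref{thm:secs} with the characterization of gerbes via bouquets (Theorem~\ref{thm:bq}). Since both $\G$ and $\h := \Ef(\G)$ are \'etale and $\iota_\G$ is a local homeomorphism with $(\iota_\G)_0 = \mathrm{id}_{\G_0}$, Theorem~\ref{thm:secs} applies and tells us that $\bar\Gamma\left(\left[\iota_\G\right]\right)$ is equivalent to the stack associated to the groupoid object $P(\iota_\G)$ in $\B\h$. By Theorem~\ref{thm:bq} it therefore suffices to verify that $P(\iota_\G)$ is a bouquet over $\h$, i.e. that the two structural maps $\mu_0 \colon P(\iota_\G)_0 \to \h_0$ and $(s,t)\colon P(\iota_\G)_1 \to P(\iota_\G)_0 \times_{\h_0} P(\iota_\G)_0$ are surjective; surjectivity of the underlying maps of spaces suffices, since this is exactly how the bouquet conditions were reformulated for $\B\h$.

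Next I would unwind $P(\iota_\G)$ using the two facts that make the effective part special: its object space agrees with that of $\G$, so $(\iota_\G)_0 = \mathrm{id}$ and $\h_0 = \G_0$; and $\h := \Ef(\G)$ is by definition the \emph{image} of $\tilde\iota_\G$, so that $(\iota_\G)_1 \colon \G_1 \to \h_1$ is surjective. With $(\iota_\G)_0$ the identity, the object sheaf $P(\iota_\G)_0 = \h_1 \times_{\h_0} \G_0 = \{(h,x) : s(h)=x\}$ is canonically identified with $\h_1$, and its moment map is identified with $t\colon \h_1 \to \h_0$. This settles condition (i): $t$ is surjective because every object carries an identity arrow.

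For condition (ii) I would make the analogous identification of $P(\iota_\G)_1 = \h_1 \times_{\h_0}\G_1 = \{(h,g) : s(h) = t(g)\}$, under which the source and target of an arrow $(h,g)$ become $h\,\iota_\G(g)$ and $h$ respectively, so that $(s,t)$ is the map $(h,g) \mapsto (h\,\iota_\G(g),\,h)$ into $\{(a,b)\in \h_1\times\h_1 : t(a)=t(b)\}$. Given such a pair $(a,b)$, I would put $h:=b$ and solve $b\,\iota_\G(g)=a$, i.e. look for $g\in\G_1$ with $\iota_\G(g)=b^{-1}a$; this composite is well-defined precisely because $t(a)=t(b)$, and such a $g$ exists exactly because $(\iota_\G)_1$ is surjective. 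One checks $(b,g)\in P(\iota_\G)_1$ (the constraint $s(b)=t(g)$ follows from $\iota_\G$ being a functor together with $(\iota_\G)_0=\mathrm{id}$), and then $(b,g)$ maps to $(a,b)$. Hence $(s,t)$ is surjective and $P(\iota_\G)$ is a bouquet.

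The only genuine content --- and the step I expect to be the crux --- is the reduction of condition (ii) to the \emph{fullness} of $\iota_\G$ on arrows; everything else is bookkeeping with the fibered-product descriptions of $P(\iota_\G)_0$ and $P(\iota_\G)_1$ and the source/target formulas recalled in Section~\ref{subsec:action}. It is worth emphasizing that this fullness is not an extra hypothesis but is built into the construction of the effective part as an image groupoid, which is what makes the theorem go through cleanly. I would also double-check the moment-map convention (that $\mu_0 = t\circ pr_1$, as forced by requiring the source and target of an arrow to lie in the fiber product over $\h_0$) so that the target identification used in condition (ii) is consistent.
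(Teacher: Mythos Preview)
Your proposal is correct and follows essentially the same route as the paper: reduce via Theorem~\ref{thm:secs} to showing $P(\iota_\G)$ is a bouquet, identify $P(\iota_\G)_0\cong\h_1$ with moment map $t$ (settling condition~(i)), and then verify condition~(ii) by using surjectivity of $(\iota_\G)_1$ to solve $\iota_\G(g)=b^{-1}a$ for given $(a,b)$ with $t(a)=t(b)$. Your explicit check that $(b,g)$ actually lies in $P(\iota_\G)_1$ and your remark about the moment-map convention being $t\circ pr_1$ are worthwhile additions that the paper leaves implicit.
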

\begin{proof}
By Theorem \ref{thm:secs}, it suffices to show that $P\left(\iota_{\G}\right)$ is a bouquet over $\h$. The map
$$t \circ pr_1:\h_1 \times_{\h_0} \G_0 \to \h_0$$ is clearly surjective, as we may identify it with the map $$t:\h_1 \to \h_0.$$
It suffices to show that the map
\begin{eqnarray*}
\h_1 \times_{\h_0} \G_1 &\to& \h_1 \times_{\h_0} \h_1\\
\left(h,g\right) &\mapsto& \left(h[g],h\right)\\
\end{eqnarray*}
is surjective, where $\h_1 \times_{\h_0} \h_1$ is the pullback
$$\xymatrix{\h_1 \times_{\h_0} \h_1 \ar[d] \ar[r] & \h_1 \ar[d]^-{t}\\
\h_1 \ar[r]^-{t} & \h_0.}$$
Given $l$ and $l'$ in $\h_1$ with common target, choose $g$ such that $[g]=l'^{-1}l.$ Then $\left(l',g\right)$ gets sent to $\left(l,l'\right).$
\end{proof}

\begin{cor}\label{cor:effchar}
$\g=\left(\rho: \underline{\g} \to \X\right) \in Et\left(\X\right)$ is a small gerbe over an effective \'etale stack $\X$ if and only if  $\rho:\underline{\g} \to \X$ is an effective local equivalence.
\end{cor}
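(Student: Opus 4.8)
The forward implication is exactly Corollary \ref{cor:gg}: if $\rho$ represents a small gerbe over the effective \'etale stack $\X$, then $\rho:\underline{\g}\to\X$ is an effective local equivalence. So the real content of the corollary is the converse, and that is what I would prove. Throughout I identify the object $\rho\in Et\left(\X\right)$ with the small stack $\bar\Gamma\left(\rho\right)\in\St\left(\X\right)$ under the adjoint-equivalence of Corollary \ref{cor:real}, so that ``$\g$ is a small gerbe'' means ``$\bar\Gamma\left(\rho\right)$ is a gerbe.''

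Assume $\rho:\underline{\g}\to\X$ is an effective local equivalence with $\X$ effective. Since $\X$ is effective I would present it as $\X\simeq\left[\h\right]$ for an \emph{effective} \'etale $S$-groupoid $\h$; this is legitimate because effectiveness of a stack is detected by its intrinsic ineffective isotropy groups, so any \'etale groupoid presenting an effective \'etale stack is itself effective (equivalently, $\iota_\h$ is an isomorphism). As $\rho$ is a local homeomorphism of \'etale stacks, by definition it is represented by a homomorphism $\varphi:\G\to\h$ of \'etale $S$-groupoids with $\varphi_0$ (hence $\varphi_1$) a local homeomorphism; after replacing $\G$ by a \v{C}ech-refinement along a suitable \'etale cover if necessary (using the calculus of fractions for Morita equivalences, so as to land in the chosen effective presentation $\h$), I may assume $\underline{\g}\simeq\left[\G\right]$ and $\rho\simeq\left[\varphi\right]$. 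Naturality of the unit $\iota:\mathrm{id}\Rightarrow j_{et}\circ\Ef_{et}$ of the adjunction of Theorem \ref{thm:bungalo}, applied to $\varphi$, yields $\iota_\h\circ\varphi\cong\Ef\left(\varphi\right)\circ\iota_\G$; since $\h$ is effective, $\iota_\h$ is an isomorphism, so up to $2$-isomorphism $\varphi$ factors as
$$\G\stackrel{\iota_\G}{\longrightarrow}\Ef\left(\G\right)\stackrel{\Ef\left(\varphi\right)}{\longrightarrow}\Ef\left(\h\right)=\h .$$
Because $\rho$ is an effective local equivalence and $\X$ is effective, $\Ef_{et}\left(\rho\right)=\left[\Ef\left(\varphi\right)\right]:\Ef\left(\underline{\g}\right)=\left[\Ef\left(\G\right)\right]\to\left[\h\right]\simeq\X$ is an equivalence of \'etale stacks.

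Now I would invoke Theorem \ref{thm:gerbsec}, which says precisely that $\bar\Gamma\left(\left[\iota_\G\right]\right)$ is a gerbe over $\left[\Ef\left(\G\right)\right]=\Ef\left(\underline{\g}\right)$. The factorization above exhibits $\rho\cong e\circ\left[\iota_\G\right]$ in $Et\left(\X\right)$, where $e:=\left[\iota_\h\right]^{-1}\circ\left[\Ef\left(\varphi\right)\right]:\Ef\left(\underline{\g}\right)\to\X$ is an equivalence. An equivalence $e$ of \'etale stacks induces, via the inverse image $2$-functor of \S\ref{sec:inverseimage}, an equivalence of the $2$-categories of small stacks $\St\left(\X\right)\simeq\St\left(\Ef\left(\underline{\g}\right)\right)$ under which, by Theorem \ref{thm:inv} together with the $2$-adjunction $\bar L\dashv\bar\Gamma$, the objects $\bar\Gamma\left(\rho\right)$ and $\bar\Gamma\left(\left[\iota_\G\right]\right)$ correspond. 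Moreover such a base equivalence preserves the property of being a gerbe --- most cleanly because it identifies points together with their stalks, so that the stalk characterization of gerbes (the first theorem of \S\ref{sec:gerbe}, that a small stack is a gerbe if and only if all its stalks are equivalent to groups) transports across $e$. Hence $\bar\Gamma\left(\rho\right)$ is a gerbe, i.e. $\rho$ represents a small gerbe over $\X$, completing the converse.

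The main obstacle is the bookkeeping of the second paragraph: faithfully representing $\rho$ by an honest groupoid homomorphism $\varphi:\G\to\h$ \emph{into a chosen effective presentation} of $\X$, and checking that the $\Ef$-factorization of $\varphi$ is compatible with the effective-local-equivalence hypothesis so that $\Ef\left(\varphi\right)$ becomes a Morita equivalence. Once $\rho$ is realized as $\left[\iota_\G\right]$ up to an equivalence of the base, Theorem \ref{thm:gerbsec} does all the substantive work, and the only remaining (routine but essential) point is that a base equivalence of \'etale stacks transports gerbes to gerbes.
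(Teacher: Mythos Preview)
Your argument is correct and is precisely the proof the paper leaves implicit: the forward direction is Corollary~\ref{cor:gg}, and for the converse you factor $\rho$ through $\iota_\G$ using naturality of $\iota$ and effectiveness of $\h$, invoke Theorem~\ref{thm:gerbsec} to see $\left[\iota_\G\right]$ is a gerbe over $\left[\Ef\left(\G\right)\right]$, and then transport along the equivalence $\left[\Ef\left(\varphi\right)\right]$. The bookkeeping you flag (presenting $\rho$ by a genuine homomorphism into a chosen effective $\h$, and checking gerbes are stable under base equivalence) is routine and handled correctly.
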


\begin{cor}
If $\X$ is an orbifold, it encodes a small gerbe over its effective part $\Ef\left(\X\right)$ via $$\iota_\X:\X \to \Ef\left(\X\right),$$ where $\iota$ is the unit of the adjunction in Theorem \ref{thm:bungalo}.
\end{cor}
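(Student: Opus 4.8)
The plan is to reduce the statement directly to Theorem \ref{thm:gerbsec}, exploiting that an orbifold is in particular an \'etale stack. First I would fix a presentation of $\X$ by an orbifold groupoid $\G$, which is in particular an \'etale $S$-groupoid, so that $\X \simeq \left[\G\right]$. By construction of the $2$-adjunction of Corollary \ref{cor:etadj} out of the groupoid-level adjunction of Theorem \ref{thm:bungalo}, the effective part $\Ef\left(\X\right)$ is presented by $\Ef\left(\G\right)=:\h$, and the unit $\iota_\X:\X \to \Ef\left(\X\right)$ is the stack completion $\left[\iota_\G\right]$ of the canonical map $\iota_\G:\G \to \Ef\left(\G\right)$. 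Note that nothing in this reduction uses properness of $\G$; the finiteness built into the orbifold case is irrelevant here, so the conclusion in fact holds for any \'etale stack, and the corollary is merely the orbifold specialization.

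Next I would verify that $\iota_\X$ is a local homeomorphism, so that it is a legitimate object of $Et\left(\Ef\left(\X\right)\right)$ on which $\bar\Gamma$ is defined. The map $\iota_\G$ is the identity on objects, and on arrows it commutes with the source maps $s:\G_1 \to \G_0$ and $s:\Ef\left(\G\right)_1 \to \G_0$, both of which are local homeomorphisms since $\G$ and $\Ef\left(\G\right)$ are \'etale. A continuous map over $\G_0$ between two spaces each equipped with a local homeomorphism to $\G_0$ is itself a local homeomorphism, so $\left(\iota_\G\right)_1$ is a local homeomorphism; hence $\iota_\G$, and therefore $\left[\iota_\G\right]=\iota_\X$, is a local homeomorphism of \'etale stacks.

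Finally I would invoke Theorem \ref{thm:gerbsec} verbatim: it asserts that $\bar\Gamma\left(\left[\iota_\G\right]\right) \in \St\left(\left[\h\right]\right) = \St\left(\Ef\left(\X\right)\right)$ is a gerbe. Since a small gerbe over $\Ef\left(\X\right)$ is by Definition \ref{dfn:smgb} precisely a small stack over $\Ef\left(\X\right)$ that is a gerbe, the small stack $\bar\Gamma\left(\iota_\X\right)$ is a small gerbe, which is exactly the assertion that $\X$ encodes a small gerbe over its effective part via $\iota_\X$. (Alternatively, one could apply Corollary \ref{cor:effchar} with $\Ef\left(\X\right)$ in place of $\X$: it suffices that $\iota_\X$ be an effective local equivalence, i.e.\ a local homeomorphism whose image under $\Ef_{et}$ is an equivalence, and the latter follows from the natural isomorphism $\Ef_P \circ j_P \Rightarrow \mathrm{id}$ established in the proof of Theorem \ref{thm:bungalo}, since $\h=\Ef\left(\G\right)$ is already effective.) The substantive obstacle has thus already been discharged in the proof of Theorem \ref{thm:gerbsec} (namely, showing $P\left(\iota_\G\right)$ is a bouquet over $\h$); at the level of this corollary the only remaining content is the bookkeeping identification of $\iota_\X$ with $\left[\iota_\G\right]$ and the local-homeomorphism check above.
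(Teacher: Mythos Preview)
Your proposal is correct and matches the paper's implicit reasoning: the corollary is stated without proof, as an immediate consequence of Theorem \ref{thm:gerbsec} (or equivalently Corollary \ref{cor:effchar}), and your reduction via a groupoid presentation $\iota_\G:\G \to \Ef(\G)$ together with the local-homeomorphism check is exactly the unpacking the paper leaves to the reader. Your observation that the orbifold hypothesis is inessential is also apt.
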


\begin{thm}
Let $\X$ be an effective \'etale stack and $\g$ a small gerbe over it. Denote by $\Y$ the underlying \'etale stack of the \'etale realization of $\g$. Then, under the natural bijection between the points of $\X$ and the points of $\Y,$ for each point $x,$ the stalk $\g_x$ is equivalent to the ineffective isotropy group of $x$ in $\Y$, as defined in Definition \ref{dfn:ineffgp}.
\end{thm}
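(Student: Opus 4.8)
The plan is to work entirely at the level of groupoids. Fix an effective \'etale $S$-groupoid $\h$ with $\X \simeq \left[\h\right]$ and represent $\g$ by a bouquet $\bq$ over $\h$, which is possible by Theorem \ref{thm:bq} together with the correspondence between small gerbes and bouquets. Then the underlying \'etale stack of the \'etal\'e realization is $\Y \simeq \left[\h \ltimes \bq\right]$, with \'etale atlas $\bq_0 = \left(\h \ltimes \bq\right)_0$ and structure map $f = \left[\theta_\bq\right]:\Y \to \X$; by Corollary \ref{cor:gg}, $f$ is an effective local equivalence, and since $\iota_{\h}$ is the identity on objects and surjective on arrows, effective local equivalences are bijective on points, which is the asserted bijection. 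Given a point $x$ of $\Y$, I would write $\bar x := f\left(x\right)$ for the corresponding point of $\X$, choose a lift $\tilde{x}' \in \bq_0$ of $x$, and set $\tilde{x} := \mu_0\left(\tilde{x}'\right) \in \h_0$, a lift of $\bar x$.

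First I would compute the ineffective isotropy group of $x$ directly from Definition \ref{dfn:ineffgp}, using the atlas $\bq_0$. Under the identification $Aut\left(x\right) \cong \left(\h \ltimes \bq\right)_{\tilde{x}'}$, the homomorphism $Aut\left(x\right) \to \mathit{Diff}_{\tilde{x}'}\left(\bq_0\right)$ is exactly the germ map, i.e. the restriction of $\iota_{\h \ltimes \bq}:\h \ltimes \bq \to \Ef\left(\h \ltimes \bq\right)$ to the isotropy group followed by the inclusion of $\Ef\left(\h \ltimes \bq\right)$ into the Haefliger groupoid $\Ha\left(\bq_0\right)$. As this inclusion is faithful, the ineffective isotropy group of $x$ is the kernel of $\left(\h \ltimes \bq\right)_{\tilde{x}'} \to \Ef\left(\h \ltimes \bq\right)_{\tilde{x}'}$.

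Next I would evaluate this kernel using the theorem that $\Ef\left(\h \ltimes \bq\right) \cong \h_{\mu_0}$, valid since $\h$ is effective, under which $\iota_{\h \ltimes \bq}$ corresponds to $\theta'_\bq$. An isotropy arrow $\left(h,k\right)$ satisfies $k:h\cdot\tilde{x}' \to \tilde{x}'$, and since $\mu_0 \circ s = \mu_0 \circ t$ on $\bq_1$, this forces $h$ into the isotropy group $\h_{\tilde{x}}$. Such an arrow lies in the kernel precisely when its image equals that of $\mathbb{1}_{\tilde{x}'}$ in $\Ef\left(\h \ltimes \bq\right)$; by Lemma \ref{lem:situation} this is equivalent to $\theta'_\bq\left(h,k\right) = \left(h, s\left(k\right), t\left(k\right)\right) = \left(\mathbb{1}_{\tilde{x}}, \tilde{x}', \tilde{x}'\right)$, that is, to $h = \mathbb{1}_{\tilde{x}}$ and $k:\tilde{x}' \to \tilde{x}'$ an automorphism of $\tilde{x}'$ in the bouquet. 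Thus the ineffective isotropy group of $x$ is canonically isomorphic to $Aut_\bq\left(\tilde{x}'\right)$.

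Finally I would identify $Aut_\bq\left(\tilde{x}'\right)$ with the stalk $\g_x = \g_{\bar x}$. By Corollary \ref{cor:stalk}, together with the fact that stalks are filtered colimits over neighborhoods and hence unchanged by stackification, $\g_{\bar x}$ is the stalk groupoid of the bouquet at $\tilde{x}$, namely $\left[\mu_1^{-1}\left(\tilde{x}\right) \rightrightarrows \mu_0^{-1}\left(\tilde{x}\right)\right]$, in which $\tilde{x}'$ is an object whose automorphism group is exactly $Aut_\bq\left(\tilde{x}'\right)$. Since $\g$ is a gerbe, this stalk groupoid is connected, so the inclusion of the one-object subgroupoid on $\tilde{x}'$ is an equivalence, giving $Aut_\bq\left(\tilde{x}'\right) \simeq \g_{\bar x} = \g_x$, as required. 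I expect the main obstacle to be the middle step: matching the germ map defining the ineffective isotropy group with $\theta'_\bq$ and invoking Lemma \ref{lem:situation} to pin down its kernel, since this is precisely where the effectiveness of $\h$ and the careful groupoid bookkeeping are indispensable, whereas the first and last steps are comparatively formal.
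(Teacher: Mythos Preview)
Your proof is correct and follows essentially the same strategy as the paper: represent $\g$ by a bouquet $\bq$, compute the ineffective isotropy group of $x$ as the kernel of $(\h\ltimes\bq)_{\tilde x'} \to \Ef(\h\ltimes\bq)_{\tilde x'}$ via Lemma~\ref{lem:situation} and the identification $\Ef(\h\ltimes\bq)\cong\h_{\mu_0}$, and match it with the isotropy group $\bq_{\tilde x'}$ of the bouquet.

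The one genuine difference is how you compute the stalk $\g_x$. The paper uses Section~\ref{sec:inverseimage} to identify $\g_x$ with the fiber of the \'etal\'e realization, i.e.\ the weak pullback groupoid $*\times_{\h_{\mu_0}}(\h\ltimes\bq)$, then explicitly describes its objects and arrows and reads off the isotropy group at $(\tilde x',\mathbb{1}_{\mu_0(\tilde x')})$ as $\bq_{\tilde x'}$. You instead invoke Corollary~\ref{cor:stalk} and the invariance of stalks under stackification to identify $\g_x$ directly with the fiber groupoid $\mu_1^{-1}(\tilde x)\rightrightarrows\mu_0^{-1}(\tilde x)$ of the bouquet, then use connectedness (from the bouquet axiom on $(s,t)$) to reduce to $\bq_{\tilde x'}$. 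Your route is slightly shorter and avoids unpacking the weak pullback, at the cost of appealing to the standard fact that filtered stalk colimits see no difference between a presheaf of groupoids and its stackification; the paper's route is more self-contained and makes the link with Theorem~\ref{thm:inv} explicit. Either way the target group is the same.
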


\begin{proof}
Represent $\X$ by an \'etale groupoid $\h$ and $\g$ by a bouquet $\bq$ over $\h$. Denote the objects of the bouquet by $$\mu_0:\bq_0 \to \h_0.$$ Then the \'etale realization of $\g$ is induced by the map of groupoids $$\theta'_{\bq}:\h\ltimes \bq \to \h_{\mu_0},$$ where $\h_{\mu_0}$ is the \v{C}ech groupoid with respect to the \'etale cover $\mu_0$ and $\theta'_{\bq}$ is as defined in the beginning of this subsection. To the \'etale cover $\mu_0$, there is an associated atlas $$p':\bq_0 \to \X.$$ Let $x$ be a point of $\X$. Then there exists a point $\tilde x \in \bq_0$ such that $x \cong p' \circ \tilde x.$ On one hand, from Section \ref{sec:inverseimage}, it follows that the stalk $\g_x$ is equivalent to the weak pullback in $S$-groupoids
$$\xymatrix{{*} \times_{\h_{\mu_0}} \left(\h \ltimes \bq\right) \ar[rr] \ar[d] & & \h \ltimes \bq \ar[d]^-{\theta'_{\bq}}\\
{*} \ar[r]^-{\tilde x} & \bq_0 \ar[r]^-{p'} & \h_{\mu_0},}$$
which is necessarily a connected groupoid.

An \textbf{object} of this groupoid can be described by a pair of the form $\left(z,h\right)$ with $z \in \bq_0$ and $h \in \h_1$ such that $$h:\mu_0\left(\tilde x\right) \to \mu_0\left(z\right).$$

An \textbf{arrow} from $\left(z,h\right)$ to $\left(z',h'\right),$ can be described simply as an arrow $$\gamma:z \to z'$$ in $\bq_1$.

Since this groupoid is equivalent to a group, it must be equivalent to the isotropy group of any object. Consider the object $\left(\tilde x,\mathbb{1}_{\mu_0\left(\tilde x\right)}\right).$ Then its isotropy group is canonically isomorphic to $\bq_{\tilde x},$ the isotropy group of $\tilde x$ in $\bq$.

On the other hand, the ineffective isotropy group of $x$ is isomorphic to the kernel of the homomorphism
$$\left(\h \ltimes \bq\right)_{\tilde  x} \to \mathit{Diff}_{\tilde x}\left(\bq_0\right)$$ induced from the canonical map $$\left(\h \ltimes \bq\right) \to \Ha\left(\bq_0\right).$$ From Lemma \ref{lem:situation}, it follows that the this is the same as the kernel of the map
\begin{eqnarray*}
\left(\h \ltimes \bq\right)_{\tilde  x} &\to& \h_{\mu_0\left(\tilde x\right)}\\
\left(h,l\right) &\mapsto& h\\
\end{eqnarray*}
which is induced from $\theta'_{\bq}$ and the canonical identification $$\left(\h_{\mu_0}\right)_{\tilde x} \cong \h_{\mu_0\left(\tilde x\right)}.$$ This kernel is clearly isomorphic to $\bq_{\tilde x}$ as well.
\end{proof}

Hence, we can use the data of a small gerbe over an effective \'etale stack to add ineffective isotropy groups to its points, as claimed. The rest of this subsection will be devoted to characterizing gerbes over general \'etale stacks which need not be effective.

\begin{dfn}
A map of stacks $\X \to \Y$ is \textbf{full} if for every space $T$, the induced map of groupoids $\X\left(T\right) \to \Y\left(T\right)$ if full as a functor.
\end{dfn}

\begin{prop}\label{prop:full}
Let $\rho:\underline{\g} \to \X$ be a small gerbe over an \'etale stack. Then $\rho$ is a full epimorphism.
\end{prop}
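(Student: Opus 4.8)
The plan is to reduce to the concrete bouquet model and read off both properties directly from the two defining conditions of a bouquet. Fix an \'etale $S$-groupoid $\h$ with $\X \simeq [\h]$. Since $\g$ is a small gerbe, by Theorem \ref{thm:bq} together with the adjoint-equivalence $\bar L : \St(\X) \to Et(\X)$ of Corollary \ref{cor:real}, I would represent $\g$ by a bouquet $\bq$ over $\h$, so that $\underline{\g} \simeq [\h\ltimes\bq]$ and $\rho \cong [\theta_\bq]$, where $\theta_\bq : \h\ltimes\bq \to \h$ is the canonical morphism. Recall that $\bq$ being a bouquet means (i) $\mu_0 : \bq_0 \to \h_0$ is surjective, and (ii) $(s,t) : \bq_1 \to \bq_0 \times_{\h_0} \bq_0$ is surjective. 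The strategy is that condition (i) forces $\rho$ to be an epimorphism, and condition (ii) forces $\rho$ to be full.

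For the epimorphism, note that $\mu_0$ is the object map of $\theta_\bq$, whose components are local homeomorphisms by Proposition \ref{prop:shver}; being a surjective local homeomorphism, $\mu_0$ admits local sections, so $\theta_\bq$ is essentially surjective and $\rho = [\theta_\bq]$ is an epimorphism of stacks. Equivalently, this is condition (i) of the Corollary characterizing gerbes among local homeomorphisms.

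For fullness I would argue at the level of arrows of the action groupoid. An arrow of $\h\ltimes\bq$ from $x$ to $y$ is a pair $(h,k)$ with $h : \mu_0(x) \to \mu_0(y)$ in $\h$ and $k : h\cdot x \to y$ in $\bq$, and $\theta_\bq(h,k) = h$. Thus, given objects $x,y \in \bq_0$ and an arbitrary arrow $h : \mu_0(x) \to \mu_0(y)$ of $\h$, the pair $(h\cdot x, y)$ lies in $\bq_0 \times_{\h_0}\bq_0$ because $\mu_0(h\cdot x) = t(h) = \mu_0(y)$; condition (ii) then produces some $k : h\cdot x \to y$, and $(h,k)$ is an arrow of $\h\ltimes\bq$ lying over $h$. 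Hence $\theta_\bq$ is full as an internal functor. Passing to associated stacks, this surjectivity on arrows yields, for each space $T$ and objects $a,b \in \underline{\g}(T)$, that the induced map of morphism-sheaves $\underline{\Hom}(a,b) \to \underline{\Hom}(\rho a, \rho b)$ is an epimorphism; equivalently, every morphism $\rho a \to \rho b$ lifts to a morphism $a \to b$ after restriction to a cover of $T$. This is precisely fullness, and it can be repackaged as condition (ii) of the gerbe characterization, that the diagonal $\underline{\g} \to \underline{\g}\times_\X\underline{\g}$ is an epimorphism: a $T$-point $(a,b,\phi)$ of $\underline{\g}\times_\X\underline{\g}$ factors locally through the diagonal exactly when $\phi$ lifts locally.

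The main obstacle is the passage from surjectivity of $\theta_\bq$ on arrows to the stated fullness of $\rho$: stackification preserves this surjectivity only locally, so what one obtains --- and what \emph{full} must be read to mean here --- is that the induced map on morphism-sheaves is an epimorphism (every morphism lifts after passing to a cover), rather than a strict lift of each global morphism. Making this translation precise, and checking $\h$-equivariance so that the local arrows $k$ assemble into the required morphism of equivariant sheaves over $\X$, is the one step demanding care; everything else follows immediately from conditions (i) and (ii).
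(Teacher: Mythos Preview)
Your approach is exactly the paper's: represent the gerbe by a bouquet $\bq$ so that $\rho \cong [\theta_\bq]$, and then read off both claims from the two bouquet axioms. The paper's proof is much terser --- it declares the epimorphism ``clear'' and, for fullness, simply notes that $\theta_\bq$ is full and asserts the same for its stackification, without discussing the passage you flag.

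Your caution about that last step is reasonable: what the bouquet argument directly yields is that the induced map on hom-sheaves is an epimorphism (morphisms lift after passing to a cover), which is the sheaf-theoretic reading you propose. This is also how fullness is actually used downstream --- in the proof of Theorem~\ref{thm:dec}, for instance, one only needs to lift individual arrows through a representing groupoid homomorphism $\varphi$, and that follows immediately from internal fullness of $\theta_\bq$. So your more careful unpacking supplies everything the paper requires, and your reading of ``full'' is the one that makes the argument go through cleanly.
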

\begin{proof}
The fact that it is an epimorphism is clear. To see that it is full, we may assume it is of the form $\bar L\left(\bq\right)$ for a bouquet $\bq$. This means it is the stackification of the map
$$\theta_{\bq}:\h \ltimes \bq \to \h,$$ where $\X \simeq \left[\h\right].$ Such a map is clearly full.
\end{proof}

\begin{thm}\label{thm:dec}
Let $\rho:\underline{\g} \to \Y$ be a local homeomorphism of \'etale stacks. If $\rho:\underline{\g} \to \Y$ is a small gerbe over $\Y,$ then $$\iota_{\Y} \circ \rho:\underline{\g} \to \Ef\left(\Y\right)$$ is a small gerbe over $\Ef\left(\Y\right)$. Conversely, $\rho:\underline{\g} \to \Y$ is a small gerbe over $\Y$ if and only if  $$\iota_{\Y} \circ \rho:\underline{\g} \to \Ef\left(\Y\right)$$ is a small gerbe over $\Ef\left(\Y\right)$ and $\rho$ is full.
\end{thm}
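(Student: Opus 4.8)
The plan is to reduce the whole statement to the stalk characterization of small gerbes (a small stack over an \'etale stack is a gerbe if and only if each of its stalks is a non-empty connected groupoid, i.e.\ equivalent to a group), and then to compare the stalks of $\rho$, of $\iota_{\Y}$, and of the composite $\iota_{\Y}\circ\rho$ through a single pasting of weak pullbacks. Before doing so I would record two preliminaries. First, $\iota_{\Y}\colon\Y\to\Ef\left(\Y\right)$ is a local homeomorphism: on presenting groupoids it is the identity on objects and commutes with the (\'etale) source maps, so it is \'etale. Moreover, by Corollary \ref{cor:effchar}, $\iota_{\Y}$ is itself a small gerbe over the effective stack $\Ef\left(\Y\right)$, because $\Ef_{et}\left(\iota_{\Y}\right)$ is an equivalence; this is the standard triangle-identity consequence of the adjunction $\Ef_{et}\dashv j_{et}$ together with $\Ef_{et}\circ j_{et}\simeq \mathrm{id}$ from Theorem \ref{thm:bungalo}. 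In particular every stalk $F_{e}:={*}\times_{\Ef\left(\Y\right)}\Y$ is non-empty and connected. Second, $\iota_{\Y}$ is a bijection on isomorphism classes of points (the quotient $\h_{1}\to\Ef\left(\h\right)_{1}$ is surjective, so $\h$ and $\Ef\left(\h\right)$ have the same orbits on $\h_{0}$), so a point $e$ of $\Ef\left(\Y\right)$ and its unique lift $y$ to $\Y$ may be used interchangeably.

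The central device is the pasting of weak pullbacks
$$\xymatrix{{*}\times_{\Ef\left(\Y\right)}\underline{\g} \ar[r]\ar[d]_-{pr_1} & \underline{\g} \ar[d]^-{\rho}\\ F_{e}={*}\times_{\Ef\left(\Y\right)}\Y \ar[r]^-{q} & \Y,}$$
which identifies ${*}\times_{\Ef\left(\Y\right)}\underline{\g}\simeq F_{e}\times_{\Y}\underline{\g}$ and exhibits the stalk of $\iota_{\Y}\circ\rho$ at $e$ as the total groupoid of an iso-fibration $pr_{1}\colon F_{e}\times_{\Y}\underline{\g}\to F_{e}$ whose strict fibre over any object is the stalk $\underline{\g}_{y}={*}\times_{\Y}\underline{\g}$ of $\rho$ at $y$. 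All of these pullbacks-of-groupoids may be computed after choosing presenting \'etale groupoids and using that stalks are the inverse-image functors $x^{*}$ (Section \ref{sec:inverseimage}).

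For the forward implication I would argue: if $\rho$ is a gerbe over $\Y$ then each fibre $\underline{\g}_{y}$ is non-empty and connected, and since the base $F_{e}$ is also non-empty and connected, an iso-fibration with non-empty connected base and non-empty connected fibres has non-empty connected total groupoid; hence ${*}\times_{\Ef\left(\Y\right)}\underline{\g}$ is equivalent to a group for every $e$, and the stalk criterion gives that $\iota_{\Y}\circ\rho$ is a gerbe over $\Ef\left(\Y\right)$. Fullness of $\rho$ in the ``only if'' half of the equivalence is exactly Proposition \ref{prop:full}. This disposes of the first assertion and of the $(\Rightarrow)$ direction of the equivalence.

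The real work, and the step I expect to be the main obstacle, is the converse: assuming $\iota_{\Y}\circ\rho$ is a gerbe over $\Ef\left(\Y\right)$ and $\rho$ is full, I must show each stalk $\underline{\g}_{y}$ is connected. Non-emptiness is immediate, since the total groupoid ${*}\times_{\Ef\left(\Y\right)}\underline{\g}$ is non-empty and all fibres are equivalent. For connectedness I take two objects $w_{0},w_{1}$ of $\underline{\g}_{y}$, i.e.\ objects $\left(\xi_{0},w_{i},\gamma_{i}\right)$ over a fixed $\xi_{0}\in F_{e}$; connectedness of the total groupoid yields a morphism $\phi\colon w_{0}\to w_{1}$ in ${*}\times_{\Ef\left(\Y\right)}\underline{\g}$, but $pr_{1}\left(\phi\right)$ need not be $\mathrm{id}_{\xi_{0}}$. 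Here fullness of $\rho$ (at $T={*}$) lets me lift the automorphism $\gamma_{0}\,q\left(pr_{1}\phi\right)\gamma_{0}^{-1}$ of $\rho\left(w_{0}\right)=y$ to an automorphism $b'$ of $w_{0}$ in $\underline{\g}$, producing an endomorphism $\psi$ of $\left(\xi_{0},w_{0},\gamma_{0}\right)$ with $pr_{1}\left(\psi\right)=pr_{1}\left(\phi\right)$; then $\phi\psi^{-1}\colon w_{0}\to w_{1}$ projects to $\mathrm{id}_{\xi_{0}}$ and therefore lies in the fibre $\underline{\g}_{y}$, connecting $w_{0}$ to $w_{1}$. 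Thus $\underline{\g}_{y}$ is a non-empty connected groupoid, and the stalk criterion shows $\rho$ is a gerbe over $\Y$. The necessity of fullness is precisely visible in this argument: without the lifting of automorphisms along $\rho$ the surjectivity $\mathrm{Aut}\left(w_{0}\right)\twoheadrightarrow\mathrm{Aut}\left(\xi_{0}\right)$ can fail and the fibre can be disconnected, as already happens for ${*}\to\B G$. So the only genuinely delicate point is this automorphism-lifting step, which is where fullness enters essentially.
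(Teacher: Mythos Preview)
Your argument is correct, and it takes a genuinely different route from the paper's own proof.

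The paper works directly from the definition of a gerbe in terms of epimorphisms: for the forward implication it observes that the square
\[
\xymatrix{\underline{\g}\times_{\Y}\underline{\g}\ar[r]\ar[d] & \Y\ar[d]^{\Delta}\\ \underline{\g}\times_{\Ef(\Y)}\underline{\g}\ar[r] & \Y\times_{\Ef(\Y)}\Y}
\]
is $2$-Cartesian, so the epimorphism $\Y\to\Y\times_{\Ef(\Y)}\Y$ (from $\iota_{\Y}$ being a gerbe) pulls back to an epimorphism $\underline{\g}\times_{\Y}\underline{\g}\to\underline{\g}\times_{\Ef(\Y)}\underline{\g}$, and composing with the diagonal of $\rho$ gives the result. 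For the converse the paper picks a presentation $\varphi\colon\G\to\K$ with $\left[\varphi\right]\cong\rho$, sets $\h=\Ef(\K)$, and verifies by hand that the relevant maps $t\circ pr_{1}$ (for $\K$ and for $\G\times_{\K}\G$) are surjective local homeomorphisms; fullness enters when an arbitrary arrow $\gamma\colon\varphi(x_{1})\to\varphi(x_{2})$ in $\K$ must be written as $\varphi(a)$ for some $a\in\G_{1}$, so that one can manufacture an arrow in $\G\times_{\K}\G$ hitting a prescribed object.

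You instead route everything through the stalk characterisation, reducing both directions to an elementary statement about iso-fibrations of groupoids: connectedness of the total space of $pr_{1}$ is governed by connectedness of base and fibre, and fullness at $T=*$ is exactly the automorphism-lifting property needed to push a connecting morphism in the total space down into a single fibre. This is cleaner and more conceptual, avoids all groupoid-level bookkeeping, and makes transparent why fullness is the precise obstruction (your closing example $*\to\B G$ is apt). The paper's approach, by contrast, is more self-contained in that it does not invoke the stalk theorem, and it exhibits the epimorphism conditions explicitly at the level of presenting groupoids, which may be preferable if one later needs to know the surjections concretely. A small bonus of your argument is that it shows only fullness on points, rather than on all spaces $T$, is actually used in the converse.
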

\begin{proof}
Suppose that $\rho:\underline{\g} \to \Y$ is a small gerbe over $\Y$. In particular, this implies $\rho:\underline{\g} \to \Y$ is an epimorphism. From Theorem \ref{thm:gerbsec}, $\iota_{\Y}$ is a gerbe, hence also an epimorphism. This implies $\iota_{\Y} \circ \rho$ is an epimorphism. Let $\X:=\Ef\left(\Y\right).$ The following diagram is a $2$-pullback:
$$\xymatrix{\underline{\g} \times_{\Y} \underline{\g} \ar[r] \ar[d] & \Y \ar[d]\\
\underline{\g} \times_{\X} \underline{\g} \ar[r] & \Y \times_{\X} \Y.}$$
Since $\iota_{\Y}$ is a gerbe, the map $\Y \to \Y \times_{\X} \Y$ is an epimorphism, hence so is $$\underline{\g} \times_{\Y} \underline{\g} \to \underline{\g} \times_{\X} \underline{\g}.$$ The composite, $$\underline{\g} \to \underline{\g} \times_{\Y} \underline{\g} \to \underline{\g} \times_{\X} \underline{\g}$$ is an epimorphism, since $\g$ is a gerbe over $\Y$. Hence $\g$ is a gerbe over $\X$.

Conversely, suppose that $\iota_{\Y} \circ \rho$ is a gerbe  over $\X$ and that $\rho$ is full. In particular, it is an epimorphism. Let $\varphi:\G \to \K$ be a map of $S$-groupoids such that $$\left[\varphi\right] \cong \rho.$$ Let $\h=\Ef\left(\K\right).$ Then the map $$t \circ pr_1:\h_1 \times_{\h_0} \G_0 \to \h_0$$ is a surjective local homeomorphism. To show that $\rho$ is an epimorphism, we want to show that the induced map $$t \circ pr_1:\K_1 \times_{\K_0} \G_0 \to \K_0=\h_0$$ is a surjective local homeomorphism. It is automatically a local homeomorphism as $pr_1$ is the pullback of one and $t$ is one. It suffices to show that it is surjective. However, it can be factored as $$\K_1 \times_{\K_0} \G_0 \to \h_1 \times_{\h_0} \G_0 \to \h_0.$$ To show that $\g$ is in fact a gerbe over $\Y$, we need  to show that $\underline{\g} \to \underline{\g} \times_{\Y} \underline{\g}$ is an epimorphism. In terms of groupoids, this is showing that the map $$t\circ pr_1:\left(\G\times_{\K} \G\right)_1 \times_{\left(\G\times_{\K} \G\right)_0} \G_0 \to \left(\G\times_{\K} \G\right)_0$$ is a surjective local homeomorphism, where $\left(\G\times_{\K} \G\right)$ is a weak pullback of $S$-groupoids. To see that it is a local homeomorphism, note that we have the following commutative diagram:

$$\xymatrix{\left(\G\times_{\h} \G\right)_1 \times_{\left(\G\times_{\h} \G\right)_0} \G_0 \ar[r]^-{et} \ar[d]_-{et} & \left(\G\times_{\h} \G\right)_0 \ar[d]^-{et}\\
\left(\G\times_{\K} \G\right)_1 \times_{\left(\G\times_{\K} \G\right)_0} \G_0 \ar[r] & \left(\G\times_{\K} \G\right)_0,}$$
where the maps marked as $et$ are local homeomorphisms. Since $\underline{\g} \to \X$ is a gerbe, we know the map $$\left(\G\times_{\h} \G\right)_1 \times_{\left(\G\times_{\h} \G\right)_0} \G_0 \to \left(\G\times_{\h} \G\right)_0$$ is a surjective local homeomorphism. This implies that for every $$\left[\gamma\right]:\varphi\left(x_1\right) \to \varphi\left(x_2\right)$$ in $\left(\G \times_{\h} \G\right)_0$, there exists $g_1$ and $g_2$ in $\G_1$ such that $$\left[\gamma\right] \circ \left[\varphi\left(g_1\right)\right] = \left[\varphi\left(g_2\right)\right].$$ Suppose instead we are given $$\gamma:\varphi\left(x_1\right) \to \varphi\left(x_2\right)$$ in $\left(\G\times_{\K} \G\right)_0.$ Then as $\rho$ is full, so is $\varphi$, hence $$\gamma=\varphi\left(a\right)$$ for some $a \in \G_1$. Now, there exists $g_1$ and $g_2$ in $\G_1$ such that $$\left[\gamma\right] \circ \left[\varphi\left(g_1\right)\right] = \left[\varphi\left(g_2\right)\right].$$ Let $g'_2:=\left(a \circ g_1\right)^{-1}.$ Then $\left(g_1,g_2\right)$ is an arrow in $\left(\G \times_{\K} \G\right)$ from $$\left(x,x,\mathbb{1}_{\varphi(x)}\right)$$ to  $$\left(x_1,x_2,\gamma\right).$$ Hence $$\left(\G\times_{\K} \G\right)_1 \times_{\left(\G\times_{\K} \G\right)_0} \G_0 \to \left(\G\times_{\K} \G\right)_0$$ is a surjection.
\end{proof}

\begin{cor}
Let $\g=\left(\rho:\underline{\g} \to \X\right)$ be a local homeomorphism of \'etale stacks. Then $\g$ is a small gerbe over $\X$ if and only if $\rho$ is a full, effective local equivalence.
\end{cor}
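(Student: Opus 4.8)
The plan is to recognize this corollary as the synthesis of Theorem \ref{thm:dec}, Corollary \ref{cor:effchar}, and Proposition \ref{prop:full}, the only genuinely new ingredient being a bookkeeping lemma about the unit map $\iota_\X:\X \to \Ef\left(\X\right)$. The idea is to reduce the general statement, where $\X$ need not be effective, to the effective case already settled in Corollary \ref{cor:effchar} by transporting everything along $\iota_\X$. Concretely, I will first establish two facts about $\iota_\X$: that (a) it is itself a local homeomorphism, and (b) $\Ef_{et}\left(\iota_\X\right)$ is an equivalence; together these say that $\iota_\X$ is an effective local equivalence. Fact (a) follows because on a presenting \'etale groupoid the map $\iota_\h:\h \to \Ef\left(\h\right)$ is the identity on objects and commutes with the two source maps, both of which are local homeomorphisms, forcing the induced map on arrows to be a local homeomorphism. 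Fact (b) is formal: $\iota$ is the unit of the $2$-adjunction $\Ef_{et} \dashv j_{et}$ of Corollary \ref{cor:etadj}, and since $\Ef_{et}\circ j_{et}\simeq \mathrm{id}$, the triangle identity forces $\Ef_{et}\left(\iota_\X\right)$ to be an equivalence.

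For the forward direction, suppose $\g=\left(\rho:\underline{\g}\to\X\right)$ is a small gerbe over $\X$. Proposition \ref{prop:full} immediately gives that $\rho$ is full. By Theorem \ref{thm:dec} the composite $\iota_\X\circ\rho:\underline{\g}\to\Ef\left(\X\right)$ is a small gerbe over the effective \'etale stack $\Ef\left(\X\right)$, so Corollary \ref{cor:effchar} tells me that $\iota_\X\circ\rho$ is an effective local equivalence; in particular $\Ef_{et}\left(\iota_\X\circ\rho\right)\simeq\Ef_{et}\left(\iota_\X\right)\circ\Ef_{et}\left(\rho\right)$ is an equivalence. Combining this with fact (b) and the $2$-out-of-$3$ property for equivalences, I conclude that $\Ef_{et}\left(\rho\right)$ is an equivalence, i.e. $\rho$ is a full effective local equivalence.

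For the converse, suppose $\rho$ is a full effective local equivalence. Using fact (a), $\iota_\X\circ\rho$ is a composite of local homeomorphisms, hence a local homeomorphism, and $\Ef_{et}\left(\iota_\X\circ\rho\right)\simeq\Ef_{et}\left(\iota_\X\right)\circ\Ef_{et}\left(\rho\right)$ is an equivalence by fact (b) together with the hypothesis on $\rho$. Thus $\iota_\X\circ\rho$ is an effective local equivalence over the effective stack $\Ef\left(\X\right)$, so Corollary \ref{cor:effchar} makes it a small gerbe over $\Ef\left(\X\right)$. Since $\rho$ is also full by hypothesis, the ``if and only if'' clause of Theorem \ref{thm:dec} delivers that $\rho$ is a small gerbe over $\X$, completing the equivalence.

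I expect the main obstacle to be fact (b), or more precisely invoking the $2$-adjunction formalism correctly: one must check that $\iota_\X$ genuinely lives in the $2$-category $\Et_{et}$ on which $\Ef_{et}$ is defined (which is exactly where fact (a) is needed), and that the triangle identity for a $2$-adjunction yields an honest equivalence rather than merely an equivalence after passing to a localization. Everything else is a clean two-step reduction: strip off effectiveness via $\iota_\X$, apply the already-proven effective case, and use functoriality of $\Ef_{et}$ together with $2$-out-of-$3$ to transfer the conclusion between $\rho$ and $\iota_\X\circ\rho$.
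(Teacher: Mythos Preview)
Your proposal is correct and follows essentially the same route as the paper's proof: reduce to the effective case via $\iota_\X$, invoke Corollary~\ref{cor:effchar} there, and transport back using Theorem~\ref{thm:dec}, Proposition~\ref{prop:full}, and $2$-out-of-$3$ for $\Ef_{et}$. The only cosmetic difference is that the paper simply asserts ``$\Ef\left(\iota_\X\right)$ is an isomorphism'' (which is immediate at the groupoid level, since $\Ef\left(\h\right)$ is already effective), whereas you take the slightly more formal route through the triangle identity of the adjunction in Corollary~\ref{cor:etadj}; your explicit verification of fact~(a) is a small gain in rigor over the paper, which leaves it implicit.
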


\begin{proof}
Suppose that $\g$ is a gerbe. From Theorem \ref{thm:gerbsec}, $\iota_{\X}:\X \to \Ef\left(\X\right)$ is a small gerbe over $\Ef\left(\X\right).$ Hence the composite $\iota_{\X} \circ \rho$ is a gerbe over $\Ef\left(\X\right)$ by Theorem \ref{thm:dec}. By Corollary \ref{cor:gg}, this implies that it is an effective local equivalence, i.e. $$\Ef\left(\iota_{\X} \circ \rho\right) = \Ef\left(\iota_{\X}\right) \circ \Ef\left(\rho\right)$$ is an equivalence. But $\Ef\left(\iota_{\X}\right)$ is an isomorphism, hence $\ \Ef\left(\rho\right)$ is an equivalence. So $\rho$ is an effective local equivalence. It is full by Proposition \ref{prop:full}.

Conversely, suppose that $\rho$ is full and an effective local equivalence. It follows that $\iota_{\X} \circ \rho$ is an effective local equivalence over $\Ef\left(\X\right)$, hence a gerbe by Corollary \ref{cor:effchar}. The result now follows from Theorem \ref{thm:dec}.
\end{proof}

\section{The 2-Category of Gerbed Effective \'Etale Stacks}\label{sec:groth}
\sectionmark{Gerbed Effective \'Etale Stacks}
In this section, we will treat topological and differentiable stacks as fibered categories (categories fibered in groupoids over $S$). The Grothendieck construction provides an equivalence of $2$-categories between this description, and the one in terms of groupoid valued weak $2$-functors \cite{elephant2}. We will assume the reader is familiar with the language of fibered categories and this equivalence. For a quick introduction to fibered categories, see for instance \cite{FGA}. For a fibered category $\X$, we shall denote the structure map which makes it a fibered category over $S$ by $p_\X$.

For a diagram of fibered categories:

$$\xymatrix{& \Z \ar[d]^-{\rho} \\
\X \ar[r]^-{g} & \Y,}$$
we choose the explicit weak pullback described as follows. The objects of $g^*\left(\Y\right)$ are triples $\left(x,z,r\right)$ in $\X_0 \times \Z_0 \times \times \Y_1$ such that $$p_\X\left(x\right)=p_\Z\left(z\right)=T$$ and $$r:g\left(x\right) \to \rho\left(z\right),$$ with $r \in \Y\left(T\right).$ An arrow between a triple $\left(z_1,x_1,r_1\right)$ and a triple $\left(z_2,x_2,r_2\right)$ is a pair $\left(u,v\right) \in \Z_1 \times \X_1$ such that $$p_\Z\left(u\right)=p_\X\left(v\right),$$ making the following diagram commute:
$$\xymatrix{g\left(x_1\right) \ar[d]_-{r_1} \ar[r]^-{g\left(u\right)} & g\left(x_2\right) \ar[d]^-{r_2}\\
\rho\left(z_1\right) \ar[r]^-{\rho\left(v\right)} & \rho\left(z_2\right).}$$
It has structure map $$p_{g^*\Y}\left(x,z,r\right)=p_\Z\left(z\right)=p_\X\left(x\right),$$ $$p_{g^*\Y}\left(u,v\right)=p_\Z\left(u\right)=p_\X\left(v\right).$$
We denote the canonical projections as $pr_1:g^{*}\rho \to \Z$ and $pr_2:g^*\rho \to \Y$. We define $g^*\rho$ as the map $pr_1:g^*\Y \to \Z.$

Given $\alpha:f \Rightarrow g$ with $g:\X \to \Y$, there is a canonical map $$\alpha^*:g^*\rho \to f^*\rho$$ given on objects as $$\left(z,x,r\right) \mapsto \left(z,x,r \circ \alpha(z)\right),$$ and given as the identity on arrows. This strictly commutes over $\X$. We denote the associated map in $St\left(S\right)/\X$ as $\alpha^*\rho$.

Given a composable sequence of arrows, $$\W \stackrel{f}{\longrightarrow} \X \stackrel{g}{\longrightarrow} \Y,$$ there is a canonical isomorphism $\chi_{g,f}:f^*g^*\Z \to \left(gf\right)^*\Z$ given on objects as $$\left(w,\left(x,z,r\right),q\right) \mapsto \left(w,z,r \circ g\left(q\right)\right),$$ and on arrows as $$\left(u,\left(a,b\right)\right) \mapsto \left(u,b\right).$$ This strictly commutes over $\W$. We denote the associated map in $St\left(S\right)/\W$ by the same name.

In a similar spirit, given $\tau:\W \to \Y$ with $m:\rho \to \tau$ in $\St\left(S\right)/\Y$,  and $$f:\X \to \Y,$$ there is a canonical map $$f^*m:f^*\rho \to f^*\tau$$ in $\St\left(S\right)/\X$, and given $\phi:m \Rightarrow n$, with $g:\rho \to \tau$, there is a canonical $2$-cell $$f^*\phi:f^*m \Rightarrow f^*n.$$ We invite the reader to work out the details.

Finally, we note that if $$f:\X \to \Y,$$ $$\rho:\Z \to \Y,$$ $$\lambda:\W \to \X,$$ $$\zeta:\W \to \Z,$$ and $$\omega: \rho \circ \zeta \Rightarrow f \circ \lambda,$$ there is a canonical map

\begin{eqnarray*}
\left(\lambda,\zeta,\omega\right):\W &\to& f^*\Z\\
w &\mapsto& \left(\lambda(w),\zeta(w),\omega(w)^{-1}\right)\\
l &\mapsto& \left(\lambda(l),\zeta(l)\right).\\
\end{eqnarray*}

This data provides us with coherent choices of pullbacks. We will now use this data to construct a $2$-category we will call the $2$-category of \textbf{gerbed effective \'etale stacks}. We will denote it by $\gets$.

Its \textbf{objects} are pairs $\left(\X,\sigma\right)$ with $\X$ an effective \'etale stack and $\sigma \to \X$ an effective local equivalence. Of course, this is the same data as a small gerbe over $\X$.

An \textbf{arrow} from $\left(\X,\sigma\right)$ to $\left(\Y,\tau\right)$ is a pair $\left(f,m\right)$ where $f:\X \to \Y$ and $m:\sigma \to f^*\tau$ in $\St\left(S\right)/\X$. Note that this is equivalent data to a map in $\St\left(\X\right)$ from $\sigma$ to $f^*\tau$ viewed as gerbes.

A $2$-\textbf{cell} between such an $\left(f,m\right)$ and a $$\left(g,n\right):\left(\X,\sigma\right) \to \left(\Y,\tau\right),$$ is a pair $\left(\alpha,\phi\right)$ with $\alpha:f \Rightarrow g$ a $2$-cell in $\Eft$, and $\phi$ a $2$-cell in $\St\left(S\right)/\X$ such that

$$\xymatrix{\sigma \ar[rr]^-{n} \ar[rrdd]_{m} & & g^*\tau \ar[dd]^{\alpha^*\tau} \ar@2{->}[dl]|{\phi}  \\
& & \\
& & f^*\tau.}$$

\textbf{Composition of $1$-morphisms} is given as follows:

If $$\left(\X,\sigma\right) \stackrel{\left(f,m\right)}{\longlongrightarrow} \left(\Y,\tau\right) \stackrel{\left(g,n\right)}{\longlongrightarrow} \left(\Z,\rho\right),$$ is a pair of composable $1$-morphisms, define their composition as $\left(gf,n\ast m\right),$ where $n\ast m$ is defined as the composite
$$\sigma \stackrel{m}{\longlongrightarrow} f^*\tau \stackrel{f^*\left(n\right)}{\longlongrightarrow} f^*g^*\rho \stackrel{\chi_{g,f}}{\longlongrightarrow} \left(gf\right)^*\rho.$$

\textbf{Vertical composition of $2$-cells} is defined in the obvious way.

Suppose $$\left(\alpha,\phi\right):\left(f,m\right) \Rightarrow \left(k,p\right)$$ and
$$\left(\beta,\psi\right):\left(g,n\right) \Rightarrow \left(l,p\right),$$
with $$\left(f,m\right):\left(\X,\sigma\right) \to \left(\Y,\tau\right)$$ and $$\left(g,n\right):\left(\Y,\tau\right) \to \left(\Z,\rho\right).$$
Denote the horizontal composition of $\beta$ with $\alpha$ by $\beta \circ \alpha$. Then we define the \textbf{horizontal composition of 2-cells}
$$\left(\beta,\psi\right) \circ \left(\alpha,\phi\right):\left(g,n\right) \circ \left(f,m\right) \Rightarrow \left(l,p\right) \circ \left(k,o\right),$$
by $$\left(\beta,\psi\right) \circ \left(\alpha,\phi\right)=\left(\beta \circ \alpha, \psi \ast \phi\right),$$where $\psi \ast \phi$ is defined by the pasting diagram:

$$\xymatrix@C=2.5cm{ & & k^*l^*\rho \ar[r]^-{\chi_{l,k}} \ar[d]^{\left(k^* \circ \beta^*\right)\left(\rho\right)} \ar@{} @<-4pt>[d]| (0.4) {}="c" & \left(lk\right)^*\left(\rho\right) \ar[d]^-{\left(\beta\circ k\right)^*\left(\rho\right)}  \\
& k^*\tau  \ar[d]^-{\alpha^*\left(\tau\right)} \ar@{} @<-4pt>[d]| (0.4) {}="a" \ar[ru]^-{k^*\left(\rho\right)} \ar[r]_-{k^*(n)} \ar@{} @<4pt> [r]| (0.4) {}="d" \ar @{=>}|{k^*\left(\psi\right)}  "c";"d" & k^*g^*\rho \ar[d]^{\left(\alpha^* \circ g^*\right)\left(\rho\right)} \ar[r]_-{\chi_{g,k}} & \left(gk\right)^*\left(\rho\right) \ar[d]^-{\left(g\alpha\right)^*\left(\rho\right)}\\
\sigma \ar[ur]^-{p} \ar[r]_-{m} \ar@{} @<4pt> [r]| (0.4) {}="b" \ar @{=>}|{\phi}  "a";"b" & f^*\tau \ar[r]_-{f^*(n)} & f^*g^*\rho \ar[r]_-{\chi_{g,f}} & \left(gf^*\right)^*\left(\rho\right).}$$
\begin{rmk}
What we have actually done is applied the Grothendieck construction for bicategories \cite{bicat} to the trifunctor which associates to each effective \'etale stack, the $2$-category of effective local equivalences over $\X$ (which we know to be equivalent to the $2$-category $\gb$ of small gerbes over $\X$).
\end{rmk}

If $P$ is an \'etale invariant subcategory of spaces, we can similarly define the $2$-category $\gets_P$ in which each underlying $1$-morphism in $\Eft$ must lie in $\Eft_P$.

\begin{thm}\label{thm:etin}
$P$ is an open \'etale invariant subcategory of spaces. Then the $2$-category $\gets_P$ of gerbed effective \'etale stacks and $P$-morphisms is equivalent to the $2$-category $\Et_P$ of \'etale stacks and $P$-morphisms (See Corollary \ref{cor:etadj}).
\end{thm}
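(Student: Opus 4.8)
The plan is to exhibit mutually quasi-inverse $2$-functors $\Phi:\Et_P \to \gets_P$ and $\Psi:\gets_P \to \Et_P$. First I would define $\Phi$ on an \'etale stack $\Y$ by $\Phi(\Y):=\left(\Ef_P(\Y),\iota_{\Y}\right)$, where $\iota_{\Y}:\Y \to \Ef(\Y)$ is the unit of the adjunction of Corollary \ref{cor:etadj}, regarded as an object of $\St(S)/\Ef(\Y)$. This is a legitimate object of $\gets_P$: since $\Ef_P\circ j_P \Rightarrow \mathrm{id}$ is invertible, $\iota_{\Y}$ is a local homeomorphism with $\Ef(\iota_{\Y})$ an equivalence, hence an effective local equivalence, and it is a small gerbe over the effective stack $\Ef(\Y)$ by Theorem \ref{thm:gerbsec} together with Corollary \ref{cor:effchar}. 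On a $P$-morphism $f:\Y \to \Y'$ I would set $\Phi(f):=\left(\Ef_P(f),m_f\right)$, where $\Ef_P(f)$ is a $P$-morphism by Lemma \ref{lem:etin1} and $m_f:\iota_{\Y}\to \Ef_P(f)^*\iota_{\Y'}$ is the map into the pullback furnished by the naturality square of $\iota$ applied to $f$. On a $2$-cell $\alpha:f\Rightarrow g$ the value is $\left(\Ef_P(\alpha),\phi_\alpha\right)$, with $\phi_\alpha$ the comparison cell forced by naturality.

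Conversely, I would define $\Psi$ on an object $(\X,\sigma)$ by $\Psi(\X,\sigma):=\underline{\sigma}$, the total \'etale stack of the effective local equivalence $\sigma$, and on a $1$-morphism $(f,m):(\X,\sigma)\to(\Y,\tau)$ by the composite $\underline{\sigma}\xrightarrow{\underline{m}}\X\times_{\Y}\underline{\tau}\xrightarrow{pr_2}\underline{\tau}$. The key point to check here is that this composite is a $P$-morphism: the projection $pr_2:\X\times_{\Y}\underline{\tau}\to\underline{\tau}$ is the pullback of $f\in P$ along the local homeomorphism $\tau$, hence lies in $P$ by \'etale invariance, while $\underline{m}$ is a local homeomorphism because it commutes over $\X$ between the two local homeomorphisms $\sigma$ and $pr_1$; since local homeomorphisms lie in $P$ and $P$ is closed under composition, $\Psi(f,m)\in\Et_P$. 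On $2$-cells $\Psi$ is defined by the evident whiskering.

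Next I would verify that the two composites are equivalent to the identities. For $\Psi\circ\Phi$ one has $\Psi\Phi(\Y)=\underline{\iota_{\Y}}=\Y$, and $\Psi\Phi(f)=pr_2\circ\underline{m_f}=f$ by the very construction of $m_f$ as a lift into the pullback over $\Ef_P(f)$. For $\Phi\circ\Psi$ one has $\Phi\Psi(\X,\sigma)=\left(\Ef(\underline{\sigma}),\iota_{\underline{\sigma}}\right)$; since $\sigma$ is an effective local equivalence and $\X$ is effective, $\Ef_P(\sigma):\Ef(\underline{\sigma})\to\Ef(\X)\simeq\X$ is an equivalence, and naturality of $\iota$ gives $\Ef_P(\sigma)\circ\iota_{\underline{\sigma}}\cong\iota_{\X}\circ\sigma$ with $\iota_{\X}$ invertible, whence $\sigma\cong\iota_{\X}^{-1}\circ\Ef_P(\sigma)\circ\iota_{\underline{\sigma}}$. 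This exhibits $(\X,\sigma)\simeq\left(\Ef(\underline{\sigma}),\iota_{\underline{\sigma}}\right)$ in $\gets_P$, naturally in $(\X,\sigma)$.

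The hard part will be the $2$-functoriality of $\Phi$ and $\Psi$: checking that they respect the composition of $1$-morphisms (defined through the comparison isomorphisms $\chi_{g,f}$) and, above all, the horizontal composition of $2$-cells given by the large pasting diagram in the definition of $\gets_P$. Rather than chase that diagram by hand, I would organize the verification through the remark that $\gets_P$ is the Grothendieck construction of the trifunctor $\X\mapsto\Gb(\X)$ over $\Eft_P$: the assignment $\Y\mapsto\bigl(\Ef_P(\Y),\iota_{\Y}\bigr)$ is precisely the comparison of this fibration with the localization fibration $\Ef_P:\Et_P\to\Eft_P$, whose fibre over an effective $\X$ is $\Gb(\X)$ by Corollary \ref{cor:effchar} and whose cartesian lifts are supplied by the adjunction of Corollary \ref{cor:etadj}. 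The coherence cells $\chi$ and $\alpha^*$ are then matched, step by step, with the pseudofunctoriality of $f\mapsto f^*$ and of $\Ef_P$, so that $\Phi$ and $\Psi$ become the two canonical directions of a biequivalence between a $2$-category and the Grothendieck construction of its associated fibration.
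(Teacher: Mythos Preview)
Your proposal is correct and follows essentially the same approach as the paper: your $\Phi$ and $\Psi$ coincide with the paper's $\Theta$ and $\Xi$, and the verification that $\Psi\Phi\simeq\mathrm{id}$ on the nose while $\Phi\Psi\Rightarrow\mathrm{id}$ via the equivalence $\Ef_P(\sigma)$ is exactly what the paper does. Your explicit check that $\Psi(f,m)$ lies in $\Et_P$ via \'etale invariance is a useful addition the paper omits, and your plan to handle the $2$-functoriality through the Grothendieck-construction viewpoint (which the paper mentions in a remark but then replaces by ``left to the reader'') is a reasonable way to organize those coherence checks.
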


\begin{proof}
Define a $2$-functor $\Theta:\Et_P \to \gets_P$.

\textbf{On objects:}
$$\Theta\left(\X\right)=\left(\Ef\left(\X\right),\iota_\X\right),$$ where $\iota$ is the unit of the adjunction in Theorem \ref{thm:bungalo}. This associates $\X$ to the gerbe it induces over $\Ef\left(\X\right).$

\textbf{On arrows:}
Suppose $\varphi:\X \to \Y$ is a map in $\Et_P.$ Notice that the diagram
$$\xymatrix{\X \ar[r]^-{\varphi} \ar[d]_-{\iota_{\X}} & \Y \ar[d]^-{\iota_{\Y}} \\
\Ef\left(\X\right) \ar[r]^-{\Ef\left(\varphi\right)} & \Ef\left(\Y\right),}$$
commutes on the nose, so there is an associated map $$\left(\iota_\X,\varphi,id\right):\X \to \Ef\left(\varphi\right)^*\Y.$$
Define $\Theta\left(\varphi\right)=\left(\Ef\left(\varphi\right),\left(\iota_\X,\varphi,id\right)\right).$

\textbf{On $2$-cells:}
Suppose that $\varphi':\X \to \Y$ and $$\alpha:\varphi \Rightarrow \varphi'.$$ Then define $$\Theta\left(\alpha\right)=\left(\Ef\left(\alpha\right),\tilde \alpha\right),$$
where $$\tilde \alpha:\X_0 \to \left(\Ef\left(\varphi\right)^*\rho\right)_1$$ is defined by the equation
$$\tilde\alpha\left(x\right)=\left(id,\alpha(x)^{-1}\right).$$

We leave it to the reader to check that $\Theta$ is $2$-functor.

Define another $2$-functor $$\Xi:\gets_P \to \Et_P.$$
\textbf{On objects:}
If $\sigma:\g \to \X$ is an effective local equivalence, denote $\g$ by $\underline{\sigma}$. Let $$\Xi\left(\X,\sigma\right):=\underline{\sigma}.$$
\textbf{On arrows:}
Suppose $\left(f,m\right):\left(\X,\sigma\right) \to \left(\Y,\tau\right)$. Denote the underlying map of $m$ by $$\underline{m}:\underline{\sigma} \to \underline{f^*\tau}.$$ Define $$\Xi\left(f,m\right):=pr_2 \circ \underline{m}: \underline{\sigma} \to \underline{\tau},$$ where $$pr_2:f^*\tau \to \tau$$ is the canonical projection.\\
\textbf{On $2$-cells:}
Given $$\left(g,n\right):\left(\X,\sigma\right) \to \left(\Y,\tau\right)$$ and $$\left(\alpha,\phi\right):\left(f,m\right) \to \left(g,n\right),$$ define $\Xi\left(\left(\alpha,\phi\right)\right)$ by the following pasting diagram:
$$\xymatrix@C=2.5cm{\underline{\sigma} \ar@2{->}[drr]|{\phi^{-1}} \ar[rr]^-{\underline{m}} \ar[rrdd]_-{\underline{n}} & & f^*\Y \ar[r]^-{pr_2} & \Y. \\
& & &  \\
& & g^*\Y \ar[uu]_{\underline{\alpha^*\left(\tau\right)}} \ar[ruu]_-{pr_2} & }$$
By direct inspection, one can see that $$\Xi \circ \Theta = id_{\Eft_P}.$$ There is furthermore a canonical natural isomorphism $$\Theta \circ \Xi \Rightarrow id_{\gets_P}.$$ On objects $$\Theta \circ \Xi\left(\left(\X,\sigma\right)\right)=\left(\Ef\left(\underline{\sigma}\right),\iota_{\underline{\sigma}}\right).$$ By Corollary \ref{cor:gg} and Theorem \ref{thm:gerbsec}, this is canonically isomorphic to $\left(\X,\sigma\right)$. Moreover, if $$\left(f,m\right):\left(\X,\sigma\right) \to \left(\Y,\tau\right),$$ then $$\Theta \Xi \left(\left(f,m\right)\right)=\left(\Ef\left(pr_2 \circ \underline{m}\right),\left(\iota_{\underline{\sigma}},pr_2 \circ \underline{m},id\right)\right).$$ Consider the following diagram:

$$\xymatrix@C=2cm{\Ef\left(\underline{\sigma}\right) \ar[r]^-{\Ef\left(\underline{m}\right)} \ar[rd]_-{\Ef\left(\sigma\right)} & \Ef\left(pr_2\right) \ar[d]^-{\Ef\left(f^*\underline{\tau}\right)} \ar[r]^-{\Ef\left(f^*\tau\right)} & \Ef\left(\underline{\tau}\right) \ar[d]^-{\Ef\left(\tau\right)}\\
& \Ef\left(\X\right) \ar[r]^-{\Ef\left(f\right)} & \Ef\left(\Y\right)\\
& \X \ar[u]^{\iota_\X} \ar[r]^-{f} & \Y \ar[u]_-{\iota_\Y}.}$$
Since $\sigma$ and $f^*\tau$ are effective local equivalences, the triangle consists of all equivalences. The lower square likewise consists of all equivalences as $\X$ and $\Y$ are effective. We leave the rest of the details to the reader.
\end{proof}

\begin{cor}
There is an equivalence of $2$-categories between gerbed effective \'etale differentiable stacks and submersions, $\gets_{subm}$, and the $2$-category of \'etale differentiable stacks and submersions, $\Et_{subm}.$
\end{cor}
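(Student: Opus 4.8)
The plan is to obtain this statement as an immediate instance of Theorem \ref{thm:etin}, which asserts that $\gets_P \simeq \Et_P$ for any open \'etale invariant subcategory $P$ of spaces. Here $S$ is the category of smooth manifolds, and we take $P$ to be the class of submersions. Thus the entire content of the corollary reduces to verifying that submersions constitute an open \'etale invariant subcategory in the sense of the definition used in Lemma \ref{lem:etin1}; indeed, that definition already lists submersions among its intended examples, so the proof is simply to confirm the three requisite properties.

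First I would check that submersions form a subcategory of $S$: the identity map of any manifold is a submersion, and the composite of two submersions is again a submersion, since surjectivity of the differential is preserved under composition. Next, I would observe that every submersion is an open map --- this is the local normal form statement that a submersion looks locally like a projection $\mathbb{R}^{n} \to \mathbb{R}^{m}$, which is open. This supplies the ``open'' half of the requirement.

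It then remains to verify the two \'etale invariance axioms. For stability under pre-composition with local homeomorphisms (here local diffeomorphisms), if $f\colon Y \to X$ is a submersion and $e\colon W \to Y$ is a local diffeomorphism, then $d(f\circ e)=df\circ de$ is surjective at every point, because $de$ is an isomorphism and $df$ is surjective; hence $f\circ e$ is a submersion. For stability under pullback along local diffeomorphisms, if $f\colon Y\to X$ is a submersion and $g\colon Z\to X$ is a local diffeomorphism, then, locally, $g$ is a diffeomorphism onto an open subset, so the fibred product $Z\times_X Y$ is locally identified with the restriction of $f$ over that open set; in particular $Z\times_X Y$ is a manifold and the projection $Z\times_X Y \to Z$ is again a submersion. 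With these checks in hand, $P=\text{submersions}$ satisfies all the hypotheses of Theorem \ref{thm:etin}, and the conclusion $\gets_{subm}\simeq\Et_{subm}$ follows directly.

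I do not anticipate a genuine obstacle: the corollary is purely a matter of confirming that the abstract framework of open \'etale invariant maps applies to submersions, and each verification is an elementary consequence of their local structure. The only mild point of care is that, since we do not require manifolds to be Hausdorff or paracompact, one should confirm that the fibred product $Z\times_X Y$ along a submersion and a local diffeomorphism again lies in $S$; this is immediate, since the relevant transversality is automatic (one of the two maps is a submersion), so the pullback is a manifold in our minimalist sense.
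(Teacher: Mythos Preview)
Your proposal is correct and matches the paper's approach: the corollary is stated immediately after Theorem \ref{thm:etin} with no separate proof, since the paper has already listed submersions as an example of an open \'etale invariant class right after the definition. You have simply spelled out the routine verification of that example, which is exactly what is needed.
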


\begin{rmk}
Some variations of this are possible. For example, if we restrict to \'etale stacks whose effective parts are (equivalent to) spaces, so-called \textbf{purely ineffective} \'etale stacks, then the functor $\Ef$ extends to all maps. The proof of Theorem \ref{thm:etin} extends to this setting to show that purely ineffective \'etale stacks are equivalent to the $2$-category of gerbed spaces, a result claimed in \cite{pres2}. This theorem is a corrected version of theorem 94 of \cite{Metzler} (which is unfortunately incorrect since there is an error on the top of page 44, see the remark after Corollary \ref{cor:real}). Moreover, by results of \cite{pres2}, this restricts to an equivalence between purely ineffective orbifolds and gerbed manifolds whose gerbe has a locally constant band with finite stabilizers.
\end{rmk}

\appendix
\section{Sheaves in Groupoids vs. Stacks}\label{App:sheavess}

\begin{dfn}\label{strps}
Let $\C$ be a small category. A \textbf{strict presheaf in groupoids} over $\C$ is a strict 2-functor $F:\C^{op} \to Gpd$ to the $2$-category of (small) groupoids.  Notice that this is the same as a $1$-functor $\C^{op} \to \tau_1\left(Gpd\right)$, where the target is the 1-category of groupoids. A morphism of strict presheaves is a strict natural transformation (i.e. a natural transformation between their corresponding $1$-functors into $\tau_1\left(Gpd\right)$). A $2$-morphism between two natural transformations $\alpha_i:F \Rightarrow G$, $i=1,2$, is an assignment to each object $C$ of $\C$ a natural transformation $$w(C):\alpha_1(C) \Rightarrow \alpha_2(C)$$ subject to the following condition:

For all $f:D \to C$, we have two functors from $F(C)$ to $G(D)$, namely $$G(f)\alpha_1(C)=\alpha_1(D)F(f)$$ and $$G(f)\alpha_2(C)=\alpha_2(D)F(f).$$ Given our assignment $C \mapsto w(C)$, we have two different natural transformations between these functors: $G(f)w(C)$ and $w(D)F(f)$. $w$ is called a \textbf{modification} if these two natural transformations are equal. Modifications are the $2$-cells of strict presheaves. This yields a strict $2$-category of strict presheaves in groupoids $\mathit{Psh}(\C,Gpd)$.
\end{dfn}

\begin{prop}\label{prop:shfgpd}
The $2$-category $\mathit{Psh}(\C,Gpd)$ is equivalent to the $2$-category of groupoid objects in $\Set^{\C^{op}}$.
\end{prop}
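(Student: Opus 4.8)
The plan is to produce explicit strict $2$-functors in both directions and to check that they are mutually inverse, so that the asserted equivalence is in fact an isomorphism of $2$-categories. The guiding principle is that a groupoid is an essentially algebraic structure (a set of objects, a set of arrows, and operations $s,t,u,m,i$ subject to axioms expressible with finite limits) and that finite limits in the presheaf category $\Set^{\C^{op}}$ are computed objectwise; hence "groupoid object in presheaves" and "presheaf of groupoids" are merely two packagings of the same data. First I would define $\Phi\colon \mathit{Psh}(\C,Gpd) \to Gpd(\Set^{\C^{op}})$ on objects: given a strict presheaf $F$, let $\Phi(F)_0$ and $\Phi(F)_1$ be the presheaves $C \mapsto F(C)_0$ and $C \mapsto F(C)_1$, which are functorial in $C$ because $F$ is a strict $2$-functor and each $F(f)$ is a functor of groupoids. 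The structure maps $s,t,u,m,i$ of the individual groupoids $F(C)$ are natural in $C$ precisely because each $F(f)$ preserves source, target, identities, composition, and inverses; they therefore assemble into morphisms of presheaves satisfying the groupoid axioms objectwise, yielding a genuine internal groupoid.

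In the other direction, $\Psi$ sends a groupoid object $\mathbb{G}$ to the strict $2$-functor $C \mapsto \mathbb{G}(C)$, where $\mathbb{G}(C)$ is the ordinary groupoid with objects $\mathbb{G}_0(C)$, arrows $\mathbb{G}_1(C)$, and structure given by the components at $C$ of the internal operations; for $f\colon D \to C$ the restriction maps $\mathbb{G}_i(f)$ constitute a functor $\mathbb{G}(C) \to \mathbb{G}(D)$, and strict functoriality of restriction in $\Set^{\C^{op}}$ gives strictness of the $2$-functor. Evaluating $\Phi$ and $\Psi$ at objects makes it transparent that they are mutually inverse there, and the same objectwise reading handles $1$-morphisms: a strict natural transformation $\alpha$ has object- and arrow-components $\alpha(C)_0,\alpha(C)_1$ which are natural in $C$ by strict naturality, so they give a pair $(\alpha_0,\alpha_1)$ of presheaf maps commuting with all structure, i.e. an internal functor, and conversely.

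The only point needing genuine attention — and the step I expect to be the main bookkeeping obstacle — is matching the $2$-cells. A modification $w$ as in Definition~\ref{strps} is a family of natural transformations $w(C)\colon \alpha_1(C) \Rightarrow \alpha_2(C)$, equivalently a family of maps $F(C)_0 \to G(C)_1$. I must verify two things. First, that each individual $w(C)$ being a natural transformation of functors of groupoids is exactly the internal-naturality identity that makes the assembled family into an internal natural transformation of the corresponding internal functors. Second, that the modification condition $G(f)\,w(C) = w(D)\,F(f)$ is precisely naturality in $C$ of the single presheaf morphism $w\colon \Phi(F)_0 \to \Phi(G)_1$ (sending an object $x \in F(C)_0$ to the component $w(C)(x) \in G(C)_1$). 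Both identifications are immediate once the two conditions are written side by side, so no real content is hidden beyond careful unwinding of the definitions.

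Finally I would record that $\Phi \circ \Psi$ and $\Psi \circ \Phi$ are the respective identities (not merely equivalences), since every step above is a bijective re-bracketing of the same underlying data with no choices involved. This gives the claimed equivalence of $2$-categories, in fact an isomorphism, and justifies the interchangeable use of strict presheaves of groupoids and internal groupoids in $\Set^{\C^{op}}$ that the body of the paper relies on.
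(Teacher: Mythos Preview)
Your proposal is correct and follows essentially the same approach as the paper's proof: both extract the object and arrow presheaves $C\mapsto F(C)_0$ and $C\mapsto F(C)_1$, observe that the groupoid structure maps are natural in $C$, and then identify the modification condition on a $2$-cell $w$ with naturality of the assembled map $F_0\to G_1$ (the paper writes the inverse as $\mathbb{G}\mapsto\Hom(\,\cdot\,,\mathbb{G})$, which is exactly your $\Psi$). Your treatment of the $2$-cells is in fact slightly more explicit than the paper's, separating the internal-naturality condition from the presheaf-naturality condition, but the content is the same.
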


\begin{proof}
Let $\left(\mspace{2mu} \cdot \mspace{2mu}\right)_i:\tau_1\left(Gpd\right) \to \Set$, $i=0,1,2$ be the functors which associate to a groupoid $\G$ its set of objects $\G_0$, its set of arrows $\G_1$, and its set $\G_2$ of composable arrows respectively. Let $F:\C^{op} \to \tau_1\left(Gpd\right)$ be a strict presheaf of groupoids. Then each $F_i$ is an ordinary presheaf of sets. Moreover, for each $C$, $F(C)$ is a groupoid, which we may write as demanding certain diagram involving each $F(C)_i$ to commute. These assemble to a corresponding diagram for the global $F_i$'s, showing they form a groupoid object in $\Set^{\C^{op}}$, $Q(F)$. Given $1$-morphism $\alpha:F \Rightarrow G$ in $\mathit{Psh}(\C,Gpd)$, let $Q(\alpha):Q(F) \to Q(F)$ be the internal functor with components $Q(\alpha)_i(C)=\alpha(C)_i$ for $i=0,1$. Finally, let $w$ be a modification from $\alpha$ to $\beta$. Then, in particular, for each $C$, $w(C):\alpha(C) \Rightarrow \beta(C)$ is a natural transformation, so is a map $w(C):F(C)_0 \to G(C)_1$ satisfying the obvious properties. It is easy to check that the conditions for $w$ to be a modification are precisely those for the family $\left(w(C):F(C)_0 \to G(D)_1\right)$ to assemble into a natural transformation $$Q(w):F_0 \Rightarrow G_1.$$ Since $w$ is point-wise a natural transformation, $Q(w)$ is an internal natural transformation. It is easy to check that this is indeed an equivalence of $2$-categories with an explicit inverse on objects given by $$\mathbb{G} \mapsto \Hom\left(\mspace{2mu} \cdot \mspace{2mu},\mathbb{G}\right).$$
\end{proof}

\begin{dfn}
Let $\left(\C,J\right)$ be a Grothendieck site. Then a \textbf{sheaf} of groupoids is a strict presheaf $F:\C^{op} \to \tau_1\left(Gpd\right)$ such that for any covering family $\left(C_i \to C\right)_i$, the induced morphism

$$F(C) \to \varprojlim \left[ \prod \limits_i {F(C_i)} \rrarrow \prod \limits_{i,j} {F(C_{ij})} \right]$$
is an isomorphism of groupoids. Sheaves of groupoids form a full sub-$2$-category $\Sh(\C,Gpd)$ of strict presheaves of groupoids.
\end{dfn}

The following proposition is easily checked:

\begin{prop}
The 2-functor $Q:\mathit{Psh}(\C,Gpd) \to Gpd\left(\Set^{\C^{op}}\right)$ restricts to an equivalence $Q:\Sh(\C,Gpd) \to Gpd\left(\Sh\left(\C\right)\right)$.
\end{prop}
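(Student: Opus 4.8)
The plan is to reduce the sheaf condition for a presheaf of groupoids to the ordinary sheaf condition on its presheaf of objects and its presheaf of arrows, and to dually characterise $Gpd\left(\Sh\left(\C\right)\right)$ inside $Gpd\left(\Set^{\C^{op}}\right)$. Since Proposition \ref{prop:shfgpd} already supplies that $Q$ is an equivalence of $2$-categories, and both $\Sh(\C,Gpd)$ and $Gpd\left(\Sh\left(\C\right)\right)$ are \emph{full} sub-$2$-categories, it suffices to check that $Q$ and its pseudo-inverse carry one onto the other; equivalently, that $F$ is a sheaf of groupoids if and only if $Q(F)$ is a groupoid object in sheaves.

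The key tool is that the two forgetful functors $$\left(\mspace{2mu}\cdot\mspace{2mu}\right)_0,\left(\mspace{2mu}\cdot\mspace{2mu}\right)_1:\tau_1\left(Gpd\right) \to \Set$$ are representable, namely $\G_0 \cong \Hom\left(*,\G\right)$ and $\G_1 \cong \Hom\left(I,\G\right)$, where $*$ is the terminal groupoid and $I$ is the free-standing isomorphism (two objects joined by a unique isomorphism). Consequently both functors preserve all limits, and they jointly reflect isomorphisms: a functor in $\tau_1\left(Gpd\right)$ is invertible precisely when it is a bijection on objects and on arrows. First I would treat the presheaf side. Given a covering family $\left(C_i \to C\right)_i$, apply $\left(\mspace{2mu}\cdot\mspace{2mu}\right)_0$ and $\left(\mspace{2mu}\cdot\mspace{2mu}\right)_1$ to the comparison functor $$F(C) \to \varprojlim\left[\prod_i F(C_i) \rrarrow \prod_{i,j} F(C_{ij})\right].$$ Because these functors preserve the limit on the right, the two resulting maps of sets are exactly the comparison maps for $F_0$ and for $F_1$ into the corresponding equalizers. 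By joint reflection of isomorphisms, the comparison functor is an isomorphism of groupoids if and only if both of these set-maps are bijections, i.e. if and only if $F_0$ and $F_1$ satisfy the sheaf condition for $\left(C_i \to C\right)_i$. As the covering family was arbitrary, $F$ is a sheaf of groupoids if and only if $F_0$ and $F_1$ are sheaves of sets.

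Next I would treat the groupoid-object side, using that the inclusion $\Sh\left(\C\right) \hookrightarrow \Set^{\C^{op}}$ is full and closed under finite limits. A groupoid object $\mathbb{G}$ in $\Set^{\C^{op}}$ lies in $Gpd\left(\Sh\left(\C\right)\right)$ exactly when all of its structure presheaves are sheaves; but the presheaf of composable pairs is the pullback $\mathbb{G}_1 \times_{\mathbb{G}_0} \mathbb{G}_1$, so once $\mathbb{G}_0$ and $\mathbb{G}_1$ are sheaves the remaining structure presheaves are sheaves automatically. Hence $\mathbb{G} \in Gpd\left(\Sh\left(\C\right)\right)$ if and only if $\mathbb{G}_0$ and $\mathbb{G}_1$ are sheaves. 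Under $Q$ one has $Q(F)_0 = F_0$ and $Q(F)_1 = F_1$, so combining with the previous step yields $F \in \Sh(\C,Gpd)$ iff $F_0,F_1$ are sheaves iff $Q(F) \in Gpd\left(\Sh\left(\C\right)\right)$. The same criterion applied to the pseudo-inverse $\mathbb{G} \mapsto \Hom\left(\mspace{2mu}\cdot\mspace{2mu},\mathbb{G}\right)$ shows it sends $Gpd\left(\Sh\left(\C\right)\right)$ back into $\Sh(\C,Gpd)$, so the equivalence of Proposition \ref{prop:shfgpd} restricts as claimed.

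The only genuinely delicate point is the first step: one must be sure that the $\varprojlim$ in the definition of a sheaf of groupoids is the strict $1$-categorical limit in $\tau_1\left(Gpd\right)$, so that the representability and limit-preservation argument applies literally, rather than a bicategorical pseudo-limit. This is precisely how the definition is phrased, and it is exactly what makes the clean object/arrow decomposition valid; were the descent object taken up to isomorphism, one would instead be testing the \emph{stack} condition, not the sheaf-of-groupoids condition, and the two forgetful functors would no longer detect it.
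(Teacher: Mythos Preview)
Your proof is correct. The paper itself does not give a proof of this proposition at all: it merely precedes the statement with ``The following proposition is easily checked'' and moves on. Your argument supplies precisely the verification the paper omits, via the natural route: reducing the sheaf condition on a presheaf of groupoids to the ordinary sheaf condition on $F_0$ and $F_1$ using that $\left(\mspace{2mu}\cdot\mspace{2mu}\right)_0$ and $\left(\mspace{2mu}\cdot\mspace{2mu}\right)_1$ are limit-preserving and jointly conservative, and then matching this against the observation that $\Sh\left(\C\right)$ is closed under finite limits in $\Set^{\C^{op}}$. Your closing remark about the $\varprojlim$ being the strict $1$-categorical limit is exactly the point that makes this work and is worth flagging.
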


Analogously to sheaves of sets, there is a $2$-adjunction

$$\xymatrix{\Sh(\C,Gpd)  \ar@<-0.5ex>[r]_-{i}  & \mathit{Psh}(\C,Gpd)\ar@<-0.5ex>[l]_-{sh}},$$
where $sh$ denotes sheafification.

Denote by $j:\mathit{Psh}(\C,Gpd) \to Gpd^{\C^{op}}$ the ``inclusion'' of strict presheaves into weak presheaves. We use quotations since this functor is not full. The following proposition is standard:

\begin{prop}\label{prop:sh}
Let $\Z$ be a strict presheaf of groupoids. Then $$a\circ j\left(\Z\right) \simeq a\circ j\circ i\circ sh\left(\Z\right),$$ where $a$ denotes stackification.
\end{prop}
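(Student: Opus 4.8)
The plan is to realize the comparison map between the two stackifications as the image under $a \circ j$ of the unit of the adjunction $sh \dashv i$, and then to show that this map is an equivalence because the unit is a local weak equivalence, which stackification inverts.

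First I would recall that the $2$-adjunction $sh \dashv i$ supplies a unit $2$-natural transformation $\eta \colon \mathrm{id}_{\mathit{Psh}(\C,Gpd)} \Rightarrow i \circ sh$, whose component $\eta_{\Z} \colon \Z \to i\,sh(\Z)$ is the sheafification map. Applying the (non-full, but well-defined) inclusion $j$ and then stackification $a$ produces a canonical morphism $a\,j(\eta_{\Z}) \colon a\,j(\Z) \to a\,j\,i\,sh(\Z)$ in $\St(\C)$, and it is precisely this morphism that I claim is an equivalence. Since stackification is the localization of $Gpd^{\C^{op}}$ at the class of local weak equivalences — so that $a$ carries every local weak equivalence to an equivalence of stacks — it suffices to prove that $j(\eta_{\Z})$ is a local weak equivalence of weak presheaves in groupoids.

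To verify this I would use Proposition \ref{prop:shfgpd} to identify $\Z$ with a groupoid object $\left(\Z_0,\Z_1\right)$ in $\Set^{\C^{op}}$ and $i\,sh(\Z)$ with the groupoid object $\left(sh\,\Z_0, sh\,\Z_1\right)$ in $\Sh(\C)$ obtained by sheafifying the object- and arrow-presheaves; here one uses that set-level sheafification is left exact, so it preserves the groupoid structure maps and $\eta_{\Z}$ is, levelwise, the ordinary sheafification map of presheaves of sets. Each of $\Z_0 \to sh\,\Z_0$ and $\Z_1 \to sh\,\Z_1$ is a local isomorphism, i.e. locally bijective. From local surjectivity on objects one gets that $j(\eta_{\Z})$ is locally essentially surjective: any section of $sh\,\Z_0$ over an object $C$ lifts, after passing to a cover of $C$, to a section of $\Z_0$. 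From local bijectivity on arrows, together with the fact that the hom-presheaf between two objects is the fiber of $(s,t)\colon \Z_1 \to \Z_0 \times \Z_0$ and that sheafification preserves this fiber (again by left exactness), one gets that $j(\eta_{\Z})$ is locally fully faithful. These two properties are exactly the content of $j(\eta_{\Z})$ being a local weak equivalence, so $a\,j(\eta_{\Z})$ is an equivalence and the proposition follows.

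I expect the only genuine work to lie in the local full faithfulness step: one must compare $\underline{\Hom}_{\Z}(x,y)$ with $\underline{\Hom}_{sh\,\Z}(x,y)$ and check that the sheafification unit induces a local isomorphism between them, which is where the left exactness of $sh$ — its commuting with the presentation of hom-sheaves as fibers of $(s,t)\colon \Z_1 \to \Z_0 \times \Z_0$ — is essential. The local essential surjectivity and the bookkeeping of covers are then routine, and the identification of $i\,sh(\Z)$ as the levelwise sheafified groupoid object is exactly Proposition \ref{prop:shfgpd} combined with left exactness.
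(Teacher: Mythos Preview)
Your argument is correct and is precisely the standard one: exhibit the comparison as $a\,j$ applied to the sheafification unit $\eta_{\Z}$, and check that $j(\eta_{\Z})$ is a local weak equivalence by verifying local essential surjectivity on objects and local full faithfulness on hom-presheaves, the latter via left exactness of set-level sheafification applied to the pullback description of $\underline{\Hom}_{\Z}(x,y)$. All of these steps go through as you describe.

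There is nothing to compare against: the paper does not supply a proof of this proposition. It is introduced with the phrase ``The following proposition is standard'' and left at that, with only the informal restatement that sheafifying a strict presheaf of groupoids before stackifying does not change the resulting stack. So you have in fact filled in what the paper omits. The one small remark worth making is that when you say the sheafification unit $\Z_i \to sh\,\Z_i$ is ``locally bijective,'' you are using this in the sense of local isomorphism of presheaves of sets (sections agree and lift after passing to a cover), not literal pointwise bijectivity; you clearly use it in that sense in the argument, but it would not hurt to say so explicitly.
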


In other words, if you start with a strict presheaf of groupoids, sheafify it to a sheaf of groupoids, and then stackify the result, this is equivalent to stackifying the original presheaf.

\begin{cor}\label{ref:stksh}
Every stack is equivalent to $a \circ j \circ i \left(\W\right)$ for some sheaf of groupoids $\W$.
\end{cor}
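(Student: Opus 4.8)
The plan is to reduce the statement to Proposition \ref{prop:sh} together with a strictification of weak presheaves. The key observation is that the essential image of the composite $a \circ j \circ i$, starting from sheaves of groupoids, coincides with the essential image of $a \circ j$ acting on all strict presheaves of groupoids. Indeed, for any strict presheaf $\Z$, Proposition \ref{prop:sh} gives $a \circ j\left(\Z\right) \simeq a \circ j \circ i \circ sh\left(\Z\right)$, where $sh\left(\Z\right) \in \Sh(\C, Gpd)$ is a sheaf of groupoids; conversely, $a \circ j \circ i\left(\W\right)$ for a sheaf $\W$ is visibly in the image of $a \circ j$. Hence it suffices to prove that every stack is equivalent to $a \circ j\left(\Z\right)$ for some strict presheaf $\Z$, that is, that every stack is the stackification of a strict presheaf of groupoids.

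First I would recall that stackification is a reflection: the inclusion $\St\left(\C\right) \hookrightarrow Gpd^{\C^{op}}$ of stacks into weak presheaves has left $2$-adjoint $a$, whose unit is an equivalence on objects already satisfying descent. Consequently, for a stack $\X$ one has $a\left(\X\right) \simeq \X$, so it is enough to produce a strict presheaf $\Z$ together with an equivalence $j\left(\Z\right) \simeq \X$ in $Gpd^{\C^{op}}$ and then apply the $2$-functor $a$, which preserves equivalences.

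The heart of the argument, and the step I expect to be the main obstacle, is this strictification: every weak presheaf in groupoids (in particular every stack, viewed merely as a weak presheaf) is equivalent, within $Gpd^{\C^{op}}$, to the image under $j$ of a strict presheaf. This is the standard coherence theorem for pseudofunctors into $Gpd$, realized explicitly by the usual rectification, for instance by passing through the Grothendieck construction to obtain a fibration in groupoids and then splitting it to recover a strict functor. The subtlety is purely $2$-categorical bookkeeping: one must verify that the rectified strict presheaf $\Z$ is connected to $\X$ by a genuine pseudo-natural equivalence, not merely a level-wise equivalence of the groupoids $\Z\left(C\right) \simeq \X\left(C\right)$, so that $j\left(\Z\right) \simeq \X$ truly holds in the $2$-category of weak presheaves.

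Granting the strictification, the remaining steps are immediate. Applying $a$ to the equivalence $j\left(\Z\right) \simeq \X$ yields $a \circ j\left(\Z\right) \simeq a\left(\X\right) \simeq \X$. Then invoking Proposition \ref{prop:sh} for $\Z$ gives $a \circ j\left(\Z\right) \simeq a \circ j \circ i \circ sh\left(\Z\right)$. Setting $\W := sh\left(\Z\right)$, which is a sheaf of groupoids by construction, produces $\X \simeq a \circ j \circ i\left(\W\right)$, as required.
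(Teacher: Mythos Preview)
Your argument is correct and is precisely the route the paper intends: the corollary is stated without proof immediately after Proposition~\ref{prop:sh}, with the implicit understanding that one strictifies the given stack to a strict presheaf, applies the proposition, and takes $\W = sh(\Z)$. Your write-up simply makes explicit the standard strictification step (via the Grothendieck construction and splitting) and the use of $a(\X)\simeq\X$ for a stack, both of which the paper leaves to the reader.
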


\section{Proof of Theorem \ref{thm:finn}}\label{sec:lem}
In this Appendix, we will prove that the generalized action groupoid construction described in Section \ref{subsec:action} yields a concrete description of \'etal\'e realization. For technical reasons, we start by fixing an ambient Grothendieck universe $\mathcal{U}$. Recall the following definition from \cite{sga4} (expos\'e ii):
\begin{dfn}
A locally $\mathcal{U}$-small Grothendieck site $\left(\mathscr{E},V\right)$ is called a $\mathcal{U}$-site if there exists a $\mathcal{U}$-small set of objects $G$, called topological generators, such that every object $E \in \mathscr{E}$ admits $V$-cover by a family of morphisms all of whose sources are in $G$.
\end{dfn}

\begin{thm}\cite{sga4} (expos\'e ii, th\'eor\`em 3.4)
If $\left(\mathscr{E},V\right)$ is a $\mathcal{U}$-site, then the category of presheaves of $\mathcal{U}$-small sets on $\E$ is locally $\mathcal{U}$-small. Moreover, the full subcategory thereof consisting of $\mathcal{U}$-small $V$-sheaves is reflective, and the reflector is left exact, hence this subcategory is a $\mathcal{U}$-topos.
\end{thm}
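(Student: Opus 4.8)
The plan is to reduce the entire statement to the case of a $\mathcal{U}$-small site, where each assertion is classical, by exploiting the topological generators. Write $G$ for the given $\mathcal{U}$-small set of topological generators, let $\mathscr{E}_G\hookrightarrow\mathscr{E}$ be the full subcategory spanned by $G$, and equip $\mathscr{E}_G$ with the topology $V_G$ induced from $V$. Since $\mathscr{E}$ is locally $\mathcal{U}$-small, $\mathscr{E}_G$ is a $\mathcal{U}$-small category. The defining property of $G$ --- that every object of $\mathscr{E}$ admits a $V$-cover by morphisms with sources in $G$ --- is precisely the hypothesis of the Comparison Lemma (\cite{sga4}, Exp.~III), so restriction along $\mathscr{E}_G\hookrightarrow\mathscr{E}$ induces an equivalence $\Sh_V(\mathscr{E})\simeq\Sh_{V_G}(\mathscr{E}_G)$ between the categories of $\mathcal{U}$-sheaves. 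It therefore suffices to establish reflectivity, left exactness of the reflector, and the $\mathcal{U}$-topos conclusion over the $\mathcal{U}$-small site $\mathscr{E}_G$, and then transport them across this equivalence.

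First I would treat the reflection and its left exactness over $\mathscr{E}_G$. There the presheaf category $\widehat{\mathscr{E}_G}=[\mathscr{E}_G^{op},\mathbf{Set}_{\mathcal{U}}]$ is automatically locally $\mathcal{U}$-small, since a natural transformation is a limit, indexed by the $\mathcal{U}$-small category $\mathscr{E}_G$, of $\mathcal{U}$-small sets. Sheafification is obtained by iterating the plus construction $L$, with $LF(E)=\varinjlim_{R}\mathrm{Hom}_{\widehat{\mathscr{E}_G}}(R,F)$, the colimit running over the $V_G$-covering sieves $R\hookrightarrow y(E)$ ordered by refinement; the associated sheaf is $aF=L(LF)$. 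Because $\mathscr{E}_G$ is $\mathcal{U}$-small, the family of covering sieves of each object is $\mathcal{U}$-small and the colimit is $\mathcal{U}$-filtered, so $LF$ and $aF$ take $\mathcal{U}$-small values and $a$ lands in $\Sh_{V_G}(\mathscr{E}_G)$. The unit $F\to aF$ exhibits $a$ as left adjoint to the inclusion $\Sh_{V_G}(\mathscr{E}_G)\hookrightarrow\widehat{\mathscr{E}_G}$, which gives reflectivity. For left exactness I would argue that each functor $F\mapsto\mathrm{Hom}_{\widehat{\mathscr{E}_G}}(R,F)$ preserves finite limits (being a limit-preserving hom-functor), that the colimit is filtered, and that $\mathcal{U}$-filtered colimits commute with finite limits in $\mathbf{Set}_{\mathcal{U}}$; hence $L$, and therefore $a=L^{2}$, preserves finite limits.

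Finally I would assemble the conclusion. By the excerpt's own definition of a Grothendieck topos, $\Sh_{V_G}(\mathscr{E}_G)$ --- a reflective subcategory of $[\mathscr{E}_G^{op},\mathbf{Set}_{\mathcal{U}}]$ with left-exact reflector and $\mathscr{E}_G$ a $\mathcal{U}$-small category --- is a $\mathcal{U}$-topos, and it is locally $\mathcal{U}$-small as a full subcategory of the locally $\mathcal{U}$-small $\widehat{\mathscr{E}_G}$. Transporting along the Comparison equivalence yields the same properties for $\Sh_V(\mathscr{E})$; its local $\mathcal{U}$-smallness can moreover be seen directly from the fact that on sheaves the restriction maps along covers are monomorphisms, so a morphism of sheaves is pinned down by its values on $G$. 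The main obstacle, and the only place the $\mathcal{U}$-site hypothesis is genuinely used, is the size bookkeeping: one must verify that the covering sieves entering the plus construction form $\mathcal{U}$-small filtered diagrams, so that sheafification does not leave the universe, and that the induced topology on $\mathscr{E}_G$ together with the generating family really meets the hypotheses of the Comparison Lemma. Both hinge precisely on $G$ being a $\mathcal{U}$-small generating set --- which is exactly why the corresponding statement fails for the category of presheaves on a general locally $\mathcal{U}$-small site.
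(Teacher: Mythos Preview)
The paper does not give its own proof of this theorem; it is quoted purely as a citation to SGA4, Expos\'e~II, Th\'eor\`eme~3.4, and is used as a black box in the lemmas that follow in Appendix~B. There is therefore nothing in the paper to compare your argument against.

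That said, your reduction via the Comparison Lemma does not quite close the loop on its own. The Comparison Lemma gives an equivalence $\Sh_V(\mathscr{E})\simeq\Sh_{V_G}(\mathscr{E}_G)$ of \emph{sheaf} categories, but the theorem as stated asserts that $\Sh_V(\mathscr{E})$ is reflective inside the \emph{presheaf} category $[\mathscr{E}^{op},\mathbf{Set}_{\mathcal{U}}]$, and that this presheaf category on the large site is itself locally $\mathcal{U}$-small. Neither of these is a statement about $\mathscr{E}_G$, and neither transports along an equivalence of sheaf categories alone: knowing that $\Sh_{V_G}(\mathscr{E}_G)$ is reflective in $\widehat{\mathscr{E}_G}$ does not by itself produce a left adjoint to the inclusion $\Sh_V(\mathscr{E})\hookrightarrow[\mathscr{E}^{op},\mathbf{Set}_{\mathcal{U}}]$, because the ambient presheaf categories are different and not equivalent. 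The argument in SGA4 instead uses the topological generators directly on $\mathscr{E}$: for each object $E$, the covering sieves generated by morphisms with source in $G$ are cofinal among all $V$-covering sieves of $E$, and this cofinality is what keeps the plus construction on $\mathscr{E}$ itself valued in $\mathcal{U}$-small sets and supplies the reflector on the large presheaf category. Your final paragraph gestures at exactly this size issue, but the body of the argument routes around it rather than confronting it.
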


\begin{rmk}
In particular, a $\mathcal{U}$-site is \emph{not} necessarily $\mathcal{U}$-small. An important example of a $\mathcal{U}$-site which is not $\mathcal{U}$-small is the following:\\
\\
Suppose $\left(\C,J\right)$ is a $\mathcal{U}$-small site. Let $\E:=Sh^{\mathcal{U}}_{J}\left(\C\right)$ be the $\mathcal{U}$-topos of $J$-sheaves of $\mathcal{U}$-small sets. Equip $\E$ with the \emph{canonical topology} which is generated by jointly surjective epimorphic families. Denote this site by $\left(\E,can\right)$. This site is clearly not $\mathcal{U}$-small, but it is a $\mathcal{U}$-site, since the set of representable sheaves is $\mathcal{U}$-small (since it is a copy of $\C$) and topologically generates $\E$ (because of the Yoneda lemma). Moreover, the category of $\mathcal{U}$-small sheaves on this site, is canonically equivalent to $\E$ itself. More generally, if $\left(\C,J\right)$ is not $\mathcal{U}$-small, but just a $\mathcal{U}$-site, $\left(\E,can\right)$ is still a $\mathcal{U}$-site; its topological generators are the image of those of $\C$ under the Yoneda embedding.
\end{rmk}

\begin{lem}
Let $\E$ be a $\mathcal{U}$-small topos, $\left(\D,K\right)$ be a $\mathcal{U}$-small site, and $$G:\E \to \St^{\mathcal{U}}_{K}\left(\D\right)$$ be a $2$-functor from $\E$ into the $2$-topos (in $\mathcal{U}$) of $K$-stacks of (essentially) $\mathcal{U}$-small groupoids, which preserves coproducts and epimorphisms. Denote by $$\hat y_{\E}:\E \to \St^{\mathcal{U}}_{can}\left(\E\right)$$ the Yoneda embedding of $\E$ into the $2$-topos of $\mathcal{U}$-small stacks on $\E$. Then, there exists (an essentially unique) $\mathcal{U}$-small weak colimit preserving $2$-functor $$L^{\mathcal{U}}_{\E}:St^{\mathcal{U}}_{can}\left(\E\right) \to \St^{\mathcal{U}}_{K}\left(\D\right),$$ which when restricted to $\E$ along $y_{\E}$ agrees with $G$ (up to equivalence).
\end{lem}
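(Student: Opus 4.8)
The plan is to realize $L^{\mathcal U}_{\E}$ as the descent, through stackification, of the weak left Kan extension of $G$ along the Yoneda embedding into weak presheaves, and then to verify that this descent is legitimate precisely by exploiting that $G$ preserves coproducts and epimorphisms. First I would form the weak left Kan extension of $G$ along $y_\E\colon \E \to Gpd^{\E^{op}}$. Since the target $\St^{\mathcal U}_{K}(\D)$ is a $\mathcal U$-topos it admits all $\mathcal U$-small weak colimits, so the pointwise formula
$$\tilde L_0(\Z)\;\simeq\;\underset{y_\E(E)\to\Z}{\hc}\,G(E)$$
defines a weak-colimit-preserving $2$-functor $\tilde L_0\colon Gpd^{\E^{op}}\to \St^{\mathcal U}_{K}(\D)$; the indexing $2$-category of elements $\int_{\E}\Z$ is $\mathcal U$-small because $\E$ is $\mathcal U$-small and $\Z$ is valued in essentially $\mathcal U$-small groupoids, so $\tilde L_0$ lands in $\mathcal U$-small stacks. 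As $y_\E$ is fully faithful, $\tilde L_0 \circ y_\E \simeq G$, and by the density theorem $Gpd^{\E^{op}}$ is the free $\mathcal U$-small weak-colimit completion of $\E$, so $\tilde L_0$ is the essentially unique such extension.

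Next I would descend $\tilde L_0$ along the reflective localization $a\colon Gpd^{\E^{op}} \to \St^{\mathcal U}_{can}(\E)$. By the universal property of a reflective localization, $\tilde L_0$ factors essentially uniquely as $L^{\mathcal U}_\E \circ a$ with $L^{\mathcal U}_\E$ weak-colimit preserving if and only if $\tilde L_0$ sends every local equivalence to an equivalence. Since $\tilde L_0$ preserves weak colimits, and the local equivalences form the closure under weak colimits and the $2$-out-of-$3$ property of the Čech descent augmentations $\varepsilon_{\mathcal U}\colon \underset{\Delta^{op}}{\hc}\, y_\E(\check C(\mathcal U)) \to y_\E(E)$ attached to covering families $\mathcal U=\{E_i\to E\}$ of the canonical topology, it suffices to invert these. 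Using that $\E$ is a topos I would first replace $\mathcal U$ by the single epimorphism $p\colon \coprod_i E_i \twoheadrightarrow E$; because both $\tilde L_0$ and $G$ preserve coproducts, this replacement changes nothing in what must be checked.

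It then remains to prove that $\tilde L_0(\varepsilon_p)\colon \underset{\Delta^{op}}{\hc}\, G(\check C(p)) \to G(E)$ is an equivalence, where $\check C(p)$ is the Čech nerve $n\mapsto (\coprod_i E_i)^{\times_E (n+1)}$, whose weak colimit computed in $\St^{\mathcal U}_{can}(\E)$ is $\hat y_\E(E)$ by effectivity of $p$. The heart of the argument is to identify $G(\check C(p))$ with the Čech nerve of $G(p)$ in $\St^{\mathcal U}_{K}(\D)$: applying $G$ to the projections yields a comparison $G\big((\coprod_i E_i)^{\times_E(n+1)}\big)\to G(\coprod_i E_i)^{\times_{G(E)}(n+1)}$, and one must check it is an equivalence, so that $G(\check C(p)) \simeq \check C(G(p))$. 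Granting this, since $G$ preserves epimorphisms the map $G(p)$ is again an epimorphism, hence an \emph{effective} one because epimorphisms in the $2$-topos $\St^{\mathcal U}_{K}(\D)$ satisfy descent; therefore the weak colimit of $\check C(G(p))$ recovers $G(E)$, which is exactly the desired equivalence. I expect this identification of $G(\check C(p))$ with a Čech nerve — that is, the compatibility of $G$ with the fibred products of the canonical topology — to be the main obstacle, since bare coproduct- and epimorphism-preservation do not force preservation of these pullbacks; this is the point where one must let the hypotheses on $G$ interact with the descent theory of the target, using effectivity of epimorphisms in $\St^{\mathcal U}_{K}(\D)$ together with preservation of the relevant pullbacks (which holds in the situations of interest, where $G$ is an inverse-image-type functor).

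Finally, setting $L^{\mathcal U}_\E$ to be the induced functor gives $L^{\mathcal U}_\E \circ a \simeq \tilde L_0$, and in particular $L^{\mathcal U}_\E \circ \hat y_\E = L^{\mathcal U}_\E \circ a \circ y_\E \simeq \tilde L_0 \circ y_\E \simeq G$, as required. Essential uniqueness follows from the same density principle: any two weak-colimit-preserving $2$-functors out of $\St^{\mathcal U}_{can}(\E)$ agreeing on the representables $\hat y_\E(E)$ agree on all of $\St^{\mathcal U}_{can}(\E)$, because every $\mathcal U$-small stack on $(\E,can)$ is canonically a weak colimit of representables and both functors preserve that colimit.
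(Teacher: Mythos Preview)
Your overall strategy—left Kan extend $G$ along the Yoneda embedding into presheaves, then descend through stackification—is the same as the paper's. Two differences are worth noting.

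First, the paper does not attempt to stay inside $\mathcal U$ throughout. It enlarges to a universe $\mathcal V\ni\mathcal U$, forms the Kan extension $L^{\mathcal V}_{\E}:\widehat\E\to\St^{\mathcal V}_K(\D)$ with right adjoint $R^{\mathcal V}_{\E}(\Z)(E)\simeq\Hom(G(E),\Z)$, and then at the very end checks that the restriction to $\St^{\mathcal U}_{can}(\E)$ lands back in $\mathcal U$-small stacks by writing any $\mathcal U$-small stack as the weak colimit of the truncated nerve of a groupoid object in $\E$. Your proof skips the enlargement because $\E$ is already assumed $\mathcal U$-small, which is fine, but you then omit the size bookkeeping that the paper's last paragraph handles.

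Second, and more importantly, the paper does not try to verify directly that $\tilde L_0$ inverts \v Cech augmentations. Instead it argues on the right–adjoint side: since $G$ preserves coproducts and epimorphisms it ``preserves covers'', and since $\Z$ is a stack, the presheaf $E\mapsto\Hom(G(E),\Z)$ satisfies descent for the canonical topology, so $R^{\mathcal V}_{\E}$ already lands in $\St^{\mathcal V}_{can}(\E)$ and the adjunction restricts. This is logically equivalent to what you are trying to prove, but it packages the step differently: rather than asking for $G(\check C(p))\simeq\check C(G(p))$, one asserts the single sentence ``$\Hom(G(-),\Z)$ is a stack because $G$ takes covers to covers and $\Z$ is a stack''. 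The paper offers no further justification for this sentence; your careful analysis has located precisely the point the paper leaves implicit. In particular, the pullback–preservation you flag as ``the main obstacle'' is not proved in the paper either—it is absorbed into that one assertion. In the paper's intended application $G$ is in fact left exact (it sends an equivariant sheaf $E$ to the representable local homeomorphism $[\h\ltimes E]\to[\h]$), so the issue does not arise there; but at the level of the abstract lemma your caution is well placed, and the paper's own proof is no more complete on this point than yours.
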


\begin{proof}
Let $\mathcal{V}$ be a larger Grothendieck universe such that $\mathcal{U} \in \mathcal{V}$. Then $\left(\E,can\right)$ is a $\mathcal{V}$-small site. Consider the canonical inclusion $$i^{\mathcal{V}}_\D:St^{\mathcal{U}}_K\left(\D\right) \to St^{\mathcal{V}}_K\left(\D\right).$$ By \cite{htt} (Remark 6.3.5.17), this inclusion preserves $\mathcal{U}$-small weak colimits (apply $\tau_{\le 1}$ to functor in this proposition, and note that $\tau_{\le 1}$ is a colimit preserving functor of infinity categories). Let $\widehat{\E}$ denote the $2$-topos in $\mathcal{V}$ of $\mathcal{V}$-small weak presheaves of groupoids on $\E$. Let $L^{\mathcal{V}}_{\E}$ denote the weak left Kan extension of $i^{\mathcal{V}}_{\D} \circ G$ along the Yoneda embedding $\hat y^{\mathcal{V}}$ of $\E$ into $\widehat \E$. Denote its right adjoint by $R^{\mathcal{V}}_{\E}.$ Explicitly, for $\Z \in St^{\mathcal{V}}_K\left(\D\right)$, $R^{\mathcal{V}}_{\E}\left(\Z\right)$ is the weak presheaf that assigns $E \in \E$, the groupoid
\begin{eqnarray*}
R^{\mathcal{V}}_{\E}\left(\Z\right)\left(E\right)&\simeq& \Hom\left(L^{\mathcal{V}}_{\E}\hat y^{\mathcal{V}} \left(E\right),\Z\right)\\
&\simeq& \Hom\left(i^{\mathcal{V}}_{\D} G\left(E\right),\Z\right).\\
\end{eqnarray*}
Since $G$ and $i^{\mathcal{V}}_{\D}$ both preserves coproducts and epimorphisms, their composite preserves covers, and since $\Z$ is a stack, it follows that this presheaf satisfies descent for the canonical topology on $\E$, so is an object of $St^{\mathcal{V}}_{can}\left(\E\right).$ Hence, by abuse of notation, there is an induced $2$-adjunction
$$\xymatrix{St^{\mathcal{V}}_{can}\left(\E\right) \ar@<-0.5ex>[r]_-{L^{\mathcal{V}}_{\E}} & St^{\mathcal{V}}_K\left(\D\right) \ar@<-0.5ex>[l]_-{R^{\mathcal{V}}_{\E}}}.$$ The $2$-functor $L^{\mathcal{V}}_{\E}$ is uniquely determined up to equivalence by the fact that it is $\mathcal{V}$-small weak colimit preserving, and agrees up to equivalence with $i^{\mathcal{V}}_{\D} \circ G$ when restricted to $\E$ along $\hat y^{\mathcal{V}}$. Define $$L^{\mathcal{U}}_{\E}:=L^{\mathcal{V}}_{\E} \circ i^{\mathcal{V}}_{\E}$$ where $i^{\mathcal{V}}_{\E}$ is the canonical inclusion $$i^{\mathcal{V}}_\E:St^{\mathcal{U}}_{can}\left(\E\right) \to St^{\mathcal{V}}_{can}\left(\E\right),$$ which is $\mathcal{U}$-small weak colimit preserving. It follows that $L^{\mathcal{U}}_{\E}$ is $\mathcal{U}$-small weak colimit preserving, and by construction, when restricted to $\E$ along the Yoneda embedding $\hat y^{\mathcal{U}}$ of $\E$ into $\mathcal{U}$-small stacks over $\E$, it agrees up to equivalence with $G$. It suffices to show that the essential image of $L^{\mathcal{U}}_{\E}$ lies in the essential image of $i^{\mathcal{V}}_{\D},$ i.e. $\mathcal{U}$-small $K$-stacks. Let $\Y \in St^{\mathcal{U}}_{can}\left(\E\right).$ Then $\Y$ is a stack over the topos $\E$ in the sense of \cite{Giraud}, hence there exists groupoid object $\K \in Gpd\left(\E\right)$ such that $\Y \simeq \left[\K\right]$ is its stack completion. In particular, this implies that $\Y$ is the weak colimit of the truncated semi-simplicial diagram $$\K_2 \rrrarrow \K_1 \rrarrow \K_0,$$ when viewed as a diagram in $St^{\mathcal{U}}_{can}\left(\E\right),$ i.e. taking the weak colimit after applying $\hat y^{\mathcal{U}},$ the Yoneda embedding into $\mathcal{U}$-small stacks over $\E$. Hence

\begin{eqnarray*}
L^{\mathcal{U}}_{\E}\left(\Y\right) &=& L^{\mathcal{V}}_{\E}i^{\mathcal{V}}_{\E}\left(\Y\right)\\
&\simeq& L^{\mathcal{V}}_{\E}i^{\mathcal{V}}_{\E}\left( \hc \hat y^{\mathcal{U}} \K_n\right)\\
&\simeq& \hc L^{\mathcal{V}}_{\E} \hat y^{\mathcal{V}} \K_n\\
&\simeq& \hc i^{\mathcal{V}}_{\D} G\left(\K_n\right)\\
&\simeq& i^{\mathcal{V}}_{\D} \left(\hc G\left(\K_n\right)\right).\\
\end{eqnarray*}
Therefore, the essential image of $L^{\mathcal{U}}_{\E}$ consists entirely of $\mathcal{U}$-small $K$-stacks.
\end{proof}

Let $\X \simeq \left[\h\right]$ be an \'etale stack, with $\h$ an \'etale $S$-groupoid. Consider the $2$-functor
$$\h \ltimes:Gpd\left(\B\h\right) \to \left(S^{et}-Gpd\right)/\h,$$ from Section \ref{subsec:action}. Denote by $G$ the composition
$$\B\h \stackrel{q}{\hookrightarrow} Gpd\left(\B\h\right) \stackrel{\h \ltimes}{\longlongrightarrow} \left(S^{et}-Gpd\right)/\h \stackrel{Y}{\longrightarrow} \St\left(S/X\right),$$
where $q$ is the canonical inclusion.

\begin{prop}
The $2$-functor $G$ as defined above preserves coproducts and epimorphisms.
\end{prop}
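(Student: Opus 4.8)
The plan is to treat the three functors $q$, $\h \ltimes (-)$, and $Y$ separately, since both coproducts and epimorphisms are colimit-type data and it suffices to check that each stage transports the relevant data correctly. First I would record the explicit description of $G$ on objects: since $q(E) = E^{(id)}$ is the groupoid object in $\B\h$ with object sheaf $E$ and only identity arrows, the generalized action groupoid $\h \ltimes q(E)$ is the ordinary action groupoid $\h \ltimes E$, with object space $E$ and arrow space $\h_1 \times_{\h_0} E$, so that $G(E) = ([\h \ltimes E] \to \X)$. I would also recall that coproducts in $\B\h$ are computed as disjoint unions (the forgetful functor $\B\h \to \Sh(\h_0)$ to \'etale spaces over $\h_0$ creates colimits, and coproducts of \'etale spaces are disjoint unions), and that a morphism in $\B\h$ is an epimorphism precisely when it is a surjective equivariant local homeomorphism.

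For coproducts, the key computation is that $(\h \ltimes -)\circ q$ carries disjoint unions to disjoint unions of $S$-groupoids over $\h$. Both the object space $\coprod_i E_i$ and the arrow space $\h_1 \times_{\h_0} \coprod_i E_i \cong \coprod_i (\h_1 \times_{\h_0} E_i)$ of $\h \ltimes \coprod_i E_i$ are built by base change along the coproduct, and fibered products distribute over coproducts in $S$; hence $\h \ltimes \coprod_i E_i \cong \coprod_i (\h \ltimes E_i)$ compatibly with the structure maps $\theta$ to $\h$. It then remains to check that $Y$ preserves coproducts, i.e. $[\coprod_i \G_i] \simeq \coprod_i [\G_i]$ over $\X$; this I would verify using the atlas $\coprod_i (\G_i)_0 \to [\coprod_i \G_i]$, observing that because the components are disjoint the associated groupoid has arrow space $\coprod_i (\G_i)_1$, which exhibits the coproduct stack. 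Combining the two gives $G(\coprod_i E_i) \simeq \coprod_i G(E_i)$.

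For epimorphisms, let $f: E \to E'$ be an epimorphism in $\B\h$, i.e. a surjective equivariant local homeomorphism. I would exploit the commuting square relating the canonical atlases $E \to [\h \ltimes E]$ and $E' \to [\h \ltimes E']$ to $f$ and to $G(f) = [\h \ltimes f]$. The composite $E \xrightarrow{f} E' \to [\h \ltimes E']$ is an epimorphism of stacks, being the composite of the epimorphism $f$ (a surjective local homeomorphism admits local sections, hence is locally essentially surjective) with the atlas $E' \to [\h \ltimes E']$. This composite factors as $E \to [\h \ltimes E] \xrightarrow{[\h \ltimes f]} [\h \ltimes E']$, and since the first factor is an atlas, right-cancellation of epimorphisms (if $b\circ a$ is locally essentially surjective then so is $b$) forces $[\h \ltimes f]$ to be an epimorphism; equivalently one checks essential surjectivity of $\h \ltimes f$ directly by lifting local sections of $f$ through the unit to local sections of $t \circ pr_1$. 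Throughout I would use that epimorphisms in $\St(S/\X) \simeq \St(S)/\X$ are detected on the underlying stacks.

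The routine parts are the distributivity of fibered products over coproducts and the bookkeeping of the action groupoid. The step I expect to require the most care is the verification that $Y$ (stack completion of the action groupoid over $\h$) genuinely preserves these coproducts \emph{as} coproducts in the slice $2$-category $\St(S)/\X$, that is, that the disjointness of components survives stack completion, and, correspondingly, the right-cancellation property for the $2$-categorical notion of epimorphism; both are where one must be careful not to conflate strict and weak constructions.
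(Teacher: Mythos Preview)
Your argument is correct, and for epimorphisms it takes a genuinely different route from the paper. The paper verifies the essential-surjectivity criterion directly: it shows that the map
\[
t \circ pr_1 \colon (\h \ltimes F)_1 \times_{F} E \longrightarrow F
\]
is an epimorphism by identifying the domain explicitly as $\h_1 \times_{\h_0} E = (\h \ltimes E)_1$ and recognizing the map as $t \circ (\h \ltimes \varphi)$, a composite of an epimorphism with a pullback of $\varphi$. Your approach instead bypasses this computation entirely: you use the commuting square of atlases to factor the known epimorphism $E \to E' \to [\h \ltimes E']$ through $[\h \ltimes f]$ and invoke the right-cancellation property of epimorphisms. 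This is cleaner and more conceptual, though the paper's identification of the pullback is useful later in the proof of Lemma~\ref{lem:C4}, where a similar but more elaborate computation is needed for groupoid objects rather than sheaves; in that sense the paper's approach is a warm-up for what follows. For coproducts the two treatments agree in substance --- the paper simply declares it immediate, whereas you spell out the distributivity of $\h_1 \times_{\h_0} (-)$ over disjoint unions and the preservation of coproducts under stack completion, which is exactly the content behind the paper's one-line dismissal. Your closing caveat about verifying that $Y$ preserves coproducts in the \emph{slice} is well-placed but not a real obstacle: disjointness of the components persists because the arrow space of $\coprod_i (\h \ltimes E_i)$ is itself a coproduct, so no new isomorphisms are created by stackification.
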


\begin{proof}
The fact that $G$ preserves coproducts can be checked immediately. As far as epimorphisms, suppose that $$\varphi:E \to F$$ is an epimorphism in $\B \h$. To show that $G\left(\varphi\right)$ is an epimorphism in $\St\left(S/X\right)$, it suffices to show that the map $$t \circ pr_1:\left(\h \ltimes F\right)_1 \times_{F} E \to F$$ is an epimorphism in $\B\h$ where
$$\xymatrix{\left(\h \ltimes F\right)_1 \times_{F} E \ar[r]^-{pr_2} \ar[d]_-{pr_1} & E \ar[d]^-{\varphi} \\
\left(\h \ltimes F\right)_1 \ar[r]^-{s} & F}$$
is the pullback diagram in $\B\h$. However, $\left(\h \ltimes F\right)_1$ is itself the pullback
$$\xymatrix{\left(\h \ltimes F\right)_1 \ar[d] \ar[r]^-{s}& F \ar[d]^-{\nu}\\
\h_1 \ar[r]^-{s} & \h_0,}$$ where $\nu$ is the moment map of $F$. Since $\varphi$ is, in particular, a map in $S/\h_0,$ $\nu \circ \varphi=\mu,$ where $\mu$ is the moment map of $E$. Hence $\left(\h \ltimes F\right)_1 \times_{F} E$ is in fact the fibered product $$\h_1 \times_{\h_0} E = \left(\h \ltimes E\right)_1.$$ The map $t \circ pr_1$ can then be identified with $t \circ \h \ltimes \left(\varphi\right)$ which is an epimorphism since $t$ is and $\h \ltimes \left(\varphi\right)$ is the pullback of $\varphi$ along $$s:\left(\h \ltimes F\right)_1 \to F.$$
\end{proof}

\begin{cor}\label{cor:mk}
There exists a weak colimit preserving $2$-functor $$L_{\B\h}:St_{can}\left(\B\h\right) \to St\left(S/\X\right)$$ whose restriction to $\B\h$ along the Yoneda embedding $$\hat y_{\B\h}:\B\h \hookrightarrow St_{can}\left(\B\h\right)$$ agrees with $G$ up to equivalence.
\end{cor}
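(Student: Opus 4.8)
The plan is to obtain Corollary \ref{cor:mk} as a direct specialization of the preceding Lemma, which was formulated precisely so that this would be immediate. First I would supply the Lemma with its three pieces of data: set $\E := \B\h$; take the $\mathcal{U}$-small site $\left(\D,K\right)$ to be $S/\X$ with the open-cover topology induced from $S$, so that $\St^{\mathcal{U}}_K\left(\D\right) = \St\left(S/\X\right)$; and take the $2$-functor $G = Y \circ \left(\h \ltimes\right) \circ q$ to be the composite already fixed above. With these substitutions the output of the Lemma is verbatim the assertion of the Corollary, so the whole task reduces to checking that the three hypotheses hold.

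The nontrivial hypothesis has in effect already been discharged: $G$ preserves coproducts and epimorphisms by the Proposition immediately preceding the Corollary. It remains only to confirm the two size conditions. The category $\B\h$ is of the required type, since the left Kan extension of $m$ along Yoneda (the Proposition in Section \ref{sec:small}) exhibits $\B\h \simeq \Sh\left(\sit\left(\h\right)\right)$, and $\sit\left(\h\right)$ is a $\mathcal{U}$-small site; thus $\B\h$ is the category of sheaves of $\mathcal{U}$-small sets on a $\mathcal{U}$-small site, exactly the kind of topos the Lemma allows for $\E$. Likewise $S/\X$ is $\mathcal{U}$-small under the running conventions (replacing $S$ by Cartesian manifolds in the smooth case, and otherwise restricting to the fixed universe), so that $\left(S/\X, K\right)$ is a $\mathcal{U}$-small site and its $2$-topos of stacks is precisely $\St\left(S/\X\right)$.

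Granting these points, the Lemma delivers an essentially unique $\mathcal{U}$-small weak colimit preserving $2$-functor $L_{\B\h} := L^{\mathcal{U}}_{\B\h} : \St_{can}\left(\B\h\right) \to \St\left(S/\X\right)$ whose restriction along $\hat y_{\B\h}$ agrees with $G$ up to equivalence, which is exactly the claim. I do not anticipate a genuine obstacle at this stage, since all of the analytic work has already been carried out: the heart of the matter is the universe-enlargement argument inside the Lemma itself — passing to $\mathcal{V} \supset \mathcal{U}$, forming the weak left Kan extension in $\widehat{\E}$, and using the presentation of an arbitrary canonical stack as the weak colimit of a truncated simplicial object $\K_2 \rrrarrow \K_1 \rrarrow \K_0$ to confirm that the essential image lands in $\mathcal{U}$-small stacks — together with the coproduct/epimorphism computation in the Proposition. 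The only point in the Corollary proper that warrants care is the size bookkeeping, namely making sure that $\B\h$ truly qualifies as the $\E$ of the Lemma and that $S/\X$ is legitimately presented as a $\mathcal{U}$-small site, so that the Lemma applies without modification.
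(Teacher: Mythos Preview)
Your proposal is correct and follows exactly the approach the paper intends: the Corollary is stated immediately after the Proposition showing that $G$ preserves coproducts and epimorphisms, and the paper's only comment is the parenthetical ``(We have suppressed the role of the Grothendieck universe $\mathcal{U}$ for simplicity)'', indicating that the result is meant to be an immediate instantiation of the preceding Lemma with $\E=\B\h$ and $\left(\D,K\right)=S/\X$. You have simply made explicit the size verifications (that $\B\h\simeq\Sh\left(\sit\left(\h\right)\right)$ is a $\mathcal{U}$-topos on a $\mathcal{U}$-small site, and that $S/\X$ is $\mathcal{U}$-small under the running conventions) which the paper leaves implicit.
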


(We have suppressed the role of the Grothendieck universe $\mathcal{U}$ for simplicity.)

\begin{lem}\label{lem:mk}
Let $$Y:\left(S^{et}-Gpd\right)/\h \to \St\left(S\right)/\X$$ be the $2$-functor which sends a groupoid $\varphi:\G \to \h$ over $\h$ to $$\left[\varphi\right]:\left[\G\right] \to \left[\h\right]=\X.$$ Then for $U \subset \h_0$ an open subset, the stacks $y\left(U \hookrightarrow \h_0 \to \X\right)$ and $$Y\left(\theta_{m_U}\right)=\left[\theta_{m_U}\right]$$ are canonically equivalent in $\St\left(S\right)/\X$, where $m_U$ is the equivariant sheaf associated to the representable $U \in \sit\left(\h \right)_0$ (Definition \ref{dfn:mu}), and $$\theta_{m_U}:\h \ltimes m_U \to \h,$$ is as in the remark directly preceding Proposition \ref{prop:shver}
\end{lem}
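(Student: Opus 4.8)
The plan is to compute the action groupoid $\h \ltimes m_U$ explicitly, recognise it as a groupoid Morita equivalent to the space $U$, and then produce the comparison $2$-cell over $\X$ tautologically.

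First I would unwind the generalized action groupoid of $m_U = s^{-1}(U)$ (Definition \ref{dfn:mu}), viewed as a groupoid object in $\h$-spaces with only identity arrows, whose moment map is $t$. By the definition of $\h \ltimes \K$, its object space is $s^{-1}(U)$, and an arrow is a pair $(h,g)$ with $h \in \h_1$ and $g \in s^{-1}(U)$ satisfying $t(h)=t(g)$, with source $h^{-1}g$ and target $g$. A direct computation gives $s(h^{-1}g)=s(g)$, so $(s,t)$ identifies the arrow space $(\h \ltimes m_U)_1$ with the fibred product $s^{-1}(U)\times_U s^{-1}(U)$ formed along $s$, the inverse sending a pair $(g_1,g_2)$ with $s(g_1)=s(g_2)$ to $(g_2 g_1^{-1},g_2)$. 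Thus $\h \ltimes m_U$ is canonically the \v{C}ech groupoid (Definition \ref{dfn:cech}) of the \'etale cover $s:s^{-1}(U)\to U$ of the space $U$; note $s$ is a surjective local homeomorphism since $\h$ is \'etale and $x \mapsto \mathbb{1}_x$ is a global section.

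Next I would introduce the functor $\pi:\h \ltimes m_U \to \left(U\right)^{(id)}$ to the trivial groupoid on $U$, given on objects by $g \mapsto s(g)$ (and forced to be $\mathbb{1}_{s(g)}$ on arrows), and verify directly that it is a Morita equivalence in the sense of Definition \ref{dfn:Morita}. Essential surjectivity holds because the map in question is again $s:s^{-1}(U)\to U$, which admits the global section $x \mapsto \mathbb{1}_x$; full faithfulness is precisely the identification of $(\h \ltimes m_U)_1$ with $s^{-1}(U)\times_U s^{-1}(U)$ obtained in the first step, i.e. the relevant square is Cartesian. Consequently $[\pi]$ is an equivalence $[\h \ltimes m_U] \xrightarrow{\sim} U$ of stacks.

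Finally I would establish compatibility over $\X$. Let $\iota:\left(U\right)^{(id)}\to \h$ be the functor induced on objects by the inclusion $U \hookrightarrow \h_0$; then the two functors $\iota\circ\pi$ and $\theta_{m_U}$ from $\h \ltimes m_U$ to $\h$ send an object $g$ to $s(g)$ and to $t(g)$ respectively. The tautological assignment $\eta(g):=g\in\h_1$, an arrow $s(g)\to t(g)$, defines a continuous natural isomorphism $\iota\circ\pi \Rightarrow \theta_{m_U}$, naturality against an arrow $(h,g)$ reducing to the identity $g\circ\mathbb{1}_{s(g)}=h\circ(h^{-1}g)$ in $\h$. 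Passing to stack completions, the equivalence $[\pi]$ together with the $2$-cell $[\eta]$ exhibits $[\theta_{m_U}]:[\h \ltimes m_U]\to\X$ as equivalent, in $\St(S)/\X$, to the atlas composite $U\to\X$, that is to $y(U \hookrightarrow \h_0 \to \X)$; since every piece of data is canonical, so is the resulting equivalence. I expect the only genuine care to lie in the bookkeeping of the source, target, and composition conventions of the action groupoid, and in the naturality check for $\eta$; once the arrow space of $\h \ltimes m_U$ has been identified with the relation $s(g_1)=s(g_2)$, everything else is forced.
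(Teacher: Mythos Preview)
Your proof is correct and follows essentially the same approach as the paper: both construct the functor $\pi=f_U:\h\ltimes m_U\to U^{id}$ sending $g\mapsto s(g)$, together with the $2$-cell $\eta$ (the paper's $\alpha_U^{-1}$) given by $g\mapsto g$, and then deduce the equivalence in $\St(S)/\X$. The only cosmetic difference is that you verify $\pi$ is a Morita equivalence via Definition~\ref{dfn:Morita} (your identification of $\h\ltimes m_U$ with the \v{C}ech groupoid of $s:s^{-1}(U)\to U$ makes this transparent), whereas the paper writes down an explicit quasi-inverse $g_U:x\mapsto\mathbb{1}_x$ and the internal natural isomorphism $\lambda_U(\gamma)=(\gamma,\mathbb{1}_{s(\gamma)})$, carrying out the whole argument inside $(S^{et}\text{-}Gpd)/\h$ before applying $Y$.
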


\begin{proof}
$\h \ltimes m_U$ has objects $s^{-1}\left(U\right)$ and arrows are of the form $$\left(h,\gamma\right):\gamma \to h \circ \gamma.$$ Define an internal functor $$f_U:\h \ltimes m_U \to U^{id}$$ on objects as $$s^{-1}\left(U\right) \stackrel{s}{\longrightarrow} U$$ and on arrows by  $$\left(h,\gamma\right) \mapsto s\left(\gamma\right).$$ Define another internal functor $$g_U:U^{id} \to \h \ltimes m_U$$ on objects as $$x \mapsto \mathbb{1}_x$$ and on arrows as $$x \mapsto \left(\mathbb{1}_x,\mathbb{1}_x\right).$$ Clearly $$f_U \circ g_U = id_{U^{id}}.$$ Moreover, there is a canonical internal natural transformation $$\lambda_U:g_U  \circ f_U \Rightarrow id_{\h \ltimes m_U},$$ given by $$\lambda_U\left(\gamma\right)=\left(\gamma,\mathbb{1}_{s(\gamma)}\right).$$
Denote by $a_U:U^{id} \to \h,$ the composite $$U \hookrightarrow \h_0 \to \h.$$ Notice that $g_U$ extends to a morphism from $a_U:U^{id} \to \h$ to $\h \ltimes m_U$ in $\left(S^{et}-Gpd\right)/\h$ as $\theta_{m_U} \circ g_U=a_U$. Note that the formula $$\alpha_U\left(\gamma\right)=\gamma^{-1}$$ defines an internal natural transformation $$\alpha_U:\theta_{m_U} \Rightarrow a_U \circ f_U .$$ Hence $\left(f_U,\alpha_U\right)$ is morphism in $\left(S^{et}-Gpd\right)/\h$ from $\theta_{m_U}$ to $a_U$. It is easy to check that $\lambda_U$ is in fact a $2$-cell in $\left(S^{et}-Gpd\right)/\h$. Hence $a_U$ and $\theta_{m_U}$ are canonically equivalent, so the same is true of their images under $Y$.
\end{proof}

Consider the functor $$m:Site\left(\h\right) \to \B\h,$$ from Proposition \cite{pres}. Then, this is a morphism of sites, and in light of the aforementioned proposition, it induces an equivalence of bicategories
$$m_!:St\left(Site\left(\h\right)\right) \to St_{can}\left(\B\h\right).$$ The $2$-functor $m_!$ is the left Kan extension of $\hat y_{\B\h} \circ m$ (See Corollary \ref{cor:mk} for the notation) along the Yoneda embedding $$\hat y_{\h}:Site\left(\h\right) \hookrightarrow St\left(Site\left(\h\right)\right).$$ In other words, it is the unique weak colimit preserving $2$-functor whose restriction to $Site\left(\h\right)$ along $\hat y_{\h}$ agrees with $\hat y_{\B\h} \circ m$ up to equivalence.

\begin{cor}\label{cor:Lbh}
The \'etale realization $2$-functor $$\bar L:St\left(Site(\h\right)) \to \St\left(S/\X\right)$$ from Corollary \ref{cor:rest} is equivalent to $L_{\B\h} \circ m_!.$
\end{cor}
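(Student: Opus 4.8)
The plan is to exploit the fact that both $\bar L$ and $L_{\B\h} \circ m_!$ are weak colimit preserving $2$-functors out of $\St\left(\sit\left(\h\right)\right)$, and that any such $2$-functor is determined up to equivalence by its restriction to the representables $\hat y_{\h}\left(U\right)$, $U \in \sit\left(\h\right)$. Consequently it suffices to produce a natural equivalence between the two restrictions to $\sit\left(\h\right)$, and this equivalence is exactly what Lemma \ref{lem:mk} supplies on objects.

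First I would record that both functors preserve weak colimits. For $\bar L$ this is immediate from its description as the restriction to stacks of $L = j_!$, the weak left Kan extension of $j_{\h}$ (composed with the Yoneda embedding) along Yoneda; combined with Proposition \ref{relp}, which gives $\bar L \circ a \simeq L$, and with the fact that stackification $a$ is weak colimit preserving, this forces $\bar L$ to be weak colimit preserving. For $L_{\B\h}\circ m_!$ it is a composite of weak colimit preserving $2$-functors: $m_!$ is weak colimit preserving by construction (it is the left Kan extension of $\hat y_{\B\h}\circ m$ along Yoneda, in fact an equivalence), and $L_{\B\h}$ is weak colimit preserving by Corollary \ref{cor:mk}.

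Next I would evaluate the two restrictions on a representable $U \subseteq \h_0$. On one side, $\bar L\left(\hat y_{\h}\left(U\right)\right) \simeq y\left(U \hookrightarrow \h_0 \to \X\right)$ directly from the defining property of $L$ as the left Kan extension of the Yoneda image of $j_{\h}$. On the other side, $m_!\left(\hat y_{\h}\left(U\right)\right) \simeq \hat y_{\B\h}\left(m_U\right)$ because $m_!$ restricts to $\hat y_{\B\h}\circ m$ along Yoneda, and then $L_{\B\h}\left(\hat y_{\B\h}\left(m_U\right)\right) \simeq G\left(m_U\right) = Y\left(\h \ltimes m_U\right) = \left[\theta_{m_U}\right]$ by the characterization of $L_{\B\h}$ in Corollary \ref{cor:mk} and the definition of $G$ as $Y \circ \left(\h \ltimes\right) \circ q$. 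Thus the two restrictions send $U$ to $y\left(U \hookrightarrow \h_0 \to \X\right)$ and to $\left[\theta_{m_U}\right]$, and Lemma \ref{lem:mk} asserts precisely that these are canonically equivalent in $\St\left(S\right)/\X$.

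The main obstacle will be upgrading the object-wise equivalences of Lemma \ref{lem:mk} to a genuine natural equivalence of the two restricted $2$-functors $\sit\left(\h\right) \to \St\left(S/\X\right)$. Concretely, for each arrow $\sigma:U \to V$ of $\sit\left(\h\right)$ one must check that the explicit data $\left(f_U,g_U,\lambda_U,\alpha_U\right)$ built in the proof of Lemma \ref{lem:mk} are compatible, up to coherent $2$-cell, with $\left[\theta_{m_{\sigma}}\right]$ on the action-groupoid side and with $y\left(\sigma\right)$ on the other; this is a routine but somewhat laborious diagram chase using the description of $m$ on morphisms (Definition \ref{dfn:mu}) and of $\h \ltimes\left(-\right)$ on morphisms. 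Once this naturality is established, the universal property of the left Kan extension along Yoneda — every object of $\St\left(\sit\left(\h\right)\right)$ being a weak colimit of representables, on which both functors preserve weak colimits — forces $\bar L \simeq L_{\B\h}\circ m_!$, completing the argument.
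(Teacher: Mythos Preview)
Your proposal is correct and follows essentially the same approach as the paper's proof, which is the terse sentence ``This follows from Lemma \ref{lem:mk} together with Corollary \ref{cor:mk}.'' You have simply unpacked what that sentence means: both $2$-functors are weak colimit preserving, and Lemma \ref{lem:mk} together with the characterization of $L_{\B\h}$ in Corollary \ref{cor:mk} shows they agree on representables, whence they agree everywhere; your explicit acknowledgment of the naturality check is a point the paper leaves implicit.
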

\begin{proof}
This follows from Lemma \ref{lem:mk} together with Corollary \ref{cor:mk}.
\end{proof}

\begin{lem}\label{lem:C4}
The composite $$Y \circ \h \ltimes:Gpd\left(\B\h\right) \to \St\left(S/\X\right)$$ preserves epimorphisms and weak pullbacks.
\end{lem}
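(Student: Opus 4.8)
The plan is to split the composite into its two factors $\h\ltimes$ and $Y=\left[\,\cdot\,\right]$, dispatch the contribution of $Y$ quickly, and concentrate on the action-groupoid functor. First I would note that $Y$ preserves both kinds of data. Weak pullbacks in a slice $2$-category are computed in the ambient $2$-category, and the stack-completion functor $\left[\,\cdot\,\right]=a\circ\tilde y$ preserves finite weak limits, since the extended Yoneda $2$-functor $\tilde y$ preserves all weak limits (as used in the proof of Theorem~\ref{thm:inv}) and stackification $a$ preserves finite ones; hence $Y$ sends weak pullback squares of $S$-groupoids over $\h$ to weak pullback squares in $\St(S)/\X$. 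For epimorphisms I would invoke the standard criterion that $\left[\psi\right]\colon\left[\G\right]\to\left[\h'\right]$ is an epimorphism of stacks exactly when the essential-surjectivity map $t\circ pr_1\colon\h'_1\times_{\h'_0}\G_0\to\h'_0$ admits local sections. So everything reduces to the behaviour of $\h\ltimes$.

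For epimorphisms, recall that a morphism $\varphi\colon\K\to\Ll$ of groupoid objects is an epimorphism precisely when the essential-surjectivity map $e_\varphi\colon\Ll_1\times_{\Ll_0}\K_0\to\Ll_0$ (fibred over $s$ and $\varphi_0$, evaluated by $t$) is an epimorphism, hence a surjection admitting local sections, in $\B\h$. I would then write down the corresponding map $E_\varphi\colon(\h\ltimes\Ll)_1\times_{\Ll_0}\K_0\to\Ll_0$ attached to $\h\ltimes\varphi$, using $(\h\ltimes\Ll)_1=\h_1\times_{\h_0}\Ll_1$ with $s(h,\ell)=h^{-1}s(\ell)$ and $t(h,\ell)=t(\ell)$. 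The key observation is that the inclusion of unit arrows $\iota\colon\Ll_1\times_{\Ll_0}\K_0\hookrightarrow(\h\ltimes\Ll)_1\times_{\Ll_0}\K_0$, $(\ell,k)\mapsto\bigl((\mathbb{1}_{\mu_1(\ell)},\ell),k\bigr)$, satisfies $E_\varphi\circ\iota=e_\varphi$, so composing a local section of $e_\varphi$ with $\iota$ produces one for $E_\varphi$. Thus $\h\ltimes\varphi$ is essentially surjective and $Y\h\ltimes(\varphi)$ is an epimorphism; this is the same argument already made at the sheaf level for $G$, run one categorical level higher.

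For weak pullbacks, let $\mathcal P=\K\times_\Ll\K'$ be the weak pullback of $\varphi\colon\K\to\Ll\leftarrow\K'\colon\psi$ in $Gpd(\B\h)$, computed as a comma object and hence again lying in $\B\h$. I would build the canonical comparison functor over $\h$, $\Phi\colon\h\ltimes\mathcal P\to(\h\ltimes\K)\times_{\h\ltimes\Ll}(\h\ltimes\K')$, sending an object $(k,k',\beta)$ with $\beta\colon\varphi_0(k)\to\psi_0(k')$ to $\bigl(k,k',(\mathbb{1},\beta)\bigr)$ and acting in the evident way on arrows, and then prove $\Phi$ is an equivalence of $S$-groupoids over $\h$. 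Full faithfulness is a direct comparison of the comma descriptions of the two weak pullbacks; essential surjectivity follows from the fact that an arbitrary object $\bigl(k,k',(h,\lambda)\bigr)$ of the target is carried isomorphically by the arrow $\bigl((h,\mathbb{1}_{h\cdot k}),\mathbb{1}_{k'}\bigr)$ to $\bigl(h\cdot k,k',(\mathbb{1},\lambda)\bigr)=\Phi(h\cdot k,k',\lambda)$. Applying $Y$, which preserves weak pullbacks and sends equivalences of groupoids to equivalences of stacks, yields $Y\h\ltimes(\mathcal P)\simeq Y\h\ltimes(\K)\times_{Y\h\ltimes(\Ll)}Y\h\ltimes(\K')$.

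I expect the weak-pullback step to be the main obstacle. The epimorphism half is essentially the one-line unit-arrow reduction once the correct characterization of epimorphisms in $Gpd(\B\h)$ is fixed, whereas establishing that $\Phi$ is an equivalence requires carefully matching the comma-object description of the weak pullback of internal groupoids against the action-groupoid formulas, all while tracking the moment maps and the commuting $\h$- and groupoid actions; that bookkeeping is where the genuine work lies.
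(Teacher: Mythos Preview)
Your proposal is correct and follows essentially the same route as the paper: both reduce to $\h\ltimes$ after noting $Y$ preserves finite weak limits, both handle epimorphisms by precomposing with the unit inclusion $\tau_\Ll$ to transfer surjectivity of $e_\varphi$ to $E_\varphi$, and both establish the weak-pullback comparison by exhibiting an explicit equivalence between $\h\ltimes P$ and $P'$. The only cosmetic difference is that the paper writes down an explicit quasi-inverse $\Lambda$ (with $\Lambda F=\mathrm{id}$ and a natural isomorphism $\omega\colon\mathrm{id}\Rightarrow F\Lambda$), whereas you argue full faithfulness and essential surjectivity of $\Phi$ directly---your essential-surjectivity isomorphism is precisely the paper's $\omega$.
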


\begin{proof}
Suppose that $\varphi:\K \to \Ll$ is an epimorphism in $Gpd\left(\B\h\right).$ This implies that the induced map $$t\circ pr_1:\Ll_1\times_{\Ll_0} \K_0 \to \Ll_0$$ is an epimorphism. In particular, this means that, when viewed as a map of underlying spaces, it is a surjective local homeomorphism, i.e. an \'etale cover. There is a canonical map $$\Ll_1\times_{\Ll_0} \K_0 \to \left(\h \ltimes \Ll\right)_1 \times_{\Ll_0} \K_0$$ induced by the canonical homomorphism $\tau_{\Ll}:\Ll \to \left(\h \ltimes \Ll\right)$ (See the remark directly preceding Proposition \ref{prop:shver}) such that the following diagram commutes
$$\xymatrix{& \left(\h \ltimes \Ll\right)_1 \times_{\Ll_0} \K_0 \ar[d]^-{t \circ pr_1}\\ \Ll_1\times_{\Ll_0} \K_0 \ar[ru] \ar[r]^-{t \circ pr_1} & \Ll_0.}$$ Hence $$t \circ pr_1:\left(\h \ltimes \Ll\right)_1 \times_{\Ll_0} \K_0 \to \Ll_0$$ is a surjective \'etale map. This implies that $Y \circ \h \ltimes\left(\varphi\right)$ is an epimorphism.

To show that $Y \circ \h \ltimes$ preserves weak pullbacks, it suffices to show that $\h \ltimes$ does, since $Y$ preserves finite weak limits. Suppose that $$\xymatrix{P \ar[r] \ar[d] & B \ar[d]^-{\beta}\\ A \ar[r]^-{\alpha} & C}$$ is a weak pullback diagram in $\B\h.$ Explicitly, we may describe $P$ by its objects being triples $$\left(a,b,l\right) \in A_0 \times B_0 \times C_1$$ such that $$l:\alpha\left(a\right) \to \beta\left(b\right),$$ with the obvious structure of an \'etale $\h$-space (we can take the moment map $\xi_0$ to be the projection onto $B_0$ followed by its moment map $\nu_0$). Its arrows from $$\left(a,b,l\right) \to \left(a',b',l'\right)$$ can be described by pairs $$\left(k_a,k_b\right)\in A_1 \times B_1$$ such that the following diagram commutes:
$$\xymatrix{\alpha\left(a\right) \ar[r]^-{l} \ar[d]_-{\alpha\left(k_a\right)} & \beta\left(b\right) \ar[d]^-{\beta\left(k_b\right)}\\ \alpha\left(a'\right) \ar[r]^-{l'} & \beta\left(b'\right).}$$
This condition on the arrows can be expressed as a pullback diagram, hence they also inherits the structure of an \'etale $\h$-space. Now, the objects of $\h \ltimes P$ are the same as $P$. The arrows $$\left(a,b,l\right) \to \left(a',b',l'\right)$$ in $\h \ltimes P$ can be described by triples $\left(h,k_{ha},k_{hb}\right)$ such that $$\left(k_{ha},k_{hb}\right):\left(ha,hb,hl\right) \to \left(a',b',l'\right)$$ is an arrow in $P.$ These of course assemble into a space which can be constructed via pullbacks as well, with moment map $\xi_1$. With the choice of moment maps $\xi,$ $\h \ltimes P$ becomes an $S$-groupoid over $\h$ by factoring it through $$\theta_B:\h \ltimes B \to \h.$$
Let $P'$ denote the weak pullback $$\xymatrix{P' \ar[r] \ar[d] & \h \ltimes A \ar[d]\\ \h \ltimes B \ar[r] & \h \ltimes C.}$$ Its objects can be described by quadruples $$\left(a,b,h,l\right) \in A_0 \times B_0 \times \h_1 \times C_1$$ such that $$k:h\alpha\left(a\right) \to \beta\left(b\right).$$ A quick calculation shows that its arrows $$\left(a,b,h,l\right) \to \left(a',b',h',l'\right)$$ can be described by quadruples $$\left(h_a,k_a,h_b,k_b\right) \in \h_1 \times A_1 \times \h_1 \times B_1$$ such that $$k_a:h_a a \to a'$$ and $$k_b:h_b b \to b',$$ and such that

\begin{equation}\label{eq:njn}
\left(h'h_a,l' \circ \left(h' \cdot \alpha\left(k_a\right)\right)\right)=\left(h_bh,\beta\left(k_b\right) \circ \left(h_b \cdot l\right)\right).
\end{equation}
We may regard $P'$ as an $S$-groupoid over $\h$ by factoring it through its canonical projection onto $\h \ltimes B.$

There is of course a canonically induced map $$F:\h \ltimes P \to P'$$ coming from the cone obtained by applying $\h \ltimes$ to the diagram expressing $P$ as a pullback. On objects, $F$ sends a triple $$\left(a,b,l\right) \mapsto \left(a,b,\mathbb{1}_{\mu_1\left(l\right)},l\right),$$ where $$\mu_1:C_1 \to \h_0$$ is the moment map. On arrows it sends $$\left(h,k_{ha},k_{hb}\right)\mapsto\left(h,k_{ha},h,k_{hb}\right).$$
Define a homomorphism $\Lambda:P' \to P$ on objects by $$\left(a,b,h,l\right) \mapsto \left(ha,b,\mathbb{1}_{\mu_1\left(l\right)},l\right)$$ and on arrows by sending quadruples $$\left(h_a,k_a,h_b,k_b\right):\left(a,b,h,l\right) \to \left(a',b',h',l'\right)$$ to triples $$\left(h'h_ah^{-1}=h_b,h'\cdot k_a,k_b\right).$$ Notice that $\Lambda$ strictly commutes over $\h$ and $\Lambda \circ F= id_{\h \ltimes P}.$ Moreover, consider the continuous map $$\omega:P'_0 \to P'_1$$ given by $$\left(a,b,h,l\right) \mapsto \left(h,\mathbb{1}_{ha},\mathbb{1}_{\nu_0\left(b\right)},\mathbb{1}_b\right),$$ where $\nu_0$ is the moment map of $B$. Notice that $$\omega\left(a,b,h,l\right):\left(a,b,h,l\right) \to \left(ha,b,\mathbb{1}_{\mu_1\left(l\right)},l\right).$$ It follows that $\omega$ is an internal natural transformation $$\omega:id_P' \Rightarrow F \circ \Lambda.$$ It is easy to check that it is indeed a $2$-morphism in $S-Gpd/\h,$ hence $F$ is an equivalence.
\end{proof}

\begin{thm}\label{thm:finally}
Consider the canonical $2$-functor
$$\left[ \mspace{3mu} \cdot \mspace{3mu}\right]_{\B\h}:Gpd\left(\B\h\right) \to St\left(\sit\left(\h\right)\right)$$
which associates a groupoid object $\K$ in in $\B\h$ with its stack completion. Then $\left[ \mspace{3mu} \cdot \mspace{3mu}\right]_{\B\h}$ is essentially surjective and faithful (but not in general full), and the $2$-functors $\bar L \circ \left[ \mspace{3mu} \cdot \mspace{3mu}\right]_{\B\h}$ and $Y \circ \h \ltimes$ are equivalent.
\end{thm}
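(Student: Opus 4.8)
The plan is to treat Theorem \ref{thm:finally} as the assembly of the Appendix lemmas, handling the two assertions separately. For essential surjectivity of $\left[\cdot\right]_{\B\h}$ I would invoke Corollary \ref{ref:stksh} together with the equivalence between sheaves of groupoids and groupoid objects in sheaves (Proposition \ref{prop:shfgpd} and its sheaf-theoretic refinement): every stack over $\sit\left(\h\right)$ is the stackification of a sheaf of groupoids, hence the stack completion of some $\K \in Gpd\left(\B\h\right)$. Faithfulness, and the failure of fullness, I would read off from the standard behaviour of stackification: a $2$-cell of $Gpd\left(\B\h\right)$ is an internal natural transformation, i.e. a morphism $\K_0 \to \Ll_1$ of sheaves, and since $2$-cells between maps into a stack are computed by the corresponding hom-sheaf, stackification is faithful on $2$-cells; it fails to be full because a morphism of the associated stacks need only be defined after passing to a cover, so it need not descend to a strict morphism of groupoid objects.

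For the equivalence $\bar L \circ \left[\cdot\right]_{\B\h} \simeq Y \circ \h\ltimes$, the strategy is to show that both $2$-functors send a groupoid object $\K$ to the weak colimit (geometric realization) of the simplicial diagram obtained by applying $G = Y \circ \h\ltimes \circ q$ to the nerve $\K_\bullet$. First I would compute the left-hand side. By Corollary \ref{cor:Lbh}, $\bar L \simeq L_{\B\h}\circ m_!$; since $m_!$ is the equivalence of $2$-topoi induced by the morphism of sites $m$, it preserves realizations and restricts on sheaves to the canonical identification $\B\h \simeq \Sh\left(\sit\left(\h\right)\right)$, so it carries $\left[\K\right]_{\B\h}$ to the stack completion of $\K$ over $\left(\B\h,can\right)$, namely $\underset{n}{\hc} \hat y_{\B\h}\left(\K_n\right)$. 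As $L_{\B\h}$ is weak colimit preserving and restricts to $G$ along $\hat y_{\B\h}$ (Corollary \ref{cor:mk}), I obtain $\bar L\left(\left[\K\right]_{\B\h}\right) \simeq \underset{n}{\hc} G\left(\K_n\right)$.

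Next I would compute the right-hand side using Lemma \ref{lem:C4}. The canonical morphism $q\left(\K_0\right) \to \K$ in $Gpd\left(\B\h\right)$ is an epimorphism, being the identity on objects, and its \v{C}ech nerve is $q\left(\K_\bullet\right)$, because the weak fibre product $q\left(\K_0\right)\times_{\K} q\left(\K_0\right)$ is exactly $q\left(\K_1\right)$. Since $Y\circ\h\ltimes$ preserves epimorphisms and weak pullbacks (Lemma \ref{lem:C4}), it sends this datum to an epimorphism $G\left(\K_0\right) \to Y\left(\h\ltimes\K\right)$ whose \v{C}ech nerve is $G\left(\K_\bullet\right)$. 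Because every epimorphism in the $2$-topos $\St\left(S\right)/\X$ is effective (descent, Giraud's axioms), the target is the weak colimit of its \v{C}ech nerve, so $Y\left(\h\ltimes\K\right) \simeq \underset{n}{\hc} G\left(\K_n\right)$ as well. Comparing the two computations, and noting that both identifications are induced levelwise by the identity on $G$ and the universal property of the realization, yields the desired pseudonatural equivalence $\bar L\circ\left[\cdot\right]_{\B\h} \simeq Y\circ\h\ltimes$.

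The main obstacle is the right-hand computation: one must ensure that $Y\circ\h\ltimes$ genuinely realizes $\K$ as the geometric realization of its $G$-nerve, rather than merely matching it level by level. The delicate points are (i) checking that the \v{C}ech nerve of $q\left(\K_0\right)\to\K$ is $q\left(\K_\bullet\right)$ and that $Y\circ\h\ltimes$ carries it to $G\left(\K_\bullet\right)$ as a simplicial object, which is precisely what preservation of weak pullbacks in Lemma \ref{lem:C4} buys, and (ii) upgrading the object-wise equivalences to a coherent pseudonatural equivalence of $2$-functors. Both reduce to effectivity of epimorphisms in the target $2$-topos together with the essential uniqueness of weak-colimit-preserving $2$-functors determined by their restriction to $q\left(\B\h\right)$, so that the remaining coherence is bookkeeping rather than a genuine difficulty.
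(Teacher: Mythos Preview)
Your proposal is correct and follows essentially the same route as the paper's proof: both arguments establish the equivalence by showing that each of $\bar L \circ \left[\,\cdot\,\right]_{\B\h}$ and $Y \circ \h\ltimes$ sends $\K$ to the weak colimit of $G\left(\K_\bullet\right)$, using Corollary \ref{cor:Lbh} on one side and Lemma \ref{lem:C4} together with effectivity of epimorphisms in the $2$-topos $\St\left(S/\X\right)$ on the other. The only cosmetic difference is that the paper works explicitly with the truncated semi-simplicial diagram $\K_2 \rrrarrow \K_1 \rrarrow \K_0$ (which suffices in a $2$-topos) and packages the \v{C}ech-nerve step as a $2$-Cartesian cube, whereas you phrase it in terms of the full simplicial nerve; your treatment of essential surjectivity and faithfulness likewise matches the paper's one-line justifications via Corollary \ref{ref:stksh} and the prestack property of sheaves of groupoids.
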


\begin{proof}
The fact that $\left[ \mspace{3mu} \cdot \mspace{3mu}\right]_{\B\h}$ is essentially surjective follows from the fact that every stack is equivalent to a strict $2$-functor, which is in particular a sheaf of groupoids, i.e. a groupoid object in sheaves. The fact that is faithful follows from the fact that sheaves of groupoids considered as weak presheaves are separated (i.e. prestacks).
Let $\K$ be a groupoid object in $\B\h$. Then the following is a $2$-Cartesian cube all of whose edges are epimorphisms:
$$\xymatrix{
   &\K_2 \ar[rr]^{d_2}\ar'[d]^{d_1}[dd]
   \ar[ld]_{d_0}
   &&\K_1\ar[ld]^-{d_0}\ar[dd]^-{d_1}\\
   \K_1 \ar[rr]^(0.4){d_1}\ar[dd]^-{d_0}
   && \K_0 \ar[dd]^(0.4){p}\\
   &\K_1 \ar[ld]^-{d_0} \ar'[r]^-{d_1}[rr]
   &&\K_0,\ar[ld]^-{p}\\
   \K_0 \ar[rr]^-{p}&& \K
   }$$
where $p:\K_0 \to \K$ is the canonical map. From Lemma \ref{lem:C4}, $$Y\circ \h \ltimes \left(p\right):\left[\h \ltimes \K_0\right] \to \left[\h \ltimes \K\right]$$ is an epimorphism, and also, applying $Y\circ \h \ltimes$ to the above cube, results in another $2$-Cartesian cube, this time in the $2$-topos $St\left(S/\X\right),$ all of whose edges are again epimorphisms. The fact that this cube is Cartesian means that the diagram obtained by deleting the vertex $\left[\h\ltimes\K\right]$ and all edges into it, namely $$\left[\h \ltimes \K_2\right] \rrrarrow \left[\h \ltimes \K_1\right] \rrarrow \left[\h \ltimes \K_0\right],$$ is the truncated semi-simplicial C\v{e}ch nerve of the epimorphism $Y\circ \h \ltimes \left(p\right).$ From \cite{htt}, since $St\left(S/\X\right)$ is a $2$-topos, this implies that $$\left[\h \ltimes \K\right] \simeq \hc \left(\left[\h \ltimes \K_2\right] \rrrarrow \left[\h \ltimes \K_1\right] \rrarrow \left[\h \ltimes \K_0\right]\right).$$ Notice that in the $2$-topos $St\left(\sit(\h\right),$ $$\left[\K\right]_{\B\h} \simeq \hc \left(\K_2 \rrrarrow \K_1 \rrarrow \K_0\right).$$ From Corollary \ref{cor:Lbh}, this implies that $$\bar L \left[\K\right]_{\B\h} \simeq L_{\B\h}\circ m_!\left(\left[\K\right]_{\B\h}\right) \simeq \hc \left(\left[\h \ltimes \K_2\right] \rrrarrow \left[\h \ltimes \K_1\right] \rrarrow \left[\h \ltimes \K_0\right]\right).$$ Hence $$\bar L \left[\K\right]_{\B\h} \simeq \left[\h \ltimes \K\right].$$ We leave the rest of the details to the reader.

\end{proof}

\bibliographystyle{hplain}
\bibliography{main}

\end{document}